\newcommand{\myp}{\mbox{$\:\!$}}
\newcommand{\mypp}{\mbox{$\;\!$}}
\newcommand{\myn}{\mbox{$\;\!\!$}}
\newcommand{\mynn}{\mbox{$\:\!\!$}}
\newcommand\sbullet[1][.5]{\mathbin{\vcenter{\hbox{\scalebox{#1}{$\bullet$}}}}}
\newcommand{\NN}{\mathbb{N}} 
\newcommand{\ZZ}{\mathbb{Z}} 
\newcommand{\RR}{\mathbb{R}} 
\newcommand{\CC}{\mathbb{C}} 
\newcommand{\rme}{\mathrm{e}}
\newcommand{\rmd}{\mathrm{d}}
\newcommand{\rmi}{\mathrm{i}}
\newcommand{\PP}{{\mathsf P}}
\newcommand{\QQ}{{\mathsf P\mynn}}
\newcommand{\EE}{{\mathsf E}}
\newcommand{\Var}{{\mathsf{Var}}}
\newcommand{\Cov}{{\mathsf{Cov}}}
\def\MR#1{\href{https://mathscinet.ams.org/mathscinet-getitem?mr=#1}{MR#1}}
\newcommand{\ignore}[1]{}
\numberwithin{equation}{section}
\title{Boltzmann Distribution on ``Short'' Integer Partitions\\ with Power Parts: Limit Laws
and Sampling}
\author{Jean C.\  Peyen\thanks{\,Corresponding author, \href{mailto:mmjcp@leeds.ac.uk}{mmjcp@leeds.ac.uk}, ORCID \href{https://orcid.org/0000-0002-7735-4005}{0000-0002-7735-4005}}, \,Leonid V.\,Bogachev\thanks{\href{mailto:L.V.Bogachev@leeds.ac.uk}{\,L.V.Bogachev@leeds.ac.uk}, ORCID \href{https://orcid.org/0000-0002-2365-2621}{0000-0002-2365-2621}},  and Paul P.\,Martin\thanks{\href{mailto:P.P.Martin@leeds.ac.uk}{\,P.P.Martin@leeds.ac.uk}, ORCID \href{https://orcid.org/0000-0002-8141-9465}{0000-0002-8141-9465}}\\[.4pc] \textit{School of Mathematics, University of Leeds, Leeds LS2 9JT, UK}}
\newtheorem{theorem}{Theorem}[section]
\newtheorem{lemma}[theorem]{Lemma}
\newtheorem{corollary}[theorem]{Corollary}
\theoremstyle{definition}
\newtheorem{definition}{Definition}[section]
\newtheorem{assumption}{Assumption}[section]
\theoremstyle{remark}
\newtheorem{remark}{Remark}[section]
\date{}
\begin{document}
\maketitle

\vspace{-1pc}
\begin{abstract}
The paper is concerned with the asymptotic analysis of a family of Boltzmann (multiplicative) distributions over the set $\check{\varLambda}^{q}$ of \emph{strict} integer partitions (i.e., with 
unequal parts) into perfect $q$-th powers. 
A combinatorial link is provided via a suitable conditioning 
by fixing the partition \emph{weight} (the sum of parts)
and \emph{length} (the number of parts), leading to uniform distribution on the corresponding subspaces of partitions. 
The Boltzmann measure is calibrated through the hyper-parameters $\langle N\rangle$ and $\langle M\rangle$ controlling the expected weight and length, respectively. 
We study ``short''  partitions, where 
the parameter $\braket{M}$ is either fixed or grows slower than for typical 
%plain  unconstrained 
partitions in $\check{\varLambda}^{q}$. For this model, we obtain a variety of limit theorems including the asymptotics of the cumulative cardinality in the case of fixed $\braket{M}$ and a limit shape result in the case of slow growth of $\braket{M}$. 
In both cases, we also characterize the joint distribution of the weight and length, as well as the growth of the smallest and largest parts. 
Using these results we construct suitable 
sampling algorithms and analyze their performance. 
\end{abstract}

\medskip\noindent
\emph{Keywords:} integer partitions; Boltzmann distribution; generating functions; Young diagrams; limit shape; sampling algorithms

\medskip\noindent
\emph{MSC\,2020:} Primary 05A17; Secondary 05A16, 60C05, 68Q87, 82B10

\tableofcontents  

 \section{Introduction}
\subsection{Integer partitions: setting the scene}
``What, yet another paper on integer partitions?''\ Well, guilty as charged\,---\,but to be fair, this classical area of mathematics, dating back to Euler, Sylvester, MacMahon, Hardy and Ramanujan,
is fresh as ever, and seems to be a non-depletable source of fascinating problems and many beautiful results (see the authoritative monograph by Andrews \cite{Andrews} for historical comments and further references). 

Integer partitions constitute one of the most basic structures in additive number theory and combinatorics\,---\,a partition of an integer $n\in\NN$ is just a decomposition as a sum of natural parts, up to reordering (e.g., partitions $5=2+3=3+2$ are not distinguished). Perhaps, the most celebrated result in the asymptotic theory of integer partitions is the ingenious formula by Hardy and Ramanujan \cite[Sec.\mypp 1.4, p.\mypp 79, and Sec.\mypp 1.7, pp.\,84--85]{Ramanujan} for the number $p(n)$ of all partitions of a large integer $n$, with the principal term\footnote{Obtained independently by Uspensky \cite{Uspensky}; see also a streamlined treatment by  Ingham \cite{Ingham}.} reading
\begin{equation}\label{eq:HR}
p(n)\sim \frac{1}{4\myp\sqrt{3}\,n}\exp\left(\pi\mypp\sqrt{\frac{2\myp n}{3}}\myp\right)\qquad (n\to\infty).
\end{equation}
The problem was attacked in \cite{Ramanujan} using the then new \emph{circle method} (later on attributed to Hardy and Littlewood), based on the Cauchy integral formula and a careful singularity analysis of the corresponding generating function $F(z)=\prod_{n=0}^\infty (1-z^n)^{-1}$ (known already to Euler). Interestingly, the same paper also contains a general result for partitions into sums of perfect $q$-th powers \cite[Sec.\mypp 7.3, p.\mypp 111]{Ramanujan} (later on elaborated by Wright \cite{Wright}),
\begin{equation}\label{eq:HR-powers}
p^{\myp (q)}(n)\sim \frac{k(q)\,\sqrt{q/(q+1)}}{(2\pi)^{(q+1)/2}\cdot n^{1/(q+1)-3/2}}\exp\left((q+1)\,k(q)\,n^{1/(q+1)}\myp\right)\qquad (n\to\infty),
\end{equation}
where
$$
k(q):=\left\{\frac{1}{q}\,\Gamma\!\left(1+\frac{1}{q}\right)\zeta\!\left(1+\frac{1}{q}\right)\right\}^{q/(q+1)}\qquad (q\in\NN).
$$

A similar result was stated in \cite{Ramanujan} (and later on elaborated by Hua \cite{Hua}) for \emph{strict partitions}, that is, those with unequal parts,\footnote{The leading term of the asymptotic expansion of $\check{p}(n)$  was given in \cite[Sec.\mypp 7.1, p.\mypp 109]{Ramanujan} in terms of (the derivative of) the Bessel function $J_0(z)$, from which it is easy to derive an explicit formula \eqref{eq:HR-dist} using the relation $J_0'(z)=-J_1(z)$ \cite[10.6.3]{NIST} and the asymptotics of $J_1(z)$ \cite[10.7.8]{NIST}. See more direct derivations in \cite{Uspensky, Ingham}.}
\begin{equation}\label{eq:HR-dist}
\check{p}(n)\sim \frac{1}{4\cdot 3^{1/4}\,n^{3/4}}\exp\left(\pi\mypp\sqrt{\frac{n}{3}}\myp\right)\qquad (n\to\infty).
\end{equation}
Strict partitions and their generating function $\check{F}(z)=\prod_{n=0}^\infty (1+z^x)$ were considered by Euler who noticed
%, using the generating functions technique, 
that $\check{p}(n)$ coincides with the number of partitions of $n$ \emph{with odd parts}, using a simple identity\footnote{See \cite{Pak} for a modern survey of bijection methods and results for various classes of integer partitions.} for the corresponding generating functions, $\check{F}(z)=F(z)/F(z^2)$, that is,
$$
\prod_{n=0}^\infty (1+z^n)=\prod_{n=0}^\infty \frac{1-z^{2n}}{1-z^n}= \prod_{n=0}^\infty (1-z^n)^{-1}\left(\prod_{n=0}^\infty (1-z^{2n})^{\myn-1}\right)^{-1}\!.
$$

These early benchmarks stimulated a growing interest in the (asymptotic) enumeration of integer partitions under various constraints, such as restrictions on the source of parts and/or their number, on permitted repetitions of parts, etc.\  \cite{Wright,Erdos,Ingham,Meinardus,HT,Lee}.
%,Szekeres}.
%starting with the result by Erd\H{o}s and Lehner \cite{Erdos} on the asymptotic cardinality of partitions of large integers $n$ with the number of parts $m$ either fixed or growing slower than $n^{1/3}$. 

In particular, there has been intensive research into additive representations of integers with $q$-power parts, starting with $q=2$ (squares) and dating back to Hardy and Ramanujan \cite{Ramanujan} (see also \cite{Vaughan, Wright}).
In connection with combinatorial enumeration, the class of (non-strict) integer partitions with a fixed number $m$ of $q$-power parts is featured in at least two classical mathematical gems, the \emph{Waring problem} \cite{HW,Waring} and the \emph{Gauss circle problem} \cite[Sec.\:F1, pp.\,365--367]{Guy}, both originally considered for squares ($q=2$) and with $m\le 4$ or $m\le 2$ parts, respectively. The Waring  problem concerns $q$-power representability of \emph{all positive integers}\footnote{It is known that $g(q)\ge 2^q+\lfloor (3/2)^q\rfloor -2$ for any $q\in\NN$, and it is believed that in fact the equality is true\,---\,although exceptions may be possible in principle, no counter-examples have been found to date. While this version of the Waring problem (i.e., for all numbers $n\in\NN$) is almost completely settled, the asymptotic version asking for the smallest number of parts, denoted $G(q)$, sufficient to partition any sufficiently large natural number into a sum of $q$-powers, remains largely open (clearly, $G(2)=4$, and it is also known that $G(4)=16$). See further details and references in \cite{Waring}.} using at most $g(q)$ parts, whereas the Gauss circle problem focuses on the cumulative cardinality of such representations (more precisely, on error bounds for the area/volume approximation). For instance, by virtue of the Lagrange theorem it is known that $g(2)=4$, that is, any natural number can be written as a sum of at most $4$ squares, while $3$ squares may not be  enough. Moreover, Legendre's theorem gives an exact description of integers that can be represented as a sum of $3$ squares\,---\,these are numbers not congruent to $7$ ($\mathrm{mod}\ 8$); for example, for $23=7\ (\mathrm{mod}\ 8)$ the only representation is $9+9+4+1=23$. On the other hand, numbers not representable using exactly $4$ positive squares are given by the sequence comprising eight odd numbers, $1$, $3$, $5$, $9$, $11$, $17$, $29$, $41$, and all numbers of the form $\ell\cdot 4^k$ with $k\in\NN_0$ and $\ell\in\{2,6,14\}$. These two sequences overlap: for example, $16+1+1+1=9+9+1=19=3\ (\mathrm{mod}\ 8)$. Another famous result, now about sums with up to $2$ squares, is the Landau theorem \cite{Landau} stating that the fraction of numbers up to $n$ enjoying such a representation is asymptotically given by $K n/\sqrt{\log n}$, where $K\doteq 0.764223653$ is the \emph{Landau--Ramanujan constant}.

\subsection{Random integer partitions}
A more recent boost of research in this area has been due to a ``statistical''  approach focusing on asymptotic properties of typical random partitions and other decomposable combinatorial structures of large size
(see, e.g., \cite{ABT,Arratia-Tavare,Duchon,V0}). The words ``typical'' and ``random'' imply that partition ensembles of interest are endowed with suitable probability measures, such as the uniform distribution on the spaces of partitions of a given $n\in\NN$ (so that all such partitions are assumed equally likely). Amongst the first results in this direction established in a seminal paper by Erd\H{o}s and Lehner \cite{Erdos} is that the growth rate of the number of parts in the bulk of integer partitions of  large $n$ (i.e., in the sense of Law of Large Numbers) is given by $\pi^{-1}\sqrt{3\myp n/2}\,\log n$. A snapshot of subsequent advances is documented, for example, in papers  \cite{Erdos-Turan,Fristedt,Hwang, Pittel,V0} and references therein. In particular, this research has led to the discovery of so-called \emph{limit shapes} of partition ensembles, which describe a typical settlement of parts and their multiplicities within large partitions under appropriate scaling (see \cite{V0,Vershik1,Pittel,EG,Bogachev,Bogachev2,VY1,Yakubovich}). The limit shapes have a natural geometric interpretation through the so-called \emph{Young diagrams} (also known as \emph{Ferrers' graphs}), with blocks visually representing constituent parts of the integer partitions (see more detail in Section~\ref{sec:2.2} below). Incidentally, Young diagrams make it self-evident (by flipping rows and columns, called \emph{conjugation}) that the number of partitions of $n$ with at most $m$ parts is the same as the number of partitions of $n$ with the largest part not exceeding $m$, which immediately implies that the aforementioned asymptotics for the typical number of parts also hold for the largest part \cite{Erdos}.

The modern approach to the asymptotic analysis of random combinatorial structures is based on a suitable \emph{randomization} of the model parameters (collectively called \emph{poissonization}) and the subsequent \emph{conditioning} (or \emph{de-poissonization}) in order to return to the original (say, uniform) distribution  with fixed parameters (see, e.g., \cite{ABT,Arratia-Tavare,Khinchin2,Kolchin,Pitman,V0} and the vast bibliography therein). In the context of random integer partitions, this method was first successfully applied by Fristedt \cite{Fristedt}, leading to the probability measure on the space of partitions of \emph{all integers} $n\in\NN$ by assigning to each such partition a probability proportional to $z^n$, respectively, where $z\in(0,1)$ is a free parameter. Under such a measure (commonly referred to as \emph{Boltzmann distribution}), the multiplicities of candidate parts $j\in\NN$, previously restricted by the partition target $n$ (often called \emph{weight}), become independent geometric random variables with success parameter $1-z^j$, respectively. Furthermore, conditioning on the partition weight to be equal to  $n$ restores the uniform distribution on the space of all partitions of that $n$. This holds for any value of $z$, but it is helpful to calibrate the randomized model by replicating the original macroscopic properties (such as the partition weight) in terms of expectation. Crucially, for the conditioning trick to work effectively, asymptotic information is needed about the probability of the  condition, specialized here as a weighted sum of random multiplicities, thus taking a familiar form of a local limit theorem in probability theory, albeit somewhat peculiar since the number of terms in the sum is random but almost surely (a.s.)\ finite (see \cite{Fristedt,GH,Bogachev,VY1,VFY,Hwang}).

Similar ideas are well known as ``equivalence of ensembles'' in statistical physics, where the probabilistic description of the particle system of interest (e.g., ideal gas) may vary subject to optional fixation of the total energy and/or the number of particles, leading accordingly to \emph{micro-canonical}, \emph{canonical} or \emph{grand-canonical Gibbs distributions}\footnote{Cf.\ also \cite{Bogachev2}, where the term ``meso-canonical'' was proposed as better suited to the space of integer partitions with a given weight and any length (interpreted as an assembly with fixed energy and an indefinite  number of particles).} \cite{Huang,Greiner}. The usual tool to establish such equivalence is via the Darwin--Fowler method involving a complicated saddle-point asymptotic analysis of high-dimensional integrals (see \cite{Huang}). As an alternative, Khinchin \cite{Khinchin,Khinchin2} advocated a systematic use of local limit theorems of probability theory in problems of statistical
mechanics, which facilitates the analysis by invoking probabilistic insight and well-developed analytical tools.

In fact, connections with statistical physics go even deeper, whereby integer partitions serve as a model for the random partitioning 
of total energy (under a suitable choice of units) in a large assembly of indistinguishable particles, with the Boltzmann distribution arising naturally as a thermodynamic equilibrium 
\cite{Auluck,DeGr,Vershik1,Vershik2, VY2,Bogachev2,Comtet}. Specific models of relevance include an ensemble of harmonic oscillators at high temperatures and ideal quantum gases, where discrete partition structures are particularly tailored to the energy quantization \cite{Auluck,Temp1}. In this context, non-strict partitions are interpreted as \emph{bosons} following the \emph{Bose--Einstein statistics}, with no restriction on the energy level occupancy, while strict partitions model \emph{fermions}
under the \emph{Fermi--Dirac statistics} obeying the Pauli exclusion principle of not allowing more than one particle on any energy level. This analogy is quite productive\,---\,for instance, it offers an insightful combinatorial explanation of the \emph{Bose--Einstein condensation}, manifested as a measurable excess of particles at the lowest (ground) energy level at temperatures close to the absolute zero \cite{Auluck,Vershik2}. Furthermore, there is a natural physical motivation for considering integer partitions with power parts (see more detail in \cite{Vershik1,Vershik2, DeGr,Comtet,Roccia} and reference therein).

More generally, Boltzmann distributions can be defined for a large variety of decomposable combinatorial structures $\mathfrak{C}=\{\mathfrak{c}\}$, such as assemblies, multisets, selections, sequences, trees, etc.\ \cite{ABT,Duchon,Flajolet-Fusy-Pivoteau}, again assigning to each element $\mathfrak{c}\in\mathfrak{C}$ a geometric probability weight proportional to $z^{s(\mathfrak{c})}$, as long as an additive ``size function'' $\mathfrak{C}\ni \mathfrak{c}\mapsto s(\mathfrak{c})\in\RR_+$ is available (such as the partition weight). Multi-parametric versions of the Boltzmann distribution may also be considered using the weights of the form $\prod_{i=1}^k \mynn z_i^{s_i(\mathfrak{c})}$, with suitable size functions $s_1(\mathfrak{c})\dots,s_k(\mathfrak{c})$ \cite{Vershik1,
BBD}. For instance, this may be needed if we wish to control more than one macroscopic characteristic of a combinatorial object $\mathfrak{c}\in\mathfrak{C}$, such as the number of parts (length) in addition to the partition weight \cite{Vershik1} (see also Section~\ref{sec:2.4} in the present paper). Another reason may arise if we are dealing with a truly multi-dimensional structure, such as vector partitions, convex lattice polygonal lines, or digitally convex polyominoes \cite{Sinai, Vershik1,BZ4,Bureaux,Bodini}. Boltzmann distributions are very popular in computer science  as an effective tool to sample random instances of combinatorial objects\,---\,if required, with a given size, 
exact or approximate (which is achieved via rejection applied to the output of a free sampler, so as to implement the conditioning step), and with a uniform distribution of the output \cite{Duchon,Flajolet-Fusy-Pivoteau,Bodini,BeFaRa}.

Some alternative probability distributions on partition spaces, different from the Boltzmann class, are also of great interest, such as the \emph{Ewens sampling formula} \cite{Ewens,Kingman}, with important applications in population genetics and ecology, and the \emph{Plancherel measure} arising in connection with representation theory of the symmetric group $\mathfrak{S}_n$ of all permutations of the set $\{1, \dots,n\}$ \cite{Fulton}. The limit shape in the Plancherel model is known \cite{Logan-Shepp,VK1,VK2}, but here the poissonization of the partition weight leads to a \emph{determinantal ensemble} (for more detail and further references, see \cite{BOO2000,BoSu,Su}).
Finally, let us mention that integer partitions find stunning applications in many other fields, such as economics \cite{GS}, optimal transport \cite{Hohloch}, and statistics of scientific citations \cite{Yong,NBV}. AAM Final version (Copy)

\subsection{Focus of the paper}
Confronted with the awesome wealth of past research into integer partitions briefly sketched above, why should the reader take any interest in the present paper? The novelty is brought about by our focus on integer partitions under a conjunction of three constraints as follows: 
\begin{enumerate}
\item[(i)]\label{i} Firstly, the source of parts is limited to perfect $q$-th powers, with some $q\in\NN$. 
\item[(ii)]\label{ii} Secondly, partitions are assumed to be \emph{strict}, in that all parts must be distinct.
\item[(iii)]\label{iii}
Finally, we consider ``short'' partitions, where the length (i.e., the number of parts) is either fixed or grows  slowly 
as compared to the ``free'',  unrestricted regime.  
\end{enumerate}
In what follows, we denote by $\check{\varLambda}^q$
the class of integer partitions satisfying the first two conditions. Of course, each of the constraints (i) to (iii) taken alone is not novel and has been considered quite extensively; for instance, power parts were considered in \cite{Wright,Vershik1,DeGr,Comtet,Roccia}; restricted growth of length was addressed in \cite{Erdos,VY2,Romik,Bridges}; and strict partitions are a classical subject (see, e.g., \cite{Hua,Roccia,VFY}). However, a juxtaposition of the first and third  constraints is new, leading to some interesting results. The choice of strict partitions is less significant and is mostly motivated by virtue of making the analysis a little easier.\footnote{Non-strict partitions under constraints (i) and (iii) remain to be studied. An intermediate model with a finite bound on multiplicities is also of interest (cf.\ the \emph{Brillouin statistics} in statistical physics \cite{Lindsay}, \cite[p.\,43]{Loeve}).}

The random structure with which we endow the space $\check{\varLambda}^q$ is based on the Boltzmann distribution, with probability weights assigned to each partition $\lambda\in\check{\varLambda}^q$ proportional to $z_1^{N_\lambda}z_2^{M_\lambda}$ ($z_1,z_2\in(0,1)$), where $N_\lambda$ and $M_\lambda$ are the weight and length of $\lambda$, respectively. Crucially, under this measure the random weight and length are finite with probability $1$; but its little idiosyncrasy is that the ``empty'' partition $\lambda_\varnothing$, with zero weight and length, is assigned a positive probability.\footnote{This nuisance can be easily eliminated by conditioning on $N_\lambda>0$.} We calibrate the parameters $z_1,z_2$ through the moment conditions $\EE_{\bm{z}}(N_\lambda) =\braket{N}$, $\EE_{\bm{z}}(M_\lambda)=\braket{M}$, where $\langle N\rangle$ and $\langle M\rangle$ are the external hyper-parameters that are used to control the distribution of the random weight and length. The slow growth condition (ii) is specialized as $\kappa:=\langle M\rangle^{q+1}\myn/\langle N\rangle\to 0$. The calibrating equations can be solved asymptotically as $\braket{N}\to\infty$ yielding the leading terms $\log z_1\sim -\langle M\rangle\myn/(q\mypp\langle N\rangle)=o(1)$ and $z_2\sim \kappa^{1/q}/(q^{1/q}\,\Gamma(1+1/q))=o(1)$ (Theorem~\ref{th:cal}). Using the terminology of statistical physics, $(-\log z_1)$ is interpreted as the \emph{inverse temperature} $1/(k_B\myp T)$ (where $k_B$ is the Boltzmann constant and $T>0$ the absolute temperature), while $z_2$ has the meaning of \emph{fugacity} \cite{Comtet,Roccia,Huang,Greiner}. In particular, it follows that the particle assembly is in a high-temperature regime and at a low fugacity, which is explained by a constrained number of particles, insufficient to create a non-negligible pressure.

Under the Boltzmann distribution calibrated as indicated above, we prove a variety of limiting results describing the joint behavior of the partition weight $N_\lambda$ and length $M_\lambda$. Unsurprisingly, such results are different according as the  expected length $\braket{M}$ is either fixed or growing slowly. In the former case, $M_\lambda$ is asymptotically Poisson with parameter $\braket{M}$ (which can be anticipated by virtue of general Poisson approximation results), while $N_\lambda$, conditionally on $M_\lambda=m$ and scaled by $m/(q\myp\langle N\rangle)$, is asymptotically gamma-distributed with shape parameter $m/q$ (Theorem~\ref{thm1}).
This implies that the scaled parts of a random partition conditioned on length $m$ can be interpreted as the order statistics of an independent sample of size $m$ from gamma distribution with shape parameter $1/q$. It also follows that for $m$ strictly less than $q$ the limiting density of the scaled weight $N_\lambda$ has a power singularity of order $x^{m/q-1}$ at zero. In the language of statistical physics, this means that the Boltzmann assembly of $m<q$ fermions with $q$-power energy levels may have, with a sizable probability, an untypically low total energy as compared to the expected ``target'' $\langle N\rangle\to\infty$. It would be interesting to give a physical justification of such a ``small system condensation'' phenomenon. 

In the slow growth regime for the expected length 
$\langle M\rangle$, the random pair $(N_\lambda,M_\lambda)$ proves to be  asymptotically normal, with the limiting correlation coefficient $1/\sqrt{q+1}$ (Theorem~\ref{th:CLT})\,. 
In this case, we have found the limit shape of scaled Young diagrams (Theorem~\ref{th:LS}), determined (up to a constant) by the tail of the $\Gamma(1/q)$-integral, $x\mapsto \int_x^\infty\mynn u^{1/q-1}\mypp\rme^{-u}\,\rmd{u}$ ($x\ge 0$), and also showed that fluctuations around the limit shape are asymptotically normal (Theorem~\ref{th:Gauss}).

Asymptotics of extreme values in the partition spectrum (i.e., the smallest and largest parts) can also be analyzed. Specifically, if $\braket{M}$ is fixed then all parts appear to grow on the same linear scale of order $\langle N\rangle/\langle M\rangle$ (Theorem~\ref{th:minmax}). In the slow growth regime of $\braket{M}$, the extremal behavior is more interesting: the smallest part ``lives'' on the scale of order $\kappa^{-1}=\langle N\rangle/\myn\langle M\rangle^{q+1}$ and has a Weibull limit distribution, whereas the largest part scales roughly as $q\mypp\langle N\rangle/\langle M\rangle$ and has the Gumbel double-exponential limit (Theorem~\ref{th:maxM}). Note that the latter result conforms to the general pattern for the maximum observed earlier in many particular cases (see, e.g., \cite{Erdos,VY2,Comtet1}). 

It is easy to check that the Boltzmann distribution on $\check{\varLambda}^q$
conditioned on both weight $N_\lambda=n$ and length $M_\lambda=m$ reverts to the uniform distribution on the corresponding subspace $\check{\varLambda}^q(n,m)\subset \check{\varLambda}^q$. So it may be natural to attempt the conditional version of our Boltzmann-based limiting results in order to tailor them to the constrained spaces  $\check{\varLambda}^q(n,m)$.
As we explained above (cf.\ \cite{Fristedt,Yakubovich,Bogachev}), this approach requires a suitable local limit theorem about the asymptotics of the Boltzmann probability of the ``slicing'' condition that defines the subspace $\check{\varLambda}^q(n,m)\subset \check{\varLambda}^q$ (for large $n$ and $m$ either fixed or suitably large).
However, there is a problem: 
unlike the case $q=1$, where every number $n\in\NN$ is partitionable with a required number $m$ of (unequal) parts (as long as $1+\dots+m=m\mypp(m+1)/2\le n$), for $q\ge 2$ this is no longer guaranteed and the space $\check{\varLambda}^q(n,m)$ may appear to be empty, unless the pair $(n,m)$ is covered by a solution of the Waring problem with the $q$-th powers \cite{Waring}.

Because of such number-theoretic complications, we did not pursue this approach in the present paper, so for the most part (with two notable exceptions indicated below) we confined ourselves to the Boltzmann-based results, which are nonetheless quite interesting. 
One exception is that the conditioning device can be utilized in order to identify the growth rate of the \emph{cumulative cardinality} of the union  $\bigcup_{k\le x} \mynn\check{\varLambda}^q(k,m)$, with $m$ fixed 
(Theorem~\ref{th:ID}), which gives the leading term in the generalized $m$-dimensional Gauss circle problem under the $q$-norm in $\RR^m$. Also, we outlined a ``semi-local'' result (Theorem~\ref{thm1+}) in the slow growth regime to address the local type asymptotics for the length $M_\lambda$ complemented by the conditional limit for the weight $N_\lambda$.

Last but not least, our interest in the class of Boltzmann distributions on the partition spaces $\check{\varLambda}^q$ is also motivated by the sampling applications, wherein simple but efficient algorithms can be designed and implemented to sample uniformly distributed random instances from the subspaces of interest such as $\check{\varLambda}^q(n,m)$. Although such algorithms are intuitively appealing and straightforward thanks to the intrinsic independence of the random multiplicities of parts, there are familiar issues with ensuring the finiteness of the sampling loops. An agreed convention in computer science to resolve such issues is to deploy the so-called \emph{oracle} \cite{Duchon,Flajolet-Fusy-Pivoteau,BeFaRa}, which is a collective name for an external device that is capable of computing at request (exactly or approximately) the values of the corresponding generating function, serving as a normalizing denominator in probability expressions. 

In the present paper, we pursue a different approach by truncating the sampling loop on the basis of high statistical confidence, thus adopting the methodology of hypotheses testing in statistics. Similar ideas have been used before, for instance, in the well-known \emph{Miller--Rabin algorithm} for primality testing \cite{Rabin}. Selection of the proper truncation thresholds is guided by our limit theorems for the partition weight and length. This approach is especially useful in the case of partition spaces $\check{\varLambda}^q(n,m)$ that
 may suffer from being empty
 for some of the pairs $(n,m)$ (which may not be  known in advance). Moreover, we argue that the statistical approach to sampling may be beneficial as a practical tool to effectively explore the hypothetical partitionability of large integers.

%\subsubsection*{Layout} 
\medskip
\noindent
\textit{Layout.} The rest of the paper is organized as follows.
Section \ref{sec:2} introduces the main elements that we use throughout this paper, including the definition of restricted classes of integer partitions and basic results about the Boltzmann distribution. In Section \ref{sec:3} we present important foundational results for our analysis of the class $\check{\varLambda}^q$ under the asymptotic regime specified by Assumption \ref{as0}. The main result of this section is Theorem \ref{th:cal} about the asymptotic calibration of the Boltzmann parameters. Section \ref{sec:4} focuses on partitions with fixed expected length, where the main  Theorem \ref{thm1} characterizes the limit distribution of the random length and weight. This result can be used for enumeration purposes, at least in the cumulative sense (Theorem~\ref{th:ID}). We also obtain the joint limit distribution of the largest and smallest parts (Theorem~\ref{th:minmax}).
Section \ref{sec:5} extends the analysis to partitions with a slowly growing expected length (under Assumption~\ref{as2}), where we obtain asymptotic results for the length and weight (Theorem \ref{th:CLT}) as well as for the extreme parts (Theorem~\ref{th:maxM}). In this regime it is also possible to derive the limit shape of properly scaled Young diagrams, which form a family of curves indexed by $q$ (Theorems \ref{th:LS} and~\ref{th:Gauss}).
Finally, Section \ref{sec:6} illustrates an application of the previously developed tools and results in the context of random sampling.
Designing issues and performance of our sampling  algorithms are discussed there in detail.

%\subsubsection*{Some general notation}  
\medskip
\noindent
\textit{Some general notation:}
$\NN:=\{1,2,\dots\}$ is the set of natural numbers, $\NN_0:=\{0\}\cup \NN=\{0,1,2,\dots\}$, $\mathbb{N}^{q}:=\{j^q\colon j\in\mathbb{N}\}$. The cardinality of a (finite) set $\mathcal{A}$ (i.e., the number of elements) is denoted  $\vphantom{\int^y}\#\mathcal{A}$. Asymptotic comparisons: $a\sim b$ means that $a/b\to1$; $a=o(b)$ that $a/b\to0$; and $a=O(b)$ that $a/b$ is bounded.
Vectors are understood as rows, e.g.\ $\bm{z}=(z_1,z_2)$. 

\section{Preliminaries} \label{sec:2}
\subsection{Integer partitions}\label{sec:2.1}

For a given integer $n\in\NN$, a \emph{partition} of $n$ 
is a decomposition of $n$ 
into a sum of non-negative integers, disregarding the order of the terms; for example,
$35=10+7+5+5+4+3+1$ is an integer partition of $n=35$. 
To fix the notation, we adopt a convention of non-increasing ordering of terms; that is to say, a sequence of integers $\lambda_1\ge\lambda_2\ge\dots\ge 0$ 
with finitely many parts $\lambda_i>0$ is a partition of $n\in\NN$ 
if $n=\lambda_1+\lambda_2+\cdots$. 
This is expressed as $\lambda\vdash n$. We formally allow the case $n=0$ represented by the ``empty'' partition $\lambda_\varnothing=(0,0,\dots)$, with no parts. This is convenient when working with generating functions. 
The set of all partitions $\lambda\vdash n$ is denoted by $\varLambda(n)$, and
the set $\varLambda:=\bigcup_{n\in\NN_0}\varLambda(n)$ is the collection of \emph{all} integer partitions. The subset $\check{\varLambda}\subset \varLambda$ of \emph{strict} partitions is \strut{}defined by the property that all parts $(\lambda_i)$ are different from one another, $\lambda_1>\lambda_2>\cdots$. Accordingly, the set of all strict partitions $\lambda\vdash n$ is denoted $\check{\varLambda}(n)$.

For a partition $\lambda=(\lambda_i)\in\varLambda$, the sum $N_\lambda:=\lambda_1+\lambda_2+\cdots$ is referred to as its
\textit{weight} (i.e., $\lambda\vdash N_\lambda$), and the number of its parts
$M_\lambda:=\#\{\lambda_i\in\lambda\colon\lambda_i>0\}$ is called
the \textit{length} of~$\lambda$. Thus, for $\lambda\in\varLambda(n)$ we
have $N_\lambda=n$ and $M_\lambda\le n$. 
The largest and smallest parts of a partition $\lambda=(\lambda_i)$ are denoted $\lambda_{\rm max}=\lambda_1=\max_{1\le i\le M_\lambda} \lambda_i$ and $\lambda_{\rm min}=\lambda_{M_\lambda}=\min_{1\le i\le M_\lambda} \lambda_i$, respectively. We make a convention that for the empty partition $\lambda_{\varnothing}$, its largest and smallest parts are defined\footnote{The familiar paradox of such definitions, suggesting that the maximum is smaller than the minimum, is but a logical consequence of applying the operations $\sup$ and $\inf$ to the empty set $\varnothing$. Despite a counter-intuitive appearance, these definitions are perfectly consistent with our limiting results in Theorem~\ref{th:minmax}.} as $\lambda_{\rm max}=0$ and $\lambda_{\rm min}=\infty$.

The alternative notation $\lambda=(1^{\nu_1}
2^{\nu_2}\mynn\dots)$ refers to the \emph{multiplicities} 
%(or \emph{counts}) 
of the parts involved,
$\nu_\ell:=\#\{\lambda_i\in\lambda\colon \lambda_i=\ell\}$
\,($\ell\in\NN$), with zero multiplicities usually omitted from the notation. Thus, the partition $\lambda\vdash 35$ in the example above can be written as $\lambda=1^13^14^15^2\myp7^110^1)$. 
The weight and length of a partition $\lambda\in\varLambda$ can be expressed through its  multiplicities $(\nu_\ell)$ as follows,
\begin{equation}\label{eq:NM}
N_\lambda= \sum_{\ell} \ell\mypp\nu_\ell\myp,\qquad M_\lambda=\sum_{\ell} \nu_\ell\myp.
\end{equation}
In terms of multiplicities $(\nu_\ell)$, the set of strict partitions $\check{\varLambda}$ is defined by the condition that any part $\ell$ can be used no more than once,
$$
\check{\varLambda}:=\{\lambda=(\ell^{\myp\nu_\ell})\in\varLambda\colon \nu_\ell\le 1\ \text{for all }\,\ell\}.
$$

\subsection{Young diagrams and limit shape}
\label{sec:2.2}
A partition $\lambda = (\lambda_1,\lambda_2,\dots)$ is succinctly visualized by its Young diagram $\varUpsilon_\lambda$ formed by
(left- and bottom-aligned) row blocks with $\lambda_1, \lambda_2,\dots$ unit square cells (see Figure \ref{fig:yddef}(a)). The upper boundary of $\varUpsilon_\lambda$ is a piecewise-constant, non-increasing function $Y_\lambda\colon [0,\infty) \to \NN_0$ defined by 
\begin{equation}\label{eq:Young}
Y_\lambda(x)=\sum_{\ell\ge x}\nu_\ell\qquad (x\ge0).
\end{equation}
In particular, 
$Y_\lambda(0)=M_\lambda$, while the area of the Young diagram $\varUpsilon_\lambda$ is 
$$
\int_0^\infty\! Y_\lambda(x)\,\rmd{x} =\sum_{\ell} \ell\mypp\nu_\ell = N_\lambda.
$$
According to the definition \eqref{eq:Young}, the step function $Y_\lambda(x)$ is left-continuous and has right limits (and is also right-continuous at the origin).

\begin{figure}[ht!]
\centering
\subfigure[]{\includegraphics[width=0.45\textwidth]{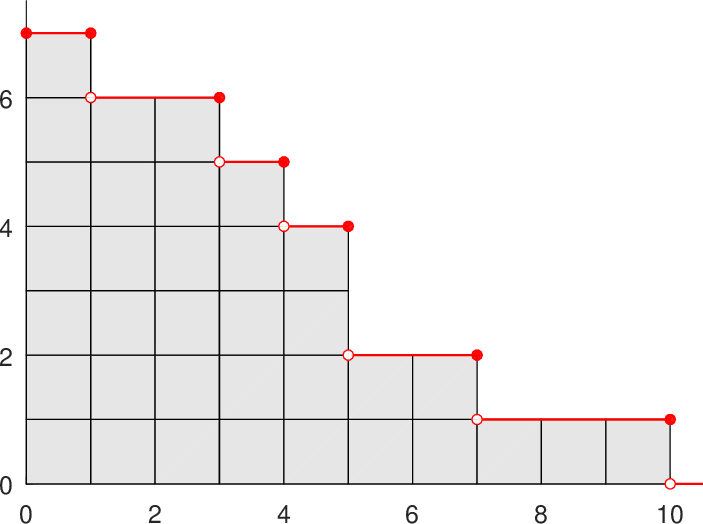} 
\put(-111,143.5){\line(-2,-1){40}}
\put(-109,143){\mbox{\small $y = Y_\lambda(x)$}}
\put(-70,95){\line(-2,-1){40}}
\put(-69,93){\mbox{\small $\varUpsilon_\lambda$}}}\hspace{1.5pc}
\subfigure[]{\includegraphics[width=0.45\textwidth]{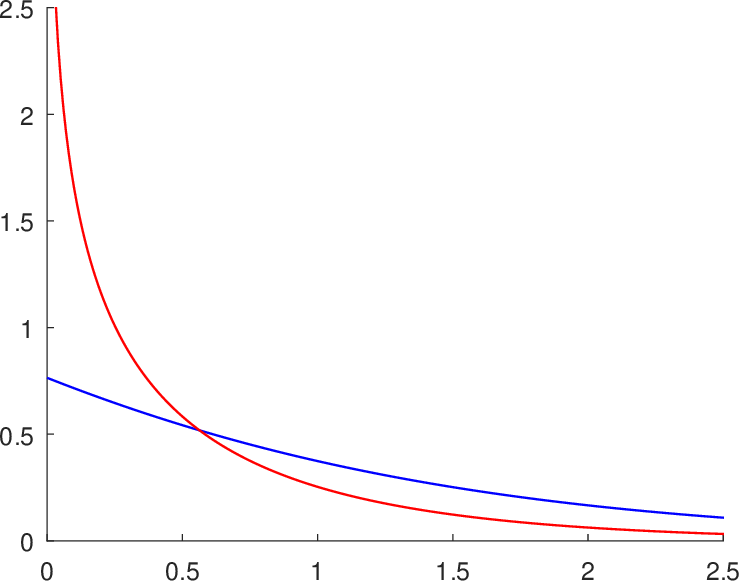}
\put(-132.5,95){\line(-2,-1){40}}\put(-131,93.5){\small$\varLambda(n)$}
\put(-60,51){\line(-2,-1){40}}
\put(-58.5,49.5){\small$\check{\varLambda}(n)$}
}
\caption{(a) The Young diagram $\varUpsilon_\lambda$ (shaded) of partition $\lambda = (10, 7, 5, 5, 4, 3, 1)$, with weight $N_\lambda = 35$ and length $M_\lambda = 7$. The graph of the step function $x\mapsto Y_\lambda(x)$ defined in \eqref{eq:Young} depicts the upper boundary of $\varUpsilon_\lambda$ (shown in red in the online version). (b) Two classical limit shapes,  
for unrestricted partitions  $\lambda\in\varLambda(n)$ (red) and strict partitions $\lambda\in\check{\varLambda}(n)$ (blue), determined by equations \eqref{eq:limit1} and \eqref{eq:limit2}, respectively.
}
\label{fig:yddef}
\end{figure}

The geometric nature of Young diagrams makes it natural to pose a question of a possible typical behavior of their boundary as the ``size'' of partitions grows. This motivates the concept of \emph{limit shape}, which may be thought of as such a curve to which the bulk of the Young diagram boundaries are asymptotically close, of course upon a suitable scaling. More precisely, choosing some sequences $a_n\to\infty$, $b_n\to\infty$, define the scaled Young boundary
$$
\widetilde{Y}^{(n)}_\lambda(x):=b_n^{-1}\,Y_\lambda(a_n x) ,\qquad x\ge0.
$$
Then (the graph of) a function $y=\omega^*(x)$ is the \emph{limit shape} for partitions $\lambda\in\varLambda(n)$ as $n\to\infty$ if, for all $x>0$ and for any $\varepsilon>0$,
\begin{equation}\label{eq:LS-uniform}
\lim_{n\to\infty}\frac{\#\{\lambda\in\varLambda(n)\colon |\widetilde{Y}^{(n)}_\lambda(x)-\omega^*(x)|\le \varepsilon\}}{\#\varLambda(n)}=1.
\end{equation}
It is well known that the limit shape $\omega^*$ exists under the scaling $a_n=b_n=\sqrt{n}\myp$ and is determined by the equation
\begin{equation}\label{eq:limit1}
\rme^{-x\myp\pi/\sqrt{6}}+\rme^{-y\myp\pi/\sqrt{6}}=1.
\end{equation}
It was first identified by Temperley \cite{Temperley} in connection with the equilibrium shape of a growing
crystal, and obtained more rigorously by Vershik (see \cite[p.\:30]{VK2}) using some asymptotic estimates from \cite{ST}. A different proof in
its modern form was outlined by Vershik \cite{Vershik1} and
elaborated by Pittel~\cite{Pittel}, both using a suitable randomization and multiplicative measures (see also a more recent survey in  \cite{EG}). For completeness, let us mention an alternative approach to the limit shape based on bijections \cite{Pak2} and a powerful variational method \cite{V0,Vershik1}, which also yields a large deviation principle \cite{DVZ}.

Note from \eqref{eq:limit1} that $\omega^*(0)=\infty$, indicating that the number of parts, $M_\lambda$, grows faster than $\sqrt{n}$. Indeed, as was shown by Erd\H{o}s and Lehner \cite{Erdos}, $M_\lambda\sim (2\pi)^{-1}\mypp\sqrt{6\myp n}\,\log n$ (in the sense similar to 
\eqref{eq:LS-uniform}).
%where $c_0=(2\pi)^{-1}\sqrt{6}\cancel{\doteq 0.3898484}?$.
%This is the reason why the origin $x=0$ is excluded in the limit \eqref{eq:limit1}.

For strict partitions $\lambda\in\check{\varLambda}(n)$, the limit shape (under the same scaling $a_n=b_n=\sqrt{n}$ and in the sense of definition  \eqref{eq:LS-uniform} adapted to $\check{\varLambda}(n)$) is  specified by the equation (see Vershik \cite{Vershik1})
\begin{equation}\label{eq:limit2}
\rme^{\myp y\myp\pi/\sqrt{12}}=1+\rme^{- x\myp \pi/\sqrt{12}}.
\end{equation}
Note that here the value at the origin is finite, $\omega^*(0)= \pi^{-1}\sqrt{12}\,\log 2$,
%\cancel{\doteq  0.7643041}?$,
which implies that the number of parts $M_\lambda$ in a
typical strict partition $\lambda\in\check{\varLambda}(n)$ grows like $\pi^{-1}\log 2\mypp\sqrt{12\myp n}$ \cite{Erdos}. The graphs of the limit shapes \eqref{eq:limit1} and \eqref{eq:limit2} are shown in Figure \ref{fig:yddef}(b).

%\begin{remark}
The limiting formula \eqref{eq:LS-uniform} and its version for strict partitions, as mentioned above, may be interpreted as convergence in probability, $\widetilde{Y}^{(n)}_\lambda(x)\stackrel{\mathrm{p}}{\longrightarrow}\omega^*(x)$, under the uniform probability measure on the corresponding partition spaces $\varLambda(n)$ or $\check{\varLambda}(n)$ (whereby all member partitions are assumed to be equally likely). 
%Furthermore, for unrestricted partitions, the convergence is uniform in $x\in[\delta, \infty)$ (for all $\delta >0$), and for strict partitions, it is uniform in $x\in \mathbb{R}_+$. 
A general question about limit shapes under alternative measures was pioneered by Vershik \cite{V0,Vershik1}. In the present paper, we study the limit shape under the Boltzmann measure (see Sections \ref{sec:2.4} and \ref{sec:5.2}).
%\end{remark}

\subsection{Integer partitions with constraints}\label{sec:2.3}
The main focus of our paper is on strict partitions with parts confined to be \emph{$q$-th powers of integers} (for some $q\in\NN$), which means that $\nu_\ell=0$ unless $\ell\in\NN^{q}$; we denote the corresponding set of partitions by $\varLambda^q$. Moreover, it is of interest to combine these two constraints by considering only strict partitions in $\varLambda^q$, leading to the subset that we naturally denote by $\check{\varLambda}^q$. 

A general approach to introducing certain constraints in the partition spaces can be described as follows. Fix %\sout{two} 
non-empty integer sets $\mathbb{A}\subseteq\mathbb{N}$ and $\mathbb{B}_\ell\subseteq\mathbb{N}_0$ ($\ell\in\mathbb{A}$), 
assuming that $0\in
\mathbb{B}_\ell$ but $\mathbb{B}_\ell\ne\{0\}$, for each $\ell\in\mathbb{A}$.
The set $\mathbb{A}$ specifies the source of permissible parts $\ell\in\NN$. For example, if $\mathbb{A}=2\mypp\NN_0+1$ then all parts must be odd integers, or if $\mathbb{A}=\mathbb{N}^{q}$ with some $q\in\mathbb{N}$ then only perfect $q$-th powers can be used; if $\mathbb{A}=\NN$ then any (positive) integer part is allowed. The sets $(\mathbb{B}_\ell)$ specify the allowed values of multiplicities $\nu_\ell\in\mathbb{B}_\ell$ for each part $\ell\in \mathbb{A}$.
For example, if $\mathbb{B}_\ell=\{0,1\}$ then 
part $\ell$ can be used no more than once; if $\mathbb{B}_\ell=\NN_0$ then $\nu_\ell$ is not constrained.

Given the sets $ \mathbb{A}$ and $(\mathbb{B}_\ell,\ell\in\mathbb{A})$, we use a generic notation $\tilde{\varLambda}$ to  denote the set of integer partitions satisfying the constraints imposed by $ \mathbb{A}$ and $(\mathbb{B}_\ell)$ as described above,
\begin{equation}\label{eq:tilde-Lambda}
\tilde{\varLambda}:=\{\lambda=(\ell^{\myp\nu_\ell})\in\varLambda\colon \,\ell\in  \mathbb{A},\,\nu_\ell\in   \mathbb{B}_\ell\}.
\end{equation}
In this loose notation we take the liberty to omit the explicit reference to $ \mathbb{A}$ and $(\mathbb{B}_\ell)$, which should cause no confusion. When the specific choice of $\mathbb{A}$ and $(\mathbb{B}_\ell)$ becomes important (in Section \ref{sec:3} below),  this will be clarified. For a partition $\lambda\in\tilde{\varLambda}$, we keep using the notation $N_\lambda$ and $M_\lambda$ for its weight and length, respectively, which are now given by (cf.\ \eqref{eq:NM})
\begin{equation}\label{eq:NM-constr}
N_\lambda= \sum_{\ell\in\mathbb{A}}\ell\mypp\nu_\ell\myp,\qquad M_\lambda=\sum_{\ell\in\mathbb{A}} \nu_\ell\myp.
\end{equation}   

\subsection{Boltzmann distributions}\label{sec:2.4}

The general idea of the \emph{Boltzmann distribution} as a probability measure on a decomposable combinatorial structure $\mathcal{C}=\{c\}$ (such as the set of all integer partitions $\varLambda$ or its constrained versions, e.g., the set of strict partitions  $\check{\varLambda}$) is that it is defined by picking some additive  structural features of the elements in $\mathcal{C}$ (such as weight and/or length of a partition) and making the probability of the element $c\in\mathcal{C}$ depend only on those features in a ``geometric'' fashion. Below, this idea is made precise for the class $\tilde{\varLambda}$ of integer partitions (see \eqref{eq:tilde-Lambda}) with constraints on the source of parts (via set $ \mathbb{A}$) and their multiplicities (via sets $(\mathbb{B}_\ell)$). 

\begin{definition}\label{def2'} Suppose that the constraining sets $ \mathbb{A}$ and $(\mathbb{B}_\ell)$ are fixed, and consider the corresponding partition space $\tilde{\varLambda}$ defined in \eqref{eq:tilde-Lambda}. Given a two-dimensional parameter $\bm{z}=(z_1,z_2)$, with $0<z_1<1$ and $0<z_2<1/z_1$,
the \emph{Boltzmann distribution} on $\tilde{\varLambda}$ is defined by  the formula
\begin{equation}\label{Boltzmann}
\QQ_{\bm{z}}(\lambda)= \frac{z_1^{N_\lambda} z_2^{M_\lambda}}{F(\bm{z})},\qquad \lambda\in \tilde{\varLambda},
\end{equation}
with the normalizing factor
\begin{equation}\label{eq:Fnorm}
F(\bm{z})=\sum_{\lambda\in \tilde{\varLambda}} z_1^{N_\lambda}z_2^{M_\lambda}.
\end{equation}
\end{definition}
Considering the constituent subspaces
\begin{equation}\label{eq:Lambda_nm}
\tilde{\varLambda}(n,m):=\{\lambda\in\tilde{\varLambda}\colon N_\lambda=n,M_\lambda=m\},
\end{equation}
the function $F(\bm{z})$
can be expressed as a double power series, 
\begin{equation}\label{eq:F}
F(\bm{z})=\sum_{n,m}  F_{n,m}\myp z_1^nz_2^m,
\end{equation}
where
$F_{n,m}=\#\tilde{\varLambda}(n,m)$.
In particular, the ``initial'' values with $n=0$ are reduced to
\begin{equation}\label{eq:F_nm}
F_{0,0}=1,\qquad F_{0,m}=0 \ \ \  (m\ge1).
\end{equation}
Of course, $F_{n,m}>0$ if and only if the condition $\{N_\lambda=n, M_\lambda=m\}$ is realizable, that is, if $\tilde{\varLambda}(n,m)\ne\varnothing$. If the focus is on the specific weight $N_\lambda=n$ or length $M_\lambda=m$ alone, this corresponds to the ``marginal'' subspaces
\begin{align}\label{eq:Lambda_n}
\tilde{\varLambda}(n,\myn\sbullet):={}&\bigcup_{m\le n} \tilde{\varLambda}(n,m)=\{\lambda\in\tilde{\varLambda}\colon N_\lambda=n\},\\
\label{eq:Lambda_m}\tilde{\varLambda}(\sbullet\myp,m):={}&\bigcup_{n\ge m} \tilde{\varLambda}(n,m)=\{\lambda\in\tilde{\varLambda}\colon M_\lambda=m\}.
\end{align}

The joint distribution of $N_\lambda$ and $M_\lambda$ under the Boltzmann measure \eqref{Boltzmann} is given by
\begin{equation}\label{eq:N=,M=}
\QQ_{\bm{z}}(N_\lambda=n,M_\lambda=m)=\QQ_{\bm{z}}(\tilde{\varLambda}(n,m))=\frac{F_{n,m}\myp z_1^n z_2^m}{F(\bm{z})},
\end{equation}
with the marginals 
\begin{align}\label{eq:NorMN}
\QQ_{\bm{z}}(N_\lambda=n)&=\QQ_{\bm{z}}(\tilde{\varLambda}(n,\myn\sbullet))=\frac{z_1^n}{F(\bm{z})}\sum_{m\le n} F_{n,m}\myp z_2^m,\\ \QQ_{\bm{z}}(M_\lambda=m)&=\QQ_{\bm{z}}(\tilde{\varLambda}(\sbullet\myp,m))=\frac{z_2^m}{F(\bm{z})}\sum_{n\ge m} F_{n,m}\myp z_1^n.
\label{eq:NorMM}
\end{align}

\begin{remark}\label{rm:=0}
For the empty partition $\lambda_{\varnothing}\vdash 0$ formally
associated with the null configuration $\nu_\ell\equiv0$, formula \eqref{Boltzmann}
yields $\QQ_{\bm{z}}(\lambda_{\varnothing})=1/F(\bm{z})>0$. On the other hand, $\QQ_{\bm{z}}(\lambda_{\varnothing})<1$, since
$F(\bm{z})>F(0,0)=1$ for $z_1>0,z_2>0$.
\end{remark}

The following result describes the Boltzmann distribution \eqref{Boltzmann} in terms of the joint distribution of the multiplicities $(\nu_\ell)$. As a
a by-product, it provides a multiplicative  representation of the generating function $F(\bm{z})$.
\begin{lemma}\label{pr:mult}
Under the Boltzmann measure $\QQ_{\bm{z}}$ on the generic partition space $\tilde{\varLambda}$ defined in (\ref{eq:tilde-Lambda}), the random multiplicities $(\nu_\ell,\,\ell\in  \mathbb{A})$ are mutually independent, with  marginal distributions 
\begin{equation}\label{eq:marginal}
\QQ_{\bm{z}}(\nu_\ell=k)=\frac{z_1^{\ell k}z_2^k}{F_\ell(\bm{z})},\qquad k\in  \mathbb{B}_\ell,
\end{equation}
where
\begin{equation}\label{eq:F-ell}
F_\ell(\bm{z})=\sum_{k\in  \mathbb{B}_\ell} z_1^{\ell k}z_2^k,\qquad \ell\in  \mathbb{A}.
\end{equation}
In particular, the generating function $F(\bm{z})$ admits the following product representation,
\begin{equation}\label{eq:F_product}
F(\bm{z})=\prod_{\ell\in  \mathbb{A}} F_\ell(\bm{z}).
\end{equation}
\end{lemma}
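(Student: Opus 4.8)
The plan is to exploit the elementary fact that a partition $\lambda\in\tilde{\varLambda}$ is completely encoded by its multiplicity sequence: the map $\lambda\mapsto(\nu_\ell)_{\ell\in\mathbb{A}}$ is a bijection from $\tilde{\varLambda}$ onto the set $S$ of all sequences $(k_\ell)_{\ell\in\mathbb{A}}$ with $k_\ell\in\mathbb{B}_\ell$ for every $\ell$ and $k_\ell=0$ for all but finitely many $\ell$. Under this identification, formulas \eqref{eq:NM-constr} give the key factorization $z_1^{N_\lambda}z_2^{M_\lambda}=\prod_{\ell\in\mathbb{A}}(z_1^{\ell}z_2)^{\nu_\ell}$, a product in which only finitely many factors differ from $1$.

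First I would check that everything converges. For each $\ell\ge1$ the hypothesis $0<z_2<1/z_1$ gives $z_1^{\ell}z_2\le z_1 z_2<1$, so the series $F_\ell(\bm{z})=\sum_{k\in\mathbb{B}_\ell}(z_1^\ell z_2)^k$ in \eqref{eq:F-ell} converges, with $1\le F_\ell(\bm{z})\le (1-z_1^\ell z_2)^{-1}$; moreover $\sum_{\ell\in\mathbb{A}}z_1^\ell z_2\le z_2\sum_{\ell\ge1}z_1^\ell<\infty$, so the infinite product $\prod_{\ell\in\mathbb{A}}F_\ell(\bm{z})$ converges to a finite positive number. Then, summing the nonnegative terms $\prod_\ell(z_1^\ell z_2)^{\nu_\ell}$ over $S$ and interchanging the sum over $S$ with the product over $\ell$ (justified by Tonelli, all terms being $\ge0$), one obtains
$$
F(\bm{z})=\sum_{(k_\ell)\in S}\ \prod_{\ell\in\mathbb{A}}(z_1^\ell z_2)^{k_\ell}=\prod_{\ell\in\mathbb{A}}\ \sum_{k\in\mathbb{B}_\ell}(z_1^\ell z_2)^{k}=\prod_{\ell\in\mathbb{A}}F_\ell(\bm{z}),
$$
which is \eqref{eq:F_product}; in particular $F(\bm{z})<\infty$, so $\QQ_{\bm{z}}$ is indeed a probability measure.

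With the factorization in hand, the distributional claims are immediate. For any configuration $(k_\ell)\in S$,
$$
\QQ_{\bm{z}}\bigl(\nu_\ell=k_\ell\ \text{for all }\ell\bigr)=\frac{\prod_{\ell}(z_1^\ell z_2)^{k_\ell}}{F(\bm{z})}=\prod_{\ell\in\mathbb{A}}\frac{(z_1^\ell z_2)^{k_\ell}}{F_\ell(\bm{z})}.
$$
Since each factor depends only on the single coordinate $k_\ell$ and sums to $1$ over $k_\ell\in\mathbb{B}_\ell$, this product form simultaneously establishes mutual independence of the $(\nu_\ell)$ and the marginal law \eqref{eq:marginal} (obtained by summing out the remaining coordinates). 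Consistency with $S$ — i.e.\ that only finitely many $\nu_\ell$ are nonzero almost surely — also drops out, via Borel--Cantelli, from $\sum_\ell\QQ_{\bm{z}}(\nu_\ell\ge1)\le\sum_\ell z_1^\ell z_2/(1-z_1^\ell z_2)<\infty$.

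There is no serious obstacle here: the only points requiring any care are the convergence of the series $F_\ell(\bm{z})$ and of the product $\prod_\ell F_\ell(\bm{z})$ — for which the standing constraint $z_2<1/z_1$ is precisely tailored — together with the routine, nonnegativity-based justification of exchanging the summation over $S$ with the product over $\ell$. The bijection $\lambda\leftrightarrow(\nu_\ell)$ and the additivity of $N_\lambda$ and $M_\lambda$ do all the structural work.
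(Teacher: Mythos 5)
Your proof is correct, and it reaches the conclusion by a somewhat different route than the paper. You establish the product formula $F(\bm{z})=\prod_{\ell\in\mathbb{A}}F_\ell(\bm{z})$ head-on: you verify convergence of each $F_\ell(\bm{z})$ and of the infinite product (using $z_1^{\ell}z_2\le z_1z_2<1$ and $\sum_\ell z_1^\ell z_2<\infty$), and then expand $F(\bm{z})$ over multiplicity configurations, interchanging the sum over finitely supported sequences with the product over $\ell$ by a nonnegativity (Tonelli/monotone truncation) argument; independence and the marginals \eqref{eq:marginal} then drop out of the factorized joint law. The paper instead avoids this interchange altogether: it \emph{defines} the product measure $\tilde{\QQ}_{\bm{z}}$ with marginals \eqref{eq:marginal}, computes that $\tilde{\QQ}_{\bm{z}}(\lambda)=z_1^{N_\lambda}z_2^{M_\lambda}/\widetilde{F}(\bm{z})$ with $\widetilde{F}(\bm{z})=\prod_\ell F_\ell(\bm{z})$, and concludes that since two probability distributions on the same space are proportional to the same weight function, their normalizing constants must coincide, which yields \eqref{eq:F_product} as a by-product. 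Both arguments hinge on the same structural fact, namely $z_1^{N_\lambda}z_2^{M_\lambda}=\prod_\ell (z_1^\ell z_2)^{\nu_\ell}$ via \eqref{eq:NM-constr}. What your version buys is an explicit verification that $F(\bm{z})<\infty$ and that the product measure is genuinely supported on finitely supported sequences (your Borel--Cantelli remark), points the paper treats lightly in this proof and defers to the separate Lemma~\ref{lm:lemma2}; what the paper's version buys is brevity, since the uniqueness-of-normalization trick spares it the sum--product interchange. One small caution in your write-up: a literal appeal to Tonelli for an infinite product of counting measures sums over \emph{all} sequences, not just finitely supported ones, so you should either note that configurations with infinitely many nonzero entries contribute zero (each such product has infinitely many factors bounded by $z_1^\ell z_2\to0$), or phrase the interchange as a monotone limit of finite truncations $\prod_{\ell\le L}$; with that remark the argument is complete.
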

\begin{proof}
It suffices to verify that the product measure $\tilde{\QQ}_{\bm{z}}$ on $\tilde{\varLambda}$ with marginals \eqref{eq:marginal} is consistent with the definition \eqref{Boltzmann}. Let a partition $\lambda\in \tilde{\varLambda}$ be specified by the sequence of multiplicities $k_\ell\in\mathbb{B}_\ell$ ($\ell\in  \mathbb{A}$). 
Due to
independence of $(\nu_\ell)$ under $\tilde{\QQ}_{\bm{z}}$ and formula \eqref{eq:marginal}, we have on account of expressions \eqref{eq:NM-constr},
\begin{align}
\notag
\tilde{\QQ}_{\bm{z}}(\lambda)=\prod_{\ell\in  \mathbb{A}} \tilde{\QQ}_{\bm{z}}(\nu_\ell=k_\ell)&=\prod_{\ell\in  \mathbb{A}} \frac{z_1^{\ell k_\ell}z_2^{k_\ell}}{F_\ell(\bm{z})}   \\
&=\frac{z_1^{\sum_{\ell\in  \mathbb{A}} \ell k_\ell}z_2^{\sum_{\ell\in  \mathbb{A}} k_\ell}}{\prod_{\ell\in  \mathbb{A}} F_\ell(\bm{z})}=\frac{z_1^{N_\lambda} z_2^{M_\lambda}}{\widetilde{F}(\bm{z})},
\label{Boltzmann-prod}
\end{align}
where $\widetilde{F}(\bm{z}):=\prod_{\ell\in  \mathbb{A}} F_\ell(\bm{z})$. Since the probability distributions \eqref{Boltzmann} and \eqref{Boltzmann-prod} on the same space $\tilde{\varLambda}$ appear to be proportional to one another, it follows that the normalization factors $F(\bm{z})$ and $\widetilde{F}(\bm{z})$ coincide, which proves the product representation \eqref{eq:F_product}.
\end{proof}
\begin{remark}
Since $0\in\mathbb{B}_\ell$, we have a lower bound
\begin{equation}\label{eq:F>}
F_\ell(\bm{z})=\sum_{k\in \mathbb{B}_\ell} z_1^{\ell k}z_2^k\ge 1,\qquad \ell\in  \mathbb{A}.
\end{equation}
\end{remark}

The next lemma ensures that a random partition generated according to the Boltzmann distribution $\QQ_{\bm{z}}$ is a.s.\ finite, so that $\QQ_{\bm{z}} (M_\lambda<\infty)=1$.
\begin{lemma}\label{lm:lemma2} Under the probability measure $\QQ_{\bm{z}}$, the number of nonzero terms in the sequence of multiplicities $(\nu_\ell)$ is a.s.\ finite.
\end{lemma}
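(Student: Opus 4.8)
The plan is to use the Borel--Cantelli lemma together with the independence and explicit marginals of the multiplicities $(\nu_\ell)$ provided by Lemma~\ref{pr:mult}. The event that infinitely many $\nu_\ell$ are nonzero is $\{\nu_\ell\ge 1 \text{ infinitely often}\}=\limsup_\ell\{\nu_\ell\ge 1\}$, so it suffices to show that $\sum_{\ell\in\mathbb{A}}\QQ_{\bm{z}}(\nu_\ell\ge 1)<\infty$; the first Borel--Cantelli lemma then gives $\QQ_{\bm{z}}(\nu_\ell\ge 1 \text{ i.o.})=0$, which is exactly the assertion. (Independence is not even needed for this direction, but it is available if one wants the converse zero--one dichotomy.)

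The key estimate is a bound on $\QQ_{\bm{z}}(\nu_\ell\ge 1)$ that is summable in $\ell$. From \eqref{eq:marginal} and \eqref{eq:F>} we have
\begin{equation*}
\QQ_{\bm{z}}(\nu_\ell\ge 1)=\frac{1}{F_\ell(\bm{z})}\sum_{k\in\mathbb{B}_\ell,\ k\ge 1} z_1^{\ell k}z_2^k\le \sum_{k\ge 1}(z_1^{\ell}z_2)^k=\frac{z_1^{\ell}z_2}{1-z_1^{\ell}z_2},
\end{equation*}
where the last step uses $z_1^\ell z_2\le z_1 z_2<1$ (valid since $z_1\in(0,1)$ and $z_2\in(0,1/z_1)$, so $z_1 z_2<1$), and the geometric series converges. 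For $\ell\ge 1$ one has $z_1^\ell z_2\le z_1 z_2=:c<1$, hence $1-z_1^\ell z_2\ge 1-c>0$, giving the clean bound $\QQ_{\bm{z}}(\nu_\ell\ge 1)\le (1-c)^{-1} z_2\mypp z_1^{\ell}$. Summing over $\ell\in\mathbb{A}\subseteq\NN$,
\begin{equation*}
\sum_{\ell\in\mathbb{A}}\QQ_{\bm{z}}(\nu_\ell\ge 1)\le \frac{z_2}{1-c}\sum_{\ell\ge 1} z_1^{\ell}=\frac{z_2}{1-c}\cdot\frac{z_1}{1-z_1}<\infty,
\end{equation*}
since $0<z_1<1$.

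This completes the argument: $\QQ_{\bm{z}}(\limsup_\ell\{\nu_\ell\ge 1\})=0$, so a.s.\ only finitely many $\nu_\ell$ are nonzero. There is no real obstacle here; the only point requiring a moment's care is confirming that the parameter range in Definition~\ref{def2'} forces $z_1 z_2<1$ (so that the geometric bound is uniform in $\ell$) and that $F_\ell(\bm{z})\ge 1$ lets us drop the normalizer harmlessly. One may optionally remark that, combined with \eqref{eq:NM-constr}, finiteness of $\sum_\ell\nu_\ell=M_\lambda$ a.s.\ also yields $N_\lambda<\infty$ a.s., since a finite sum of finite terms is finite; but the stated lemma only asks for the number of nonzero multiplicities, which is what the Borel--Cantelli bound delivers directly.
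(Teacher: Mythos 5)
Your proof is correct and follows essentially the same route as the paper: Borel--Cantelli applied to $\sum_{\ell\in\mathbb{A}}\QQ_{\bm{z}}(\nu_\ell>0)$, with a geometric-series bound exploiting $z_1^\ell z_2\le z_1z_2<1$. The only cosmetic difference is that you drop the normalizer via $F_\ell(\bm{z})\ge 1$ and absorb a factor $(1-z_1z_2)^{-1}$, whereas the paper bounds $F_\ell(\bm{z})\le (1-z_1^\ell z_2)^{-1}$ to get $\QQ_{\bm{z}}(\nu_\ell>0)\le z_1^\ell z_2$ directly; both yield the same summable bound.
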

\begin{proof}
By the Borel--Cantelli lemma (see, e.g., \cite[Sec.\,II.10, p.\,255]{Shiryaev}), 
% (see e.g. \cite[\S7.3]{grimmett2001probability}),
it suffices to check that 
$$
\sum_{\ell\in  \mathbb{A}}\QQ_{\bm{z}}(\nu_\ell>0)<\infty.
$$
Noting that $z_1^\ell z_2\le z_1z_2<1$ (see Definition~\ref{def2'}), we have
$$
F_\ell(\bm{z})=\sum_{k\in  \mathbb{B}_\ell} z_1^{\ell k}z_2^k\le \sum_{k=0}^\infty (z_1^{\ell}z_2)^k=\frac{1}{1-z_1^\ell z_2}.
$$
Hence, from the distribution formula \eqref{eq:marginal} we get 
\begin{align*}
\QQ_{\bm{z}}(\nu_\ell>0)&=1-\frac{1}{F_\ell(\bm{z})}\le 1-\bigl(1-z_1^\ell z_2\bigr)=z_1^\ell z_2,
\end{align*}
and therefore
$$
\sum_{\ell\in  \mathbb{A}}\QQ_{\bm{z}}(\nu_\ell>0)\le z_2\sum_{\ell\in  \mathbb{A}}z_1^\ell\le z_2\sum_{\ell=1}^\infty z_1^\ell=\frac{z_1z_2}{1-z_1}<\infty,
$$
as required.
\end{proof}

\subsection{Conditional Boltzmann distributions} \label{sec:2.5}

Recall that the Boltzmann distribution $\QQ_{\bm{z}}$ (see \eqref{Boltzmann}) is defined on the partition space $\tilde{\varLambda}$ subject to the tacit constraints determined by the sets $\mathbb{A}$ and $(\mathbb{B}_\ell,\ell\in \mathbb{A})$, as described in Section~\ref{sec:2.1}.  
It is sometimes useful to impose further constraints on permissible partitions and consider the arising conditional projections of the original measure $\QQ_{\bm{z}}$ onto the corresponding partition subspaces. To be specific, let $\tilde{\varLambda}^\dag\subseteq \tilde{\varLambda}$  and consider the   conditional measure supported on the space $\tilde{\varLambda}^\dag$,
\begin{equation}\label{eq:B-cond+}
\QQ_{\bm{z}}^\dag(\lambda):=\QQ_{\bm{z}}(\lambda\mypp|\myp\tilde{\varLambda}^\dag)=\frac{\QQ_{\bm{z}}(\lambda)}{\QQ_{\bm{z}}(\tilde{\varLambda}^\dag)},\qquad \lambda\in\tilde{\varLambda}^\dag.
\end{equation}
\begin{lemma}\label{lm:B-cutoff0}
The measure \eqref{eq:B-cond+} coincides with the Boltzmann measure on the partition space $\tilde{\varLambda}^\dag$.
\end{lemma}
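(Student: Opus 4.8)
The plan is to show that the conditional measure $\QQ_{\bm{z}}^\dag$ defined by \eqref{eq:B-cond+} is, up to renaming of the normalizing constant, of the form \eqref{Boltzmann} with the \emph{same} parameter $\bm{z}=(z_1,z_2)$ but with the partition space $\tilde{\varLambda}$ replaced by $\tilde{\varLambda}^\dag$. The key observation is that the exponents $N_\lambda$ and $M_\lambda$ in the numerator of \eqref{Boltzmann} depend only on the partition $\lambda$ itself (through \eqref{eq:NM-constr}), and not on which ambient space $\lambda$ is regarded as living in; only the normalization \eqref{eq:Fnorm} ``sees'' the ambient space. So passing to a subspace merely re-sums the same monomial weights over fewer partitions.

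First I would write out the left-hand side of \eqref{eq:B-cond+} explicitly using \eqref{Boltzmann}:
\[
\QQ_{\bm{z}}^\dag(\lambda)=\frac{\QQ_{\bm{z}}(\lambda)}{\QQ_{\bm{z}}(\tilde{\varLambda}^\dag)}
=\frac{z_1^{N_\lambda}z_2^{M_\lambda}/F(\bm{z})}{\sum_{\mu\in\tilde{\varLambda}^\dag} z_1^{N_\mu}z_2^{M_\mu}/F(\bm{z})}
=\frac{z_1^{N_\lambda}z_2^{M_\lambda}}{\sum_{\mu\in\tilde{\varLambda}^\dag} z_1^{N_\mu}z_2^{M_\mu}},\qquad \lambda\in\tilde{\varLambda}^\dag,
\]
where the common factor $F(\bm{z})$ cancels. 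Then I would set $F^\dag(\bm{z}):=\sum_{\mu\in\tilde{\varLambda}^\dag} z_1^{N_\mu}z_2^{M_\mu}$ and note that this is precisely the normalizing factor \eqref{eq:Fnorm} for the Boltzmann distribution on $\tilde{\varLambda}^\dag$ (it is finite, being a sub-sum of the convergent series $F(\bm{z})$, and positive provided $\tilde{\varLambda}^\dag\ne\varnothing$). Hence $\QQ_{\bm{z}}^\dag(\lambda)=z_1^{N_\lambda}z_2^{M_\lambda}/F^\dag(\bm{z})$ for all $\lambda\in\tilde{\varLambda}^\dag$, which is exactly the defining formula \eqref{Boltzmann} for the Boltzmann measure on the space $\tilde{\varLambda}^\dag$. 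One should add the minor caveat that $\tilde{\varLambda}^\dag$ may not itself be of the restricted form \eqref{eq:tilde-Lambda} for suitable $(\mathbb{A},(\mathbb{B}_\ell))$, so ``Boltzmann measure on $\tilde{\varLambda}^\dag$'' is understood in the sense of the general recipe \eqref{Boltzmann}–\eqref{eq:Fnorm} applied verbatim to the set $\tilde{\varLambda}^\dag$.

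There is essentially no obstacle here: the statement is a one-line cancellation, and the only things to be careful about are the bookkeeping of which normalizing constant is which, the harmless non-emptiness hypothesis on $\tilde{\varLambda}^\dag$ (needed for the conditioning in \eqref{eq:B-cond+} to make sense and for $F^\dag(\bm{z})>0$), and flagging that the ambient constraints defining $\tilde{\varLambda}$ are inherited. The substance of the remark is conceptual rather than technical: it records the ``stability'' of the Boltzmann class under conditioning, which is exactly the mechanism exploited later — e.g.\ conditioning on $\{N_\lambda=n,M_\lambda=m\}$ to recover the uniform measure on $\tilde{\varLambda}(n,m)$, since on that subspace the monomial $z_1^{n}z_2^{m}$ is constant.
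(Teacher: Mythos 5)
Your proof is correct and is essentially identical to the paper's own argument: write out the conditional probability, cancel the common factor $F(\bm{z})$, and recognize the remaining denominator $F^\dag(\bm{z})=\sum_{\mu\in\tilde{\varLambda}^\dag} z_1^{N_\mu}z_2^{M_\mu}$ as the Boltzmann normalizing constant for the subspace $\tilde{\varLambda}^\dag$ with the same parameters $\bm{z}$. Your added remarks on positivity/finiteness of $F^\dag(\bm{z})$ and on interpreting ``Boltzmann measure on $\tilde{\varLambda}^\dag$'' via the general recipe are harmless refinements, not a different route.
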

\proof
By the definition \eqref{Boltzmann}, for any partition  $\lambda\in\tilde{\varLambda}^\dag$ we have
\begin{equation}\label{eq:F-trunc+}
\QQ_{\bm{z}}(\lambda\mypp|\myp\tilde{\varLambda}^\dag)=\frac{z_1^{N_\lambda}z_2^{M_\lambda}/F(\bm{z})}{F^\dag(\bm{z})/F(\bm{z})}=\frac{z_1^{N_\lambda}z_2^{M_\lambda}}{F^\dag(\bm{z})},\qquad \lambda\in\tilde{\varLambda}^\dag,
\end{equation}
where
\begin{equation*}
F^{\dag}(\bm{z})=
\sum_{\lambda\in \tilde{\varLambda}^\dag} z_1^{N_\lambda}z_2^{M_\lambda}.
\end{equation*}
Again referring to \eqref{Boltzmann} together with  \eqref{eq:Fnorm}, we see that formula \eqref{eq:F-trunc+} defines the Boltzmann distribution on $\tilde{\varLambda}^\dag$ with the same parameters $\bm{z}=(z_1,z_2)$. 
\endproof

In fact, the Boltzmann measure \eqref{Boltzmann} on the  partition space $\tilde{\varLambda}$ constrained by means of the sets $\mathbb{A}$ and $(\mathbb{B}_\ell)$ as described in Definition \ref{def2'}, can itself be identified with the conditional Boltzmann measure projected from the full partition space $\varLambda$ via conditioning on $\lambda\in\tilde{\varLambda}$. The following  particular cases of further constraints on the space $\tilde{\varLambda}$ are also of interest: 
\begin{alignat*}{3}
\tilde{\varLambda}_K:={}&\{\lambda\in\tilde{\varLambda}\colon \,\max \nu_\ell\le K\}&&\text{(multiplicities bounded by $K$)};\\
\tilde{\varLambda}_L:={}&\bigl\{\lambda\in\tilde{\varLambda}\colon \max\lambda_i\le L\bigr\} &&\text{(parts bounded by $L$)};\\
\tilde{\varLambda}_M:={}&\{\lambda\in\tilde{\varLambda}\colon \textstyle \sum\nolimits _{\ell} \nu_\ell=M\} &&\text{(fixed length $M_\lambda=M$)};\\ \tilde{\varLambda}_N:={}&\bigl\{\lambda\in\tilde{\varLambda}\colon \textstyle\sum\nolimits_{\ell} \ell\myp\nu_\ell=N\bigr\}&\quad& \text{(fixed weight $N_\lambda=N$)}.
\end{alignat*}

Note that the result of Lemma \ref{pr:mult} about mutual independence and marginal distributions of the multiplicities $(\nu_\ell)$ remains true for the first two examples, $\tilde{\varLambda}_K$ and $\tilde{\varLambda}_L$, simply because they follow our basic Definition \ref{def2'} with the constraining sets  $\mathbb{B}_\ell$ or $\mathbb{A}$ replaced by $\mathbb{B}_{\ell,K}=\{k\in\mathbb{\mathbb{B}_\ell}\colon k\le K\}$ or $\mathbb{A}_L=\{\ell\in\mathbb{A}\colon \ell\le L\}$, \strut{}respectively.
However, this result does not apply to the Boltzmann measures on the spaces $\tilde{\varLambda}_M=\tilde{\varLambda}(\sbullet\myp,\myn M)$ or $\tilde{\varLambda}_N=\tilde{\varLambda}(N,\myn\sbullet)$ (see \eqref{eq:Lambda_n} and \eqref{eq:Lambda_m}), which take a reduced form by ``burning out'' the parameters $z_1$ or $z_2$, respectively:
\begin{align}\label{eq:uniM}
\QQ_{\bm{z}}(\lambda\mypp|\mypp\tilde{\varLambda}_M)& =\frac{z_1^{N_\lambda}}{\sum_{n\ge M}F(n,M)\mypp z_1^n},\qquad \lambda\in\tilde{\varLambda}_M,\\
\QQ_{\bm{z}}(\lambda\mypp|\mypp\tilde{\varLambda}_N)& =\frac{z_2^{M_\lambda}}{\sum_{m\le N}F(N,m)\mypp z_2^m},\qquad \lambda\in\tilde{\varLambda}_N.
\label{eq:uniN}
\end{align}

Individual constraints such as listed above can be combined. An important example is considered in the next lemma, stating that the Boltzmann distribution conditioned on both $N_\lambda$ and $M_\lambda$ is reduced to a uniform measure on the corresponding partition subspace.
\begin{lemma}\label{uniformity}
Let $n,m\in\NN_0$ be such that the conditions $N_\lambda=n$ and $M_\lambda=m$ are compatible with the constraining sets $\mathbb{A}$ and $ (\mathbb{B}_\ell)$, that is, $\tilde{\varLambda}(n,m)\ne\varnothing$.
Then 
\begin{equation}\label{eq:uni}
\QQ_{\bm{z}}\bigl(\lambda\mypp|\myp\tilde{\varLambda}(n,m)\bigr) =\frac{1}{\#\tilde{\varLambda}(n,m)},\qquad \lambda\in\tilde{\varLambda}(n,m).
\end{equation}
\end{lemma}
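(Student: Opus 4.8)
The plan is to apply Lemma~\ref{lm:B-cutoff0} with the specific choice $\tilde{\varLambda}^\dag=\tilde{\varLambda}(n,m)$, thereby reducing the claim to the observation that the Boltzmann measure is \emph{constant} on this subspace. First I would note that, by hypothesis, $\tilde{\varLambda}(n,m)\ne\varnothing$, so the conditioning event has positive $\QQ_{\bm{z}}$-probability (indeed $\QQ_{\bm{z}}(\tilde{\varLambda}(n,m))=F_{n,m}\myp z_1^nz_2^m/F(\bm{z})>0$ by \eqref{eq:N=,M=}, since $F_{n,m}=\#\tilde{\varLambda}(n,m)\ge1$ and $z_1,z_2>0$), so the conditional measure \eqref{eq:B-cond+} is well defined.

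The key step is then immediate from \eqref{eq:F-trunc+} in the proof of Lemma~\ref{lm:B-cutoff0}: for every $\lambda\in\tilde{\varLambda}(n,m)$ we have, by definition of the subspace, $N_\lambda=n$ and $M_\lambda=m$, hence
\begin{equation*}
\QQ_{\bm{z}}\bigl(\lambda\mypp|\myp\tilde{\varLambda}(n,m)\bigr)=\frac{z_1^{N_\lambda}z_2^{M_\lambda}}{F^\dag(\bm{z})}=\frac{z_1^{n}z_2^{m}}{F^\dag(\bm{z})},
\end{equation*}
where $F^\dag(\bm{z})=\sum_{\lambda\in\tilde{\varLambda}(n,m)}z_1^{N_\lambda}z_2^{M_\lambda}=\#\tilde{\varLambda}(n,m)\cdot z_1^nz_2^m$. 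Substituting this value of $F^\dag(\bm{z})$ cancels the factor $z_1^nz_2^m$ and yields $\QQ_{\bm{z}}(\lambda\mypp|\myp\tilde{\varLambda}(n,m))=1/\#\tilde{\varLambda}(n,m)$, which is \eqref{eq:uni}. One could equivalently phrase this directly from \eqref{eq:N=,M=} without invoking Lemma~\ref{lm:B-cutoff0}: $\QQ_{\bm{z}}(\lambda\mypp|\myp\tilde{\varLambda}(n,m))=\QQ_{\bm{z}}(\lambda)/\QQ_{\bm{z}}(\tilde{\varLambda}(n,m))=\bigl(z_1^nz_2^m/F(\bm{z})\bigr)\big/\bigl(F_{n,m}z_1^nz_2^m/F(\bm{z})\bigr)=1/F_{n,m}$.

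There is essentially no obstacle here — the whole content is that fixing both additive statistics $N_\lambda$ and $M_\lambda$ removes all $\bm{z}$-dependence from the Boltzmann weight $z_1^{N_\lambda}z_2^{M_\lambda}$, so the conditional law is forced to be uniform. The only point worth a word of care is confirming nonemptiness guarantees a well-defined conditional probability, which is covered by the hypothesis $\tilde{\varLambda}(n,m)\ne\varnothing$; the result then holds for \emph{every} admissible parameter $\bm{z}=(z_1,z_2)$, a fact that underpins the de-poissonization strategy discussed in the introduction.
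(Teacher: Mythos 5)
Your proposal is correct and follows essentially the same route as the paper: invoke Lemma~\ref{lm:B-cutoffo0} (with $\tilde{\varLambda}^\dag=\tilde{\varLambda}(n,m)$) and observe that the Boltzmann weight $z_1^{N_\lambda}z_2^{M_\lambda}=z_1^nz_2^m$ is constant on the subspace, forcing uniformity. You merely spell out the normalization $F^\dag(\bm{z})=\#\tilde{\varLambda}(n,m)\,z_1^nz_2^m$ explicitly, which the paper leaves implicit.
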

\begin{proof}
By virtue of  Lemma \ref{lm:B-cutoff0}, it suffices to observe that the Boltzmann distribution on $\tilde{\varLambda}(n,m)$ is uniform, because $\QQ_{\bm{z}}(\lambda)\propto z_1^nz_2^m=\mathrm{const}$ for all $\lambda\in\tilde{\varLambda}(n,m)$.
\end{proof}

Thanks to Lemma \ref{uniformity}, the Boltzmann distribution can be used for enumeration purposes.
\begin{corollary}\label{cor:uni}
The following representation holds with any $\bm{z}=(z_1,z_2)$ such that $0<z_1<1$ and $z_2<1/z_1$,
\begin{equation}\label{eq:uni-space}
F_{n,m}\equiv\#\tilde{\varLambda}(n,m)= \frac{F(\bm{z})\,\QQ_{\bm{z}}(N_\lambda=n,M_\lambda=m)}{z_1^n z_2^m}.
\end{equation}
\end{corollary}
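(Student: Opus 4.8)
The plan is to obtain the identity by a one-line rearrangement of the joint-law formula \eqref{eq:N=,M=}. That formula reads $\QQ_{\bm{z}}(N_\lambda=n,M_\lambda=m)=F_{n,m}\myp z_1^nz_2^m/F(\bm{z})$; since the hypotheses $0<z_1<1$ and $z_2<1/z_1$ (together with $z_2>0$) give $z_1^nz_2^m>0$, and $F(\bm{z})$ is a finite positive number, I would simply divide through by $z_1^nz_2^m$ and multiply by $F(\bm{z})$ to reach \eqref{eq:uni-space}. No hypothesis on $\tilde{\varLambda}(n,m)$ being non-empty is needed: if $\tilde{\varLambda}(n,m)=\varnothing$ then $F_{n,m}=0$ and $\QQ_{\bm{z}}(N_\lambda=n,M_\lambda=m)=0$ as well, so the claimed equality degenerates to the trivially true $0=0$.

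An alternative route, which is the one suggested by the sentence ``thanks to Lemma~\ref{uniformity}'' preceding the statement, is to restrict first to the realizable case $\tilde{\varLambda}(n,m)\ne\varnothing$ and combine the uniformity relation \eqref{eq:uni} with the elementary conditional-probability decomposition $\QQ_{\bm{z}}(\lambda\mypp|\myp\tilde{\varLambda}(n,m))=\QQ_{\bm{z}}(\lambda)/\QQ_{\bm{z}}(\tilde{\varLambda}(n,m))$. For any fixed $\lambda\in\tilde{\varLambda}(n,m)$ one has $\QQ_{\bm{z}}(\lambda)=z_1^nz_2^m/F(\bm{z})$ by \eqref{Boltzmann}; equating this ratio with $1/\#\tilde{\varLambda}(n,m)$ and solving for $\#\tilde{\varLambda}(n,m)$ yields \eqref{eq:uni-space}, upon recalling $\QQ_{\bm{z}}(\tilde{\varLambda}(n,m))=\QQ_{\bm{z}}(N_\lambda=n,M_\lambda=m)$. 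The empty case is then appended as in the previous paragraph.

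There is essentially no obstacle here: the corollary is a cosmetic reformulation of identities already proved, and the only points deserving a word are the strict positivity of $z_1,z_2$ (so that one never divides by zero) and the harmless degenerate case of an unrealizable pair $(n,m)$. Accordingly, I would give the short derivation from \eqref{eq:N=,M=} as the proof, with a parenthetical remark that it can equally be read off from Lemma~\ref{uniformity}.
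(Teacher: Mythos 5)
Your proposal is correct, and in fact both of your routes are sound; the paper gives no explicit proof, merely flagging the corollary as a consequence of Lemma~\ref{uniformity}, i.e.\ your second route. Your primary argument\,---\,rearranging the joint-law formula \eqref{eq:N=,M=}, which itself is just the definition \eqref{Boltzmann} summed over $\tilde{\varLambda}(n,m)$ with the identification $F_{n,m}=\#\tilde{\varLambda}(n,m)$\,---\,is if anything the cleaner match to the statement as written, since it needs no non-emptiness hypothesis and handles the degenerate case $F_{n,m}=0$ automatically, exactly as you note. The route via Lemma~\ref{uniformity} (equating $\QQ_{\bm{z}}(\lambda)=z_1^nz_2^m/F(\bm{z})$ with $\QQ_{\bm{z}}(\tilde{\varLambda}(n,m))/\#\tilde{\varLambda}(n,m)$ for a single $\lambda$ in a non-empty $\tilde{\varLambda}(n,m)$) is what the paper's preamble suggests and is equally valid once the empty case is appended separately; the only thing it buys is making explicit the conceptual point that the corollary is the enumeration-by-uniformity device, which is how it gets used later (e.g.\ in the proof of Theorem~\ref{th:ID}). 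Either write-up would be accepted; your attention to positivity of $z_1^nz_2^m$ and to the empty case is the right level of care for this statement.
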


To use formula \eqref{eq:uni-space} in practice, the parameters $z_1,z_2$ are usually calibrated so as to make the events $\{N_\lambda=n,M_\lambda=m\}$ ``likely'' under the Boltzmann distribution $\QQ_{\bm{z}}$ when $n$ and $m$ are close to some target values $\langle N\rangle$ and $\langle M\rangle$, respectively (treated as \emph{hyper-parameters}). In this paper, we pursue the standard approach (see, e.g.,  \cite{Fristedt}, \cite{VY1},  \cite{VY2}, \cite{Romik},  \cite{Bridges}, \cite{Bogachev}) based on making the \emph{expected values} of $N_\lambda$ and $M_\lambda$ consistent with the prescribed values $\langle N\rangle$ and $\langle M\rangle$, respectively,
\begin{equation}\label{braket0}
  \EE_{\bm{z}}(N_\lambda)=\langle N\rangle, \qquad   \EE_{\bm{z}}(M_\lambda)=\langle M\rangle.
\end{equation} 
Using \eqref{eq:NM-constr} and \eqref{eq:marginal}, conditions \eqref{braket0} are rewritten more explicitly as a system of equations for the parameters $z_1$ and $z_2$,
\begin{equation}\label{eq:calib=NM}
\left\{
\begin{aligned}
\sum_{\ell\in\mathbb{A}} \ell\,\sum_{k\in\mathbb{B}_\ell}\frac{k\mypp z_1^{\ell k} z_2^{k}}{F_\ell(\bm{z})}&=\braket{N}\!,\\
\sum_{\ell\in\mathbb{A}} \sum_{k\in\mathbb{B}_\ell}\frac{k\mypp z_1^{\ell k} z_2^{k}}{F_\ell(\bm{z})}&=\braket{M}\!.
\end{aligned}
\right.
\end{equation}
Solving such a system exactly is usually beyond reach but an asymptotic analysis may be feasible when one or both of the hyper-parameters $\braket{N}$ and $\braket{M}$ are large (see Section \ref{sec:3.3} below).

Truncation of the source of parts, thus reducing the partition space to $\tilde{\varLambda}_L$ may be useful in the design of Boltzmann samplers with the aim to avoid indefinite computation (see Section~\ref{sec:6}). Clearly, if $L<\max\mathbb{A}$ then the truncation leads to a distortion of the original  Boltzmann distribution $\QQ_{\bm{z}}$; in particular, it will cause a negative bias between the target hyper-parameters $\braket{N}$ and $\braket{M}$ used for calibration of $\QQ_{\bm{z}}$ (see \eqref{eq:calib=NM}) and the truncated expected values,
\begin{align}
\label{eq:ExpN<}
\EE_{\bm{z}}^{L}(N_\lambda)&=\sum_{\ell\in\mathbb{A}_L} \ell\,\sum_{k\in\mathbb{B}_\ell}\frac{k\mypp z_1^{\ell k} z_2^{k}}{F_\ell(\bm{z})}<\sum_{\ell\in\mathbb{A}} \ell\,\sum_{k\in\mathbb{B}_\ell}\frac{k\mypp z_1^{\ell k} z_2^{k}}{F_\ell(\bm{z})}=\EE_{\bm{z}}(N_\lambda)=\braket{N}\!,\\
\EE_{\bm{z}}^{L}(M_\lambda)&=\sum_{\ell\in\mathbb{A}_L} \sum_{k\in\mathbb{B}_\ell}\frac{k\mypp z_1^{\ell k} z_2^{k}}{F_\ell(\bm{z})}<\sum_{\ell\in\mathbb{A}} \sum_{k\in\mathbb{B}_\ell}\frac{k\mypp z_1^{\ell k} z_2^{k}}{F_\ell(\bm{z})}=\EE_{\bm{z}}(M_\lambda)=\braket{M}\!.
\label{eq:ExpM<}
\end{align}
However, if the probability of the condition $\{\lambda\in\tilde{\varLambda}_L\}$ is large enough, then the two distributions are close to one another in total variation, which is justified by the following elementary estimate.
\begin{lemma}\label{lm:TV-appr}
Let $(\Omega,\mathcal{F},\PP)$ be a probability space, and let an event $A\in\mathcal{F}$ be such that\/ $\PP(A)\ge 1-\delta$ for some (small) $\delta>0$. Then for any event $B\in\mathcal{F}$,
$$
|\PP(B\mypp|\myp A)-\PP(B)|\le\frac{2\delta}{1-\delta}. 
$$
\end{lemma}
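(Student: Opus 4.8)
The plan is to bound $|\PP(B\mypp|\myp A)-\PP(B)|$ by writing everything in terms of $\PP(B\cap A)$ and the two quantities $\PP(A)\ge 1-\delta$ and $\PP(A^c)\le\delta$. First I would split $\PP(B)=\PP(B\cap A)+\PP(B\cap A^c)$, so that
\[
\PP(B\mypp|\myp A)-\PP(B)=\frac{\PP(B\cap A)}{\PP(A)}-\PP(B\cap A)-\PP(B\cap A^c)=\PP(B\cap A)\left(\frac{1}{\PP(A)}-1\right)-\PP(B\cap A^c).
\]
Since $\frac{1}{\PP(A)}-1=\frac{\PP(A^c)}{\PP(A)}\ge 0$ and $\PP(B\cap A^c)\ge 0$, the difference is squeezed between $-\PP(B\cap A^c)$ and $\PP(B\cap A)\bigl(\frac1{\PP(A)}-1\bigr)$.

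For the upper bound, use $\PP(B\cap A)\le\PP(A)\le 1$ together with $\frac1{\PP(A)}-1=\frac{1-\PP(A)}{\PP(A)}\le\frac{\delta}{1-\delta}$, which gives $\PP(B\mypp|\myp A)-\PP(B)\le\frac{\delta}{1-\delta}$. For the lower bound, use $\PP(B\cap A^c)\le\PP(A^c)\le\delta\le\frac{\delta}{1-\delta}$, which gives $\PP(B\mypp|\myp A)-\PP(B)\ge-\frac{\delta}{1-\delta}$. Combining the two one-sided estimates yields $|\PP(B\mypp|\myp A)-\PP(B)|\le\frac{\delta}{1-\delta}$, which is in fact slightly sharper than the claimed $\frac{2\delta}{1-\delta}$; to match the statement verbatim one simply notes $\frac{\delta}{1-\delta}\le\frac{2\delta}{1-\delta}$ (alternatively, a cruder bookkeeping in which one does not exploit the cancellation gives the factor $2$ directly).

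There is really no obstacle here — this is a routine total-variation-type estimate. The only point requiring a modicum of care is keeping track of the signs so that one obtains a genuine two-sided bound rather than just controlling one direction, and making sure the monotone bounds $\PP(B\cap A)\le 1$ and $\PP(B\cap A^c)\le\PP(A^c)$ are applied on the correct side of the inequality. It is also worth remarking, though not strictly needed for the proof, that the bound is vacuous unless $\delta<1$, which is guaranteed by the hypothesis $\PP(A)\ge 1-\delta$ together with the implicit assumption that this is a nontrivial constraint; if $\PP(A)=0$ the conditional probability is undefined and the statement is read as applying only when $\PP(A)>0$.
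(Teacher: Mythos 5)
Your proof is correct and follows essentially the same elementary route as the paper: both rest on the decomposition $\PP(B)=\PP(B\cap A)+\PP(B\cap A^c)$ and the bound $\PP(A^c)\le\delta$. The only difference is bookkeeping: the paper bounds the numerator $|\PP(B\cap A)\,\PP(A^c)-\PP(B\cap A^c)\,\PP(A)|$ by the triangle inequality, giving the factor $2$, whereas your two one-sided estimates exploit the signs and yield the slightly sharper constant $\delta/(1-\delta)$, which of course implies the stated bound.
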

\begin{proof}
Denoting $A^c=\Omega\setminus A$, we have 
\begin{align*}
|\PP(B\mypp|\myp A)-\PP(B)|&=\frac{|\PP(B\cap A)-\PP(B)\mypp\PP(A)|}{\PP(A)} \\
&=\frac{|\PP(B\cap A)\mypp\PP(A^c)-\PP(B\cap A^c)\mypp\PP(A)|}{1-\PP(A^c)}\\
&\le \frac{2\,\PP(A^c)}{1-\PP(A^c)}\le \frac{2\myp\delta}{1-\delta},
\end{align*}
noting that $\PP(A^c)\le \delta$.
\end{proof}

A simple but useful version of the truncation idea adapted to the spaces $\tilde{\varLambda}(n,m)$ states that if the truncation threshold $L$ is high enough then the additional condition $\ell\le L$ does not affect the conditional uniformity stated in Lemma~\ref{uniformity}.

\begin{definition}\label{def:majorant}
Assuming that the partition space $\tilde{\varLambda}(n,m)$ is non-empty, denote by $L^*\mynn=L^*(n,m)$ any \emph{majorant} of parts involved in partitions $\lambda$ belonging to  this space, that is,
\begin{equation}\label{eq:ell-max}
L^*\!\ge \max\mynn\{\lambda_1\colon \lambda=(\lambda_i)\in\tilde{\varLambda}(n,m)\}.
\end{equation}
In terms of multiplicities $(\nu_\ell)$ encoding partitions $\lambda\in\tilde{\varLambda}(n,m)$, condition \eqref{eq:ell-max} is equivalent to 
saying that
$\nu_\ell\equiv 0$ for all $\ell>L^*\mynn$.
\end{definition}
For example, a loose majorant is provided by  $L^*\mynn=n$; this is actually sharp if $m=1$. In general, a sharp majorant is given by
\begin{equation}\label{eq:l-tight}
L^*\mynn=\max\mynn\{\ell\in\mathbb{A}\colon \tilde{\varLambda}(n-\ell,m-1)\ne\varnothing\}.
\end{equation}
Note that formula \eqref{eq:l-tight} holds in the boundary case $m=1$ due to our convention in Section \ref{sec:2.1}, effectively stating that the set $\tilde{\varLambda}(0,0)$ is not empty by containing a (single)  partition $\lambda_\varnothing = (0,0, \dots)$.

The following fact is self-evident by observing that $\tilde{\varLambda}(n,m)\cap \tilde{\varLambda}_{L^*}=\tilde{\varLambda}(n,m)$ and in view of Lemma~\ref{uniformity}.
\begin{lemma}\label{uniformity-gen}
Suppose that  $\tilde{\varLambda}(n,m)\ne\varnothing$ and let $L^*$ be a majorant as in Definition \ref{def:majorant}. 
Then 
\begin{equation}\label{eq:uni1}
\QQ_{\bm{z}}(\lambda\,|\, \tilde{\varLambda}(n,m)\cap \tilde{\varLambda}_{L^*})=\frac{1}{\#\tilde{\varLambda}(n,m)},\qquad \lambda\in\tilde{\varLambda}(n,m).
\end{equation}
\end{lemma}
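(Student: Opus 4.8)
The plan is to show that the extra constraint $\lambda\in\tilde\varLambda_{L^*}$ is vacuous on the space $\tilde\varLambda(n,m)$, so that conditioning on the intersection is the same as conditioning on $\tilde\varLambda(n,m)$ alone, and then invoke Lemma~\ref{uniformity}. First I would argue the set-theoretic identity $\tilde\varLambda(n,m)\cap\tilde\varLambda_{L^*}=\tilde\varLambda(n,m)$: by the definition of a majorant in Definition~\ref{def:majorant}, every partition $\lambda=(\lambda_i)\in\tilde\varLambda(n,m)$ satisfies $\lambda_1\le L^*$, hence (since the parts are non-increasing) $\max\lambda_i\le L^*$, which is exactly the membership condition for $\tilde\varLambda_{L^*}$; therefore $\tilde\varLambda(n,m)\subseteq\tilde\varLambda_{L^*}$ and the intersection collapses. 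Equivalently, in the multiplicity encoding, $\nu_\ell\equiv 0$ for $\ell>L^*$ on every $\lambda\in\tilde\varLambda(n,m)$, so the constraint $\nu_\ell=0$ ($\ell>L^*$) defining $\tilde\varLambda_{L^*}$ adds nothing.

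Having established this identity, the conditional probability $\QQ_{\bm z}(\lambda\,|\,\tilde\varLambda(n,m)\cap\tilde\varLambda_{L^*})$ literally equals $\QQ_{\bm z}(\lambda\,|\,\tilde\varLambda(n,m))$, since the conditioning event is unchanged. Then Lemma~\ref{uniformity} applies directly (its hypothesis $\tilde\varLambda(n,m)\ne\varnothing$ is assumed here), giving $\QQ_{\bm z}(\lambda\,|\,\tilde\varLambda(n,m))=1/\#\tilde\varLambda(n,m)$ for every $\lambda\in\tilde\varLambda(n,m)$, which is \eqref{eq:uni1}. This is essentially the remark the authors already make just before the statement, so no further work is needed.

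There is no real obstacle here; the only thing requiring a moment's care is making sure the majorant inequality \eqref{eq:ell-max} is phrased for the largest part $\lambda_1$ while the definition of $\tilde\varLambda_L$ is phrased via $\max\lambda_i$ — but these coincide by the non-increasing ordering convention of Section~\ref{sec:2.1}. One should also note the harmless boundary case $m=1$ (or $n=0$), where the convention that $\lambda_\varnothing\in\tilde\varLambda(0,0)$ keeps the formula \eqref{eq:l-tight} for a sharp majorant valid; but the lemma as stated only needs \emph{some} majorant, so this is not even necessary for the proof. Accordingly, the write-up will be a two-line argument: state the intersection identity, then cite Lemma~\ref{uniformity}.
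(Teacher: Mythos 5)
Your proposal is correct and follows exactly the paper's own argument: the paper likewise observes that $\tilde{\varLambda}(n,m)\cap \tilde{\varLambda}_{L^*}=\tilde{\varLambda}(n,m)$ (since the majorant $L^*$ bounds every part of every partition in $\tilde{\varLambda}(n,m)$) and then invokes Lemma~\ref{uniformity}. No gaps; your extra remarks on the ordering convention and the $m=1$ boundary case are harmless but not needed.
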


This lemma can be utilized in random sampling of integer partitions based on the Boltzmann distribution. Indeed, choosing a suitable majorant $L^*$ and building a random partition in $\tilde{\varLambda}(n,m)$ by iteratively sampling the multiplicities $\nu_\ell$ with $\ell\in\mathbb{A}_{L^*}$ until the target conditions \eqref{eq:Lambda_nm} are satisfied, the resulting partition $\lambda=(\ell^{\myp\nu_\ell})$ will be  uniformly sampled from $\tilde{\varLambda}(n,m)$, according to \eqref{eq:uni1}. We will return to these issues in Section~\ref{sec:6}.

\subsection{Second-order moments of the partition weight and length}\label{sec:2.6}

Consider the covariance matrix of the vector $(N_\lambda,M_\lambda)$ under the Boltzmann measure $\QQ_{\bm{z}}$,
\begin{equation}\label{eq:K(z)}
\bm{K}(\bm{z}):=\begin{pmatrix}
\Var_{\bm{z}}(N_\lambda)&\Cov_{\bm{z}}(N_\lambda,M_\lambda)\\[.2pc]
\Cov_{\bm{z}}(N_\lambda,M_\lambda)&\Var_{\bm{z}}(M_\lambda)
\end{pmatrix}\!,\qquad \bm{z}=(z_1,z_2).
\end{equation}
As such, the matrix $\bm{K}(\bm{z})$ is automatically positive semi-definite; moreover, it is \emph{positive definite} provided that the set $\mathbb{A}$ of permissible parts contains at least two elements,  $\#\mathbb{A}\ge 2$. To this effect, since both $\Var_{\bm{z}}(N_\lambda)>0$ and $\Var_{\bm{z}}(M_\lambda)>0$, we only need to check that $\det \bm{K}(\bm{z})>0$, that is, the underlying Cauchy inequality is strict, $\bigl|\Cov_{\bm{z}}(N_\lambda,M_\lambda)\bigr|<\sqrt{\Var_{\bm{z}}(N_\lambda)\,\Var_{\bm{z}}(M_\lambda)}\myp$. Indeed, otherwise the random variables $N_\lambda$ and $M_\lambda$ would be linearly dependent, that is, with some deterministic constants $c_1,c_2\in\RR$ 
$$
c_1N_\lambda+c_2M_\lambda=\sum_{\ell\in\mathbb{A}} \left(c_1\ell+c_2\right)\nu_\ell\equiv\mathrm{const}\qquad \text{($\QQ_{\bm{z}}$-a.s.)}.
$$
But this is impossible if $\#\mathbb{A}\ge2$, since $(\nu_\ell)$ are mutually independent and $\nu_\ell\not\equiv \mathrm{const}$ ($\QQ_{\bm{z}}$-a.s.).

A single-parameter version of the next lemma is well known (see, e.g., 
\cite[Proposition 2.1]{Duchon} or \cite[formula (2.2), p.\mypp 110]{BeFaRa}). An extension to the general multi-parametric case is stated,
with a sketch proof, in \cite[Proposition 7, pp.\,772--773]{BBD}. For convenience, we give a direct proof in the general case of a constrained partition space $\tilde{\varLambda}$ described in Definition \ref{def2'}.
\begin{lemma}\label{lm:Phi}
For $\bm{s}=(s_1,s_2)$, denote\/ $\rme^{\bm{s}}\mynn:=(\rme^{s_1}\mynn,\myp \rme^{s_2})$.
Define the function
\begin{equation}\label{eq:Phi}
\varPhi(\bm{s}):=\log F(\rme^{\bm{s}}),\qquad s_1,s_2<0,
\end{equation}
where $F(\bm{z})$ is introduced in (\ref{eq:Fnorm}). Then
\begin{equation}\label{eq:partialPhi}
\frac{\partial\myp \varPhi}{\partial s_1}=\EE_{\bm{z}}(N_\lambda)\bigr|_{\bm{z}=\rme^{\bm{s}}},\qquad \frac{\partial\myp \varPhi}{\partial s_2}=\EE_{\bm{z}}(M_\lambda)\bigr|_{\bm{z}=\rme^{\bm{s}}},
\end{equation}
Moreover, the Hessian of\/ $\varPhi(\bm{s})$ is expressed as follows,
\begin{equation}\label{eq:K(exp)}
\left(\frac{\partial^2\varPhi}{\partial s_i\myp\partial s_j}\right)=\bm{K}(\bm{z})\bigr|_{\bm{z}=\rme^{\bm{s}}},
\end{equation}
where $\bm{K}(\bm{z})$ is the covariance matrix defined in (\ref{eq:K(z)}). 
\end{lemma}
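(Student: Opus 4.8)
The plan is to compute the first and second partial derivatives of $\varPhi(\bm{s}) = \log F(\rme^{\bm{s}})$ directly from the power-series representation \eqref{eq:F}, recognizing the resulting expressions as moments of $(N_\lambda, M_\lambda)$ under the Boltzmann measure $\QQ_{\bm{z}}$ with $\bm{z} = \rme^{\bm{s}}$. Writing $z_i = \rme^{s_i}$ so that $\partial z_i/\partial s_i = z_i$, we have $F(\rme^{\bm{s}}) = \sum_{n,m} F_{n,m}\myp \rme^{n s_1 + m s_2}$, which converges for $s_1, s_2 < 0$ (equivalently $0 < z_1, z_2 < 1$; note this is within the admissible parameter range of Definition~\ref{def2'} since then $z_1 z_2 < 1$). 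First I would differentiate: $\partial F(\rme^{\bm{s}})/\partial s_1 = \sum_{n,m} n\myp F_{n,m}\myp \rme^{n s_1 + m s_2}$, and hence by the chain rule applied to the logarithm,
\begin{equation*}
\frac{\partial \varPhi}{\partial s_1} = \frac{1}{F(\rme^{\bm{s}})}\sum_{n,m} n\myp F_{n,m}\myp \rme^{n s_1 + m s_2} = \sum_{n,m} n\,\QQ_{\rme^{\bm{s}}}(N_\lambda = n, M_\lambda = m) = \EE_{\rme^{\bm{s}}}(N_\lambda),
\end{equation*}
using \eqref{eq:N=,M=}; the computation for $\partial \varPhi/\partial s_2$ is identical with $n$ replaced by $m$. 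This establishes \eqref{eq:partialPhi}.

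For the Hessian, I would differentiate $\partial \varPhi/\partial s_1 = F_{s_1}/F$ (writing $F = F(\rme^{\bm{s}})$, $F_{s_1} = \partial F/\partial s_1$, etc.) once more. The quotient rule gives $\partial^2 \varPhi/\partial s_1^2 = F_{s_1 s_1}/F - (F_{s_1}/F)^2$, and since $F_{s_1 s_1}/F = \EE_{\rme^{\bm{s}}}(N_\lambda^2)$ while $(F_{s_1}/F)^2 = \EE_{\rme^{\bm{s}}}(N_\lambda)^2$, this equals $\Var_{\rme^{\bm{s}}}(N_\lambda)$. Analogously $\partial^2 \varPhi/\partial s_2^2 = \Var_{\rme^{\bm{s}}}(M_\lambda)$, and the mixed partial $\partial^2 \varPhi/\partial s_1 \partial s_2 = F_{s_1 s_2}/F - (F_{s_1}/F)(F_{s_2}/F) = \EE_{\rme^{\bm{s}}}(N_\lambda M_\lambda) - \EE_{\rme^{\bm{s}}}(N_\lambda)\EE_{\rme^{\bm{s}}}(M_\lambda) = \Cov_{\rme^{\bm{s}}}(N_\lambda, M_\lambda)$. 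Assembling these four entries reproduces exactly the matrix $\bm{K}(\bm{z})$ of \eqref{eq:K(z)} evaluated at $\bm{z} = \rme^{\bm{s}}$, which is \eqref{eq:K(exp)}.

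The only genuine subtlety — and the step I would be most careful about — is justifying term-by-term differentiation of the series for $F(\rme^{\bm{s}})$, i.e.\ interchanging $\partial/\partial s_i$ with the infinite sum over $(n,m)$, and likewise confirming that the relevant moments $\EE_{\rme^{\bm{s}}}(N_\lambda^2)$, $\EE_{\rme^{\bm{s}}}(M_\lambda^2)$ are finite so that $\bm{K}(\rme^{\bm{s}})$ is well-defined. Both follow from the fact that for $s_1, s_2 < 0$ fixed, the power series $\sum_{n,m} F_{n,m}\myp \rme^{n s_1 + m s_2}$ has all partial derivatives converging locally uniformly (it is an analytic function of $(s_1, s_2)$ in the open region where it converges, because one may dominate $F_{n,m} \le$ a polynomial-times-geometric bound coming from the product formula \eqref{eq:F_product} and Lemma~\ref{lm:lemma2}-type estimates, or simply invoke standard theorems on differentiation of multivariate power series inside their domain of convergence). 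Alternatively, one can bypass the series entirely and work from the product representation \eqref{eq:F_product}: then $\varPhi(\bm{s}) = \sum_{\ell \in \mathbb{A}} \log F_\ell(\rme^{\bm{s}})$, and by independence of the multiplicities $(\nu_\ell)$ (Lemma~\ref{pr:mult}) together with $N_\lambda = \sum_\ell \ell\myp\nu_\ell$, $M_\lambda = \sum_\ell \nu_\ell$, the derivatives split as sums over $\ell$ of the per-coordinate cumulants of $\nu_\ell$, which are manifestly finite and summable given the geometric-type tail bound $\QQ_{\bm{z}}(\nu_\ell > 0) \le z_1^\ell z_2$ from the proof of Lemma~\ref{lm:lemma2}. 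I would likely present the power-series computation as the main line and remark that the product form gives an independent check; in either case the convergence bookkeeping is routine rather than deep, so I do not anticipate a real obstacle, only the need for a clean statement of the analyticity/differentiation justification.
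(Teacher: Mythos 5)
Your proposal is correct and follows essentially the same route as the paper: term-by-term differentiation of the double power series for $F$, identifying $\varPhi$ as the cumulant generating function of $(N_\lambda,M_\lambda)$. The only (cosmetic) difference is that you differentiate directly in $s_1,s_2$, obtaining $\EE_{\bm{z}}(N_\lambda^2)$ etc.\ at once, whereas the paper differentiates in $z_1,z_2$ and passes through factorial moments via the chain rule; your explicit attention to justifying the interchange of differentiation and summation is a welcome extra that the paper leaves implicit.
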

\begin{proof}
Differentiating \eqref{eq:F} and using formula \eqref{eq:NorMN}, we get
\begin{align*}
z_1\myp\frac{\partial F}{\partial z_1}&=z_1\sum_{n=0}^\infty n\myp z_1^{n-1}\mynn\sum_{m=0}^n F_{n,m}\myp z_2^m\\
&=F(\bm{z})\sum_{n=0}^\infty n\, \QQ_{\bm{z}}(N_\lambda=n)=F(\bm{z})\:\EE_{\bm{z}}(N_\lambda).
\end{align*}
Similarly, using \eqref{eq:NorMM},
\begin{align*}
z_2\mypp\frac{\partial F}{\partial z_2}&=z_2\sum_{m=0}^\infty m\mypp z_2^{m-1}\mynn\sum_{n=m}^\infty F_{n,m}\myp z_1^n\\
&=F(\bm{z})\sum_{m=0}^\infty m\, \QQ_{\bm{z}}(M_\lambda=m)=F(\bm{z})\:\EE_{\bm{z}}(M_\lambda).
\end{align*}
Hence, the chain rule applied to  \eqref{eq:Phi} yields
\begin{align*}
\frac{\partial\myp \varPhi}{\partial s_1}=
\rme^{s_1}\!\left.\frac{\partial F /\partial z_1 }{F(\bm{z})}\right|_{\bm{z}=\rme^{\bm{s}}}\! = \EE_{\bm{z}}(N_\lambda)\bigr|_{\bm{z}=\rme^{\bm{s}}},\\[.3pc]
\frac{\partial\myp \varPhi}{\partial s_2}=
\rme^{s_2}\!\left.\frac{\partial F /\partial z_2 }{F(\bm{z})}\right|_{\bm{z}=\rme^{\bm{s}}}\! = \EE_{\bm{z}}(M_\lambda)\bigr|_{\bm{z}=\rme^{s}},
\end{align*}
which proves formulas \eqref{eq:partialPhi}.

Likewise, considering second-order partial derivatives, we obtain
\begin{align*}
z_1^2\myp\frac{\partial ^2 F}{\partial z_1^2}&=z_1^2\sum_{n=0}^\infty n\myp(n-1)\myp z_1^{n-2}\mynn\sum_{m=0}^n F_{n,m}\myp z_2^m\\
&=F(\bm{z})\sum_{n=0}^\infty n\myp(n-1)\, \QQ_{\bm{z}}(N_\lambda=n)\\
&=F(\bm{z})\:\EE_{\bm{z}}\bigl(N^2_\lambda-N_\lambda\bigr).
\end{align*}
Therefore,
\begin{align}
\notag
\frac{\partial^2 \varPhi}{\partial s_1^2}&=\frac{\partial}{\partial s_1}\!\left(\rme^{s_1}\!\left.\frac{\partial F /\partial z_1 }{F(\bm{z})}\right|_{\bm{z}=\rme^{\bm{s}}}\right)\\
\notag
&=\rme^{s_1}\!\left.\frac{\partial F /\partial z_1 }{F(\bm{z})}\right|_{\bm{z}=\rme^{\bm{s}}}\!+
\rme^{2s_1}\!\left.\frac{\partial^2 F /\partial z_1^2}{F(\bm{z})}\right|_{\bm{z}=\rme^{\bm{s}}}
\!-\rme^{2s_1}\!\left(\left.\frac{\partial F /\partial z_1 }{F(\bm{z})}\right|_{\bm{z}=\rme^{\bm{s}}}\right)^2\\
\notag
&=\Bigl(\EE_{\bm{z}}(N_\lambda)+\EE_{\bm{z}}\bigl(N_\lambda^2-N_\lambda\bigr)-\bigl(\EE_{\bm{z}}(N_\lambda)\bigr)^2\Bigr)_{\bm{z}=\rme^{\bm{s}}}\\
&=\Var_{\bm{z}}(N_\lambda)\bigr|_{\bm{z}=\rme^{\bm{s}}}.
\label{eq:VarN}
\end{align}
Similarly, one can show that 
\begin{equation}\label{eq:VarM}
\frac{\partial^2 \varPhi}{\partial s_2^2}=\Var_{\bm{z}}(M_\lambda)\bigr|_{\bm{z}=\rme^{\bm{s}}}.
\end{equation}
Furthermore, considering the mixed partial derivative, by means of formula \eqref{eq:N=,M=} we have
\begin{align*}
z_1z_2\myp\frac{\partial ^2 F}{\partial z_1\myp\partial z_2}&=z_1 z_2\sum_{n=0}^\infty \sum_{m=0}^n F_{n,m}\mypp n\myp z_1^{n-1}\myp  m\myp z_2^{m-1}\mypp \\[-.2pc]
&=F(\bm{z})\sum_{n=0}^\infty\sum_{m=0}^n n\myp m\, \QQ_{\bm{z}}(N_\lambda=n,M_\lambda=m)=F(\bm{z})\:\EE_{\bm{z}}\bigl(N_\lambda M_\lambda\bigr).
\end{align*}
Hence,
\begin{align}
\notag
\frac{\partial^2 \varPhi}{\partial s_1\myp\partial s_2}&=\frac{\partial}{\partial s_2}\!\left(\rme^{s_1}\!\left.\frac{\partial F /\partial z_1 }{F(\bm{z})}\right|_{\bm{z}=\rme^{\bm{s}}}\right)\\
\notag
&=
\rme^{s_1}\myp\rme^{s_2}\!\left.\frac{\partial^2 F /\partial z_1^2}{F(\bm{z})}\right|_{\bm{z}=\rme^{\bm{s}}}
\!-\rme^{s_1}\myp\rme^{s_2}\!\left(\left.\frac{\partial F /\partial z_1 }{F(\bm{z})}\cdot \frac{\partial F /\partial z_2 }{F(\bm{z})}\right)\right|_{\bm{z}=\rme^{\bm{s}}}\\
&=\Bigl(\EE_{\bm{z}}(N_\lambda M_\lambda)-\EE_{\bm{z}}(N_\lambda)\, \EE_{\bm{z}}(M_\lambda) \Bigr)_{\bm{z}=\rme^{\bm{s}}}=\Cov_{\bm{z}}(N_\lambda,M_\lambda)\bigr|_{\bm{z}=\rme^{\bm{s}}}.
\label{eq:CovNM}
\end{align}

Finally, it remains to notice that, collectively, formulas \eqref{eq:VarN}, \eqref{eq:VarM} and \eqref{eq:CovNM} prove the claim \eqref{eq:K(exp)}, which completes the proof of Lemma~\ref{lm:Phi}.
\end{proof}

\begin{corollary}\label{cor:unique}
Suppose that $\#\mathbb{A}\ge 2$ (see a comment after  definition (\ref{eq:K(z)})). Then the function $\varPhi(\bm{s})$ defined in (\ref{eq:Phi}) is strictly convex. 
%in its domain. 
Consequently, any system of equations in variables  $\bm{z}=(z_1,z_2)$ of the form (\ref{braket0}) has at most one solution. 
\end{corollary}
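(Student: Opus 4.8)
The plan is to deduce both assertions from Lemma~\ref{lm:Phi}. First I would observe that the Hessian of $\varPhi(\bm{s})$ equals the covariance matrix $\bm{K}(\bm{z})\bigr|_{\bm{z}=\rme^{\bm{s}}}$ by formula \eqref{eq:K(exp)}, and that this matrix was already shown to be \emph{positive definite} (not merely positive semi-definite) precisely under the hypothesis $\#\mathbb{A}\ge 2$, in the discussion immediately following definition \eqref{eq:K(z)}. Since a $C^2$ function on a convex open domain (here the open quadrant $\{s_1<0,\,s_2<0\}$) whose Hessian is everywhere positive definite is strictly convex, this gives the first claim at once.

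For the second claim, suppose $\bm{z}'=(z_1',z_2')$ and $\bm{z}''=(z_1'',z_2'')$ both satisfy the calibration system \eqref{braket0}, i.e.\ $\EE_{\bm{z}}(N_\lambda)=\braket{N}$ and $\EE_{\bm{z}}(M_\lambda)=\braket{M}$. Writing $\bm{s}'=(\log z_1',\log z_2')$ and $\bm{s}''=(\log z_1'',\log z_2'')$ (both in the quadrant, since $0<z_i<1$), formula \eqref{eq:partialPhi} says that the gradient $\nabla\varPhi$ takes the \emph{same} value $(\braket{N},\braket{M})$ at $\bm{s}'$ and at $\bm{s}''$. But a strictly convex differentiable function has an injective gradient map: if $\nabla\varPhi(\bm{s}')=\nabla\varPhi(\bm{s}'')$ with $\bm{s}'\ne\bm{s}''$, then along the segment $\bm{s}(t)=(1-t)\bm{s}'+t\bm{s}''$ the scalar function $g(t):=\langle \nabla\varPhi(\bm{s}(t)),\,\bm{s}''-\bm{s}'\rangle$ would satisfy $g(0)=g(1)$, yet $g'(t)=(\bm{s}''-\bm{s}')^{\myn\top}\,\mathrm{Hess}\,\varPhi(\bm{s}(t))\,(\bm{s}''-\bm{s}')>0$ for all $t$ by positive definiteness, a contradiction. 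Hence $\bm{s}'=\bm{s}''$, so $\bm{z}'=\bm{z}''$, proving uniqueness.

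I do not anticipate a serious obstacle here — the main point is simply to invoke the already-established positive definiteness of $\bm{K}(\bm{z})$ and the standard fact that strict convexity forces the gradient to be injective. The only thing to be a little careful about is the domain: strict convexity and the mean value argument require the segment joining $\bm{s}'$ and $\bm{s}''$ to lie in the region $\{s_1<0,\,s_2<0\}$ where $\varPhi$ is defined and $C^2$, which holds because that region is convex and contains both endpoints (each $z_i\in(0,1)$ forces $\log z_i<0$). One should also note in passing that the statement ``at most one solution'' is the right formulation — existence is a separate matter handled elsewhere (Theorem~\ref{th:cal}), and strict convexity alone does not guarantee it.
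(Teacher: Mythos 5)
Your proposal is correct and follows essentially the same route as the paper: both rest on Lemma~\ref{lm:Phi}, identifying the Hessian of $\varPhi$ with the covariance matrix $\bm{K}(\bm{z})$ and invoking its positive definiteness (established right after \eqref{eq:K(z)} when $\#\mathbb{A}\ge 2$) to get strict convexity. The paper leaves the uniqueness step implicit, whereas you spell out the standard gradient-injectivity argument via monotonicity along the segment; that is a faithful filling-in of the same proof rather than a different approach.
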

\begin{proof}
Convexity of $\varPhi(\bm{s})$ follows from the representation \eqref{eq:K(exp)} and the fact that the covariance matrix $\bm{K}(\bm{z})$ is positive definite.   
\end{proof}

\section{Strict power partitions}\label{sec:3}

\subsection{Basic formulas}
From now on, we consider the case $\mathbb{A}=\NN^{q}$, with a fixed $q\in\NN$, and $\mathbb{B}_\ell\equiv\{0,1\}$ {for all $\ell\in\NN^q$. This specification corresponds to strict integer partitions (i.e., with unequal parts) into perfect $q$-th powers. To highlight the choice of this model, in what follows we switch from the generic notation $\tilde{\varLambda}$ and  $\tilde{\varLambda}(n,m)$ to a more adapted notation $\check{\varLambda}^{q}$ and
$\check{\varLambda}^{q}(n,m)$, where the super-index $^{q}$ indicates that parts are $q$-th powers, while the ``check'' symbol $\,\check{}\,$ stands as a reminder that partitions are strict. 
{To conform with the conventional notation, for $q=1$  we will omit the super-index by writing $\check{\varLambda}$, 
$\check{\varLambda}(n,m)$, etc.} 

The next key lemma is but a specialization of the general Proposition \ref{pr:mult} to the case $\check{\varLambda}^{q}$.
\begin{lemma}\label{distnu}
Under the Boltzmann distribution on the space $\check{\varLambda}^{q}$, the random multiplicities  $(\nu_{\ell},\,\ell \in \mathbb{N}^q)$ are mutually independent and have a Bernoulli distribution with the corresponding parameter $z_1^{\ell}z_2\mypp(1+z_1^{\ell}z_2)^{-1}$,
\begin{equation}\label{eq:Bern}
\QQ_{\bm{z}}(\nu_{\ell}=0)= \frac{1}{1+z_1^{\ell}z_2},\qquad \QQ_{\bm{z}}(\nu_{\ell}=1)= \frac{z_1^{\ell}z_2}{1+z_1^{\ell}z_2}\qquad (\ell\in\NN^{q}).
\end{equation}
Furthermore,
the corresponding generating function is given by 
\begin{equation}\label{eq:F_q}
F(\bm{z}) = \prod_{\ell\in\NN^q} (1+z_1^{\ell}z_2)=\prod_{j=1}^{\infty}(1+z_1^{j^q}\! z_2).
\end{equation}
\end{lemma}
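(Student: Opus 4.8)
The statement to prove, Lemma~\ref{distnu}, is a direct specialization of the general machinery already developed in Lemma~\ref{pr:mult}, so the plan is essentially to instantiate that result in the case $\mathbb{A}=\NN^q$ and $\mathbb{B}_\ell\equiv\{0,1\}$. First I would invoke Lemma~\ref{pr:mult} verbatim: since the constraining sets here are $\mathbb{A}=\NN^q$ and $\mathbb{B}_\ell=\{0,1\}$ for every $\ell\in\NN^q$, the random multiplicities $(\nu_\ell,\,\ell\in\NN^q)$ are mutually independent, with marginal law $\QQ_{\bm{z}}(\nu_\ell=k)=z_1^{\ell k}z_2^k/F_\ell(\bm{z})$ for $k\in\{0,1\}$. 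It only remains to evaluate $F_\ell(\bm{z})=\sum_{k\in\mathbb{B}_\ell} z_1^{\ell k}z_2^k$ in this case, which is simply the two-term sum $F_\ell(\bm{z})=1+z_1^\ell z_2$.

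Substituting this back into \eqref{eq:marginal} gives $\QQ_{\bm{z}}(\nu_\ell=0)=1/(1+z_1^\ell z_2)$ and $\QQ_{\bm{z}}(\nu_\ell=1)=z_1^\ell z_2/(1+z_1^\ell z_2)$, which is the asserted Bernoulli law \eqref{eq:Bern} with parameter $z_1^\ell z_2\,(1+z_1^\ell z_2)^{-1}$; here one should note in passing that the admissibility condition $0<z_2<1/z_1$ from Definition~\ref{def2'} guarantees $z_1^\ell z_2\le z_1 z_2<1$, so the parameter lies in $(0,1)$ and the expression is a genuine probability distribution. For the generating function, I would apply the product representation \eqref{eq:F_product}, namely $F(\bm{z})=\prod_{\ell\in\mathbb{A}}F_\ell(\bm{z})=\prod_{\ell\in\NN^q}(1+z_1^\ell z_2)$, and then merely re-index the product over $\ell=j^q$ with $j\in\NN$ to obtain the second form $\prod_{j=1}^\infty(1+z_1^{j^q}z_2)$, which is \eqref{eq:F_q}.

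There is essentially no obstacle here: the lemma is a corollary of Lemma~\ref{pr:mult}, and the only computations are the evaluation of a two-term geometric-type sum and a cosmetic change of index in the product. The one point worth a sentence of care is the convergence of the infinite product $\prod_{j=1}^\infty(1+z_1^{j^q}z_2)$, which is immediate since $\sum_{j=1}^\infty z_1^{j^q}z_2\le z_2\sum_{j=1}^\infty z_1^j=z_1 z_2/(1-z_1)<\infty$ (this is also implicitly covered by Lemma~\ref{lm:lemma2} and the finiteness of $F(\bm{z})$ established earlier). Thus the proof is short and self-contained, reducing entirely to quoting Lemma~\ref{pr:mult} and plugging in $F_\ell(\bm{z})=1+z_1^\ell z_2$.
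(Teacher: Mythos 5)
Your proposal is correct and matches the paper exactly: the paper gives no separate proof, noting only that the lemma is a specialization of Lemma~\ref{pr:mult} to $\mathbb{A}=\NN^q$, $\mathbb{B}_\ell\equiv\{0,1\}$, which is precisely your argument of evaluating $F_\ell(\bm{z})=1+z_1^\ell z_2$ and invoking the product representation \eqref{eq:F_product}. Your added remarks on the parameter lying in $(0,1)$ and on convergence of the infinite product are harmless extras already covered by Definition~\ref{def2'} and Lemma~\ref{lm:lemma2}.
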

\begin{remark}
Here the Borel--Cantelli condition $\sum_\ell \QQ_{\bm{z}}(\nu_\ell>0)<\infty$ (see the proof of Lemma \ref{lm:lemma2}) specializes to
$$
\sum_{\ell\in\NN^q} \frac{z_1^{\ell}z_2}{1+z_1^{\ell}z_2}
\le z_2\sum_{\ell=1}^\infty z_1^{\ell}=\frac{z_1z_2}{1-z_1}.
$$
Thus, we need $0<z_1<1$ but no condition on $z_2>0$.
\end{remark}

\subsection{Sums asymptotics} 
We will be frequently using asymptotic formulas for certain sums over the integer set $\NN^q$. Of course, such results are well known for $q=1$. The analysis is greatly facilitated by the classical (first-order) \emph{Euler--Maclaurin summation formula} (see, e.g.,
\cite[\S12.2]{Cramer} or \cite[2.10.1]{NIST}),
%, formula (12.2.3)]
conveniently written in the form\footnote{The Euler--Maclaurin formula is usually written using the (first-order) Bernoulli polynomial $B_1(x)=x-\frac12$ ($0\le x<1$).
Our notation yields simpler expressions. A similar remark applies to the second-order version \eqref{eq:EM2} below.}
\begin{equation}\label{EM0}
\sum_{j=1}^\infty f(j)=
\int_{0}^\infty\! f(x)\,\rmd{x} + \int_{0}^\infty\! \!
{B}^{\circ\vphantom{a_a}}_{1}\bigl(x-\lfloor x\rfloor\bigr)\mypp f^{\myp\prime}(x)\,\rmd{x},
\end{equation}
where ${B}^{\circ\vphantom{a_a}}_{1}(x):=x$ ($0\le x<1$), and both the series $\sum_{j=1}^\infty f(j)$ and the integral $\int_{0}^\infty \!f(x)\,\rmd{x}$ are assumed to converge. In particular, formula \eqref{EM0} gives a simple bound for the error arising from replacing the sum with the integral,
$$
\sum_{j=1}^\infty f(j)=
\int_{0}^\infty\!\! f(x)\,\rmd{x} + O(1)\!\int_{0}^\infty \!
|f^{\myp\prime}(x)|\,\rmd{x}.
$$
A more general ``indented'' version of the Euler--Maclaurin formula reads
\begin{equation}\label{EM1}
\sum_{j>j_*} f(j)=
\int_{j_*}^\infty\!\! f(x)\,\rmd{x} + \int_{j_*}^\infty \!\!
{B}^{\circ\vphantom{a_a}}_1\bigl(x-\lfloor x\rfloor\bigr)\mypp f^{\myp\prime}(x)\,\rmd{x},
\end{equation}
leading to the estimate
$$
\sum_{j>j_*}^\infty f(j)=
\int_{j_*}^\infty\!\! f(x)\,\rmd{x} + O(1)\!\int_{j_*}^\infty \!
|f^{\myp\prime}(x)|\,\rmd{x}.
$$

For most of our purposes, the first-order formulas will suffice. If needed, a refined estimate of the remainder term can be obtained using  a second-order Euler--Maclaurin formula (cf.\ \cite[2.10.1]{NIST}),
\begin{equation}\label{eq:EM2}
\sum_{j=1}^\infty f(j)=
\int_{0}^\infty\!\! f(x)\,\rmd{x}-\frac{f(0)}{2}+ \frac{1}{2}\myn\int_{0}^\infty \!
{B}^{\circ\vphantom{a_a}}_2\bigl(x-\lfloor x\rfloor\bigr)\mypp f^{\myp\prime\prime}(x)\,\rmd{x},
\end{equation}
where ${B}^{\circ\vphantom{a_a}}_2(x):=x-x^2$ ($0\le x<1$).
\begin{lemma}\label{lm:sum1}
Let $s\ge 0$ be fixed. Then, as $\gamma\to0^{+}$, 
\begin{equation}\label{eq:1st}
\sum_{\ell\in\NN^q} \ell^s\myp \rme^{-\gamma\ell}=\frac{\Gamma(s+1/q)}{q\,\gamma^{s+1/q}}\left(1+O\bigl(\gamma^{1/q}\bigr)\right)\!.
\end{equation}
\end{lemma}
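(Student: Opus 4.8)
The plan is to write the sum $S(\gamma):=\sum_{\ell\in\NN^q}\ell^s\,\rme^{-\gamma\ell}=\sum_{j=1}^\infty j^{sq}\,\rme^{-\gamma j^q}$ and apply the first-order Euler--Maclaurin formula \eqref{EM0} with $f(x)=x^{sq}\,\rme^{-\gamma x^q}$. First I would record that $f$ is smooth on $(0,\infty)$, decays superexponentially, and (since $s\ge 0$, $q\ge 1$) has $f(0)=0$ when $sq>0$ and $f(0)=1$ when $s=0$; in either case the boundary terms in \eqref{EM0} are harmless, so
\begin{equation*}
S(\gamma)=\int_0^\infty x^{sq}\,\rme^{-\gamma x^q}\,\rmd{x}+R(\gamma),\qquad R(\gamma)=\int_0^\infty\widetilde{B}_1(x)\,f^{\myp\prime}(x)\,\rmd{x}.
\end{equation*}

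The main term is evaluated by the substitution $u=\gamma x^q$, i.e.\ $x=(u/\gamma)^{1/q}$, $\rmd{x}=\frac{1}{q}\gamma^{-1/q}u^{1/q-1}\,\rmd{u}$, giving
\begin{equation*}
\int_0^\infty x^{sq}\,\rme^{-\gamma x^q}\,\rmd{x}=\frac{1}{q\,\gamma^{s+1/q}}\int_0^\infty u^{s+1/q-1}\,\rme^{-u}\,\rmd{u}=\frac{\Gamma(s+1/q)}{q\,\gamma^{s+1/q}},
\end{equation*}
which is exactly the claimed leading factor. It remains to show $R(\gamma)=O\bigl(\gamma^{s+1/q}\cdot\gamma^{1/q}\bigr)=O\bigl(\gamma^{s+2/q}\bigr)$, since then $S(\gamma)=\frac{\Gamma(s+1/q)}{q\,\gamma^{s+1/q}}\bigl(1+O(\gamma^{1/q})\bigr)$ as required.

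For the remainder I would use the crude bound $|\widetilde{B}_1(x)|\le 1$, so $|R(\gamma)|\le\int_0^\infty|f^{\myp\prime}(x)|\,\rmd{x}$. Computing $f^{\myp\prime}(x)=\bigl(sq\,x^{sq-1}-\gamma q\,x^{sq+q-1}\bigr)\rme^{-\gamma x^q}$, the sign of $f'$ changes once (at $x_0=(s/\gamma)^{1/q}$ when $s>0$, or $f'\le 0$ throughout when $s=0$), so $f$ is unimodal and $\int_0^\infty|f'|\,\rmd{x}\le 2\max_{x\ge0}f(x)+|f(0)|$. The maximum of $f$ is $f(x_0)=(s/\gamma)^{s}\,\rme^{-s}$ when $s>0$ (and $f(0)=1$ when $s=0$), hence $|R(\gamma)|=O(\gamma^{-s})$. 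Comparing with the main term of order $\gamma^{-s-1/q}$, this already yields the weaker statement $S(\gamma)=\frac{\Gamma(s+1/q)}{q\gamma^{s+1/q}}\bigl(1+O(\gamma^{1/q})\bigr)$ directly, so no sharper estimate of $R$ is needed. The only mildly delicate point — and the step I expect to require the most care — is the bookkeeping of the boundary contribution and the near-origin behaviour when $0<sq<1$, where $f^{\myp\prime}(x)$ blows up like $x^{sq-1}$ as $x\to0{+}$; but this singularity is integrable (its integral near $0$ is $O(\gamma^{-s})$ as well, consistent with the bound above), so the argument goes through uniformly in the allowed range of $s$. Finally, if one wants the error term stated with the exponent $\gamma^{1/q}$ rather than a larger power, one simply notes $R(\gamma)/\bigl(\gamma^{-s-1/q}\bigr)=O(\gamma^{1/q})$, completing the proof.
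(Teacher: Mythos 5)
Your proof is correct and follows essentially the same route as the paper: the Euler--Maclaurin formula \eqref{EM0} with $f(x)=x^{sq}\myp\rme^{-\gamma x^q}$, the substitution $u=\gamma x^q$ for the main term, and a bound on the remainder via $\int_0^\infty|f^{\myp\prime}(x)|\,\rmd{x}=O(\gamma^{-s})$ (the paper gets $\Gamma(s+1)/\gamma^s$ by treating the two terms of $f^{\myp\prime}$ separately, you get $2\myp(s/\gamma)^s\rme^{-s}$ via unimodality of $f$ — same order, same conclusion). The only blemish is the sign slip in your intermediate target: the remainder should be shown to be $O(\gamma^{-s})=O\bigl(\gamma^{-s-1/q}\cdot\gamma^{1/q}\bigr)$, not $O\bigl(\gamma^{s+2/q}\bigr)$, which is indeed what your final estimate delivers.
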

\begin{proof}
Setting $f(x):=\ell^s\myp \rme^{-\gamma\ell}|_{\myp\ell=x^q}$, we have 
\begin{equation}\label{eq:f'}
f'(x)=\left.\frac{\rmd(\ell^s\myp \rme^{-\gamma\ell})}{\rmd{\ell}}\right|_{\myp\ell=x^q}\!\cdot (x^q)'=(s-\gamma\ell)\,\ell^{s-1}\myp\rme^{-\gamma\ell}|_{\myp\ell=x^q}\cdot q\,x^{q-1}.
\end{equation} 
Hence, the Euler--Maclaurin formula \eqref{EM0} yields
\begin{equation}\label{eq:N0}
\sum_{\ell\in\NN^q}\ell^s\myp \rme^{-\gamma\ell}= \sum_{j=1}^\infty j^{qs}\myp \rme^{-\gamma j^{q}}=\int_0^\infty \!x^{qs}\mypp \rme^{-\gamma\myp x^q}\rmd{x}+\Delta_s(\gamma),
\end{equation}
where the error term is given by
\begin{equation}\label{eq:Delta_s}
\Delta_s(\gamma)=q\mynn\int_0^\infty \!\mypp {B}^{\circ\vphantom{a_a}}_1(x)\,(s-\gamma\myp x^q)\,x^{qs-1}\mypp \rme^{-\gamma\myp x^q}\myp\rmd{x}.
\end{equation}
The integral in \eqref{eq:N0} is easily computed by the substitution $u=\gamma\myp x^q$,  
\begin{equation}
\label{eq:N1}
\int_0^\infty \!x^{qs}\mypp \rme^{-\gamma\myp x^q}\rmd{x}=\frac{1}{q\mypp \gamma^{s+1/q}}\int_0^\infty\!u^{s+1/q-1}\mypp\rme^{-u}\,\rmd{u}=\frac{\Gamma(s+1/q)}{q\mypp \gamma^{s+1/q}}.
\end{equation}

To estimate the error term $\Delta_s(\gamma)$, we have to consider the cases $s=0$ and $s>0$ separately. Using that $0\le {B}^{\circ\vphantom{a_a}}_1(x)\le 1$, from \eqref{eq:Delta_s} we obtain, via the same substitution $u=\gamma\myp x^q$, 
\begin{equation}\label{eq:Delta0}
0<-\Delta_0(\gamma)< q\myp\gamma\myn\int_0^\infty \!x^{q-1}\myp \rme^{-\gamma x^q}\myp\rmd{x}=\int_0^\infty \!\rme^{-u}\myp\rmd{u}=1.
\end{equation}
If $s>0$ then expression \eqref{eq:Delta_s} implies a two-sided inequality,  
\begin{equation}\label{eq:Delta1}
-q\myp \gamma\mynn\int_0^\infty \!x^{q\myp(s+1)-1}\myp \rme^{-\gamma x^q}\myp\rmd{x}< \Delta_s(\gamma)< q\myp s\mynn\int_0^\infty \!x^{qs-1}\myp \rme^{-\gamma x^q}\myp\rmd{x}.
\end{equation}
Computing the integrals as before, the bounds \eqref{eq:Delta1} specialize as follows,
\begin{equation}\label{eq:Delta2}
-\frac{\Gamma(s+1)}{\gamma^s}<\Delta_s(\gamma)< \frac{s\mypp\Gamma(s)}{\gamma^s}
= \frac{\Gamma(s+1)}{\gamma^s}.
\end{equation}
In fact, the estimate \eqref{eq:Delta0} for $s=0$ can be included in \eqref{eq:Delta2}. 
%\lvb{[An alternative estimation:
%\begin{align*}
%|\Delta_s(\gamma)|&\le q\mynn\int_0^\infty \!t^{sp-1}\myp \rme^{-\gamma\myp t^q}\myp|s-\gamma\myp t^q|\,\rmd{t}=\gamma^{-s}\!\underbrace{\int_0^\infty \! u^{s-1}\mypp\rme^{-u}\,|s-u|\,\rmd{u}}_{{}=C(s)}.
%\end{align*}
%The last integral is rewritten as follows,
%\begin{align*}
%C(s)=2\!\int_0^s \! u^{s-1}\mypp\rme^{-u}\mypp(s-u)\,\rmd{u}+\underbrace{\int_0^\infty \! u^{s-1}\mypp\rme^{-u}\mypp(s-u)\,\rmd{u}}_{s\mypp\Gamma(s)-\Gamma(s+1)=0}=2\!\int_0^s \! u^{s-1}\mypp\rme^{-u}\mypp(s-u)\,\rmd{u}.\end{align*}
%Some numerical examples in comparison with \eqref{eq:Delta2} (by Maple):
%$$
%C(1)\doteq0.7357588824<\Gamma(2)=1,\qquad C(2)=1.082682266<\Gamma(3)=2.
%$$}

Finally, the claim \eqref{EM1} follows from \eqref{eq:N0}, \eqref{eq:N1} and \eqref{eq:Delta2}.
\end{proof}

Refined asymptotics of sums of the form \eqref{eq:1st} can be obtained with the help of the second-order Euler--Maclaurin formula \eqref{eq:EM2}. 
Keeping the same notation $f(x)=\ell^s\myp \rme^{-\gamma\ell}|_{\myp\ell=x^q}$ and using that ${B}^{\circ\vphantom{a_a}}_2(x)$ is bounded, we obtain
\begin{equation}\label{eq:EM-ref}
\sum_{\ell\in\NN^q}\ell^s\myp \rme^{-\gamma\ell}= \int_0^\infty \!f(x)\,\rmd{x}-\frac{f(0)}{2}+O(1)\!\int_0^\infty\!|f''(x)|\,\rmd{x},
\end{equation}
where $f(0)=0$ for $s>0$ and $f(0)=1$ for $s=0$. Similarly as in \eqref{eq:f'}, we have
\begin{equation}\label{eq:f''ell}
f''(x)=\left.\frac{\rmd^2(\ell^s\myp \rme^{-\gamma\ell})}{\rmd{\ell}^2}\right|_{\myp \ell=x^q}\!\cdot (x^q)^{\prime\mypp 2}+\left.\frac{\rmd(\ell^s\myp \rme^{-\gamma\ell})}{\rmd{\ell}}\right|_{\myp\ell=x^q}\!\cdot (x^q)''.
\end{equation}
Recalling from \eqref{eq:f'} that $\frac{\rmd}{\rmd{\ell}}(\ell^s\myp \rme^{-\gamma\ell})=(s-\gamma\ell)\,\ell^{s-1}\myp\rme^{-\gamma\ell}$, we further compute
$$
\frac{\rmd^2(\ell^s\myp \rme^{-\gamma\ell})}{\rmd{\ell}^2}
=\frac{\rmd}{\rmd{\ell}}\!\left((s-\gamma\ell)\,\ell^{s-1}\myp\rme^{-\gamma\ell}\right)=\left((s-1)\myp s-2s\gamma\myp\ell+\gamma^2\ell^2\right)\ell^{s-2}\mypp\rme^{-\gamma\ell}. 
$$
Returning to \eqref{eq:f''ell} and \eqref{eq:EM-ref}, one can check  that each of the arising integrals in the remainder term is estimated by $O(\gamma^{-s+1/q})$. As a result, this leads to an improved asymptotic formula (cf.~\eqref{eq:1st})
\begin{equation}\label{eq:2nd+}
\sum_{\ell\in\NN^q} \ell^s\myp \rme^{-\gamma\ell}=\begin{cases}
\displaystyle \frac{\Gamma(1/q)}{q\,\gamma^{1/q}}-\frac12+O\bigl(\gamma^{1/q}\bigr),&s=0,\\[.8pc] \displaystyle \frac{\Gamma(s+1/q)}{q\,\gamma^{s+1/q}}\left(1+O\bigl(\gamma^{2/q}\bigr)\right),&s>0.
\end{cases}
\end{equation}

The refined estimate \eqref{eq:2nd+} with $s=1$ will be useful in the discussion of a numerical implementation of the Boltzmann sampler in Section \ref{sec:6.1}.
\begin{lemma}\label{lm:sum2} As $\eta\to0$ and $\gamma\to0^+$, 
\begin{equation}\label{eq:2nd}
 \sum_{\ell\in\NN^q}\log\bigl(1+ \eta\,\rme^{-\gamma\ell}\bigr)= \frac{\eta\:\Gamma(1/q)}{q\,\gamma^{1/q}}\left(1+O(\eta)+O\bigl(\gamma^{1/q}\bigr)\right)\!.
\end{equation}
\end{lemma}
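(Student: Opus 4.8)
The plan is to expand the logarithm in a power series and reduce the claim to Lemma~\ref{lm:sum1} (or its refinement \eqref{eq:2nd+}) applied termwise. Writing $\log(1+\eta\,\rme^{-\gamma\ell})=\sum_{r\ge 1}(-1)^{r-1} r^{-1}\eta^{r}\rme^{-r\gamma\ell}$, one has, upon summing over $\ell\in\NN^q$ and interchanging the order of summation (legitimate since all terms are eventually dominated geometrically once $\eta$ is small),
\begin{equation*}
\sum_{\ell\in\NN^q}\log\bigl(1+\eta\,\rme^{-\gamma\ell}\bigr)
=\sum_{r\ge 1}\frac{(-1)^{r-1}\eta^{r}}{r}\sum_{\ell\in\NN^q}\rme^{-r\gamma\ell}.
\end{equation*}
The inner sum is exactly the $s=0$ case of Lemma~\ref{lm:sum1} with parameter $r\gamma$, giving $\sum_{\ell\in\NN^q}\rme^{-r\gamma\ell}=\Gamma(1/q)\,(q\,(r\gamma)^{1/q})^{-1}\bigl(1+O((r\gamma)^{1/q})\bigr)$. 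The $r=1$ term thus contributes the announced leading term $\eta\,\Gamma(1/q)/(q\,\gamma^{1/q})$, while the $r\ge 2$ terms carry an extra factor $\eta^{r-1} r^{-1-1/q}$ relative to it; summing these over $r\ge 2$ produces a factor $O(\eta)$ times the leading term, and the $O((r\gamma)^{1/q})$ corrections inside each summand, once weighted by $\eta^{r}/r$ and renormalised by the leading term, sum to $O(\gamma^{1/q})$ (using $\sum_r \eta^{r} r^{-1+1/q}<\infty$ for small $\eta$). Collecting, the right-hand side equals $\dfrac{\eta\,\Gamma(1/q)}{q\,\gamma^{1/q}}\bigl(1+O(\eta)+O(\gamma^{1/q})\bigr)$, as claimed.

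An alternative, perhaps cleaner, route is to bypass the series entirely: since $\eta\to 0$, for $\ell$ in the bulk one has $\log(1+\eta\,\rme^{-\gamma\ell})=\eta\,\rme^{-\gamma\ell}+O(\eta^{2}\rme^{-2\gamma\ell})$ uniformly in $\ell$, because $|\log(1+t)-t|\le t^{2}$ for $0\le t\le \tfrac12$ and $\eta\,\rme^{-\gamma\ell}\le\eta<\tfrac12$ for $\eta$ small. Summing this estimate over $\ell\in\NN^q$ and invoking Lemma~\ref{lm:sum1} with $s=0$ for both $\sum_\ell \rme^{-\gamma\ell}$ and $\sum_\ell \rme^{-2\gamma\ell}$ gives
\begin{equation*}
\sum_{\ell\in\NN^q}\log\bigl(1+\eta\,\rme^{-\gamma\ell}\bigr)
=\eta\cdot\frac{\Gamma(1/q)}{q\,\gamma^{1/q}}\bigl(1+O(\gamma^{1/q})\bigr)
+O\!\left(\eta^{2}\cdot\frac{\Gamma(1/q)}{q\,(2\gamma)^{1/q}}\right),
\end{equation*}
and factoring out the leading term yields the multiplicative error $1+O(\eta)+O(\gamma^{1/q})$.

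The only point requiring a little care — and the one I would treat as the main obstacle, modest as it is — is making the error bounds genuinely uniform and correctly tracking how the two small parameters $\eta$ and $\gamma$ interact: the bound $|\log(1+t)-t|\le t^{2}$ needs $t\le\tfrac12$, which forces an implicit smallness threshold on $\eta$ that is harmless since $\eta\to 0$ by hypothesis, but should be stated; and in the series approach one must check that the tail $\sum_{r\ge 2}$, after renormalisation, really contributes only $O(\eta)$ and not something that degrades with $\gamma$. Both are routine given Lemma~\ref{lm:sum1}, so I would present the second (direct) argument for brevity, noting in passing that the refined formula \eqref{eq:2nd+} could be used instead if a sharper remainder (e.g.\ an explicit $-\tfrac{\eta}{2}$ constant term) were ever wanted.
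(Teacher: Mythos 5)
Your proposal is correct, and the direct argument you say you would present (the bound $|\log(1+t)-t|\le t^2$ for $0\le t\le\tfrac12$ applied termwise, followed by Lemma~\ref{lm:sum1} with $s=0$ for both $\sum_\ell \rme^{-\gamma\ell}$ and $\sum_\ell \rme^{-2\gamma\ell}$) is essentially identical to the paper's proof, which uses $x-\tfrac12 x^2\le\log(1+x)\le x$ and the same lemma. The series-expansion variant is a harmless alternative but adds nothing here.
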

\begin{proof}
By the elementary inequalities
$$
x-\tfrac{1}{2}\mypp x^2\le \log\myn(1+x)\le x\qquad (0<x<1),
$$
applied to each term in the sum, we obtain
\begin{align*}
 \sum_{\ell\in\NN^q}\log\bigl(1+ \eta\,\rme^{-\gamma\ell}\bigr)&=\eta\sum_{\ell\in\NN^q} \rme^{-\gamma\ell}+O(\eta^2) \sum_{\ell\in\NN^q} \rme^{-2\myp\gamma\ell}\\
 &=\eta \sum_{\ell\in\NN^q} \rme^{-\gamma\ell}\left(1+O(\eta)\,\frac{\sum_{\ell\in\NN^q} \rme^{-2\myp\gamma\ell}}{\sum_{\ell\in\NN^q} \rme^{-\myp\gamma\ell}}\right)\!,
\end{align*}
and the claim follows due to Lemma \ref{lm:sum1} (with $s=0$).
\end{proof}

We will also need a more general version of Lemmas \ref{lm:sum1} and \ref{lm:sum2}, which can be proved in a similar manner  using the indented Euler--Maclaurin formula \eqref{EM1}. In what follows, the summation range $\{\ell\ge\ell_*\}$ is a shorthand for $\{\ell\in\NN^q\colon \ell\ge\ell_*\}$. We also use the notation \begin{equation}\label{eq:inc-gamma}
\Gamma(a,x):=\int_x^\infty \!u^{a-1}\mypp\rme^{-u}\,\rmd{u}\qquad (a>0, \,x\ge0)
\end{equation}
for the \emph{(upper) incomplete gamma function} \cite[8.2.2]{NIST}.
\begin{lemma}\label{lm:sum1+}
As $\gamma\to0^+$,
\begin{align}\label{eq:1st'}
\sum_{\ell\ge\ell_*} \ell^s\myp \rme^{-\gamma\ell} = \frac{\Gamma(s+1/q,\gamma\myp\ell_*)}{q\,\gamma^{s+1/q}}
\left(1+O\bigl(\gamma^{1/q}\bigr)\right)\!.
\end{align}
\end{lemma}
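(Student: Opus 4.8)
\textbf{Proof plan for Lemma~\ref{lm:sum1+}.}
The plan is to mimic the proof of Lemma~\ref{lm:sum1} almost verbatim, replacing the plain Euler--Maclaurin formula \eqref{EM0} by its indented version \eqref{EM1} with the lower endpoint $j_*=\lceil \ell_*^{1/q}\rceil$ (so that the sum $\sum_{\ell\ge\ell_*}$ over $\NN^q$ becomes $\sum_{j>j_*-1} j^{qs}\rme^{-\gamma j^q}$; one must be a little careful here that the integer cutoff on $j$ matches the stated real cutoff $\ell_*$ on $\ell$, but the discrepancy costs at most one boundary term of size $O(\ell_*^s\rme^{-\gamma\ell_*})$, which is absorbed into the error). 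First I would set $f(x):=\ell^s\rme^{-\gamma\ell}|_{\ell=x^q}$ exactly as before, so that $f'(x)=(s-\gamma\ell)\,\ell^{s-1}\rme^{-\gamma\ell}|_{\ell=x^q}\cdot qx^{q-1}$ by \eqref{eq:f'}, and apply \eqref{EM1} to obtain
\begin{equation*}
\sum_{\ell\ge\ell_*} \ell^s\,\rme^{-\gamma\ell}=\int_{\ell_*^{1/q}}^\infty x^{qs}\,\rme^{-\gamma x^q}\,\rmd{x}+\Delta_s(\gamma,\ell_*),
\end{equation*}
where $\Delta_s(\gamma,\ell_*)=q\int_{\ell_*^{1/q}}^\infty \widetilde B_1(x)\,(s-\gamma x^q)\,x^{qs-1}\rme^{-\gamma x^q}\,\rmd{x}$.

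Next I would evaluate the main integral by the substitution $u=\gamma x^q$, which sends the lower limit $x=\ell_*^{1/q}$ to $u=\gamma\ell_*$ and yields
\begin{equation*}
\int_{\ell_*^{1/q}}^\infty x^{qs}\,\rme^{-\gamma x^q}\,\rmd{x}=\frac{1}{q\,\gamma^{s+1/q}}\int_{\gamma\ell_*}^\infty u^{s+1/q-1}\,\rme^{-u}\,\rmd{u}=\frac{\Gamma(s+1/q,\gamma\ell_*)}{q\,\gamma^{s+1/q}},
\end{equation*}
using the definition \eqref{eq:inc-gamma} of the incomplete gamma function. This is the claimed leading term, so it remains to bound $\Delta_s(\gamma,\ell_*)$ relative to it. Using $0\le\widetilde B_1(x)\le 1$ and the triangle inequality, the same substitution bounds $|\Delta_s(\gamma,\ell_*)|$ by a constant multiple of $\gamma^{-s}\bigl(\Gamma(s,\gamma\ell_*)+\Gamma(s+1,\gamma\ell_*)\bigr)$ when $s>0$ (and by $\Gamma(1,\gamma\ell_*)=\rme^{-\gamma\ell_*}\le 1$ when $s=0$, as in \eqref{eq:Delta0}); in all cases this is $O(\gamma^{-s})$ uniformly in $\ell_*\ge0$.

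The one genuine subtlety --- and the step I expect to be the main obstacle --- is turning the additive error bound $\Delta_s=O(\gamma^{-s})$ into the \emph{multiplicative} relative error $1+O(\gamma^{1/q})$ as stated, since the main term $\Gamma(s+1/q,\gamma\ell_*)/(q\gamma^{s+1/q})$ decays in $\ell_*$ and could in principle be much smaller than $\gamma^{-s}$ for large $\ell_*$. The resolution is that the ratio of error to main term is controlled by $\gamma^{1/q}\cdot\bigl(\Gamma(s,\gamma\ell_*)+\Gamma(s+1,\gamma\ell_*)\bigr)/\Gamma(s+1/q,\gamma\ell_*)$, and one checks that this ratio of incomplete gamma functions stays bounded (indeed $\to$ a constant) uniformly over $\gamma\ell_*\in[0,\infty)$: for $\gamma\ell_*$ in a bounded range it is bounded by continuity since $\Gamma(s+1/q,0)=\Gamma(s+1/q)>0$, while as $\gamma\ell_*\to\infty$ all three incomplete gamma functions are asymptotic to $(\gamma\ell_*)^{\,\cdot\,-1}\rme^{-\gamma\ell_*}$ with matching exponential factors, so the ratio is again $O(1)$ (in fact $\to(\gamma\ell_*)^{1-1/q}$ times a bounded factor, which only helps). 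Hence $\Delta_s(\gamma,\ell_*)=O(\gamma^{1/q})$ times the main term, and combining this with the boundary-term discrepancy noted above gives \eqref{eq:1st'}. The analogue of Lemma~\ref{lm:sum2} with a cutoff follows from \eqref{eq:1st'} with $s=0$ by the same elementary inequality $x-\tfrac12 x^2\le\log(1+x)\le x$ used there.
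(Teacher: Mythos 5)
Your route is exactly the one the paper intends (the paper omits the proof, saying only that the indented Euler--Maclaurin formula \eqref{EM1} should be applied as in Lemma \ref{lm:sum1}), and your main-term computation and the bound $|\Delta_s(\gamma,\ell_*)|\le \gamma^{-s}\bigl(s\,\Gamma(s,\gamma\ell_*)+\Gamma(s+1,\gamma\ell_*)\bigr)$ are correct. The flaw is in the final uniformity step. The ratio $\bigl(s\,\Gamma(s,a)+\Gamma(s+1,a)\bigr)/\Gamma(s+1/q,a)$ is \emph{not} bounded uniformly over $a\in[0,\infty)$ when $q\ge2$: since $\Gamma(\alpha,a)\sim a^{\alpha-1}\rme^{-a}$ as $a\to\infty$, the ratio grows like $a^{1-1/q}\to\infty$, and your parenthetical ``which only helps'' is backwards --- $1-1/q>0$ for $q\ge 2$, so this factor inflates the error rather than suppressing it (only for $q=1$ is it harmless). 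What your estimate actually delivers is a relative error $O\bigl(\gamma^{1/q}(1+\gamma\ell_*)^{1-1/q}\bigr)$, which coincides with the stated $O(\gamma^{1/q})$ only when $\gamma\ell_*=O(1)$.

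This is not just lost sharpness in the bookkeeping: the uniform claim is genuinely false for $q\ge2$ when $\gamma\ell_*$ grows. Take $q=2$, $s=0$, $\ell_*=\gamma^{-2}$ with $\gamma^{-1}\in\NN$: then $\sum_{\ell\ge\ell_*}\rme^{-\gamma\ell}=\sum_{k\ge0}\rme^{-1/\gamma-2k-\gamma k^2}\to(1-\rme^{-2})^{-1}\rme^{-1/\gamma}$, whereas $\Gamma(1/2,\gamma\ell_*)/(2\gamma^{1/2})\sim\tfrac12\,\rme^{-1/\gamma}$; the two differ by a constant factor, not by $1+O(\gamma^{1/2})$. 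So the lemma has to be read with $\gamma\ell_*$ bounded (which is how the paper actually uses it in Theorems \ref{thm1<L} and \ref{th:minmax}, where $\gamma\ell_*$ converges; in Theorem \ref{th:maxM} one has $\gamma\ell_*\sim\log\braket{M}$, but there only the leading-order asymptotics is needed and the weaker bound $O(\gamma^{1/q}(\gamma\ell_*)^{1-1/q})$ still vanishes), or else with the error term weakened accordingly. In the regime $\gamma\ell_*=O(1)$ your argument --- continuity and strict positivity of $\Gamma(s+1/q,\cdot)$ on compact sets --- is complete, and the same restriction also covers your boundary-term estimate, whose relative size is likewise only $O(\gamma^{1/q})$ for bounded $\gamma\ell_*$; you should state this restriction (or the modified error bound) explicitly instead of claiming uniform boundedness of the incomplete-gamma ratio.
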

\begin{lemma}\label{lm:sum2+} As $\eta\to0$ and  $\gamma\to0^+$, 
\begin{equation}\label{eq:2nd'}
 \sum_{\ell\ge\ell_*}\log\bigl(1+ \eta\,\rme^{-\gamma\ell}\bigr)=\frac{\eta\:\Gamma(1/q,\gamma\myp\ell_*)}{{q\,\gamma^{1/q}}}
\end{equation}
%where the $O$-terms are uniform in $\ell_*\ge0$.
\end{lemma}

For ease of future reference, we state here a well-known criterion for uniform convergence of monotone functions adapted to the half-line domain (see, e.g., \cite[Sec.\,0.1]%, pp.\,1--3]
{Resnick}).
\begin{lemma}\label{lm:uniform}
Let a sequence of monotone functions on $(0,\infty)$, uniformly bounded 
on $[\myp\delta,\infty)$ for any $\delta>0$, converge pointwise to a continuous (monotone) function. Then this convergence is uniform on $[\myp\delta,\infty)$ for any $\delta>0$. 
\end{lemma}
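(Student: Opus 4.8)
\textbf{Proof plan for Lemma~\ref{lm:uniform}.}
The statement is the classical Pólya–type uniform convergence theorem (often associated with Dini's lemma in the compact case, and with Pólya's theorem for distribution functions on the line), adapted here to monotone functions on the half-line. The plan is to reduce the half-line claim to a statement on a compact interval by exploiting the monotonicity and the uniform boundedness hypothesis, and then to run the standard $\varepsilon$-net argument on that compact interval.

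First I would fix $\delta>0$ and an arbitrary $\varepsilon>0$, and denote the sequence by $g_n$ and the limit by $g$, both assumed monotone (say non-decreasing; the non-increasing case is symmetric, or obtained by passing to $-g_n$). Since the $g_n$ are uniformly bounded on $[\delta,\infty)$ and $g$ is continuous and monotone, $g$ has finite limits $g(\delta)$ and $g(\infty):=\lim_{x\to\infty}g(x)$. Pick a point $T>\delta$ large enough that $g(T)>g(\infty)-\varepsilon$, and then choose a finite partition $\delta=x_0<x_1<\dots<x_k=T$ fine enough that $g(x_{i})-g(x_{i-1})<\varepsilon$ for each $i$ (possible by uniform continuity of $g$ on $[\delta,T]$). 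By pointwise convergence there is $n_0$ such that for all $n\ge n_0$ and all $i=0,\dots,k$ we have $|g_n(x_i)-g(x_i)|<\varepsilon$.

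The key step is the monotonicity sandwich: for $x\in[x_{i-1},x_i]$, non-decreasingness of $g_n$ gives $g_n(x_{i-1})\le g_n(x)\le g_n(x_i)$, hence
\[
g(x_{i-1})-\varepsilon \le g_n(x) \le g(x_i)+\varepsilon,
\]
and combining with $g(x_{i-1})\ge g(x)-\varepsilon$ and $g(x_i)\le g(x)+\varepsilon$ (the latter from $g(x_i)-g(x_{i-1})<\varepsilon$ and $g(x)\ge g(x_{i-1})$) yields $|g_n(x)-g(x)|<2\varepsilon$ uniformly on $[\delta,T]$ for $n\ge n_0$. For the tail $x>T$ one uses monotonicity again: $g_n(x)\ge g_n(T)\ge g(T)-\varepsilon>g(\infty)-2\varepsilon\ge g(x)-2\varepsilon$, while for the upper bound one needs control of $\sup_x g_n(x)$; this is where the uniform boundedness hypothesis enters, together with the observation that monotone convergence at the single far-away point forces $\limsup_n \sup_{x\ge T} g_n(x)\le g(\infty)+\varepsilon$ after possibly enlarging $T$ and $n_0$. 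Putting the compact estimate and the tail estimate together gives $\sup_{x\ge\delta}|g_n(x)-g(x)|\le 2\varepsilon$ for all large $n$, which is the claim.

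I expect the only real subtlety to be the tail control for $x>T$: on a compact interval the partition argument is completely routine, but on the half-line one must make sure the sequence does not ``escape to infinity'' near $+\infty$. This is precisely handled by the uniform boundedness assumption on $[\delta,\infty)$ combined with monotonicity, which pins the values of $g_n$ on the tail between $g_n(T)$ (converging to $g(T)\approx g(\infty)$) and the uniform bound; in fact for non-decreasing functions converging to a bounded continuous limit the tail oscillation is automatically small once $g(T)$ is close to $g(\infty)$. Since this is a standard lemma I would simply cite \cite[Sec.\,0.1]{Resnick} in the paper and include at most this brief sketch.
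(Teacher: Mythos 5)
Your compact-interval argument (the $\varepsilon$-net plus monotonicity sandwich) is the standard one and is fine; note, though, that the paper gives no proof of this lemma at all — it is stated as a known fact with a pointer to Resnick, whose Proposition in Sec.\,0.1 yields exactly the locally uniform (compact-interval) statement you reproduce. So there is nothing to compare on that part; the issue is your tail step.

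The tail upper bound is a genuine gap. You assert that pointwise convergence at a single far point $T$, together with uniform boundedness, ``forces $\limsup_n \sup_{x\ge T} g_n(x)\le g(\infty)+\varepsilon$''. That is false for monotone functions: take $g_n(x)=\min\{1,\max\{0,\,x-n\}\}$, which is non-decreasing, continuous, uniformly bounded by $1$, and converges pointwise on $(0,\infty)$ to $g\equiv 0$ (continuous and monotone), yet $\sup_{x\ge\delta}|g_n(x)-g(x)|=1$ for every $n$. Uniform boundedness only caps $g_n$ by the uniform constant, not by $g(\infty)+\varepsilon$; the mass can ``escape to infinity'' exactly as in this example, and your closing remark that the tail oscillation is ``automatically small once $g(T)$ is close to $g(\infty)$'' is where the argument breaks. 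What the Pólya-type extension from compacts to $[\myp\delta,\infty)$ really needs is the additional endpoint condition $g_n(\infty)\to g(\infty)$, where $g_n(\infty):=\lim_{x\to\infty}g_n(x)$ exists by monotonicity and boundedness; with it, choose $T$ so that $g(\infty)-g(T)<\varepsilon$ and $n_0$ so that both $|g_n(T)-g(T)|<\varepsilon$ and $|g_n(\infty)-g(\infty)|<\varepsilon$ for $n\ge n_0$, and then $g(T)-\varepsilon\le g_n(x)\le g(\infty)+\varepsilon$ sandwiches $g_n$ within $2\varepsilon$ of $g$ on $[T,\infty)$. In the paper's applications this extra condition holds automatically — in Theorem~\ref{thm1}(b) the functions are conditional distribution functions with common value $1$ at infinity, and in Theorem~\ref{th:exp-LS} the scaled expected Young boundaries and the limit shape both vanish at infinity — so your proof should either add this hypothesis explicitly (or restrict the uniformity claim to compact intervals) and verify it where the lemma is invoked, rather than derive the tail bound from uniform boundedness alone.
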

%for a quick reference, see \href{https://www.math.unl.edu/~sdunbar1/ProbabilityTheory/Lessons/BernoulliTrials/DeMoivreLaplaceCLT/demoivrelaplaceclt.pdf}{https://www.math.unl.edu/\~{}sdunbar1/ProbabilityTheory/Lessons/\allowbreak BernoulliTrials/DeMoivreLaplaceCLT/demoivrelaplaceclt.pdf} (Lemma~6) or W.~Feller, \emph{An Introduction to Probability Theory and Its Applications}, Vol.~II, 2nd ed.  Wiley Series in Probability and Mathematical Statistics. Wiley, New York, 1971 (Section VIII.10, Problem~5, p.\,285), or P.~Billingsley, \emph{Probability and Measure}, 3rd ed. Wiley Series in Probability and Mathematical Statistics. Wiley, New York, 1995 (Section 14, Problem 14.8(c), p.\,198). Many years ago  I myself proved a similar version (on $[a,b]$ but without boundedness) in \href{http://dx.doi.org/10.1214/10-AOP607}{doi:10.1214/10-AOP607} (Lemma 4.3, p.~2287). A good discussion is in Sydney Resnick's book, \cite[Section 0.1, pp.\ 1--3]{Resnick}.

\subsection{Calibration of the parameters}\label{sec:3.3}
To analyze the Boltzmann distribution under various limit regimes, it is convenient to re-parameterize it via the \emph{hyper-parameters}   \begin{equation}\label{braket}
  \langle N\rangle=\EE_{\bm{z}}(N_\lambda), \qquad   \langle M\rangle=\EE_{\bm{z}}(M_\lambda).
\end{equation} 
%as $\langle N\rangle\to\infty$ and under a suitable growth condition on $\langle M\rangle$.
Throughout the rest of the paper, we will work under the following growth condition.
\begin{assumption}\label{as0}
It is assumed that  $\langle N \rangle \to \infty$ and $\langle M\rangle^{-1}=O(1)$ (that is, $\langle M\rangle$ is bounded away from zero), and furthermore,
\begin{equation}\label{eq:kappa}
\kappa:=\frac{\langle M\rangle^{q+1}}{\langle N\rangle\ \ }\to 0.
\end{equation}
\end{assumption}

\begin{remark}
The meaning of Assumption \ref{as0} is elucidated by a  comparison with the case of \emph{all} strict partitions $\check{\varLambda}^{q}$, that is,  \emph{without controlling the number of parts}. Here, the parameter $z_2$ would become obsolete (we can formally set $z_2=1$ in the Boltzmann distribution formula \eqref{Boltzmann}), while the parameter $z_1\in(0,1)$, calibrated from the weight condition in \eqref{braket}, can be shown (using the Euler--Maclaurin sum formula \eqref{eq:EM}, similarly as in the proof of Theorem \ref{th:cal} below) to satisfy the asymptotics $-\log z_1\sim c_q\mypp\langle N\rangle^{-q/(q+1)}$, with 
\begin{equation}\label{eq:cq}
c_q^{1+1/q}=\frac{1}{q}\int_0^\infty\! u^{1/q}\myp\frac{\rme^{-u}}{1+\rme^{-u}}\,\rmd{u}. 
\end{equation}
In turn, the expected length has the asymptotics 
$\EE_{\bm{z}}(M_\lambda)\sim C_q\mypp\langle N\rangle^{1/(q+1)}$,
where
\begin{equation}\label{eq:Cq}
C_q=\frac{1}{q\,c_q^{1/q}}\int_0^\infty\! u^{1/q-1}\myp\frac{\rme^{-u}}{1+\rme^{-u}}\,\rmd{u}.
\end{equation}
For example, for $q=1$ the 
integrals in \eqref{eq:cq} and \eqref{eq:Cq} can be evaluated to yield  $c_1=\pi/\sqrt{12}$ and  $C_1=\bigl(\sqrt{12}\,\log 2\bigr)/\pi$ (cf.\ \cite{Erdos}).
Thus, the restriction that we put on the growth of $\langle M\rangle$ in Assumption \ref{as0} means that the number of parts is asymptotically smaller than what is expected from typical partitions in $\check{\varLambda}^q$ of large expected weight $\langle N\rangle$.
\end{remark}

The conditions \eqref{braket} can be viewed as a set of equations on the parameters $z_1$ and $z_2$ (cf.\ \eqref{braket0}). According to Corollary \ref{cor:unique}, a solution to \eqref{braket} is unique, if it exists. The following theorem gives an asymptotic representation of the roots $z_1$ and $z_2$ in terms of $\langle N\rangle$ and $\langle M\rangle$. 

\begin{theorem}\label{th:cal}
Under Assumption \ref{as0}, the roots $z_1$ and $z_2$ of the equations (\ref{braket}) are asymptotically given by
\begin{align}
\label{eq:z1}
z_1 &=\exp\left(-\frac{\langle M\rangle}{q\mypp\langle N\rangle}\left(1+O\bigl(\kappa^{1/q}\bigr) \right)\right)\!,\\
z_2 &= \frac{\kappa^{1/q}}{q^{1/q}\,\Gamma(1+1/q)}\left(1+O\bigl(\kappa^{1/q}\bigr)\right)\!.
\label{eq:z2}
\end{align}
\end{theorem}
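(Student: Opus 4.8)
The plan is to set $s_i=\log z_i$ and work with the two calibrating equations in \eqref{braket}, written out via Lemma~\ref{distnu} as
\begin{equation}\label{eq:calib-here}
\sum_{\ell\in\NN^q}\frac{z_1^\ell z_2}{1+z_1^\ell z_2}=\langle M\rangle,\qquad
\sum_{\ell\in\NN^q}\frac{\ell\, z_1^\ell z_2}{1+z_1^\ell z_2}=\langle N\rangle.
\end{equation}
Write $\gamma:=-\log z_1>0$ and $\eta:=z_2$, so that $z_1^\ell z_2=\eta\,\rme^{-\gamma\ell}$, and anticipate from the heuristics in the Introduction that both $\gamma\to0$ and $\eta\to0$. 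First I would establish a priori bounds showing $\gamma\to0$ and $\eta\to0$ under Assumption~\ref{as0}: the ratio of the two equations in \eqref{eq:calib-here} is $\langle N\rangle/\langle M\rangle$, which is a $z_1$-weighted ``mean part size'' and must therefore blow up, forcing $\gamma\to0$; feeding this back into the first equation then forces $\eta\to0$ since otherwise the sum would be of order $\gamma^{-1/q}\to\infty$ while $\langle M\rangle$ grows more slowly (this is where $\kappa\to0$ enters).

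Next, with $\gamma,\eta\to0$ secured, I would expand each summand as $\dfrac{\eta\,\rme^{-\gamma\ell}}{1+\eta\,\rme^{-\gamma\ell}}=\eta\,\rme^{-\gamma\ell}\bigl(1+O(\eta)\bigr)$ uniformly in $\ell$, so that the left-hand sides of \eqref{eq:calib-here} become $\eta\sum_{\ell\in\NN^q}\ell^{\,s}\,\rme^{-\gamma\ell}\bigl(1+O(\eta)\bigr)$ with $s=0$ and $s=1$ respectively. Applying Lemma~\ref{lm:sum1} to both sums gives
\begin{equation}\label{eq:sys-asym}
\frac{\eta\,\Gamma(1/q)}{q\,\gamma^{1/q}}\bigl(1+O(\eta)+O(\gamma^{1/q})\bigr)=\langle M\rangle,\qquad
\frac{\eta\,\Gamma(1+1/q)}{q\,\gamma^{1+1/q}}\bigl(1+O(\eta)+O(\gamma^{1/q})\bigr)=\langle N\rangle.
\end{equation}
Dividing the second relation by the first (and using $\Gamma(1+1/q)=\tfrac1q\Gamma(1/q)$) yields $\gamma^{-1}\bigl(1+O(\eta)+O(\gamma^{1/q})\bigr)=q\,\langle N\rangle/\langle M\rangle$, i.e.\ $\gamma=\dfrac{\langle M\rangle}{q\,\langle N\rangle}\bigl(1+o(1)\bigr)$. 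Substituting this value of $\gamma$ back into the first relation of \eqref{eq:sys-asym} and solving for $\eta$ gives $\eta=\dfrac{q\,\gamma^{1/q}\langle M\rangle}{\Gamma(1/q)}\bigl(1+o(1)\bigr)$; plugging in $\gamma\sim\langle M\rangle/(q\langle N\rangle)$ and simplifying with $\kappa=\langle M\rangle^{q+1}/\langle N\rangle$ produces $\eta=\dfrac{\kappa^{1/q}}{q^{1/q}\,\Gamma(1+1/q)}\bigl(1+o(1)\bigr)$, which is \eqref{eq:z2}, while $z_1=\rme^{-\gamma}$ gives \eqref{eq:z1}.

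Finally I would upgrade the $o(1)$ error terms to the claimed $O(\kappa^{1/q})$. The point is that once the leading orders are known, $\gamma^{1/q}$ is itself of order $(\langle M\rangle/\langle N\rangle)^{1/q}$ and $\eta$ is of order $\kappa^{1/q}$, and since $\langle M\rangle$ is bounded below while $\kappa\to0$, one checks $\langle M\rangle/\langle N\rangle=\kappa/\langle M\rangle^q\le\kappa\cdot O(1)$, so $\gamma^{1/q}=O(\kappa^{1/q})$; hence both error sources $O(\eta)$ and $O(\gamma^{1/q})$ in \eqref{eq:sys-asym} are $O(\kappa^{1/q})$. Propagating this through the division and back-substitution steps — each of which only multiplies or divides by quantities of the form $1+O(\kappa^{1/q})$ — gives the stated error bounds. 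The main obstacle is the bootstrapping at the start: rigorously proving $\gamma\to0$ and $\eta\to0$ before one is entitled to use the asymptotic sum formulas, since Lemma~\ref{lm:sum1} only applies in that regime; this requires a careful two-sided estimate of the sums in \eqref{eq:calib-here} valid for all small-but-not-yet-controlled values of $\gamma$ and $\eta$, for instance by comparing with geometric series for an upper bound and with a single dominant term (or an integral over a bounded range) for a lower bound.
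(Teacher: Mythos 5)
Your plan is correct and follows essentially the same route as the paper's proof: write the calibration conditions via Lemma~\ref{distnu}, absorb the denominators into a relative error $O(z_2)$, apply Lemma~\ref{lm:sum1} with $s=0$ and $s=1$ to get the asymptotic system for $\gamma=-\log z_1$ and $z_2$, solve it by taking the ratio (using $\Gamma(1+1/q)=\tfrac1q\Gamma(1/q)$) and back-substituting, and finally note that $\gamma^{1/q}$ and $z_2$ are both $O(\kappa^{1/q})$ so the errors propagate as claimed. Your additional explicit bootstrapping of $\gamma\to0$, $z_2\to0$ is a refinement the paper skips (and is welcome), though note that your first step alone is not quite enough: a divergent mean part size does not by itself force $\gamma\to0$ (one can have $\gamma$ fixed and $z_2\to\infty$), so the $\kappa\to0$ hypothesis must already be invoked there, as you partly anticipate.
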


\begin{proof}
Denote for short
$   \gamma:=-\log z_1$. In view of Lemma \ref{distnu} (see \eqref{eq:Bern}), we have  
\begin{equation}\label{eq:<M>}
\EE_{\bm{z}}(M_\lambda)=\sum_{\ell\in\NN^q}\EE_{\bm{z}}(\nu_\ell)= \sum_{\ell\in\NN^q}\frac{z_1^{\ell} z_2}{1+z_1^{\ell} z_2}=z_2\sum_{\ell\in\NN^q}z_1^{\ell} -R_1(\bm{z}),
\end{equation}
where
\begin{equation}\label{eqR1}
0<R_1(\bm{z})=\sum_{\ell\in\NN^q}\frac{z_1^{2\ell}z_2^2}{1+z_1^{\ell} z_2}<  z_2^2\sum_{\ell\in\NN^q} z_1^{2\ell}.
\end{equation}
Then Lemma \ref{lm:sum1} (with $s=0$)  applied to the sums on the right-hand side of \eqref{eq:<M>} and \eqref{eqR1} yields 
\begin{equation}\label{eq:EM}
\EE_{\bm{z}}(M_\lambda)=
\frac{z_2\,\Gamma(1/q)}{q\,\gamma^{1/q}}\left(1+ O\bigl(\gamma^{1/q}\bigr)+O(z_2)\right)\!.
\end{equation}

Similarly, 
\begin{equation}\label{eq:<N>}
\EE_{\bm{z}}(N_\lambda)=\sum_{\ell\in\NN^q}\ell\:\EE_{\bm{z}}(\nu_{\ell})=\sum_{\ell\in\NN^q}\frac{\ell\myp z_1^{\ell} z_2}{1+z_1^{\ell} z_2}=z_2\sum_{\ell\in\NN^q}\ell\myp z_1^{\ell} -R_2(\bm{z}),
\end{equation}
where
\begin{equation}\label{eqR2}
0<R_2(\bm{z})=\sum_{\ell\in\NN^q}\frac{\ell\myp z_1^{2\ell}\myn z_2^2 }{1+z_1^{\ell} z_2}< z_2^2\sum_{\ell\in\NN^q} \ell\myp z_1^{2\ell}.
\end{equation}
Again applying Lemma \ref{lm:sum1} (now with $s=1$), we get
\begin{equation}\label{eq:EN}
\EE_{\bm{z}}(N_\lambda)=
\frac{z_2\,\Gamma(1+1/q)}{q\,\gamma^{1+1/q}}\left(1+ O\bigl(\gamma^{1/q}\bigr)+O(z_2)\right)\!.
\end{equation}

Returning to the calibrating conditions \eqref{braket} and substituting the asymptotic expressions \eqref{eq:EM} and \eqref{eq:EN}, we obtain the following system of asymptotic equations,
\begin{equation}\label{eq:<MN>1}
\left\{
\begin{aligned}\langle M\rangle &= \frac{z_2\,\Gamma(1/q)}{q\,\gamma^{1/q}}\left(1+O\bigl(\gamma^{1/q}\bigr)+O(z_2)\right)\!,\\
\langle N\rangle &= \frac{z_2\, \Gamma(1+1/q)}{q\, \gamma^{1+1/q}}\left(1+O\bigl(\gamma^{1/q}\bigr)+O(z_2) \right)\!.
\end{aligned}
\right.\end{equation}
Since $\langle M\rangle$ is bounded away from zero, the first of these equations implies that $\gamma^{1/q}=O(z_2)$, so that the error terms $O(\gamma^{1/q})$ in \eqref{eq:<MN>1} are superfluous. 

A further simple analysis of the system \eqref{eq:<MN>1} shows that $z_2$ is of order of $\kappa^{1/q}$; specifically, using that $\Gamma(1+1/q)=(1/q)\,\Gamma(1/q)$, we find   
\begin{align*}
\gamma&=\frac{\langle M\rangle}{q\mypp\langle N\rangle} \left(1+O\bigl(\kappa^{1/q}\bigr)\right)\!,\\
z_2&=\frac{\langle M\rangle \mypp\gamma^{1/q}}{\Gamma(1+1/q)}\left(1+O\bigl(\kappa^{1/q}\bigr)\right) =\frac{\kappa^{1/q}}{q^{1/q}\,\Gamma(1+1/q)}\left(1+O\bigl(\kappa^{1/q}\bigr)\right)\!,
\end{align*}
in line with formulas \eqref{eq:z1} and
\eqref{eq:z2}.
\end{proof}

\begin{assumption}
\label{as1}
Throughout the rest of the paper, we assume that the parameters $z_1$ and $z_2$ are chosen according to formulas \eqref{eq:z1} and \eqref{eq:z2}, respectively. In particular, the Boltzmann measure $\QQ_{\bm{z}}$ becomes dependent on the hyper-parameters $\braket{N}$ and $\braket{M}$, as well as the
$\QQ_{\bm{z}}$-probabilities and the corresponding expected values.
\end{assumption}

For the sake of future reference and to fix the notation already used in the proof of Theorem \ref{th:cal}, 
the asymptotic formula \eqref{eq:z1} can be written as 
\begin{equation}\label{eq:gamma}
\gamma:=-\log z_1=  \frac{\braket{M}}{q\myn\braket{N}} \left(1+O\bigl(\kappa^{1/q}\bigr)\right)\!.
\end{equation}
It is also useful to record a simple consequence of the relations \eqref{eq:z1} and \eqref{eq:z2} (on account of the notation \eqref{eq:kappa} and \eqref{eq:gamma}), 
\begin{equation}\label{eq:z2/gamma}
\frac{z_2}{\gamma^{1/q}}=\frac{\braket{M}}{\Gamma(1+1/q)} \left(1+O\bigl(\kappa^{1/q}\bigr)\right)\!. 
\end{equation}

\begin{lemma}\label{lm:=0}
Under Assumptions \ref{as0} and \ref{as1},
\begin{equation}\label{eq:F0}
\log F(\bm{z})\sim \braket{M}\mynn.
\end{equation}
\end{lemma}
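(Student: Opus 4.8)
The plan is to start from the product representation \eqref{eq:F_q} and take logarithms, writing
\begin{equation*}
\log F(\bm{z}) = \sum_{\ell\in\NN^q}\log\bigl(1+z_1^\ell z_2\bigr) = \sum_{\ell\in\NN^q}\log\bigl(1+z_2\,\rme^{-\gamma\ell}\bigr),
\end{equation*}
where $\gamma=-\log z_1$ as in \eqref{eq:gamma}. This is precisely a sum of the type handled by Lemma \ref{lm:sum2}, with $\eta=z_2$ and the same $\gamma$. Under Assumptions \ref{as0} and \ref{as1} we have $z_2=O(\kappa^{1/q})\to 0$ and $\gamma = O(\braket M/\braket N)\to 0$ (since $\braket M$ is bounded away from zero and $\kappa\to 0$ forces $\braket N\to\infty$), so the hypotheses of Lemma \ref{lm:sum2} are met.

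Applying Lemma \ref{lm:sum2} directly gives
\begin{equation*}
\log F(\bm{z}) = \frac{z_2\,\Gamma(1/q)}{q\,\gamma^{1/q}}\left(1+O(z_2)+O\bigl(\gamma^{1/q}\bigr)\right).
\end{equation*}
Now I would substitute the calibration asymptotics. Using \eqref{eq:z2/gamma}, namely $z_2/\gamma^{1/q} = \braket M/\Gamma(1+1/q)\,(1+O(\kappa^{1/q}))$, together with the identity $\Gamma(1+1/q)=(1/q)\,\Gamma(1/q)$, the leading factor becomes
\begin{equation*}
\frac{z_2\,\Gamma(1/q)}{q\,\gamma^{1/q}} = \frac{\Gamma(1/q)}{q}\cdot\frac{\braket M}{\Gamma(1+1/q)}\,\bigl(1+O(\kappa^{1/q})\bigr) = \braket M\,\bigl(1+O(\kappa^{1/q})\bigr).
\end{equation*}
Finally I would check that the error factors are negligible: $z_2 = O(\kappa^{1/q})\to 0$ by \eqref{eq:z2}, and $\gamma^{1/q} = O((\braket M/\braket N)^{1/q})\to 0$ — in fact since $\braket M$ is bounded away from zero, $\kappa\to 0$ is equivalent to $\braket N\to\infty$, and one checks $\gamma^{1/q}\to 0$ and indeed $\gamma^{1/q}=O(\kappa^{1/q})$ up to the $\braket M$-dependence, so both error contributions are $o(1)$. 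Collecting, $\log F(\bm{z}) = \braket M\,(1+o(1))$, which is exactly \eqref{eq:F0}.

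There is really no hard obstacle here: the lemma does all the analytic work, and the statement only claims an asymptotic equivalence ($\sim$) rather than a sharp error term, so the crude $o(1)$ bookkeeping suffices. The one point requiring a moment's care is confirming that $z_2\to 0$ and $\gamma\to 0$ under Assumptions \ref{as0}–\ref{as1} so that Lemma \ref{lm:sum2} applies — this follows immediately from \eqref{eq:z1}, \eqref{eq:z2} and the fact that $\braket M$ bounded away from zero combined with $\kappa\to 0$ yields $\braket N\to\infty$. (If a quantitative remainder were wanted, one would instead track the $O(\kappa^{1/q})$ terms throughout, which is straightforward but unnecessary for the stated claim.)
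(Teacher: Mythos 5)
Your argument is correct and coincides with the paper's own proof: both pass through the product representation \eqref{eq:F_q}, apply Lemma \ref{lm:sum2} with $\eta=z_2$ and $\gamma=-\log z_1$, and conclude via the calibration relation \eqref{eq:z2/gamma} together with $\Gamma(1+1/q)=(1/q)\,\Gamma(1/q)$. Your extra bookkeeping that $z_2\to0$ and $\gamma\to0$ under Assumptions \ref{as0}--\ref{as1} is a harmless elaboration of what the paper leaves implicit.
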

\begin{proof}
Using formula \eqref{eq:F_q} and applying Lemma \ref{lm:sum2} with $\gamma=-\log z_1$ and $\eta=z_2$, we have
$$
\log F(\bm{z})=\sum_{\ell\in\NN^q} \log\bigl(1+z_1^\ell z_2\bigr)\sim\frac{z_2\,\Gamma(1+1/q)}{\gamma^{1/q}}\sim \langle M\rangle,
$$
according to formula~\eqref{eq:z2/gamma}.
\end{proof}
\begin{remark}\label{rm:==0}
The result \eqref{eq:F0} provides the asymptotics of the probability of the empty partition $\lambda_\varnothing$ (cf.\ Remark~\ref{rm:=0}); indeed, by formula \eqref{Boltzmann}
\begin{equation}\label{eq:l=empty}
\QQ_{\bm{z}}(\lambda_\varnothing)=\frac{1}{F(\bm{z})}=\rme^{-\langle M\rangle\mypp(1+o(1))}.
\end{equation}
\end{remark}

\subsection{Asymptotics of the covariance matrix}\label{sec:3.4}
\begin{theorem}\label{th:VarCov}
Under Assumptions \ref{as0} and \ref{as1}, we have
\begin{equation}\label{eq:VarCov}
\Var_{\bm{z}}(M_\lambda)\sim\langle M\rangle,\qquad \Var_{\bm{z}}(N_\lambda)\sim\frac{(q+1)\mypp\langle N\rangle^2}{\langle M\rangle},\qquad \Cov_{\bm{z}}(M_\lambda,N_\lambda)\sim \langle N\rangle.
\end{equation}
\end{theorem}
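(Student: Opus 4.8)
The plan is to exploit Lemma~\ref{lm:Phi}, which expresses every entry of the covariance matrix $\bm{K}(\bm{z})$ as a second-order partial derivative of $\varPhi(\bm{s})=\log F(\rme^{\bm{s}})$. Concretely, from \eqref{eq:F_q} we have the exact formula
$$
\varPhi(\bm{s})=\sum_{j=1}^\infty\log\bigl(1+\rme^{\myp s_1 j^q+s_2}\bigr),\qquad s_1=-\gamma,\ s_2=\log z_2,
$$
and differentiating termwise gives closed-form series for $\EE_{\bm{z}}(N_\lambda)$, $\EE_{\bm{z}}(M_\lambda)$ and for the three second moments. Each such series has the shape $\sum_{\ell\in\NN^q}\ell^s\,\psi(z_1^\ell z_2)$ for a rational function $\psi$ that behaves like $z_1^\ell z_2$ when the argument is small. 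Since under Assumptions~\ref{as0}--\ref{as1} we have $z_2\to 0$ and $\gamma\to0{+}$, the dominant contribution in each case is the ``linearized'' sum $z_2\sum_{\ell\in\NN^q}\ell^s\myp z_1^\ell=z_2\sum_{\ell\in\NN^q}\ell^s\rme^{-\gamma\ell}$, with a relative error $O(z_2)$ coming from the higher-order terms in $\psi$, exactly as in the derivation of \eqref{eq:EM} and \eqref{eq:EN} in the proof of Theorem~\ref{th:cal}.

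The concrete steps are as follows. First I would record the needed derivatives: with $p_\ell:=z_1^\ell z_2/(1+z_1^\ell z_2)$ (the Bernoulli parameter from \eqref{eq:Bern}), one has $\Var_{\bm{z}}(\nu_\ell)=p_\ell(1-p_\ell)$, hence $\Var_{\bm{z}}(M_\lambda)=\sum_\ell p_\ell(1-p_\ell)$, $\Var_{\bm{z}}(N_\lambda)=\sum_\ell \ell^2 p_\ell(1-p_\ell)$, and $\Cov_{\bm{z}}(N_\lambda,M_\lambda)=\sum_\ell \ell\, p_\ell(1-p_\ell)$, by independence of the $(\nu_\ell)$. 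Second, since $p_\ell(1-p_\ell)=z_1^\ell z_2\,(1+z_1^\ell z_2)^{-2}=z_1^\ell z_2\,(1+O(z_1^\ell z_2))$, each variance/covariance equals $z_2\sum_{\ell\in\NN^q}\ell^s\rme^{-\gamma\ell}$ times $(1+O(z_2))$, for $s=0,1,2$ respectively; the error is uniform because $z_1^\ell z_2\le z_1 z_2\to0$. Third, I would apply Lemma~\ref{lm:sum1} with $s=0,1,2$, obtaining
$$
\Var_{\bm{z}}(M_\lambda)\sim\frac{z_2\,\Gamma(1/q)}{q\,\gamma^{1/q}},\qquad
\Cov_{\bm{z}}(N_\lambda,M_\lambda)\sim\frac{z_2\,\Gamma(1+1/q)}{q\,\gamma^{1+1/q}},\qquad
\Var_{\bm{z}}(N_\lambda)\sim\frac{z_2\,\Gamma(2+1/q)}{q\,\gamma^{2+1/q}}.
$$
Fourth, I would feed in the calibrated asymptotics. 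By \eqref{eq:EM}--\eqref{eq:EN} the first two right-hand sides are $\sim\braket{M}$ and $\sim\braket{N}$ respectively (this is just the calibration), giving the first and third claims in \eqref{eq:VarCov} immediately. For $\Var_{\bm{z}}(N_\lambda)$ I would write $\Gamma(2+1/q)=(1+1/q)\,\Gamma(1+1/q)$, so that
$$
\Var_{\bm{z}}(N_\lambda)\sim\frac{z_2\,\Gamma(1+1/q)}{q\,\gamma^{1+1/q}}\cdot\frac{1+1/q}{\gamma}
\sim\braket{N}\cdot\frac{q+1}{q\,\gamma}\sim\braket{N}\cdot\frac{(q+1)\,\braket{N}}{\braket{M}},
$$
where the last step uses $\gamma\sim\braket{M}/(q\braket{N})$ from \eqref{eq:gamma}. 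This yields $\Var_{\bm{z}}(N_\lambda)\sim(q+1)\braket{N}^2/\braket{M}$, completing the proof.

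There is no serious obstacle here; the only points requiring a little care are (i) checking that the $O(z_2)$ relative errors from replacing $p_\ell(1-p_\ell)$ by $z_1^\ell z_2$ are genuinely uniform in $\ell$ and dominate (which follows from $z_1^\ell z_2\le z_1z_2$ and from the same Euler--Maclaurin bounds used for $R_1,R_2$ in \eqref{eqR1}, \eqref{eqR2}), and (ii) bookkeeping the three error terms $O(\gamma^{1/q})$, $O(z_2)$, $O(\kappa^{1/q})$ so as to confirm they all tend to $0$ under Assumption~\ref{as0} — but since we only claim $\sim$ (not a rate), it suffices to note each is $o(1)$. The mild ``trick'' worth isolating is the $\Gamma$-function recursion $\Gamma(2+1/q)=(1+1/q)\Gamma(1+1/q)$ together with $\gamma\sim\braket{M}/(q\braket{N})$, which is what produces the clean factor $q+1$ in the variance of the weight and, incidentally, foreshadows the limiting correlation coefficient $\Cov/\sqrt{\Var_N\Var_M}\to 1/\sqrt{q+1}$ used later in Theorem~\ref{th:CLT}.
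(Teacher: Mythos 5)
Your proposal is correct and follows essentially the same route as the paper's proof: both reduce the variances and covariance to sums of $\ell^s\myp z_1^\ell z_2\mypp(1+z_1^\ell z_2)^{-2}$ over $\ell\in\NN^q$ via independence of the Bernoulli multiplicities, linearize to $z_2\sum_\ell \ell^s\myp z_1^\ell$ with a uniformly small relative error, and then invoke Lemma~\ref{lm:sum1} (for $s=0,1,2$) together with the calibration relations $\gamma\sim\braket{M}/(q\braket{N})$ and \eqref{eq:z2/gamma}. The opening appeal to Lemma~\ref{lm:Phi} is harmless but unnecessary, since your actual computation is the direct one the paper uses.
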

\begin{proof}
Using formulas \eqref{eq:NM}, mutual independence of the multiplicities $(\nu_\ell)$ and the Bernoulli marginals \eqref{eq:Bern}, we have
\begin{equation}\label{eq:VarM2}
\Var_{\bm{z}}(M_\lambda)=\sum_{\ell\in\NN^q}\Var_{\bm{z}}(\nu_\ell)=\sum_{\ell\in\NN^q}\frac{z_1^\ell z_2}{(1+z_1^\ell z_2)^2}=z_2\sum_{\ell\in\NN^q} z_1^\ell -R_3(\bm{z}),
\end{equation}
where (cf.\ the proof of Theorem \ref{th:cal})
\begin{equation}\label{eq:R3}
0< R_3(\bm{z})= \sum_{\ell\in\NN^q}\frac{z_1^{2\ell}z_2^2\left(2+z_1^{\ell}z_2\right)}{(1+z_1^\ell z_2)^2}< 2 z_2^2\sum_{\ell\in\NN^q}\! z_1^{2\ell}+ z_2^3\sum_{\ell\in\NN^q} \!z_1^{3\ell}.
\end{equation}
By Lemma \ref{lm:sum1} we obtain, for any $r>0$,
\begin{equation}\label{eq:sum_q}
z_2\sum_{\ell\in\NN^q}\! z_1^{r\ell}\sim \frac{z_2\mypp\Gamma(1/q)}{q\left(r\gamma\right)^{1/q}}\sim \frac{\langle M\rangle}{r^{1/q}},
\end{equation}
according to \eqref{eq:z2/gamma}. Recalling that $z_2=o(1)$ (see \eqref{eq:z2}), it follows from \eqref{eq:R3} and \eqref{eq:sum_q} that
\begin{equation}\label{eq:R3'}
R_3(\bm{z})=O\mypp(z_2\myp\langle M\rangle)=o\mypp(\langle M\rangle).
\end{equation}
Hence, returning to \eqref{eq:VarM2} and using \eqref{eq:sum_q}, \eqref{eq:R3'} and formulas \eqref{eq:z1} and \eqref{eq:z2}, we get
\begin{equation}\label{eq:Sum-22}
\Var_{\bm{z}}(M_\lambda)\sim z_2\sum_{\ell\in\NN^q} z_1^\ell \sim  \langle M\rangle,
\end{equation}
in accord with the first formula in \eqref{eq:VarCov}.

Similarly (omitting technical details, which can be easily worked out), we obtain
\begin{align}
\notag
\Var_{\bm{z}}(N_\lambda)=\sum_{\ell\in\NN^q}\ell^2\, \Var_{\bm{z}}(\nu_\ell)&=\sum_{\ell\in\NN^q}\frac{\ell^2 z_1^\ell z_2}{(1+z_1^\ell z_2)^2}\\
&\sim z_2\sum_{\ell\in\NN^q} \ell^2 z_1^\ell 
\sim \frac{z_2\,\Gamma(2+1/q)}{q\mypp\gamma^{2+1/q}}
\sim\frac{(q+1)\mypp\langle N\rangle^2}{\langle M\rangle},
\label{eq:Sum-11}
\end{align}
and
\begin{align}
\notag
\Cov_{\bm{z}}(M_\lambda,N_\lambda)=\sum_{\ell\in\NN^q}\ell\; \Var_{\bm{z}}(\nu_\ell)
&=\sum_{\ell\in\NN^q}\frac{\ell\myp z_1^\ell z_2}{(1+z_1^\ell z_2)^2}\\
&\sim z_2\sum_{\ell\in\NN^q} \ell\myp z_1^\ell
\sim \frac{z_2\,\Gamma(1+1/q)}{q\mypp\gamma^{1+1/q}}
\sim \langle N\rangle,
\label{eq:Sum-12}
\end{align}
as claimed in \eqref{eq:VarCov}.
\end{proof}
\begin{corollary}\label{cor:5.2} 
The correlation coefficient between $M_\lambda$ and $N_\lambda$ is asymptotically given by
$$
\varrho(M_\lambda,N_\lambda)\sim\frac{1}{\sqrt{q+1}}.
$$
\end{corollary}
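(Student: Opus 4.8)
The statement to prove is Corollary~\ref{cor:5.2}, that $\varrho(M_\lambda,N_\lambda)\sim 1/\sqrt{q+1}$, and this is an immediate consequence of the three asymptotic equivalences established in Theorem~\ref{th:VarCov}. The plan is simply to substitute those equivalences into the definition of the correlation coefficient and simplify.

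\begin{proof}
By definition,
\[
\varrho(M_\lambda,N_\lambda)=\frac{\Cov_{\bm{z}}(M_\lambda,N_\lambda)}{\sqrt{\Var_{\bm{z}}(M_\lambda)\,\Var_{\bm{z}}(N_\lambda)}}.
\]
Substituting the asymptotics from Theorem~\ref{th:VarCov}, namely $\Cov_{\bm{z}}(M_\lambda,N_\lambda)\sim\langle N\rangle$, $\Var_{\bm{z}}(M_\lambda)\sim\langle M\rangle$ and $\Var_{\bm{z}}(N_\lambda)\sim(q+1)\langle N\rangle^2/\langle M\rangle$, we obtain
\[
\varrho(M_\lambda,N_\lambda)\sim\frac{\langle N\rangle}{\sqrt{\langle M\rangle\cdot (q+1)\langle N\rangle^2/\langle M\rangle}}=\frac{\langle N\rangle}{\sqrt{(q+1)}\,\langle N\rangle}=\frac{1}{\sqrt{q+1}},
\]
as claimed. (The ratio of equivalents is again an equivalent since all quantities involved are positive and bounded away from $0$ and $\infty$ in the relevant sense; more precisely, writing each of the three moments as its leading term times $1+o(1)$ and taking the quotient, the $o(1)$ corrections combine into a single $1+o(1)$ factor because the denominator leading term is nonzero.)
\end{proof}

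The only thing to be careful about is that dividing asymptotic equivalences is legitimate here: each of $\Var_{\bm{z}}(M_\lambda)$, $\Var_{\bm{z}}(N_\lambda)$, $\Cov_{\bm{z}}(M_\lambda,N_\lambda)$ equals its stated leading term multiplied by $(1+o(1))$, and since $\langle M\rangle$ is bounded away from zero (Assumption~\ref{as0}) and $\langle N\rangle\to\infty$, the leading terms are strictly positive, so the quotient and square root operations preserve the $\sim$ relation. There is essentially no obstacle — this is a one-line corollary whose entire content is packaged in Theorem~\ref{th:VarCov}; the only ``work'' is bookkeeping with the $o(1)$ error terms, which is routine.
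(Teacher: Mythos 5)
Your proof is correct and is exactly the intended argument: the paper states the corollary as an immediate consequence of Theorem~\ref{th:VarCov}, obtained by substituting the three asymptotics for $\Var_{\bm{z}}(M_\lambda)$, $\Var_{\bm{z}}(N_\lambda)$ and $\Cov_{\bm{z}}(M_\lambda,N_\lambda)$ into the definition of the correlation coefficient. Your extra remark on why the $\sim$ relation survives the quotient and square root is fine bookkeeping but adds nothing beyond what the paper takes for granted.
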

\begin{remark}
Corollary \ref{cor:5.2} shows that the dependence between $M_\lambda$ and $N_\lambda$ does not vanish, and also that the amount of this dependence is decreasing with the growth of the power index $q$.
\end{remark}

\section{Fixed expected length} \label{sec:4}

\subsection{Limit theorems for the partition length and weight}\label{sec:4.1}

In what follows, we use the notation
\begin{equation}\label{eq:gamma-cdf}
G_\alpha(x):=\frac{1}{\Gamma(\alpha)}\int_0^x\! u^{\alpha-1}\mypp\rme^{-u}\,\rmd{u},\qquad x\ge0,
\end{equation}
for the distribution function of the gamma distribution $\mathrm{Gamma}\mypp(\alpha)$ with shape parameter $\alpha>0$ (and unit scale parameter). We are now in a position to obtain our main result in this section.
\begin{theorem}\label{thm1}
Under Assumptions \ref{as0} and \ref{as1}, consider the regime where $\EE_{\bm{z}}(M_\lambda)=\langle M\rangle>0$ is fixed, while $\EE_{\bm{z}}(N_\lambda)=\langle N\rangle\to\infty$. Then the following distributional asymptotics hold under the Boltzmann distribution $\QQ_{\bm{z}}$ on the space $\check{\varLambda}^q$. 
\begin{itemize}
\item[\rm (a)]
The distribution of the length $M_\lambda$ converges to a Poisson distribution with parameter $\langle M\rangle$,
\begin{equation}\label{cvM}
\QQ_{\bm{z}}(M_\lambda=m)\to\pi_m:=\frac{\langle M\rangle^m\, \rme^{-\langle M\rangle}}{m!},\qquad m\in\NN_0.
\end{equation}
\item[\rm (b)]
The conditional distribution of the weight $N_\lambda$ 
given $M_\lambda=m\ge 1$ 
converges to the gamma distribution with shape parameter $\alpha_m=m/q$, 
\begin{equation}\label{cvN}
\QQ_{\bm{z}}\!\left(\gamma\myp N_\lambda\le x \,
\bigr|\myp M_\lambda=m \right) \to 
G_{m/q}(x)=\frac{1}{\Gamma(m/q)}
\int_0^x\! u^{m/q-1}\mypp\rme^{-u}\,\rmd{u},\qquad x\ge0,
\end{equation}
where $\gamma$ is defined in (\ref{eq:gamma}),
Moreover, convergence (\ref{cvN}) is uniform in $x\ge0$.
\item [\rm (c)] The marginal (unconditional) distribution function $G(x)=\lim_{\langle N\rangle\to\infty} \QQ_{\bm{z}}(\gamma\myp N_\lambda\le x)$, 
with atom $G(0)=\pi_0 =\rme^{-\braket{M}}$ at zero, is determined by its Laplace transform
\begin{equation}\label{eq:N-marginal}
\phi(s)=
\exp\left\{-\langle M\rangle \! \left(1-(1+s)^{-1/q}\right)\right\}\!,\qquad s\ge0.
\end{equation}
Furthermore, conditioned on 
$M_\lambda>0$, the Laplace transform becomes
\begin{equation}\label{eq:N-marginal-cond}
\tilde{\phi}(s)=
\frac{\rme^{-\braket{M}}}{1-\rme^{-\braket{M}}}\left(\exp\left\{\frac{\braket{M}}{(1+s)^{1/q}}\right\}-1\right)\!,\qquad s\ge0.
\end{equation}
\end{itemize}
\end{theorem}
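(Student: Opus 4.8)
The plan is to exploit the product formula \eqref{eq:F_q} together with the independence of the Bernoulli multiplicities from Lemma~\ref{distnu}, reducing everything to the asymptotics of sums over $\NN^q$ already established in Lemmas~\ref{lm:sum1}--\ref{lm:sum2}. For part~(a), I would write $\QQ_{\bm{z}}(M_\lambda=m)=F(\bm{z})^{-1}z_2^m\sum_{n\ge m}F_{n,m}z_1^n$ using \eqref{eq:NorMM}, but it is cleaner to go through the probability generating function: since $M_\lambda=\sum_\ell\nu_\ell$ with independent Bernoulli summands, $\EE_{\bm{z}}(t^{M_\lambda})=\prod_{\ell\in\NN^q}\bigl(1+z_1^\ell z_2(t-1)\bigr)/(1+z_1^\ell z_2)$, and taking logarithms and applying Lemma~\ref{lm:sum2} (with $\eta=z_2(t-1)$ or $\eta=z_2$ and $\gamma=-\log z_1$) gives $\log\EE_{\bm{z}}(t^{M_\lambda})\to \langle M\rangle(t-1)$ by \eqref{eq:z2/gamma}, which is the pgf of $\mathrm{Poisson}(\langle M\rangle)$; convergence of pgf's on $[0,1]$ yields \eqref{cvM}.

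For part~(b), the key is a conditional Laplace transform computation. Conditionally on $M_\lambda=m$, the Boltzmann measure on $\check\varLambda^q(\sbullet,m)$ has weights $\propto z_1^{N_\lambda}$ (the $z_2$ ``burns out'', cf.\ \eqref{eq:uniM}), so $N_\lambda$ is distributed as a sum of $m$ parts drawn as an $m$-subset of $\NN^q$ with weight $\propto z_1^{\sum \ell_i}$. I would instead compute $\EE_{\bm z}\bigl(\rme^{-s\gamma N_\lambda}\mathbbm 1\{M_\lambda=m\}\bigr)$ directly from \eqref{eq:F_q}: this equals $F(\bm z)^{-1}$ times the coefficient of $z_2^m$ in $\prod_{\ell}\bigl(1+z_1^\ell\rme^{-s\gamma\ell}z_2\bigr)=\prod_\ell\bigl(1+(z_1\rme^{-s\gamma})^\ell z_2\bigr)$, i.e.\ $F(\bm z)^{-1}e_m\bigl((z_1\rme^{-s\gamma})^\ell,\ell\in\NN^q\bigr)z_2^m$ where $e_m$ is the $m$-th elementary symmetric function. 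The elementary-symmetric sum $e_m$ of the numbers $(w^\ell)_{\ell\in\NN^q}$ with $w=z_1\rme^{-s\gamma}=\rme^{-\gamma(1+s)}$ is asymptotically $\frac1{m!}\bigl(\sum_{\ell\in\NN^q}w^\ell\bigr)^m\bigl(1+o(1)\bigr)$ since the power-sum corrections are of smaller order (this is the standard Newton-identity argument: $p_k=O(\gamma^{-1/q})$ uniformly, so $e_m=p_1^m/m!+\dots$). By Lemma~\ref{lm:sum1} with $s=0$, $\sum_{\ell\in\NN^q}\rme^{-\gamma(1+s)\ell}\sim \Gamma(1/q)/\bigl(q\,(\gamma(1+s))^{1/q}\bigr)$, hence $e_m z_2^m\sim \frac1{m!}\bigl(z_2\Gamma(1/q)/(q\gamma^{1/q})\bigr)^m(1+s)^{-m/q}\sim \langle M\rangle^m/m!\cdot(1+s)^{-m/q}$ by \eqref{eq:z2/gamma}. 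Dividing by $\QQ_{\bm z}(M_\lambda=m)\to \langle M\rangle^m\rme^{-\langle M\rangle}/m!$ and using $F(\bm z)\to\infty$ appropriately (the $F(\bm z)^{-1}$ and the $\rme^{-\langle M\rangle}$ from \eqref{eq:F0} cancel in the ratio), I get $\EE_{\bm z}\bigl(\rme^{-s\gamma N_\lambda}\mid M_\lambda=m\bigr)\to (1+s)^{-m/q}$, which is exactly the Laplace transform of $\mathrm{Gamma}(m/q)$; continuity of the limit and monotonicity then upgrade to uniform convergence of distribution functions via Lemma~\ref{lm:uniform}.

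For part~(c), the unconditional Laplace transform is the cleanest route: from \eqref{eq:F_q}, $\EE_{\bm z}(\rme^{-s\gamma N_\lambda})=\prod_{\ell\in\NN^q}\frac{1+z_1^\ell z_2\rme^{-s\gamma\ell}}{1+z_1^\ell z_2}=\prod_\ell\frac{1+\rme^{-\gamma(1+s)\ell}z_2}{1+\rme^{-\gamma\ell}z_2}$, and taking logs and applying Lemma~\ref{lm:sum2} to numerator and denominator gives $\log\EE_{\bm z}(\rme^{-s\gamma N_\lambda})\to \langle M\rangle\bigl((1+s)^{-1/q}-1\bigr)$, which is \eqref{eq:N-marginal}; inverting shows the limit law is the compound-Poisson mixture $\sum_{m\ge0}\pi_m\,\mathrm{Gamma}(m/q)$ consistent with (a) and (b), with atom $\pi_0$ at $0$ from the $m=0$ term. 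The conditional version \eqref{eq:N-marginal-cond} follows by the elementary identity $\tilde\phi(s)=\bigl(\phi(s)-\pi_0\bigr)/(1-\pi_0)$ with $\pi_0=\rme^{-\langle M\rangle}$, which rearranges to the stated expression. The main obstacle I anticipate is the justification in part~(b) that the elementary symmetric function $e_m$ of the $q$-power-indexed geometric sequence is asymptotically dominated by its leading Newton term $p_1^m/m!$ uniformly in the relevant range; this needs the bound $p_k=\sum_{\ell\in\NN^q}\rme^{-k\gamma(1+s)\ell}=O\bigl((k\gamma)^{-1/q}\bigr)$ from Lemma~\ref{lm:sum1} together with the combinatorial estimate expressing $e_m$ as a polynomial in $p_1,\dots,p_m$, so that every non-leading monomial carries at least one extra factor of $z_2=o(1)$ after multiplying by $z_2^m$ — a routine but slightly fiddly bookkeeping step, and the one place where care with the error terms is genuinely required. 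Everything else reduces to the sum asymptotics and \eqref{eq:z2/gamma} already in hand.
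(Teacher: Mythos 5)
Your proposal is correct, and for part (b) it takes a genuinely different route from the paper. The paper works with the two-parameter transform $\phi_{\bm z}(\gamma s_1,s_2)=\EE_{\bm z}\bigl[\exp(-\gamma s_1 N_\lambda-s_2M_\lambda)\bigr]$, shows via Lemmas \ref{lm:=0} and \ref{lm:sum2} together with \eqref{eq:z2/gamma} that it converges to the limit \eqref{eq:phi-lim}, and then disentangles the conditional laws by Taylor expanding that limit in powers of $\rme^{-s_2}$ and invoking the continuity theorem for Laplace transforms applied to the atoms $\QQ_{\bm z}(M_\lambda=m)\,\phi_{\bm z}(\gamma s_1|\mypp m)$; parts (a) and (c) are then read off by setting $s_1=0$ or $s_2=0$. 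You instead compute the transform restricted to the event $\{M_\lambda=m\}$ as $F(\bm z)^{-1}z_2^m\myp e_m\bigl((w^\ell)_{\ell\in\NN^q}\bigr)$ with $w=\rme^{-\gamma(1+s)}$, and argue via Newton's identities that $z_2^m e_m\sim (z_2\myp p_1)^m/m!$, since every non-leading monomial $p_{k_1}\cdots p_{k_j}$ with $j<m$ contributes $O(z_2^{m-j})=o(1)$ after the $z_2^m$ scaling, because $z_2\myp p_k=O\bigl(\braket{M}k^{-1/q}\bigr)$ by Lemma \ref{lm:sum1} and \eqref{eq:z2/gamma}; this yields $\pi_m(1+s)^{-m/q}$ for the restricted transform and hence (a) and (b) simultaneously, bypassing the continuity-theorem step at the cost of the (correct, if fiddly) symmetric-function bookkeeping, which the paper's joint-transform argument avoids. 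Your parts (a) and (c) coincide in substance with the paper's (marginal transforms plus Lemma \ref{lm:sum2} and \eqref{eq:z2/gamma}), as do the upgrade to uniform convergence via Lemma \ref{lm:uniform} and the elementary conditioning identity giving \eqref{eq:N-marginal-cond}. Two small points: $F(\bm z)\to\rme^{\braket{M}}$, a finite limit when $\braket{M}$ is fixed (Lemma \ref{lm:=0}), not $F(\bm z)\to\infty$ as you wrote in passing\,---\,harmless, since $F(\bm z)^{-1}$ cancels exactly in the conditional ratio; and in your pgf computation for (a) it is cleaner to treat numerator and denominator separately (each an application of Lemma \ref{lm:sum2} with $\eta\ge0$) rather than taking $\eta=z_2(t-1)\le 0$, for which the lemma's stated bounds would need a trivial adaptation.
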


\begin{remark}\label{rm:Poisson}
Part (a) is a particular case of a well-known Poisson approximation for the distribution of the total number of successes in a sequence of independent  Bernoulli trials with success probabilities $(p_i)$, which is valid as long as $\sum_i p_i\to\mathrm{const}>0$ and $\sum_i p_i^2\to0$ \cite{Barbour,Novak}.
Indeed, here we deal with the Bernoulli sequence $(\nu_\ell)$, where  $M_\lambda=\sum_\ell \nu_\ell$ and $p_\ell=\EE_{\bm{z}}(\nu_{\ell})=z_1^\ell z_2/(1+z_1^\ell z_2)$. According to \eqref{braket}, $\sum_\ell p_\ell=\braket{M}$. Furthermore, noting that $(p_\ell)$ is monotone decreasing, we obtain
\begin{equation}\label{eq:sqsum}
\sum_{\ell\in\NN^q} p_\ell^2\le p_1\sum_{\ell\in\NN^q} p_\ell=\frac{z_1 z_2}{1+z_1z_2}\braket{M}\le z_2\braket{M}=O\bigl(\braket{N}^{-1/q}\bigr)=o(1),
\end{equation}
on account of formula \eqref{eq:z2}.
Alternatively, the asymptotic estimate \eqref{eq:sqsum} follows with the help of formula \eqref{eq:sum_q} (with $r=2$). 
\end{remark}

\begin{remark}\label{rm:4.2}
The normalizing constant $\gamma$ in 
parts (b) and (c) can be replaced by 
its asymptotic equivalent $\gamma_0=\langle M\rangle/(q\myn\braket{N})$ (see~\eqref{eq:gamma}).  
\end{remark}
\begin{remark}\label{rm:m=0}
The case $m=0$ excluded in Theorem \ref{thm1}\myp(b) corresponds to the empty partition $\lambda_\varnothing$,
$$
\QQ_{\bm{z}}\!\left(\gamma\myp N_\lambda\le x \,
\bigr|\myp M_\lambda=0\right)=\QQ_{\bm{z}}(N_\lambda=0 \,
|\myp M_\lambda=0)=1,\qquad x\ge0.
$$
This is consistent with the gamma distribution \eqref{cvN} weakly converging to $0$ as the parameter  $\alpha_m=m/q$ is formally sent to zero. Indeed, using the Laplace transform, for any $s>0$ we have
$$
\frac{1}{\Gamma(\alpha)}\int_{0}^\infty \! x^{\alpha-1}\myp\rme^{-sx-x}\,\rmd{x}=\frac{1}{(s+1)^{\alpha}}\to 1\qquad (\alpha\to0^+).
$$
\end{remark}

\begin{remark}\label{rm:m-conv} Noting that  $\mathrm{Gamma}\mypp(m/q)$ is the convolution of $m$ copies of $\mathrm{Gamma}\mypp(1/q)$, the result of Theorem \ref{thm1}\myp(b) may be interpreted by saying that, on the scale $\gamma^{-1}\sim q\myp\langle N\rangle/\langle M\rangle$ and conditional on $M_\lambda=m$, the partition parts $\{\lambda_1,\dots,\lambda_m\}$ (considered without ordering) behave asymptotically as $m$ independent random variables, each with distribution  $\mathrm{Gamma}\mypp(1/q)$.
\end{remark} 
%\begin{remark}
%By virtue of uniform convergence in \eqref{cvN}, the limit in part (b) can be rewritten in a more flexible form that allows level $x$ to vary with $\langle N\rangle$: 
%\begin{equation}\label{cvN+}
%\QQ_{\bm{z}}\!\left(\gamma\mypp N_\lambda\le x \,
%\bigr|\myp M_\lambda=m \right) -  %\frac{1}{\Gamma(\alpha)}
%\int_0^x\! u^{\alpha-1}\myp\rme^{-u}\,\rmd{u}\to0.
%\end{equation}
%\end{remark}
\begin{proof}[Proof of Theorem \ref{thm1}]
Consider the Laplace transform of the pair $(N_\lambda,M_\lambda)$,
\begin{equation}\label{eq:phi}
\phi_{\bm{z}}(\bm{s}):=\EE_{\bm{z}}\bigl[\exp\myn(-s_1 N_\lambda-s_2\myp M_\lambda)\bigr],\qquad s_1,s_2\ge0.
\end{equation}
Using formulas \eqref{eq:NM}, mutual independence of the multiplicities $(\nu_\ell)$ and the Bernoulli marginals \eqref{eq:Bern}, the definition \eqref{eq:phi} is rewritten as
\begin{align*}
\phi_{\bm{z}}(\bm{s})&=\EE_{\bm{z}}\!\left[\exp\left(-\sum_{\ell\in\NN^q}\left(s_1\ell+s_2\right)\nu_\ell\right)\right]\\
&=\prod_{\ell\in\NN^q}\EE_{\bm{z}}\bigl[\rme^{-(s_1\ell+s_2)\myp\nu_\ell}\bigr]=\prod_{\ell\in\NN^q}\frac{1+z_1^\ell z_2\,\rme^{-(s_1\ell+s_2)}}{1+z_1^\ell z_2}.
\end{align*}
The normalization $\gamma\myp  N_\lambda$ corresponds to replacing the argument $s_1$ in \eqref{eq:phi} by $\gamma\myp s_1$. 
Hence,
\begin{align}\notag
\log\phi_{\bm{z}}(\gamma\myp s_1,s_2) &= \log \EE_{\bm{z}}\bigl[\exp\myn(-\gamma\myp s_1 N_\lambda-s_2M_\lambda)\bigr]\\
\notag
&= \sum_{\ell\in\NN^q} \log\frac{1+z_1^{\ell}z_2\,\rme^{-(\gamma\myp s_1\ell+s_2)}}{1+z_1^{\ell}z_2}\\
&=\sum_{\ell\in\NN^q} \log\bigl( 1+z_1^{\ell}z_2\,\rme^{-(\gamma\myp s_1\ell+s_2)}\bigr)-\sum_{\ell\in\NN^q}\log\bigl( 1+z_1^{\ell}z_2\bigr).
\label{eq:phi1}
\end{align}
Starting with the last sum in \eqref{eq:phi1}, Lemma \ref{lm:=0} immediately gives 
\begin{align}
\sum_{\ell\in\NN^q} \log\bigl( 1+z_1^{\ell}z_2\bigr)=\log F(\bm{z})\to\langle M\rangle.
\label{eq:F-limit1}
\end{align}
Similarly, applying Lemma \ref{lm:sum2}  we obtain
\begin{align}
\sum_{\ell\in\NN^q} \log\bigl( 1+z_1^{\ell}z_2\,\rme^{-(\gamma\myp s_1\ell+s_2)}\bigr)
&\sim\frac{z_2\,\rme^{-s_2}\,\Gamma(1/q)}{
q\,\gamma^{1/q}(1+s_1)^{1/q}}\sim \frac{\rme^{-s_2}\langle M\rangle}{(1+s_1)^{1/q}},
\label{eq:F-limit2}
\end{align}
according to the asymptotic relation  \eqref{eq:z2/gamma}.

As a result, combining \eqref{eq:F-limit1} and \eqref{eq:F-limit2} yields
\begin{equation}\label{eq:phi-lim}
\phi_{\bm{z}}(\gamma\myp s_1,s_2)\to\exp\left\{-\langle M\rangle\! \left(1-\frac{\rme^{-s_2}}{\displaystyle(1+s_1)^{1/q}} \right)\right\}\!.
\end{equation}
In particular, setting $s_1=0$ we get the limiting Laplace transform of $M_\lambda$,
$$
\phi_{\bm{z}}(0,s_2)\to\exp\bigl(-\langle M\rangle (1-\rme^{-s_2})\bigr)=\sum_{m=0}^\infty \pi_m\,\rme^{-s_2\myp m},
$$
which corresponds to the Poisson distribution $(\pi_m)$ (see \eqref{cvM}),
thus proving the claim of part~(a). 

Furthermore, by Taylor expanding the exponential in the formula \eqref{eq:phi-lim}, we obtain
\begin{align}
\rme^{-\langle M\rangle}\exp \left(\frac{\langle M\rangle\, \rme^{-s_2}}{\displaystyle(1+s_1)^{1/q}} \right)
&= \sum_{m=0}^\infty 
\frac{\pi_m\,\rme^{-m\myp s_2}}{\displaystyle (1+s_1)^{m/q}}.
\label{eq:phi0}
\end{align}
This can be interpreted as follows: by the total expectation formula, we have 
\begin{align}
\notag
\phi_{\bm{z}}(\gamma\myp s_1,s_2)&=\EE_{\bm{z}}\!\left[\EE_{\bm{z}}\bigl(\rme^{-\gamma\myp s_1N_\lambda- s_2 M_\lambda}\bigr|\,M_\lambda=m\bigr)\right]\\
&=
\sum_{m=0}^\infty \QQ_{\bm{z}}(M_\lambda=m)\,\phi_{\bm{z}}(\gamma\myp s_1\myp|\mypp m)\,\rme^{-m\myp s_2},
\label{eq:phi2}
\end{align}
where $\phi_{\bm{z}}(s\mypp|\mypp m):=\EE_{\bm{z}}\bigl(\rme^{- s\myp N_\lambda}\myp|\mypp M_\lambda=m\bigr)$.
Comparing \eqref{eq:phi0} and \eqref{eq:phi2}, we conclude that
\begin{equation}\label{eq:con_s1}
\phi_{\bm{z}}(\gamma\myp s_1\myp|\mypp m)\to \frac{1}{\displaystyle (1+s_1)^{m/q}}.
\end{equation}
To be more precise, by the continuity theorem for Laplace transforms \cite[Sec.\,XIII.1, Theorem~2, p.\,431]{Feller} applied to the measure with atoms $a_{\bm{z}}(m;s_1):=\QQ_{\bm{z}}(M_\lambda=m)\,\phi_{\bm{z}}(\gamma\myp s_1\myp|\mypp m)$ (with $s_1\ge0$ fixed), it follows from \eqref{eq:phi0} and \eqref{eq:phi2} that
$$
a_{\bm{z}}(m;s_1)\to
\frac{\pi_m}{\displaystyle (1+s_1)^{m/q}},\qquad m\in\NN_0.
$$
But since the convergence \eqref{cvM} has already been established, this implies \eqref{eq:con_s1}, and it remains to observe that the right-hand side
is the Laplace transform of $\mathrm{Gamma}\mypp(m/q)$, 
as claimed in part~(b). Finally, the uniform convergence in \eqref{cvN} readily follows by application of Lemma~\ref{lm:uniform}. 

As for part (c), the first claim follows immediately
by setting $s_2=0$ in the limit \eqref{eq:phi-lim}. The atom at zero is identified as  \strut{}$\lim_{s\to\infty} \phi(s)=\rme^{-\braket{M}}=\pi_0$, and the conditional Laplace transform is expressed as $\tilde{\phi}(s)=(\phi(s)-\rme^{-\braket{M}})/(1-\rme^{-\braket{M}})$.
This completes the proof of Theorem~\ref{thm1}. 
\end{proof}

The limiting marginal distribution defined in \eqref{eq:N-marginal} is a mixture of a discrete family of gamma distributions indexed by the shape parameter $\alpha_m=m/q$ ($m\in\NN_0$), subject to a Poisson mixing distribution with parameter~$\braket{M}$,
\begin{equation}\label{eq:mix}
G(x)=\pi_0+\sum_{m=1}^\infty \pi_m\mypp G_{m/q}(x),\qquad x\ge 0.
\end{equation}
Formula \eqref{eq:mix} defines the compound Poisson-Gamma distribution of a random variable
\begin{equation}\label{eq:compound}
Y=Z_1+\dots+Z_{M},
\end{equation}
where $(Z_i)$ are independent random variables with gamma distribution $\mathrm{Gamma}\mypp(1/q)$ and $M$ is an independent random variable with a Poisson distribution $(\pi_m)$. In line with Remark \ref{rm:m=0}, the case $m=0$ is represented in \eqref{eq:mix} by a point mass $\pi_0=\rme^{-\braket{M}}$ at zero.  The absolutely continuous part of this distribution has density 
\begin{equation}\label{eq:g-density}
g(x)=\sum_{m=1}^\infty \pi_m\mypp G^{\myp\prime}_{m/q}(x)=\sum_{m=1}^\infty \pi_m\mypp\frac{x^{m/q-1}\mypp\rme^{-x}}{\Gamma(m/q)}=\frac{\rme^{-\braket{M}-x}}{x}
\,W_{1/q}\bigl(\langle M\rangle\mypp x^{1/q}\bigr),
\end{equation}
where $\displaystyle W_{\myn\varrho}(x)=\sum\nolimits_{m=1}^\infty \frac{x^m}{m!\:\Gamma(m\varrho)}$ is a special case of the \emph{Wright function} \cite{Wright-f}. Noting that $\mathrm{Gamma}\mypp(\alpha)$ has mean \strut{}$\alpha$, the expected value of the distribution \eqref{eq:mix} is
given by 
$$
\sum_{m=1}^\infty \pi_m\!\int_0^\infty\! x\,\rmd{\myp G_{m/q}(x)}=\sum_{m=1}^\infty \pi_m\mypp\frac{m}{q}
=\frac{\braket{M}}{q},
$$
which is consistent with the calibration $\EE_{\bm{z}}(N_\lambda)=\braket{N}$ (see \eqref{braket}) in view of the asymptotic formula $\gamma\sim\gamma_0=\langle M\rangle/(q\myn\braket{N})$ (see~\eqref{eq:gamma}). Of course, the same result can be obtained by differentiating the Laplace transform \eqref{eq:N-marginal} at $s=0$. 

The principal term in the asymptotics of the density $g(x)$ as $x\to+\infty$ can be recovered from the known asymptotic expansion of the Wright function \cite[Theorem 2, p.\,258]{Wright-f}, yielding
$$
g(x)=\frac{\rme^{-\braket{M}}}{2\pi \mypp(q+1)}\left(\frac{q}{x}\right)^{\mynn\frac{q+2}{2\myp(q+1)}}\exp\!\left((q+1)\left(\frac{x}{q}\right)^{\frac{q}{q+1}}\!-x\right)\left(1+O\bigl(x^{-\frac{q}{q+1}}\bigr)\right)\qquad (x\to+\infty).
$$
Turning to the behavior of $g(x)$ near zero, from the expansion \eqref{eq:g-density} it is clear that 
this is determined by the lowest-order terms with $m\le q$, that is, 
\begin{equation}\label{eq:pdf-as}
g(x) =\sum_{m=1}^{q} \pi_m\,\frac{x^{m/q-1}}{\Gamma(m/q)}+O(x^{1/q}) \qquad (x\to0^+).
\end{equation} 
Observe that if $q>1$ then the density of the absolutely continuous part of $G(x)$ has singularity at the origin, thus causing an ``excess'' of partitions with an anomalously small weight on the scale $\gamma^{-1}\sim \gamma_0^{-1}=q\,\langle N\rangle/\langle M\rangle$. On the other hand, the contribution of this singularity is exponentially vanishing as $\braket{M}\to\infty$. These effects will be verified empirically using  the output of the Boltzmann sampler considered below in Section \ref{sec:6.1.3} (see Figure \ref{fig4}). The exact distribution \eqref{eq:mix} will also be contrasted there with a crude approximation via replacing the Poisson mixing parameter by its expected value $\braket{M}$, yielding the gamma distribution with shape parameter $\langle M\rangle/q$. Of course, such an approximation cannot capture the aforementioned singularity at zero, but it works reasonably well for larger values of $\braket{M}$, whereby singularity becomes immaterial.

For the benefit of the Boltzmann sampling considered below in Section \ref{sec:6}, we conclude this section by stating a ``truncated'' version of Theorem \ref{thm1} with a reduced source of parts. We write $G_{\alpha}(x\,|\,a):=G_{\alpha}(x)/G_\alpha(a)$ \myp($0\le x\le a$) for the distribution function of the $\mathrm{Gamma}\mypp(\alpha)$-distribution truncated by threshold $a>0$. The symbol $\star$ stands for the convolution of probability distributions.  
\begin{theorem}\label{thm1<L}
Under the hypotheses of Theorem \ref{thm1}, let $L\sim\theta\mypp\langle N\rangle$ with $\theta>0$, and denote $a_\theta:=\theta\myp\langle M\rangle\myn/q$. Then the following distributional asymptotics hold subject to the constraint $\lambda_{\rm max}\le L$. 
\begin{itemize}
\item[\rm (a)]
The conditional distribution of the length $M_\lambda$ converges to a Poisson law,
\begin{equation}\label{eq:M<L}
\QQ_{\bm{z}}(M_\lambda=m\,|\, \lambda_{\rm max}\le L)\to\pi^\theta_m:=\frac{\mu_\theta^m\, \rme^{-\mu_\theta}}{m!},\qquad m\in\NN_0,
\end{equation}
with mean \begin{equation}\label{eq:mu-theta}
\mu_\theta:=\langle M\rangle\,G_{1/q}(a_\theta)=\frac{\braket{M}}{\Gamma(1/q)}\int_0^{a_\theta}\!u^{1/q-1}\mypp\rme^{-u}\,\rmd{u}.
\end{equation}
\item[\rm (b)]
The conditional distribution of the weight $N_\lambda$ 
given $M_\lambda=m\ge 1$ 
converges to the convolution of $m$ copies of an $a_\theta$-truncated gamma distribution with shape $1/q$,
\begin{equation}\label{cvN<L}
\QQ_{\bm{z}}(\gamma\myp N_\lambda\le x \,
\bigr|\myp M_\lambda=m, \lambda_{\rm max}\le L) \to 
\bigl(\underbrace{G_{1/q}(\cdot\,|\,a_\theta)\star \dots \star G_{1/q}(\cdot\,|\,a_\theta)}_{m}\bigr)(x),\qquad 0<x\le m\myp a_\theta.
\end{equation}
Like in (\ref{cvN}), convergence (\ref{cvN<L}) is uniform in\/ $0\le x\le m\myp a_\theta$.
\item [\rm (c)] The marginal distribution function $G(x;\theta):=\lim_{\langle N\rangle\to\infty} \QQ_{\bm{z}}(\gamma\myp N_\lambda\le x\mypp|\mypp\lambda_{\rm max}\le L)$, 
with atom $G(0;\theta)=\pi^\theta_0=\exp\left\{-\langle M\rangle\mypp G_{1/q}(a_\theta)\right\}$ at zero, is determined by its Laplace transform
\begin{equation}\label{eq:N-marginal<L}
\phi(s;\theta)
=\exp\left\{-\langle M\rangle \! \left(G_{1/q}(a_\theta)-\frac{G_{1/q}\bigl(a_\theta\myp(1+s)\bigr)}{(1+s)^{1/q}}\right)\right\}\!,\qquad s\ge0.
\end{equation}
Furthermore, conditioned on 
$M_\lambda>0$, the Laplace transform becomes
\begin{equation}\label{eq:N-marginal-cond-L}
\tilde{\phi}(s;\theta)=
\frac{\rme^{-\mu_\theta}}{1-\rme^{-\mu_\theta}}\left(\exp\left\{\frac{\braket{M}G_{1/q}\bigl(a_\theta\myp(1+s)\bigr)}{(1+s)^{1/q}}\right\}-1\right)\!,\qquad s\ge0.
\end{equation}
\end{itemize}
\end{theorem}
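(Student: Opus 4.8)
The plan is to mirror the proof of Theorem~\ref{thm1}, but working throughout with the truncated part source $\mathbb{A}_L=\{\ell\in\NN^q\colon \ell\le L\}$, so that by Lemma~\ref{pr:mult} (specialized as in Lemma~\ref{distnu}) the multiplicities $(\nu_\ell,\,\ell\le L)$ remain independent Bernoulli variables with the \emph{same} success probabilities $z_1^\ell z_2/(1+z_1^\ell z_2)$, the parameters $\bm z$ still being calibrated from the \emph{un}truncated conditions \eqref{braket} via Theorem~\ref{th:cal}. The only change from the proof of Theorem~\ref{thm1} is that every sum $\sum_{\ell\in\NN^q}$ is replaced by $\sum_{\ell\le L}$, i.e.\ by $\sum_{\ell\ge 1}-\sum_{\ell>L}$, and the tail is evaluated by the indented-Euler--Maclaurin Lemmas~\ref{lm:sum1+} and~\ref{lm:sum2+}, which produce incomplete gamma functions $\Gamma(\cdot,\gamma L)$ in place of complete ones. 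Since $L\sim\theta\langle N\rangle$ and $\gamma\sim\langle M\rangle/(q\langle N\rangle)$ (see \eqref{eq:gamma}), we have $\gamma L\to a_\theta:=\theta\langle M\rangle/q$, which is exactly the truncation level of the limiting gamma law appearing in the statement; this is the one genuinely new computation and the natural place where $a_\theta$ enters.

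Concretely, I would first record that, conditional on $\lambda_{\max}\le L$, the Laplace transform of $(N_\lambda,M_\lambda)$ factorizes as $\prod_{\ell\le L}(1+z_1^\ell z_2\,\rme^{-(s_1\ell+s_2)})/(1+z_1^\ell z_2)$, so that with the rescaling $s_1\mapsto\gamma s_1$,
$$
\log\phi^{L}_{\bm z}(\gamma s_1,s_2)=\sum_{\ell\le L}\log\bigl(1+z_1^\ell z_2\,\rme^{-(\gamma s_1\ell+s_2)}\bigr)-\sum_{\ell\le L}\log\bigl(1+z_1^\ell z_2\bigr).
$$
For each sum, write $\sum_{\ell\le L}=\sum_{\ell\ge1}-\sum_{\ell>L}$; Lemma~\ref{lm:sum2} handles the full sums (as in \eqref{eq:F-limit1}--\eqref{eq:F-limit2}), while Lemma~\ref{lm:sum2+} with $\gamma'=\gamma(1+s_1)$, $\eta=z_2\rme^{-s_2}$ handles the tails, giving $\eta\,\Gamma(1/q,\gamma'L)/(q\,\gamma'^{1/q})$. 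Using $z_2/\gamma^{1/q}\to\langle M\rangle/\Gamma(1+1/q)$ from \eqref{eq:z2/gamma} and $\gamma L\to a_\theta$, the truncated sum converges to $\langle M\rangle\,G_{1/q}\bigl(a_\theta(1+s_1)\bigr)(1+s_1)^{-1/q}$ (recall $G_{1/q}(x)=1-\Gamma(1/q,x)/\Gamma(1/q)$ and $\Gamma(1+1/q)=q^{-1}\Gamma(1/q)$), and the ``$s_2=0$'' version to $\langle M\rangle\,G_{1/q}(a_\theta)$. Subtracting yields
$$
\phi^{L}_{\bm z}(\gamma s_1,s_2)\longrightarrow\exp\left\{-\langle M\rangle\!\left(G_{1/q}(a_\theta)-\frac{\rme^{-s_2}\,G_{1/q}\bigl(a_\theta(1+s_1)\bigr)}{(1+s_1)^{1/q}}\right)\right\}.
$$
Setting $s_1=0$ gives a Poisson limit for $M_\lambda$ with mean $\mu_\theta=\langle M\rangle G_{1/q}(a_\theta)$, proving~(a); setting $s_2=0$ gives $\phi(s;\theta)$ in~\eqref{eq:N-marginal<L}, and the atom at zero and the conditional transform $\tilde\phi(s;\theta)$ follow exactly as in the proof of Theorem~\ref{thm1}(c) by letting $s\to\infty$ and renormalizing, proving~(c).

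For part~(b), Taylor-expand the limiting exponential in powers of $\rme^{-s_2}$ as in~\eqref{eq:phi0}: the coefficient of $\rme^{-ms_2}$ is $\pi^\theta_m\bigl(G_{1/q}(a_\theta(1+s_1))/\bigl(G_{1/q}(a_\theta)(1+s_1)^{1/q}\bigr)\bigr)^m$. Comparing with the total-expectation expansion \eqref{eq:phi2} and invoking the continuity theorem for Laplace transforms \cite[Sec.~XIII.1, Thm~2, p.~431]{Feller} together with the already-established convergence \eqref{eq:M<L}, exactly as in the proof of Theorem~\ref{thm1}(b), we get that $\phi_{\bm z}(\gamma s_1\,|\,m,\lambda_{\max}\le L)$ converges to $\bigl(G_{1/q}(a_\theta(1+s_1))/(G_{1/q}(a_\theta)(1+s_1)^{1/q})\bigr)^m$. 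It remains only to recognize this as the Laplace transform of the $m$-fold convolution of the $a_\theta$-truncated $\mathrm{Gamma}(1/q)$ law: indeed $\int_0^{a_\theta}\rme^{-sx}x^{1/q-1}\rme^{-x}\,\rmd x/\Gamma(1/q)=\Gamma(1/q)^{-1}\int_0^{a_\theta(1+s)}u^{1/q-1}\rme^{-u}\,\rmd u\cdot(1+s)^{-1/q}=G_{1/q}(a_\theta(1+s))(1+s)^{-1/q}$, so dividing by the normalizer $G_{1/q}(a_\theta)$ and raising to the $m$-th power gives precisely the transform of $G_{1/q}(\cdot\,|\,a_\theta)^{\star m}$ supported on $[0,m\myp a_\theta]$. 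Uniform convergence in $0\le x\le m\myp a_\theta$ follows from Lemma~\ref{lm:uniform} since the limit is a continuous distribution function, just as in~\eqref{cvN}.

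The main obstacle is purely the bookkeeping of the tail asymptotics: one must check that the error terms in Lemmas~\ref{lm:sum1+}--\ref{lm:sum2+}, which are $O(\gamma^{1/q})+O(z_2)=o(1)$ uniformly for $s_1,s_2$ in compacts, do not interfere, and that the passage $\gamma L\to a_\theta$ is legitimate despite $L\sim\theta\langle N\rangle$ being only asymptotic and $\gamma$ being known only up to the relative error $O(\kappa^{1/q})$ in \eqref{eq:gamma}; since $x\mapsto\Gamma(1/q,x)$ is continuous and $\kappa\to0$, this is harmless. Everything else is a line-by-line replay of the proof of Theorem~\ref{thm1} with complete gamma functions replaced by incomplete ones, and I would present it as such rather than re-deriving the independence structure and the continuity-theorem step from scratch.
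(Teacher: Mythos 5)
Your proposal is correct and follows essentially the same route the paper indicates: it adapts the proof of Theorem \ref{thm1} verbatim, analysing the truncated logarithmic sums via Lemma \ref{lm:sum2+} (incomplete gamma functions) in place of Lemma \ref{lm:sum2}, with $\gamma L\to a_\theta$ supplying the truncation level. Your computations of the limiting joint transform, the Poisson mean $\mu_\theta$, the truncated-gamma Laplace transform, and the atom/conditional transform in part (c) all check out.
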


This theorem can be proved by adapting the proof of Theorem \ref{thm1}, whereby the (finite) sums of logarithmic expressions are analyzed with the help of Lemma \ref{lm:sum2+} in place of Lemma \ref{lm:sum2}.

\begin{remark}
Comparing Theorems \ref{thm1} and \ref{thm1<L}, a reduction of the Poisson parameter (mean) in part (a) from $\braket{M}$ to $\mu_\theta=\braket{M} G_{1/q}(a_\theta)<\braket{M}$ (see \eqref{eq:mu-theta}), as well as the replacement of the gamma distribution $G_{1/q}(x)$ in part (b) with a truncated version $G_{1/q}(x\mypp|\mypp a_\theta)$ (see \eqref{cvN<L}) is clearly due to a reduced source of parts, $\ell\le L\sim \theta\myp\langle N\rangle$.
\end{remark}
\begin{remark}
As a sanity check of formula \eqref{eq:N-marginal<L}, the expected value of $\gamma\myp N_\lambda$ conditional on $\lambda_{\rm max}\le L$ is asymptotically evaluated (using Lemma \ref{lm:sum1+} and formula \eqref{eq:z2/gamma}) as   
\begin{equation}\label{eq:ExpN1}
\EE_{\bm{z}}(\gamma\myp N_\lambda\,|\,\lambda_{\rm max}\le L)=\gamma \sum_{\ell\le L} \frac{\ell \mypp z_1^\ell z_2}{1+z_1^\ell z_2}\sim  \frac{\braket{M}}{\Gamma(1/q)}\int_0^{a_\theta}\!u^{1/q}\mypp\rme^{-u}\,\rmd{u}.
\end{equation}
On the other hand, by differentiating the Laplace transform \eqref{eq:N-marginal<L} at $s=0$ we obtain
\begin{equation*}
\EE_{\bm{z}}(\gamma\myp N_\lambda\,|\,\lambda_{\rm max}\le L)\sim \langle M\rangle \mynn \left(\frac{1}{q}\,G_{1/q}(a_\theta)-\frac{a_\theta^{1/q}\mypp\rme^{-a_\theta}}{\Gamma(1/q)}\right)\!.
\end{equation*}
These two expressions are reconciled by integration of parts in \eqref{eq:ExpN1} and in view of notation \eqref{eq:gamma-cdf}.
\end{remark}

\subsection{Cumulative cardinality of strict power partitions}\label{sec:4.2}
For fixed $q,m\in\NN$ and for any $x>0$, consider a sub-level partition set 
\begin{equation}\label{eq:sub-level}
\check{\varLambda}^q_m(x):=\bigcup_{n\le x}\check{\varLambda}^q(n,m),
\end{equation}
and denote its cardinality \begin{equation}\label{eq:Lambda-card}
L^q_{m}(x):=\# \check{\varLambda}^q_m(x)=\sum_{n\le x}\#\check{\varLambda}^q(n,m).
\end{equation}
That is to say, $L^q_{m}(x)$ denotes the number of integral solutions to the inequality $j_1^q+\dots+j_m^q\le x$ such that  $j_1>\dots>j_m>0$.

\begin{theorem}\label{th:ID}
The following asymptotics hold as $x\to\infty$,
\begin{equation}\label{eq:L}
L^q_{m}(x)\sim \frac{q\,\bigl(\Gamma(1+1/q)\bigr)^m \mypp x^{m/q}}{m!\,m\:\Gamma(m/q)}.
\end{equation}
In particular, for $q=1$ and $q=2$
\begin{equation}\label{eq:L=12}
L_{m}(x)\sim \frac{x^{m}}{m!\,m!}, \qquad L^2_{m}(x)\sim \frac{\pi^{m/2} \mypp x^{m/2}}{2^{m-1}\myp m!\,m\:\Gamma(m/2)}.
\end{equation}
\end{theorem}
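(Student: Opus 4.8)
The plan is to derive \eqref{eq:L} from the conditioning machinery of Section~\ref{sec:4.1}, combining the enumeration identity in Corollary~\ref{cor:uni} with the limit laws of Theorem~\ref{thm1}. For any admissible $\bm z=(z_1,z_2)$, Corollary~\ref{cor:uni} gives $\#\check\varLambda^q(n,m)=F(\bm z)\,\QQ_{\bm z}(N_\lambda=n,M_\lambda=m)\myp z_1^{-n}z_2^{-m}$, so that
\begin{equation*}
L^q_m(x)=\sum_{n\le x}\#\check\varLambda^q(n,m)=\frac{F(\bm z)}{z_2^{\myp m}}\,\EE_{\bm z}\!\left[z_1^{-N_\lambda}\,\mathbf 1\{N_\lambda\le x\}\,\mathbf 1\{M_\lambda=m\}\right].
\end{equation*}
I would run this in the fixed-length regime of Theorem~\ref{thm1}: keep $\langle M\rangle\equiv m$ and let $\langle N\rangle=\langle N\rangle(x)\to\infty$, chosen so that $\gamma\myp x\to y$ for an arbitrary but fixed $y>0$ — this is possible because $\gamma\sim\langle M\rangle/(q\myn\langle N\rangle)$ by \eqref{eq:gamma}, so one takes $\langle N\rangle\sim m\myp x/(q\myp y)$, and then Assumption~\ref{as0} holds ($\langle M\rangle^{-1}=1/m=O(1)$ and $\kappa=m^{q+1}/\langle N\rangle\to0$). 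Since $z_1=\rme^{-\gamma}$ we have $z_1^{-N_\lambda}=\rme^{\gamma N_\lambda}$, and conditioning on $M_\lambda=m$ turns the display into
\begin{equation*}
L^q_m(x)=\frac{F(\bm z)}{z_2^{\myp m}}\,\QQ_{\bm z}(M_\lambda=m)\,\EE_{\bm z}\!\left[\rme^{\gamma N_\lambda}\mathbf 1\{N_\lambda\le x\}\,\big|\,M_\lambda=m\right].
\end{equation*}

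Now every ingredient is controlled. By Theorem~\ref{thm1}(a), $\QQ_{\bm z}(M_\lambda=m)\to\pi_m=m^m\rme^{-m}/m!$; by Lemma~\ref{lm:=0}, $\log F(\bm z)\to\langle M\rangle=m$, i.e.\ $F(\bm z)\to\rme^m$; and from \eqref{eq:z2}, $z_2^{\myp m}\sim\kappa^{m/q}/(q^{m/q}\,\Gamma(1+1/q)^m)$ with $\kappa=m^{q+1}/\langle N\rangle$ and $\langle N\rangle\sim m\myp x/(q\myp y)$, whence $F(\bm z)\,\QQ_{\bm z}(M_\lambda=m)/z_2^{\myp m}\sim (x/y)^{m/q}\,\Gamma(1+1/q)^m/m!$ after collecting the powers of $m$ and $q$. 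For the conditional expectation I would use that, by Theorem~\ref{thm1}(b), the conditional distribution function $F_{\bm z}(u):=\QQ_{\bm z}(\gamma N_\lambda\le u\mid M_\lambda=m)$ converges to $G_{m/q}$ \emph{uniformly} on $[0,\infty)$, with $F_{\bm z}(0)=0$ (since $M_\lambda=m\ge1$ forces $N_\lambda\ge1$). Integration by parts in the Riemann--Stieltjes integral gives $\EE_{\bm z}[\rme^{\gamma N_\lambda}\mathbf 1\{N_\lambda\le x\}\mid M_\lambda=m]=\rme^{\gamma x}F_{\bm z}(\gamma x)-\int_0^{\gamma x}\rme^{u}F_{\bm z}(u)\,\rmd u$, and the uniform convergence together with $\gamma x\to y$ yields the limit $\rme^{y}G_{m/q}(y)-\int_0^y\rme^{u}G_{m/q}(u)\,\rmd u=\int_0^y\rme^{u}\,\rmd G_{m/q}(u)=\Gamma(m/q)^{-1}\int_0^y u^{m/q-1}\,\rmd u=q\myp y^{m/q}/(m\,\Gamma(m/q))$. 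Multiplying the two asymptotic factors, the $y$-dependence cancels and $L^q_m(x)\sim q\,\Gamma(1+1/q)^m x^{m/q}/(m!\,m\,\Gamma(m/q))$, which is \eqref{eq:L}; the cases $q=1,2$ in \eqref{eq:L=12} then follow by inserting $\Gamma(2)=1$, $m\,\Gamma(m)=m!$ and $\Gamma(3/2)=\sqrt\pi/2$.

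The one genuinely delicate point — and the step I expect to take most care over — is the unbounded weight $\rme^{\gamma N_\lambda}$ inside the conditional expectation: without the truncation $\mathbf 1\{N_\lambda\le x\}$ that expectation is in fact infinite (the relevant product $\prod_{\ell\in\NN^q}(1+z_2)$ diverges), so one cannot simply invoke weak convergence of $\gamma N_\lambda$, and moreover the truncation level $\gamma x$ is itself moving. The integration-by-parts device is what resolves this, converting the Stieltjes integral of a monotone unbounded weight against $F_{\bm z}$ into an ordinary integral of $F_{\bm z}$ over the bounded window $[0,\gamma x]$, where the uniform convergence of Theorem~\ref{thm1}(b) and $\gamma x\to y$ apply directly; this is the heart of the argument. (As an independent check, the same leading term drops out of a Karamata Tauberian theorem applied to $\sum_n\#\check\varLambda^q(n,m)\,z_1^{\myp n}=[z_2^m]\prod_{j\ge1}(1+z_1^{j^q}z_2)\sim\tfrac1{m!}\bigl(\sum_{j\ge1}\rme^{-\gamma j^q}\bigr)^m$ as $\gamma\to0{+}$ via Lemma~\ref{lm:sum1}, or, purely geometrically, from the volume $\Gamma(1+1/q)^m x^{m/q}/\Gamma(1+m/q)$ of $\{u\in\RR_+^m:\sum_iu_i^q\le x\}$ divided by $m!$ to account for the ordering of parts.)
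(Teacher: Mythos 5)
Your proposal is correct and follows essentially the same route as the paper's proof: the identity of Corollary \ref{cor:uni} with hyper-parameters $\braket{M}=m$ and $\braket{N}\propto x$, the asymptotics $F(\bm z)\to\rme^m$, $\QQ_{\bm z}(M_\lambda=m)\to\pi_m$ and $z_2\sim\kappa^{1/q}/(q^{1/q}\Gamma(1+1/q))$, and the uniform gamma convergence of Theorem \ref{thm1}(b) to evaluate $\sum_{n\le x}z_1^{-n}\myp\QQ_{\bm z}(N_\lambda=n\,|\,M_\lambda=m)$. Your Stieltjes integration by parts is just the continuous counterpart of the paper's Abel summation-by-parts (and your free parameter $y$ reduces to the paper's choice $\braket{N}=x$, i.e.\ $y=m/q$), so the two arguments coincide in substance.
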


\begin{figure}[h]
\centering
\hfill\subfigure[Sub-level set $\check{\varLambda}^q_m(x)$ with $q=1$,  $m=2$, $x=20$.]{\includegraphics[width=0.45\textwidth]{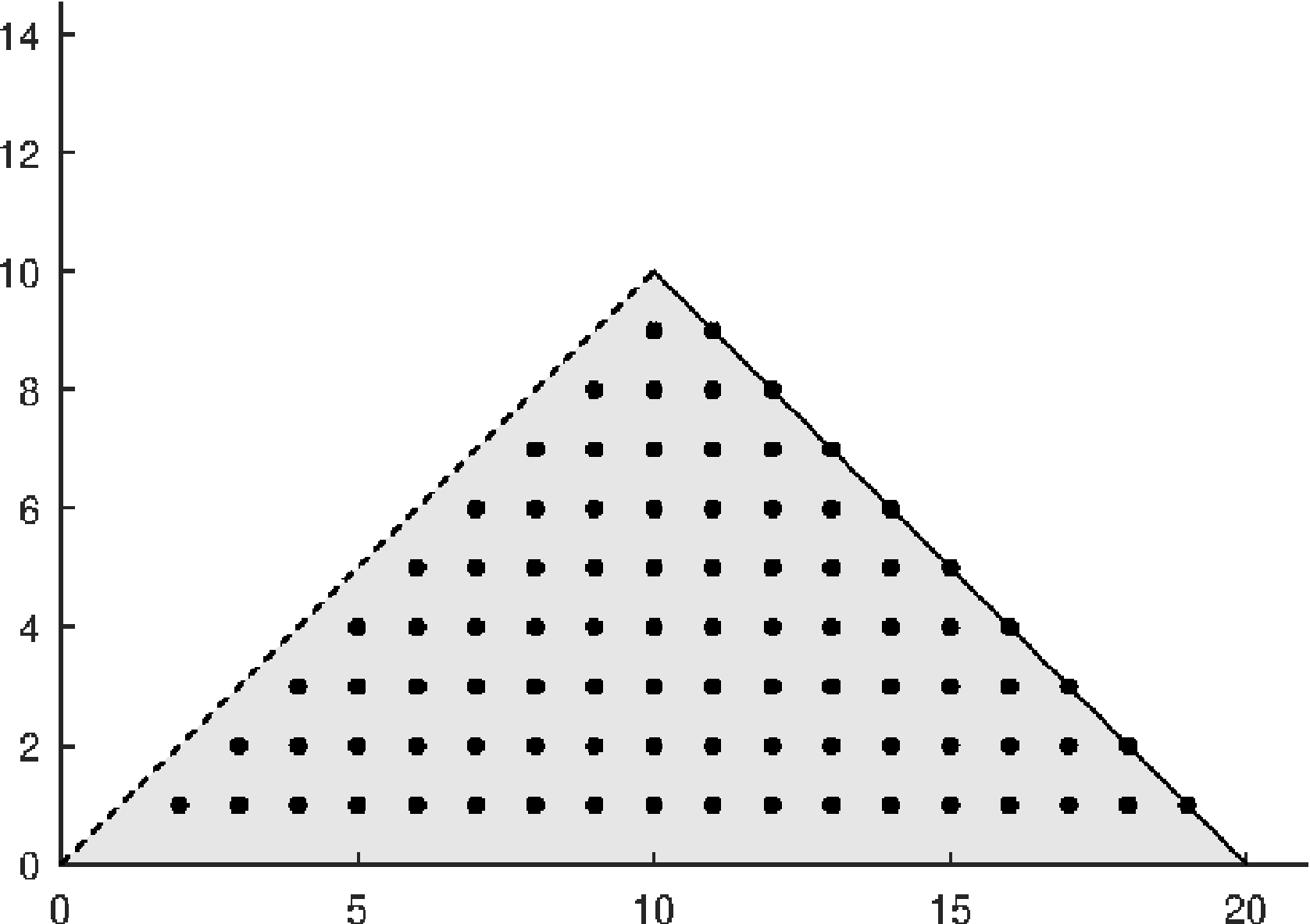}}
\hspace{1pc}\subfigure[Sub-level set $\check{\varLambda}^q_m(x)$ with $q=2$,  $m=2$, $x=400$.]{\includegraphics[width=0.45\textwidth]{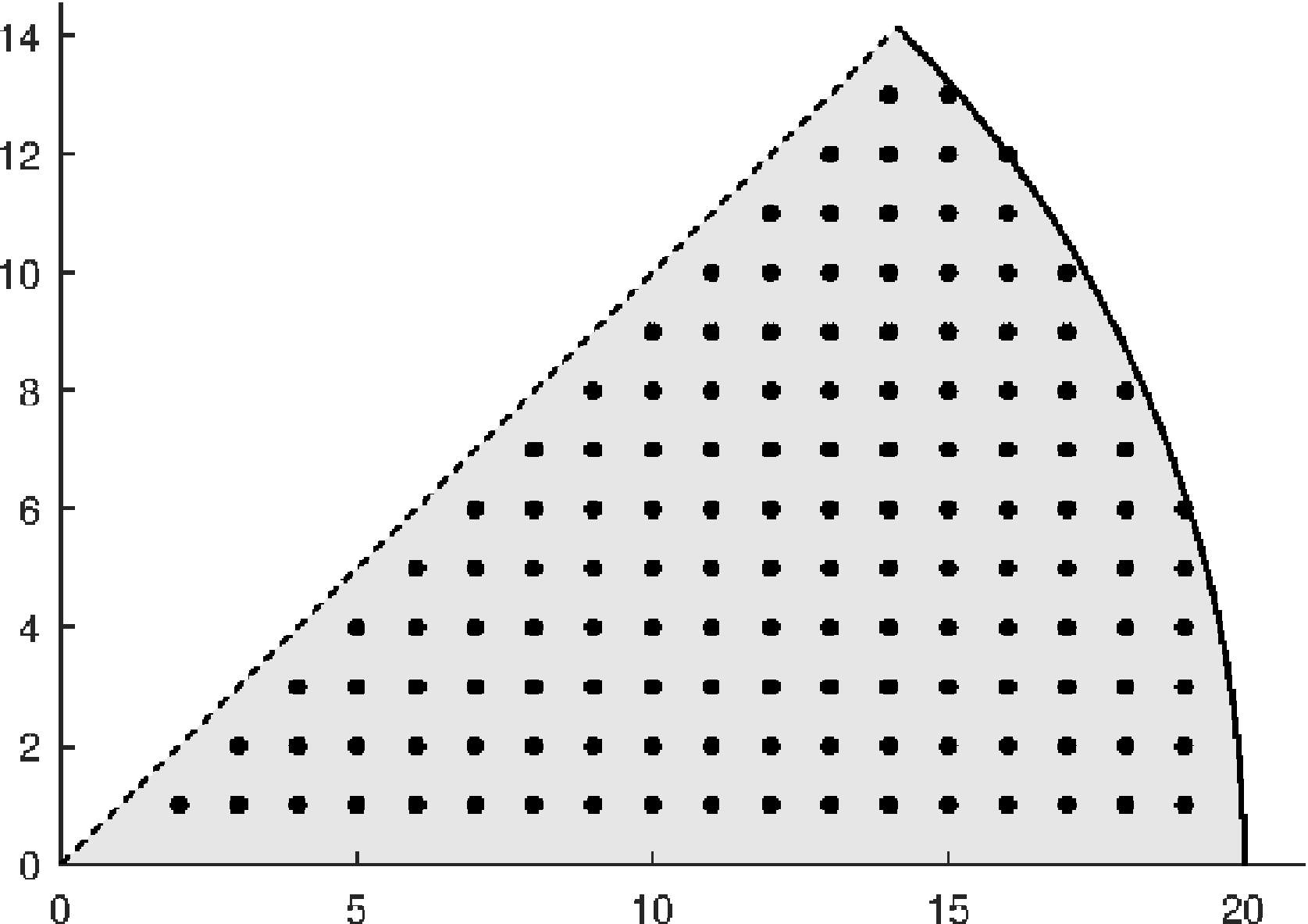}
}\hfill
\caption{Geometric illustration of the sub-level partition sets $\check{\varLambda}^q_m(x)$ (defined  in \eqref{eq:sub-level})
with $m=2$ and (a) $q=1$ or (b) $q=2$,
represented as the sets of integer points $(j_1,j_2) \in \ZZ^2$ such that $0<j_2<j_1$ and $j_1+j_2\le x$ or $j_1^2+j_2^2\le x$, respectively. In line with Theorem~\ref{th:ID}, 
their cardinalities have the asymptotics $L_{2}(x)\sim \frac14\myp x^2$ and 
$L^2_{2}(x)\sim \frac18\myp\pi x
$, 
corresponding to the area of the shaded domains. 
}
\label{fig:ball}
\end{figure}

\begin{proof}
Without loss of generality,
we may and will assume that $x$ is an integer. Set the hyper-parameters in the Boltzmann distribution $\QQ_{\bm{z}}$ to be $\langle M\rangle=m$ and $\langle N\rangle=x$.
By Corollary \ref{uniformity}, 
\begin{equation}\label{eq:nm}
\#\check{\varLambda}^{q}(n,m) = \frac{F(\bm{z})\,\QQ_{\bm{z}}(M_\lambda=m)\,\QQ_{\bm{z}}(N_\lambda=n\,|\mypp M_\lambda=m)}{z_1^n z_2^m}.
\end{equation} 
Hence, according to \eqref{eq:Lambda-card},
\begin{equation}\label{eq:Lq}
L^q_{m}(x)=\frac{F(\bm{z})\,\QQ_{\bm{z}}(M_\lambda=m)}{z_2^{m}}\sum_{n \le x}\frac{\QQ_{\bm{z}}\left(N_\lambda=n\,|\mypp M_\lambda=m  \right)}{z_1^{n}}.
\end{equation}

First of all, by Lemma \ref{lm:=0}, Theorem \ref{thm1}(a) and formula \eqref{eq:z2}, we get
\begin{align}
\notag
\frac{F(\bm{z})\,\QQ_{\bm{z}}(M_\lambda=m)}{z_2^{m}}&\sim \rme^{m}\, \frac{m^m\mypp\rme^{-m}}{m!}\left(\frac{m^{1+1/q}}{\displaystyle q^{1/q}\,\Gamma(1+1/q)}\,x^{-1/q}\right)^{\mynn-m}\\
&=\frac{q^{m/q}\myp\bigl(\Gamma(1+1/q)\bigr)^m\mypp x^{m/q}}{m!\, m^{m/q}}.
\label{eq:Q-as}
\end{align}
To handle the sum in \eqref{eq:Lq}, define the cumulative probabilities 
\begin{equation}\label{eq:T}
T_n:=\QQ_{\bm{z}}(N_\lambda\le n\,|\mypp M_\lambda=m),\qquad T_0:=0.
\end{equation}
Then by Abel's summation-by-parts formula (see, e.g., \cite[p.\,390]{Shiryaev}) we can write
\begin{align}
\notag
\sum_{n \le x}\frac{\QQ_{\bm{z}}\left(N_\lambda=n\,|\mypp M_\lambda=m  \right)}{z_1^{n}}
&=\sum_{n\le x} \bigl(T_n-T_{n-1}\bigr)\,\frac{1}{z_1^n}\\
\notag
&=\frac{T_{x}}{z_1^{x+1}}-\frac{T_0}{z_1}-\sum_{n\le x} T_n\left(\frac{1}{z_1^{n+1}}-\frac{1}{z_1^{n}}\right)\\
&=\frac{T_{x}}{z_1^{x-1}}- \frac{1-z_1}{z_1}\sum_{n\le x}\frac{T_n}{z_1^{n}}.\label{eq:Q2}
\end{align}
To shorten the notation, denote $\alpha:=m/q$. By the asymptotic formula \eqref{eq:z1}, we have  $1-z_1\sim \alpha/x$;
moreover, uniformly in $n\le x$
\begin{equation}\label{eq:z1uni}
z_1^{-n}=\rme^{\myp\alpha\myp n/x}\left(1+O\bigl(x^{-1/q}\bigr)\right)\!,
\end{equation}
since 
$\kappa=m^{q+1}\myn/x=O(x^{-1})$ (see \eqref{eq:kappa}).
Furthermore, using Theorem \ref{thm1}(b) and Remark \ref{rm:4.2} we get for the first term in \eqref{eq:Q2},
\begin{equation}\label{eq:appr1}
\frac{T_{x}}{z_1^{x-1}}\to \rme^{\alpha} \,G_\alpha(\alpha)=\frac{\rme^{\alpha}}{\Gamma(\alpha)}\int_0^\alpha \myn u^{\alpha-1}\mypp\rme^{-u}\,\rmd{u}.
\end{equation}
Generalizing, observe that $T_n\approx G_\alpha(\alpha\myp n/x)$ for $n\le x$.
More precisely, taking advantage of the uniform convergence in \eqref{cvN}, for any $\varepsilon>0$ and all large enough $\langle N\rangle$ we have, uniformly in $n\le x$,
$$
\left|T_n-G_\alpha(\alpha\myp n/x)\right|<\varepsilon.
$$
Hence, the total approximation error arising from the sum in \eqref{eq:Q2} is estimated as follows, 
\begin{align}\notag
(1-z_1)\sum_{n\le x} \frac{|T_n-G_\alpha(\alpha\myp n/x)|}{z_1^{n}}&
\le\varepsilon\,(1-z_1)\sum_{n=0}^{x} z_1^{-n}\\
&=\varepsilon\left(z_1^{-x}-z_1\right)
=O(\varepsilon).
\label{eq:eps}
\end{align}

Therefore, replacing $T_n$ with $G_\alpha(\alpha\myp n/x)$ in the sum \eqref{eq:appr1} and also using \eqref{eq:z1uni}, we obtain 
\begin{align}
\frac{1-z_1}{z_1}\sum_{n\le x} z_1^{-n}\mypp G_\alpha(\alpha\myp n/x)&\sim  \frac{\alpha}{x}\sum_{n=1}^{x}  \rme^{\alpha\myp n/x}\myp G_\alpha(\alpha\myp n/x)\to\int_{0}^{\alpha} \rme^{u}\mypp G_\alpha(u)\,\rmd{u}.
\label{eq:iint1}
\end{align}
Furthermore, the integral in \eqref{eq:iint1} is evaluated by integration by parts,
\begin{align}
\int_{0}^{\alpha}\! G_\alpha(u)\,\rmd\myp(\rme^{u})&=\rme^{\alpha}\myp G_\alpha(\alpha)
-\frac{1}{\Gamma(\alpha)}\int_{0}^{\alpha}\! u^{\alpha-1}\,\rmd{u}=\rme^{\alpha}\, G_\alpha(\alpha)-\frac{\alpha^{\alpha-1}}{\Gamma(\alpha)}.
\label{eq:iint2}
\end{align}
Thus, collecting \eqref{eq:appr1}, \eqref{eq:eps}, \eqref{eq:iint1} and \eqref{eq:iint2}, from \eqref{eq:Q2} we obtain 
\begin{equation}\label{eq:eps2}
\lim_{x\to\infty} \sum_{n\le x}\frac{\QQ_{\bm{z}}(N_\lambda=n\,|\mypp M_\lambda=m)}{z_1^{n}}= \frac{\alpha^{\alpha-1}}{\Gamma(\alpha)}+O(\varepsilon)\equiv \frac{\alpha^{\alpha-1}}{\Gamma(\alpha)},
\end{equation}
since $\varepsilon>0$ is arbitrary. 

Finally, returning to \eqref{eq:Lq} and substituting the limits \eqref{eq:Q-as}
and \eqref{eq:eps2} (the latter with $\alpha=m/q$), we obtain the asymptotic formula~\eqref{eq:L}.
\end{proof}

Continuing an illustration of Theorem \ref{th:ID} for $q=1$ and $q=2$ started in  Figure \ref{fig:ball} with $m=2$, it is well known \cite[Theorem 4.1, p.\,341]{Erdos}\footnote{To be precise, Theorem 4.1 of \cite{Erdos} is stated for partitions with at most $m$ parts, but due to Corollary 4.3 \cite[p.\,343]{Erdos} the same result also holds for partitions into exactly $m$ distinct parts.} (see also \cite[Theorem 4.3, p.\,56]{Andrews}) that, with $m\ge2$ fixed (or even $m=o(n^{1/3})$),
\begin{equation}\label{eq:q=1-size}
\#\check{\varLambda}(n,m)\sim \frac{1}{m!}\binom{n-1}{m-1}\sim
\frac{n^{m-1}}{m!\,(m-1)!}\qquad (n\to\infty).
\end{equation}
This is consistent with our cumulative formula \eqref{eq:L=12} for $q=1$, noting that
$$
\sum_{n\le x} n^{m-1}\sim\frac{x^m}{m}\qquad (x\to\infty). 
$$

With regards to the case $q=2$, it is interesting that in their famous paper of 1918 on an ``exact'' enumeration of 
%plain 
integer partitions, Hardy and Ramanujan \cite[Eq.\,(7.21), p.\mypp 110]{Ramanujan} stated without proof an asymptotic formula for the number of representations of a large $n\in\NN$ as the sum of $m>4$ squares,
%(where the order of the summands is observed and the terms such as $(+j)^2$ and $(-j)^2$ are considered different),} 
%(often referred to as \emph{compositions} \cite[Ch.\,4]{Andrews}),}
\begin{equation}\label{eq:HR-squares}
r_m(n):=\#\{(j_1,\dots,j_m)\in \ZZ^m\colon j_1^2+\dots+j_m^2=n\}=\frac{C_m\mypp\pi^{m/2} \mypp n^{m/2-1}}{\Gamma(m/2)}+O(n^{m/4})\qquad(n\to\infty),
\end{equation}
where the  constant $C_m>0$ is defined through a series 
$\sum_{k\ge 1} c_k\myp k^{-m/2}$ with computable coefficients $c_k=O(k)$. Using a geometric embedding of such representations into the space $\ZZ^m$, it is easy to see that their enumeration is asymptotically reduced to strict partitions with $m$ positive parts,
$$
r_m(n)\sim 2^m\myp m!\,\#\check{\varLambda}^2(n,m),
$$
and in view of \eqref{eq:HR-squares} it follows 
\begin{equation}\label{eq:Lq=2}
\#\check{\varLambda}^2(n,m)\sim \frac{C_m\mypp\pi^{m/2} \mypp n^{m/2-1}}{2^m\myp m!\:\Gamma(m/2)}.
\end{equation}
By the order of growth, this formula matches our cumulative asymptotic result \eqref{eq:L=12} (for $q=2$), and moreover, the constant is explicitly recovered from this comparison,
$C_m= 1$.

\subsection{A joint limit theorem for the 
extreme parts (fixed expected length)}\label{sec:4.3}
In this section, we address the limiting distribution of the largest and smallest parts of a random partition, $\lambda_{\rm max}$ and $\lambda_{\rm min}$. Recall the notation $\gamma=-\log z_1\sim \gamma_0=\langle M\rangle/(q\myp\langle N\rangle)$ (see~\eqref{eq:gamma}). 

\begin{theorem}\label{th:minmax}
Assume that $\braket{M}>0$ is fixed. 
Then\/ $\lambda_{\rm max}$ and\/ $\lambda_{\rm min}$ are asymptotically independent as  $\braket{N}\to\infty$, with the marginal limiting laws given by (for any $x\ge0$) 
\begin{align}\label{eq:max1}
\QQ_{\bm{z}}(\gamma\myp\lambda_{\rm max}\le x)&\to G_{\rm max}(x):=
\rme^{-\braket{M}\left(1-G_{1/q}(x)\right)},\\[.3pc]
\label{eq:min1}
\QQ_{\bm{z}}(\gamma\myp\lambda_{\rm min}>x)&\to G^{\myp c}_{\rm min}(x):=
\rme^{-\braket{M}\mypp G_{1/q}(x)}.
\end{align}
Moreover, conditionally on non-empty partition, 
\begin{align}\label{eq:max2}
\QQ_{\bm{z}}(\gamma\myp\lambda_{\rm max}\le x\,|\,\lambda_{\rm max}>0)&\to\widetilde{G}_{\rm max}(x):=\frac{G_{\rm max}(x)-\rme^{-\langle M\rangle}}{1-\rme^{-\langle M\rangle}},\\
\QQ_{\bm{z}}(\gamma\myp\lambda_{\rm min}>x\,|\,\lambda_{\rm min}<\infty)&\to\widetilde{G}^{\myp c}_{\rm min}(x):=\frac{G_{\rm min}^{\myp c}(x)-\rme^{-\langle M\rangle}}{1-\rme^{-\langle M\rangle}}.
\label{eq:min2}
\end{align}
\end{theorem}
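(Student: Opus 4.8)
The plan is to reduce everything to the Bernoulli picture of Lemma~\ref{distnu}. Under $\QQ_{\bm{z}}$ the multiplicities $(\nu_\ell)_{\ell\in\NN^q}$ are independent with $\QQ_{\bm{z}}(\nu_\ell=1)=z_1^\ell z_2/(1+z_1^\ell z_2)$, so that $\lambda_{\rm max}=\max\{\ell\in\NN^q\colon \nu_\ell=1\}$ and $\lambda_{\rm min}=\min\{\ell\in\NN^q\colon \nu_\ell=1\}$, with the conventions $\lambda_{\rm max}=0$, $\lambda_{\rm min}=\infty$ on the empty configuration. Fixing $x,y\ge0$, these conventions make the identities
\[
\{\gamma\lambda_{\rm max}\le y\}=\bigcap_{\ell\in\NN^q,\,\ell>y/\gamma}\{\nu_\ell=0\},\qquad \{\gamma\lambda_{\rm min}>x\}=\bigcap_{\ell\in\NN^q,\,\ell\le x/\gamma}\{\nu_\ell=0\}
\]
exact (both intersections automatically contain $\lambda_\varnothing$). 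By independence each probability equals $\exp\bigl(-\sum\log(1+z_1^\ell z_2)\bigr)$ over the respective half-line, so the whole argument comes down to the asymptotics of two sums of $\log(1+z_1^\ell z_2)$.

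For the marginals, let $\ell_*$ be the least element of $\NN^q$ exceeding $y/\gamma$; since consecutive $q$-th powers near $y/\gamma$ are $O\bigl((y/\gamma)^{1-1/q}\bigr)$ apart, $\gamma\ell_*\to y$. Then Lemma~\ref{lm:sum2+} (with $\eta=z_2\to0$, $\gamma\to0{+}$), together with \eqref{eq:z2/gamma}, the identity $\Gamma(1+1/q)=\Gamma(1/q)/q$, and continuity of $u\mapsto\Gamma(1/q,u)$, gives
\[
\sum_{\ell\in\NN^q,\,\ell>y/\gamma}\log(1+z_1^\ell z_2)=\frac{z_2\,\Gamma(1/q,\gamma\ell_*)}{q\,\gamma^{1/q}}\bigl(1+o(1)\bigr)\to \frac{\braket{M}\,\Gamma(1/q,y)}{\Gamma(1/q)}=\braket{M}\bigl(1-G_{1/q}(y)\bigr),
\]
so $\QQ_{\bm{z}}(\gamma\lambda_{\rm max}\le y)\to G_{\rm max}(y)$, which is \eqref{eq:max1}. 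Subtracting this tail from the full sum, which by Lemma~\ref{lm:=0} equals $\log F(\bm{z})\to\braket{M}$, yields $\sum_{\ell\le x/\gamma}\log(1+z_1^\ell z_2)\to\braket{M}\,G_{1/q}(x)$, hence $\QQ_{\bm{z}}(\gamma\lambda_{\rm min}>x)\to G^{\myp c}_{\rm min}(x)$, which is \eqref{eq:min1}; the endpoints $x=0$ (where $\{\gamma\lambda_{\rm min}>0\}=\check{\varLambda}^q$ since parts are positive) and $y\to\infty$ produce no inconsistency.

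For the joint statement, observe that when $0\le x<y$ the index sets $\{\ell>y/\gamma\}$ and $\{\ell\le x/\gamma\}$ are disjoint, so the two events above are measurable with respect to disjoint families of the independent $(\nu_\ell)$, whence
\[
\QQ_{\bm{z}}(\gamma\lambda_{\rm max}\le y,\ \gamma\lambda_{\rm min}>x)=\QQ_{\bm{z}}(\gamma\lambda_{\rm max}\le y)\,\QQ_{\bm{z}}(\gamma\lambda_{\rm min}>x)\to G_{\rm max}(y)\,G^{\myp c}_{\rm min}(x),
\]
which is the asserted asymptotic independence on the generic range $x<y$. In the complementary range $x\ge y$ the intersection is exactly $\{\nu_\ell=0\ \forall\ell\in\NN^q\}=\{\lambda=\lambda_\varnothing\}$, of probability $1/F(\bm{z})\to\rme^{-\braket{M}}$ by Remark~\ref{rm:==0}, consistently with the marginal atoms $G_{\rm max}(0)=\rme^{-\braket{M}}$ and $G^{\myp c}_{\rm min}(\infty)=\rme^{-\braket{M}}$, both carried by the empty partition. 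Finally, since $\{\lambda_{\rm max}>0\}=\{\lambda_{\rm min}<\infty\}=\{\lambda\ne\lambda_\varnothing\}$ has probability $1-1/F(\bm{z})\to1-\rme^{-\braket{M}}$, writing (for $x>0$) $\{\gamma\lambda_{\rm max}\le x\}=\{\gamma\lambda_{\rm max}\le x,\,\lambda_{\rm max}>0\}\cup\{\lambda_{\rm max}=0\}$ and dividing through yields $\QQ_{\bm{z}}(\gamma\lambda_{\rm max}\le x\mid\lambda_{\rm max}>0)\to(G_{\rm max}(x)-\rme^{-\braket{M}})/(1-\rme^{-\braket{M}})$, namely \eqref{eq:max2}, and \eqref{eq:min2} follows identically from $\QQ_{\bm{z}}(\gamma\lambda_{\rm min}>x)=\QQ_{\bm{z}}(\gamma\lambda_{\rm min}>x,\,\lambda_{\rm min}<\infty)+1/F(\bm{z})$.

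I expect the only real obstacle to be the bookkeeping around the empty partition: making the displayed event identities exact under the $\lambda_{\rm max}=0$, $\lambda_{\rm min}=\infty$ conventions, and being precise about the sense of ``asymptotic independence'' — namely, that the joint tail functional factorizes exactly on $\{x<y\}$, while the residual probability $\rme^{-\braket{M}}$ sits on a single degenerate point reflecting the coupled atoms of the two marginals. The analytic content, i.e.\ replacing the half-line sums over $\NN^q$ by incomplete-gamma integrals, is routine given Lemmas~\ref{lm:sum2}, \ref{lm:sum2+} and~\ref{lm:=0}, the only wrinkle being that the cut-off $x/\gamma$ need not be a perfect $q$-th power, which is absorbed by continuity of $\Gamma(1/q,\cdot)$.
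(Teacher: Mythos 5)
Your proposal is correct and follows essentially the same route as the paper's proof: the Bernoulli representation of Lemma~\ref{distnu}, the product formula for the probability that all multiplicities vanish outside a window, and Lemmas~\ref{lm:sum2+} and~\ref{lm:=0} (with \eqref{eq:z2/gamma}) to convert the log-sums into incomplete-gamma integrals, followed by the same atom bookkeeping for the conditional statements \eqref{eq:max2}--\eqref{eq:min2}. The only (harmless) difference is organizational: you derive the marginals first and get the joint factorization from exact independence of the disjoint index sets when $x<y$, with the degenerate range $x\ge y$ treated explicitly, whereas the paper computes the joint log-probability directly via the three-sum decomposition in the generic case.
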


\begin{remark}
Clearly, the normalization $\gamma$ in Theorem \ref{th:minmax} can be replaced by its asymptotic equivalent, $\gamma_0=\langle M\rangle/(q\myp\langle N\rangle )
$ (see \eqref{eq:gamma}).
\end{remark}

\begin{remark}\label{rm:G(0)}
Note that the distribution function $G_{\rm max}(x)$ has a jump at zero with mass $G_{\rm max}(0)=\rme^{-\langle M\rangle}$. This is in line with our convention for  $\lambda_{\rm max}=0$, which corresponds to the empty partition $\lambda_\varnothing$ (cf.\ Remark~\ref{rm:==0} and formula \eqref{eq:l=empty}).
On the other hand,
$\widetilde{G}_{\rm max}(0)=0$, and so the distribution function $\widetilde{G}_{\rm max}(x)$ is continuous at zero. Likewise, the tail distribution function $G^{\myp c}_{\rm min}(x)$ is improper, with defect mass $G^{\myp c}_{\rm min}(\infty)=\rme^{-\langle M\rangle}$, which again matches our convention for  $\lambda_{\rm min}=\infty$.
But $\widetilde{G}_{\rm min}^{\myp c}(\infty)=0$, so  $\widetilde{G}^{\myp c}_{\rm min}(x)$ defines a proper distribution. 
\end{remark}

\begin{proof}[Proof of Theorem \ref{th:minmax}]
For $x\ge0$, set 
$\ell_*(x):=\min\{\ell\in\NN^q\colon \ell> \gamma^{-1}x\}$, and note that $\gamma\myp\ell_*(x)\to x$.
By virtue of Lemma \ref{distnu},
\begin{align}
\notag
\QQ_{\bm{z}}(\gamma\myp\lambda_{\rm min}>x_1, \gamma\myp\lambda_{\rm max}\le x_2) &= \QQ_{\bm{z}}\bigl(\nu_{\ell}\equiv 0\ \text{for all}\ \ell<\ell_*(x_1)\ \text{and}\ \ell\ge\ell_*(x_2)\bigr)\\
\notag
&= \prod_{\ell<\ell_*(x_1),\,\ell\ge\ell_*(x_2)} \frac{1}{1+z_1^{\ell}z_2}\\
&= \exp\Biggl\{-\Biggl(\myp\sum_{\ell\in\NN^q}^\infty -\sum_{\ell\ge\ell_*(x_1)}^\infty +\sum_{\ell\ge\ell_*(x_2)}^\infty\Biggr) \log\mynn\!\left(1+z_1^{\ell}z_2 \right) \Biggr\}.
\label{eq:ell>ell*}
\end{align}
Hence, applying Lemma \ref{lm:sum2+} (with $\gamma=-\log z_1$ and $\eta=z_2$) and using the asymptotic relation \eqref{eq:z2/gamma}, we get
\begin{align*}
-\log \QQ_{\bm{z}}(\gamma\myp\lambda_{\rm min}> x_1,\gamma\myp\lambda_{\rm max}\le x_2)&\sim \frac{z_2}{ q\,\gamma^{1/q}}\left(\int_{0}^\infty\!-\int_{\gamma\myp\ell_*(x_1)}^\infty+\int_{\gamma\myp\ell_*(x_2)}^\infty \right) u^{1/q-1}\mypp\rme^{-u}\,\rmd{u}\\
&\sim\frac{\langle M\rangle}{\Gamma(1/q)} \bigl(\Gamma(1/q)-\Gamma(1/q,x_1)+\Gamma(1/q,x_2)\bigr)\\
&=-\log G^c_{\rm min}(x_1)-\log G_{\rm max}(x_2),
\end{align*}
which proves asymptotic independence and the marginal laws \eqref{eq:max1} and \eqref{eq:min1}.

The conditional versions \eqref{eq:max2} and \eqref{eq:min2} easily follow using that $G_{\rm max}(0)=G^c_{\rm min}(\infty)=\rme^{-\langle M\rangle}$ (see Remark~\ref{rm:G(0)}),
\begin{align*}
\QQ_{\bm{z}}(\gamma\myp\lambda_{\rm max}\le x\,|\,\lambda_{\rm max}>0) 
&=\frac{\QQ_{\bm{z}}(\gamma\myp\lambda_{\rm max}\le x)-\QQ_{\bm{z}}(\lambda_{\rm max}=0)}{1-\QQ_{\bm{z}}(\lambda_{\rm max}=0)}\\
&\to \frac{G_{\rm max}(x)-\rme^{-\langle M\rangle}}{1-\rme^{-\langle M\rangle}}=\widetilde{G}_{\rm max}(x),
\end{align*}
and similarly 
\begin{align*}
\QQ_{\bm{z}}(\gamma\myp\lambda_{\rm min}>x \,|\,\lambda_{\rm min}<\infty) 
&=\frac{\QQ_{\bm{z}}(\gamma\myp\lambda_{\rm min}>x) -\QQ_{\bm{z}}(\lambda_{\rm min}=\infty)}{1-\QQ_{\bm{z}}(\lambda_{\rm min}=\infty)}\\
&\to \frac{G^{\myp c}_{\rm min}(x)-\rme^{-\langle M\rangle}}{1-\rme^{-\langle M\rangle}}=\widetilde{G}_{\rm min}^{\myp c}(x),
\end{align*}
as claimed.
\end{proof}

\begin{remark}\label{rm:max-fixed}
The result of Theorem \ref{th:minmax} indicates that all parts $(\lambda_i)$ of a $\QQ_{\bm{z}}$-typical partition $\lambda\in\check{\varLambda}^q$ `live'' on the universal scale $A=\gamma^{-1}\sim q\mypp\langle N\rangle/\langle M\rangle$. Clearly, this is a manifestation of keeping the expected number of parts $\braket{M}$ fixed. The situation is different when the parameter $\braket{M}$ is allowed to grow with $\braket{N}$, as will be shown below in Theorem~\ref{th:maxM}.
\end{remark}
\begin{remark}\label{rm:maxmin-heuristic}
The interpretation of the limiting distribution of parts given in Remark \ref{rm:m-conv} can be used for a heuristic derivation of Theorem \ref{th:minmax}. Indeed, using 
independence and the $\mathrm{Gamma}\mypp(1/q)$-distribution of the independent limiting parts $(Z_i)$ (see~\eqref{eq:compound}), the  distribution function of $Z_{\rm max}=\max\{Z_1,\dots,Z_M\}$ is given by
\begin{align*}
G_{\rm max}(x)=\sum_{m=0}^\infty \pi_m\myp \bigl(G_{1/q}(x)\bigr)^m=\exp\left\{-\langle M\rangle\mynn\left(1-G_{1/q}(x)\right)\right\}=\exp\mynn\left\{-\frac{\langle M\rangle \myp\Gamma(1/q,x)}{\Gamma(1/q)}\right\}\!,
\end{align*}
which conforms with claim \eqref{eq:max1}. Derivation of \eqref{eq:min1} is similar.
\end{remark}

\section{Slow growth of the 
expected length}\label{sec:5}
Throughout this section, we impose the following condition on the growth of the hyper-parameters 
$\langle N\rangle$ and $\langle M\rangle$ (cf.\ Theorem \ref{th:cal}). 

\begin{assumption}\label{as2}
In addition to Assumption \ref{as0} stating that $\kappa=\langle M\rangle^{q+1}/\langle N\rangle\to0$ as $\langle N\rangle\to\infty$, it is assumed that
$\langle M\rangle\rightarrow \infty$.
\end{assumption}
Recall that the vector parameter $\bm{z}=(z_1,z_2)$ of the Boltzmann measure $\QQ_{\bm{z}}$ is calibrated according to Assumption~\ref{as1}. We use our standard notation $\gamma=-\log z_1\sim \langle M\rangle/(q\myn\braket{N})$ (see~\eqref{eq:gamma}).

\subsection{A joint limit theorem for the weight and length}\label{sec:5.1}
\begin{theorem}\label{th:CLT}
Under Assumptions \ref{as1} and \ref{as2}, 
define the normalized versions of $N_\lambda$ and $M_\lambda$, 
\begin{equation}\label{eq:MN-norm}
N_\lambda^*:=\frac{\sqrt{\langle M\rangle}}{\sqrt{q+1}}\left(\frac{N_\lambda-\langle N\rangle}{\langle N\rangle}\right)\!,\qquad M_\lambda^*:=\frac{M_\lambda-\langle M\rangle}{\sqrt{\langle M\rangle}} .
\end{equation}
Then both $N_\lambda^*$ and $M_\lambda^*$ are asymptotically standard normal,
\begin{equation*}
N_\lambda^* \stackrel{\mathrm{d}}{\longrightarrow} \mathcal{N}(0,1),\qquad M_\lambda^* \stackrel{\mathrm{d}}{\longrightarrow} \mathcal{N}(0,1).
\end{equation*}
Moreover, the joint limiting distribution of $N_\lambda^*$ and $M_\lambda^*$ is bivariate normal with zero mean and covariance matrix
\begin{equation}\label{eq:K}
\bm{K}_q=\begin{pmatrix}
1&\displaystyle \frac{1}{\sqrt{q+1}}\\[.5pc]
\displaystyle \frac{1}{\sqrt{q+1}}&1\end{pmatrix}\!.
\end{equation}
\end{theorem}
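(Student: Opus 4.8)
The plan is to use the standard characteristic-function (or cumulant-generating-function) approach, exploiting the independence of the Bernoulli multiplicities $(\nu_\ell)$ under $\QQ_{\bm z}$ established in Lemma~\ref{distnu}. First I would write the joint Laplace (or Fourier) transform of $(N_\lambda,M_\lambda)$ in the factorized form already used in the proof of Theorem~\ref{thm1}:
\begin{equation*}
\log\EE_{\bm z}\bigl[\rme^{-s_1 N_\lambda-s_2 M_\lambda}\bigr]=\sum_{\ell\in\NN^q}\log\frac{1+z_1^\ell z_2\,\rme^{-(s_1\ell+s_2)}}{1+z_1^\ell z_2}.
\end{equation*}
To capture the limiting bivariate normal law with covariance $\bm K_q$, I would substitute the scaling dictated by \eqref{eq:MN-norm}, namely $s_1=\dfrac{t_1\sqrt{\langle M\rangle}}{\sqrt{q+1}\,\langle N\rangle}$ and $s_2=\dfrac{t_2}{\sqrt{\langle M\rangle}}$ (working with imaginary arguments $\rmi t_j$ for a rigorous CLT, or real ones plus an analyticity/cumulant argument), and perform a Taylor expansion of $\log(1+z_1^\ell z_2\,\rme^{-(s_1\ell+s_2)})-\log(1+z_1^\ell z_2)$ to second order in the small quantities $s_1\ell+s_2$.

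The key computational step is then to evaluate the three sums that arise as coefficients of $t_1^2$, $t_1 t_2$, and $t_2^2$. These are, up to the scaling factors, exactly $\Var_{\bm z}(N_\lambda)$, $\Cov_{\bm z}(N_\lambda,M_\lambda)$, and $\Var_{\bm z}(M_\lambda)$, whose asymptotics are already supplied by Theorem~\ref{th:VarCov}. Indeed, with the chosen normalization,
\begin{equation*}
\Var_{\bm z}(N_\lambda^*)=\frac{\langle M\rangle}{(q+1)\langle N\rangle^2}\,\Var_{\bm z}(N_\lambda)\to 1,\qquad
\Var_{\bm z}(M_\lambda^*)=\frac{\Var_{\bm z}(M_\lambda)}{\langle M\rangle}\to 1,
\end{equation*}
and the cross term $\Cov_{\bm z}(N_\lambda^*,M_\lambda^*)=\dfrac{\sqrt{\langle M\rangle}}{\sqrt{q+1}\,\langle N\rangle\sqrt{\langle M\rangle}}\,\Cov_{\bm z}(N_\lambda,M_\lambda)\to 1/\sqrt{q+1}$, which is precisely $\bm K_q$. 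So the quadratic part of the expansion converges to $-\tfrac12\,\bm t\,\bm K_q\,\bm t^{\top}$, giving the claimed Gaussian limit.

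What remains — and what I expect to be the main obstacle — is controlling the remainder of the Taylor expansion: one must show that $\sum_{\ell\in\NN^q}\bigl|s_1\ell+s_2\bigr|^3\,p_\ell$ (roughly, the third-order term, with $p_\ell=z_1^\ell z_2/(1+z_1^\ell z_2)$) tends to $0$ under Assumptions~\ref{as1}–\ref{as2}. This splits into the contributions $s_1^3\sum_\ell \ell^3 p_\ell$, $s_1^2 s_2\sum_\ell \ell^2 p_\ell$, $s_1 s_2^2\sum_\ell \ell p_\ell$, and $s_2^3\sum_\ell p_\ell$; each of the sums $\sum_\ell \ell^r p_\ell$ has order $\langle N\rangle^{r}/\langle M\rangle^{r-?}$ extracted from Lemma~\ref{lm:sum1} and \eqref{eq:z2/gamma} (e.g.\ $\sum_\ell \ell^3 p_\ell \asymp z_2\,\gamma^{-3-1/q}\asymp \langle N\rangle^3/\langle M\rangle^2$), so that the four terms are $O(\langle M\rangle^{-1/2})$, $O(\langle M\rangle^{-1/2})$, $O(\langle M\rangle^{-1/2})$, $O(\langle M\rangle^{-1/2})$ respectively — all vanishing precisely because $\langle M\rangle\to\infty$. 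A clean way to package this is to invoke a Lyapunov-type CLT for the triangular array of independent (vector-valued) summands $\bigl(\ell\,\nu_\ell,\nu_\ell\bigr)$: check that the normalized covariance converges to $\bm K_q$ (done via Theorem~\ref{th:VarCov}) and that a Lyapunov ratio $\sum_\ell \EE_{\bm z}\|\cdot\|^3/(\sum_\ell \Var)^{3/2}\to 0$, which is the bounded-third-moment estimate just sketched. Finally I would note that the marginal statements $N_\lambda^*\stackrel{\mathrm d}{\to}\mathcal N(0,1)$ and $M_\lambda^*\stackrel{\mathrm d}{\to}\mathcal N(0,1)$ are immediate from the joint convergence (or obtained by setting $t_2=0$, resp.\ $t_1=0$), and the whole argument reduces to a careful but routine verification of the Lyapunov condition, the error bounds being uniform thanks to monotonicity of $(p_\ell)$ and Lemma~\ref{lm:uniform}.
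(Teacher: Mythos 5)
Your proposal is correct and follows essentially the same route as the paper's proof: factorize the joint characteristic function via the independence of the Bernoulli multiplicities (Lemma \ref{distnu}), expand the logarithm so that the linear terms cancel exactly against the centering by the calibration \eqref{braket}, identify the quadratic coefficients with the variance/covariance asymptotics of Theorem \ref{th:VarCov}, and kill the third-order remainder using Lemma \ref{lm:sum1} together with \eqref{eq:z2/gamma}, each contribution being $O(\langle M\rangle^{-1/2})$ and hence vanishing precisely because $\langle M\rangle\to\infty$. The only cosmetic difference is your optional repackaging as a Lyapunov CLT for the triangular array $(\ell\,\nu_\ell,\nu_\ell)$, which encodes the same second- and third-moment estimates that the paper carries out by hand in its explicit expansion of $\log\varphi(t_1,t_2)$.
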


\begin{proof}
Consider the characteristic function of the pair $(N^*_\lambda,M^*_\lambda)$,
\begin{equation}\label{eq:phi3}
\varphi(t_1,t_2):=\EE_{\bm{z}}\bigl[\exp\myn\bigl(\rmi\mypp t_1 N^*_\lambda+\rmi\mypp t_2\myp M^*_\lambda\bigr)\bigr],\qquad t_1,t_2\in\RR.
\end{equation}
Substituting \eqref{eq:MN-norm}, this is transformed as
\begin{equation}\label{eq:phi3+}
\varphi(t_1,t_2)=\exp\myn\bigl(-\rmi\mypp \tilde{t}_1 \langle N\rangle-\rmi\mypp \tilde{t}_2\myp \langle M\rangle\bigr)
\;\EE_{\bm{z}}\bigl[\exp\myn(\rmi\mypp \tilde{t}_1 N_\lambda+\rmi\mypp \tilde{t}_2\myp M_\lambda)\bigr],
\end{equation}
where
\begin{equation}\label{eq:AB}
\tilde{t}_1= \frac{t_1\myp\sqrt{\langle M\rangle}}{\sqrt{q+1}\,\langle N\rangle}\myp, \qquad \tilde{t}_2= \frac{t_2}{\sqrt{\langle M\rangle}}\myp.
\end{equation}
Furthermore, using formulas \eqref{eq:NM}, mutual independence of the multiplicities $(\nu_\ell)$ and the Bernoulli marginals \eqref{eq:Bern}, the expectation in \eqref{eq:phi3} is rewritten as
\begin{align}
\notag
\EE_{\bm{z}}\!\left[\exp\left(\sum_{\ell\in\NN^q}\rmi\left(\tilde{t}_1\ell+\tilde{t}_2\right)\nu_\ell\right)\right]
&=\prod_{\ell\in\NN^q}\EE_{\bm{z}}\bigl[\rme^{\mypp\rmi\myp(\tilde{t}_1\ell+\tilde{t}_2)\mypp\nu_\ell}\bigr]\\
&=\prod_{\ell\in\NN^q}\frac{1+z_1^\ell z_2\,\rme^{\mypp\rmi\myp(\tilde{t}_1\ell+\tilde{t}_2)}}{1+z_1^\ell z_2}.
\label{eq:cf1}
\end{align}
Choosing the principal branch of the logarithm function $\CC\setminus\{0\}\ni \xi\mapsto \log\xi\in\CC$ (i.e., such that $\log 1=0$), we can rewrite \eqref{eq:phi3+} and \eqref{eq:cf1} 
as
\begin{equation}
\label{eq:phi4}\log\varphi(t_1,t_2) =-\rmi\mypp \tilde{t}_1 \langle N\rangle-\rmi\mypp \tilde{t}_2\myp \langle M\rangle+\sum_{\ell\in\NN^q} \log\left(1+w_\ell\right)\!,
\end{equation}
where
\begin{equation}\label{eq:w}
w_\ell\equiv w_\ell(\tilde{t}_1,\tilde{t}_2):=\frac{z_1^\ell z_2}{1+z_1^\ell z_2}\,\bigl(\rme^{\mypp\rmi\myp(\tilde{t}_1\ell+\tilde{t}_2)}-1\bigr).
\end{equation}
Remembering that $0<z_1<1$ and $z_2\to0$ (see \eqref{eq:z1} and \eqref{eq:z2}), note that, uniformly
for $\ell\in\NN^q$,
$$
|w_\ell|\le \frac{2\myp z_1^\ell z_2}{1+z_1^\ell z_2}\le \frac{2\myp  z_2}{1+ z_2}\le \frac12=:R_0,
$$
provided that  $\kappa=\langle M\rangle^{q+1}\myn/\langle N\rangle$ is small enough (cf.\  Assumption \ref{as2}). 

By Taylor's formula for complex-analytic functions (see, e.g., \cite[Sec.\,5.4, p.\,93]{WW}) applied to the function $g(w)=\log\left(1+w\right)$ with $|w|\le R_0<1$, we have
\begin{equation}\label{eq:Taylor}
g(w)=g(0)+g'(0)\mypp w
+\frac{w^2}{2\pi\rmi}\oint_{\varGamma_{\myn R}}
\frac{g(\xi)}{\xi^2\mypp(\xi-w)}\,\rmd{\xi},
\end{equation}
where $\varGamma_{\myn R}$ is the circle of radius $R\in(R_0,1)$ about the
origin, positively oriented (i.e., anti-clockwise). Noting that $|\xi-w|\ge R-R_0$ for all $\xi\in\varGamma_{\myn R}$ and $|w|\le R_0$, the remainder term in \eqref{eq:Taylor} is bounded in modulus as follows,
$$
\left|\frac{w^2}{2\pi\rmi}\oint_{\varGamma_{\myn R}}
\frac{g(\xi)}{\xi^2\mypp(\xi-w)}\,\rmd{\xi}\right|\le\frac{2\pi R}{2\pi}\left(\frac{|w|}{R}\right)^2\frac{C_{R}}{R-R_0}=\frac{|w|^2\mypp C_R}{R\,(R-R_0)},
$$
where 
$$
C_{R}:=\max\mynn\bigl\{|g(\xi)|,\,\xi\in \varGamma_{\myn R}\bigr\}<\infty.
$$
Thus, the expansion \eqref{eq:Taylor} specializes to
\begin{equation}\label{eq:Taylor-log}
\log\left(1+w\right)=w +O(|w|^2),
\end{equation}
where the $O$-term is uniform in the disk $|w|\le R_0=\frac12$.

We will also  use Taylor's expansion for the complex exponent in \eqref{eq:w}  
with a uniform bound
on the error term for any $k\in\NN$ and all $t\in\RR$  (see, e.g.,
\cite[Sec.\,XV.4, Lemma~1, p.\,512]{Feller}),
\begin{equation}\label{eq:exp-Taylor}
\Biggl|\myp\rme^{\rmi t}-\sum_{j=0}^{k-1}\frac{(\rmi\myp
t)^j}{j!}\Biggr|\le\frac{\,|t|^k}{k!\,}.
\end{equation}

Now, combining the uniform expansions  \eqref{eq:Taylor-log} and \eqref{eq:exp-Taylor} (the latter with $k=2$ or $k=1$ as appropriate) and returning to \eqref{eq:phi4}, we obtain
\begin{align}
\notag
\sum_{\ell\in\NN^q}\log\left(1+w_\ell\right)&=\sum_{\ell\in\NN^q} \frac{z_1^\ell z_2}{1+z_1^\ell z_2}\left(\rmi\mypp(\tilde{t}_1\ell+\tilde{t}_2)-\frac12\left(\tilde{t}_1\ell+\tilde{t}_2\right)^2+O(1)\left(\tilde{t}_1\ell+\tilde{t}_2\right)^3\right)\\
\notag
&\quad +O(1)\sum_{\ell\in\NN^q} \frac{z_1^{2\ell} z_2^2}{(1+z_1^\ell z_2)^2}\left(\tilde{t}_1\ell+\tilde{t}_2\right)^2\\
\notag
&=\rmi\mypp \tilde{t}_1\sum_{\ell\in\NN^q} \frac{\ell\mypp z_1^\ell z_2}{1+z_1^\ell z_2}+\rmi\mypp \tilde{t}_2\sum_{\ell\in\NN^q} \frac{z_1^\ell z_2}{1+z_1^\ell z_2} -\frac12\sum_{\ell\in\NN^q}\frac{z_1^\ell z_2}{1+z_1^\ell z_2}\left(\tilde{t}_1\ell+\tilde{t}_2\right)^2\\
\notag&\quad +O(1)\sum_{\ell\in\NN^q}z_1^\ell z_2\left(\tilde{t}_1\ell+\tilde{t}_2\right)^3 +O(1)\sum_{\ell\in\NN^q} z_1^{2\ell} z_2^2\left(\tilde{t}_1\ell+\tilde{t}_2\right)^2\\
&=:\rmi\myp\tilde{t}_1\myp\varSigma_1+\rmi\myp\tilde{t}_2\mypp\varSigma_2-\tfrac12\myp\varSigma_3+O(1)\myp\varSigma_4+O(1)\myp\varSigma_5.
\label{eq:log-expansion}
\end{align}

According to the calibration equations (see \eqref{eq:<M>} and  \eqref{eq:<N>}), the first two sums in \eqref{eq:log-expansion} are known exactly, 
\begin{equation}\label{eq:phi-sum12}
\varSigma_1=\langle N\rangle,\qquad \varSigma_2=\langle M\rangle.
\end{equation}
Next, the error sums $\varSigma_4$ and $\varSigma_5$ can be shown to asymptotically vanish. Indeed, using the elementary inequality $(a+b)^r\le 2^{r-1}\left(a^r+b^r\right)$ ($r\ge 1$) and  
combining Lemma \ref{lm:sum1} with formulas \eqref{eq:z1}, \eqref{eq:z2} and \eqref{eq:z2/gamma} gives upon simple calculations the estimate
\begin{align}\notag
0\le \varSigma_4&\le4\mypp\tilde{t}_1^{\mypp3}\sum_{\ell\in\NN^q}\ell^3z_1^\ell z_2+4\mypp\tilde{t}_2^{\mypp3}\sum_{\ell\in\NN^q}z_1^\ell z_2\\
&=\frac{O(1)\mypp\langle M\rangle^{3/2}\myp z_2}{\langle N\rangle^3\mypp\gamma^{3+1/q}}+\frac{O(1)\mypp z_2}{\langle M\rangle^{3/2}\mypp\gamma^{1/q}}=\frac{O(1)}{\langle M\rangle^{1/2}}=o(1).
\label{eq:phi-sum4}
\end{align}
Similarly,
\begin{align}\notag
0\le \varSigma_5&\le2\mypp\tilde{t}_1^{\mypp2}\sum_{\ell\in\NN^q}\ell^2z_1^{2\ell} z_2^2+2\mypp\tilde{t}_2^{\mypp2}\sum_{\ell\in\NN^q}z_1^{2\ell} z_2^2\\
&=\frac{O(1)\mypp\langle M\rangle\myp z_2^2}{\langle N\rangle^2\mypp\gamma^{2+1/q}}+\frac{O(1)\mypp z_2^2}{\langle M\rangle\mypp\gamma^{1/q}}=O(\kappa^{1/q})=o(1).
\label{eq:phi-sum5}
\end{align}

Finally, consider the sum $\varSigma_3$ in \eqref{eq:log-expansion} which, as we will see, provides the main contribution to \eqref{eq:phi4}.
To this end, observe (cf.\ \eqref{eq:<M>}) that  
\begin{align}
\notag
0\le \sum_{\ell\in\NN^q} z_1^\ell z_2\left(\tilde{t}_1\ell+\tilde{t}_2\right)^2-\varSigma_3&=\sum_{\ell\in\NN^q}\frac{z_1^{2\ell} z_2^2}{1+z_1^\ell z_2}\left(\tilde{t}_1\ell+\tilde{t}_2\right)^2\\
&\le \sum_{\ell\in\NN^q}z_1^{2\ell} z_2^2\left(\tilde{t}_1\ell+\tilde{t}_2\right)^2=\varSigma_5=o(1),
\label{eq:phi-sum31}
\end{align}
as shown in \eqref{eq:phi-sum5}. In turn, using the asymptotic results \eqref{eq:Sum-22}, \eqref{eq:Sum-11} and \eqref{eq:Sum-12}, and recalling the rescaling 
expressions \eqref{eq:AB}, we obtain the limit
\begin{align}\notag
\sum_{\ell\in\NN^q} z_1^\ell z_2\left(\tilde{t}_1\ell+\tilde{t}_2\right)^2&=\tilde{t}_1^{\mypp2}\sum_{\ell\in\NN^q} \ell^2z_1^\ell z_2+2\mypp\tilde{t}_1\tilde{t}_2\sum_{\ell\in\NN^q} \ell\myp z_1^\ell z_2+\tilde{t}_2^{\mypp2}\sum_{\ell\in\NN^q} z_1^\ell z_2\\
&\to t_1^2+\frac{2\mypp t_1t_2}{\sqrt{q+1}}+t_2^2\mypp,
\label{eq:phi-sum32}
\end{align}
which is a quadratic form with matrix \eqref{eq:K}.

Thus, substituting the estimates \eqref{eq:phi-sum12}, \eqref{eq:phi-sum4},
\eqref{eq:phi-sum5}, \eqref{eq:phi-sum31} and \eqref{eq:phi-sum32} into \eqref{eq:log-expansion} and returning to \eqref{eq:phi4}, we obtain
$$
\varphi(t_1,t_2)\to \exp\left\{-\frac12\left( t_1^2+\frac{2\mypp t_1t_2}{\sqrt{q+1}}+t_2^2\right)\right\}\!,
$$
and the proof of Theorem \ref{th:CLT} is complete.
\end{proof}

\begin{corollary}\label{cor:5.4}
Under the hypotheses of Theorem \ref{th:CLT}, the following laws of large numbers hold under the Boltzmann measure $\QQ_{\bm{z}}$,
\begin{equation}\label{eq:LLN-MN}
\frac{M_\lambda}{\langle M\rangle}\stackrel{\mathrm{p}}{\longrightarrow} 1, \qquad \frac{N_\lambda}{\langle N\rangle}\stackrel{\mathrm{p}}{\longrightarrow} 1.
\end{equation}
\end{corollary}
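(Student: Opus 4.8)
The plan is to derive Corollary~\ref{cor:5.4} directly from the asymptotic normality established in Theorem~\ref{th:CLT}, using nothing more than the fact that convergence in distribution to a constant is equivalent to convergence in probability. First I would recall that Theorem~\ref{th:CLT} gives $M_\lambda^* = (M_\lambda-\langle M\rangle)/\sqrt{\langle M\rangle} \stackrel{\mathrm{d}}{\to}\mathcal{N}(0,1)$ and $N_\lambda^* = \sqrt{\langle M\rangle}\,(N_\lambda-\langle N\rangle)/(\sqrt{q+1}\,\langle N\rangle)\stackrel{\mathrm{d}}{\to}\mathcal{N}(0,1)$. The point is that each normalized variable, being tight (it converges in distribution), forces the original ratio towards~$1$ once we divide by the diverging normalizing factor.

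For the length, write
\begin{equation*}
\frac{M_\lambda}{\langle M\rangle}-1 = \frac{M_\lambda-\langle M\rangle}{\langle M\rangle}=\frac{M_\lambda^*}{\sqrt{\langle M\rangle}}.
\end{equation*}
Since $M_\lambda^*\stackrel{\mathrm{d}}{\to}\mathcal{N}(0,1)$, the sequence $(M_\lambda^*)$ is stochastically bounded, and under Assumption~\ref{as2} we have $\langle M\rangle\to\infty$, so $\sqrt{\langle M\rangle}\to\infty$; hence $M_\lambda^*/\sqrt{\langle M\rangle}\stackrel{\mathrm{p}}{\to}0$ by Slutsky's theorem (product of a tight sequence with a deterministic null sequence). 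This gives $M_\lambda/\langle M\rangle\stackrel{\mathrm{p}}{\to}1$. For the weight, similarly
\begin{equation*}
\frac{N_\lambda}{\langle N\rangle}-1=\frac{N_\lambda-\langle N\rangle}{\langle N\rangle}=\frac{\sqrt{q+1}}{\sqrt{\langle M\rangle}}\,N_\lambda^*,
\end{equation*}
and again $N_\lambda^*$ is tight while $\sqrt{q+1}/\sqrt{\langle M\rangle}\to 0$, so $N_\lambda/\langle N\rangle\stackrel{\mathrm{p}}{\to}1$.

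There is essentially no obstacle here; the corollary is a routine consequence, and I would keep the proof to a few lines. The only point worth a remark is that one does not even need the joint statement or the precise covariance matrix $\bm{K}_q$ from Theorem~\ref{th:CLT} — the two marginal central limit theorems suffice, applied one at a time. One could alternatively bypass Theorem~\ref{th:CLT} entirely and argue from Chebyshev's inequality using the variance asymptotics of Theorem~\ref{th:VarCov}, since $\Var_{\bm z}(M_\lambda)/\langle M\rangle^2\sim 1/\langle M\rangle\to 0$ and $\Var_{\bm z}(N_\lambda)/\langle N\rangle^2\sim (q+1)/\langle M\rangle\to 0$; but invoking the already-proven CLT is the shortest route.
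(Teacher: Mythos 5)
Your proof is correct and follows exactly the route the paper intends: the corollary is stated without proof as an immediate consequence of Theorem \ref{th:CLT}, obtained by dividing the tight normalized variables $M_\lambda^*$, $N_\lambda^*$ by the diverging factor $\sqrt{\langle M\rangle}$ (Slutsky), using $\langle M\rangle\to\infty$ from Assumption \ref{as2}. Your remark that Chebyshev plus Theorem \ref{th:VarCov} would also suffice is accurate but not needed.
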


\begin{remark}\label{rm:link1}
As a curiosity, we observe that the limiting distribution in Theorem \ref{thm1} formally conforms to Theorem~\ref{th:CLT} under the additional limit as $\langle M\rangle \to\infty$. Indeed, start with the intermediate limit \eqref{eq:phi-lim} (with $\langle M\rangle$ fixed) and switch from Laplace transform to  characteristic function by formally changing $(s_1,s_2)$ ($s_i\ge0$) to $-\rmi\mypp (t_1,t_2)$ ($t_i\in\RR$). Then, bearing in mind the normalization \eqref{eq:MN-norm}, we obtain
\begin{align}
-\langle M\rangle\! \left(1-\rme^{\myp\rmi\myp t_2/\sqrt{\langle M\rangle}}\left(1-\frac{\rmi\mypp t_1 q}{\sqrt{\langle M\rangle}}\right)^{-1/q}\right)+\frac{\rmi\mypp t_1 \sqrt{\langle M\rangle}}{\sqrt{q+1}}&\to-\frac12\left( t_1^2+\frac{2\mypp t_1t_2}{\sqrt{q+1}}+t_2^2\right)\!,
\label{eq:lim2}
\end{align}
by Taylor expanding the left-hand side of \eqref{eq:lim2} up to the second order in parameter $\langle M\rangle^{-1/2}=o(1)$. 
\end{remark}

We finish this section by stating a theorem concerning the marginal local-type asymptotics for the length $M_\lambda$ and the corresponding conditional asymptotics for the weight $N_\lambda$. 
\begin{theorem}\label{thm1+}
Under Assumptions \ref{as1} and \ref{as2}, the following distributional asymptotics hold under the Boltzmann distribution $\QQ_{\bm{z}}$ on the space $\check{\varLambda}^q$. 
\begin{itemize}
\item[\rm (a)] \textup{(Local limit theorem for $M_\lambda$)}
For any $m$ such that $m-\langle M\rangle= O\bigl(\sqrt{\langle M\rangle}\myp\bigr)$, 
\begin{equation}\label{cvM+}
Q_{\bm{z}}(M_\lambda=m)\sim f_{\langle M\rangle}(m),
\end{equation}
where $f_{\langle M\rangle}(x)$ is the normal density with mean $\langle M\rangle$ and standard deviation  $\sqrt{\langle M\rangle}$,
$$
f_{\langle M\rangle}(x)=\frac{1}{\sqrt{2\pi\langle M\rangle}}\,\exp\left\{-\frac{(x-\langle M\rangle)^2}{2\myp\langle M\rangle}\right\}\!,\qquad x\in\RR.
$$
\item[\rm (b)]
\textup{(Conditional limit theorem for $N_\lambda$)} Conditionally on $M_\lambda=m$ with $m-\langle M\rangle = O\bigl(\sqrt{\langle M\rangle}\myp\bigr)$, the partition weight $N_\lambda$ is asymptotically normal with mean $m\myp\langle N\rangle/\langle M\rangle$ and variance $q\mypp\langle N\rangle^2/\langle M\rangle$, that is,
\begin{equation}\label{cvN2}
\frac{\sqrt{\langle M\rangle}}{\sqrt{q}}\left(\frac{N_\lambda}{\langle N\rangle}-\frac{m}{\langle M\rangle}\right) \stackrel{\mathrm{d}}{\longrightarrow}_{M_\lambda=m} \mathcal{N}(0,1). 
\end{equation}
\end{itemize}
\end{theorem}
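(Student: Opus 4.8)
The plan is to prove both parts by Fourier inversion in the length variable, using the product form of the joint characteristic function from Lemma~\ref{distnu}. Write $M_\lambda=\sum_{\ell\in\NN^q}\nu_\ell$, a sum of independent Bernoulli variables with $p_\ell:=\QQ_{\bm z}(\nu_\ell=1)=z_1^\ell z_2/(1+z_1^\ell z_2)$, so that $\sum_\ell p_\ell=\braket M$ and, by Theorem~\ref{th:VarCov}, $\Var_{\bm z}(M_\lambda)=\sum_\ell p_\ell(1-p_\ell)\sim\braket M\to\infty$. For part~(a), the characteristic function $\chi(t):=\EE_{\bm z}[\rme^{\rmi t M_\lambda}]=\prod_{\ell\in\NN^q}\bigl(1+p_\ell(\rme^{\rmi t}-1)\bigr)$ satisfies
$$
|\chi(t)|^2=\prod_{\ell\in\NN^q}\bigl(1-4\myp p_\ell(1-p_\ell)\sin^2(t/2)\bigr)\le\exp\bigl(-4\sin^2(t/2)\,\Var_{\bm z}(M_\lambda)\bigr),
$$
whence $|\chi(t)|\le\rme^{-c\myp t^2\braket M}$ for $|t|\le\pi$ with some fixed $c>0$, since $\sin^2(t/2)\ge t^2/\pi^2$ there. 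Starting from $\QQ_{\bm z}(M_\lambda=m)=\frac1{2\pi}\int_{-\pi}^{\pi}\rme^{-\rmi tm}\chi(t)\,\rmd t$, substituting $t=u/\sqrt{\braket M}$ and comparing with the corresponding Fourier representation of the density $f_{\braket M}$, the assertion \eqref{cvM+} reduces to
$$
\frac1{2\pi}\int_{\RR}\Bigl[\chi\bigl(u/\sqrt{\braket M}\bigr)\,\rme^{-\rmi u\sqrt{\braket M}}\,\mathbf 1_{\{|u|\le\pi\sqrt{\braket M}\}}-\rme^{-u^2/2}\Bigr]\rme^{-\rmi u(m-\braket M)/\sqrt{\braket M}}\,\rmd u\longrightarrow0 .
$$
On compacts the bracket tends to $0$ because $\chi(u/\sqrt{\braket M})\,\rme^{-\rmi u\sqrt{\braket M}}=\EE_{\bm z}[\rme^{\rmi u(M_\lambda-\braket M)/\sqrt{\braket M}}]\to\rme^{-u^2/2}$ by the characteristic-function convergence obtained in the proof of Theorem~\ref{th:CLT}; the bound $|\chi(u/\sqrt{\braket M})|\le\rme^{-c u^2}$ provides an $m$-independent integrable majorant, so the limit holds, and is uniform over $|m-\braket M|=O(\sqrt{\braket M})$ because there $\sqrt{\braket M}\,f_{\braket M}(m)$ stays bounded away from $0$.

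For part~(b), fix $\tau\in\RR$ and put $s:=\tau\sqrt{\braket M}/(\sqrt q\,\braket N)$, so that $s/\gamma=O(\braket M^{-1/2})\to0$. With $N_\lambda^{**}:=(\sqrt{\braket M}/\sqrt q)\bigl(N_\lambda/\braket N-m/\braket M\bigr)$,
$$
\EE_{\bm z}\bigl[\rme^{\rmi\tau N_\lambda^{**}}\bigm|M_\lambda=m\bigr]=\rme^{-\rmi\tau m/\sqrt{q\braket M}}\,\frac{\mathcal I(s,m)}{\mathcal I(0,m)},\qquad \mathcal I(s,m):=\frac1{2\pi}\int_{-\pi}^{\pi}\rme^{-\rmi tm}\,\EE_{\bm z}\bigl[\rme^{\rmi sN_\lambda+\rmi tM_\lambda}\bigr]\,\rmd t ,
$$
and $\mathcal I(0,m)=\QQ_{\bm z}(M_\lambda=m)\sim f_{\braket M}(m)$ by part~(a). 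By Lévy's continuity theorem it suffices to show the right-hand side converges to $\rme^{-\tau^2/2}$. Split the $t$-integral at $|t|=\delta:=\braket M^{-2/5}$. For $\delta\le|t|\le\pi$, the estimate of part~(a) applied factorwise gives $|\EE_{\bm z}[\rme^{\rmi sN_\lambda+\rmi tM_\lambda}]|^2\le\exp(-4\sum_\ell\Var_{\bm z}(\nu_\ell)\sin^2(\tfrac{s\ell+t}{2}))$, and since $s\ell$ is negligible on the range $\ell\lesssim\gamma^{-1}$ that carries a positive fraction of $\Var_{\bm z}(M_\lambda)$ (by Lemma~\ref{lm:sum1+} with $s=0$), this is $\le\exp(-c'\braket M^{1/5})$, i.e.\ $o(f_{\braket M}(m))$. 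For $|t|\le\delta$ we Taylor expand the exponent $-\rmi tm+\log\EE_{\bm z}[\rme^{\rmi sN_\lambda+\rmi tM_\lambda}]=-\rmi tm+\log\EE_{\bm z}[\rme^{\rmi tM_\lambda}]+D(s,t)$, where $D(s,t):=\sum_{\ell\in\NN^q}\log\frac{1+z_1^\ell z_2\rme^{\rmi(s\ell+t)}}{1+z_1^\ell z_2\rme^{\rmi t}}$. Here $\log\EE_{\bm z}[\rme^{\rmi tM_\lambda}]=\rmi t\braket M-\tfrac12 t^2\Var_{\bm z}(M_\lambda)+O(\delta^3\braket M)$, while expanding $D(s,t)$ to second order in $s$ and first order in $t$ and using $\sum_\ell\ell\Var_{\bm z}(\nu_\ell)=\Cov_{\bm z}(N_\lambda,M_\lambda)$, $\sum_\ell\ell^2\Var_{\bm z}(\nu_\ell)=\Var_{\bm z}(N_\lambda)$ and the asymptotics \eqref{eq:VarCov} yields
$$
D(s,t)=\rmi s\braket N-st\,\Cov_{\bm z}(N_\lambda,M_\lambda)-\tfrac12 s^2\Var_{\bm z}(N_\lambda)+o(1)
$$
uniformly in $|t|\le\delta$ (the discarded remainders, $O(s^3\sum_\ell\ell^3z_1^\ell z_2)$, $O(s\delta^2\braket N)$ and $O(s^2\delta\,\Var_{\bm z}(N_\lambda))$, are each $o(1)$ for this choice of $\delta$).

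Substituting these expansions, the $|t|\le\delta$ part of $\mathcal I(s,m)$ equals, up to $o(f_{\braket M}(m))$, the Gaussian integral
$$
\rme^{\rmi s\braket N-\frac12 s^2\Var_{\bm z}(N_\lambda)}\cdot\frac1{2\pi}\int_{\RR}\rme^{-[\rmi(m-\braket M)+s\,\Cov_{\bm z}(N_\lambda,M_\lambda)]\myp t-\frac12 t^2\Var_{\bm z}(M_\lambda)}\,\rmd t ,
$$
the truncation at $\delta$ being harmless since $\delta\sqrt{\Var_{\bm z}(M_\lambda)}\to\infty$. Evaluating the integral, dividing by $\mathcal I(0,m)\sim f_{\braket M}(m)$, inserting $s=\tau\sqrt{\braket M}/(\sqrt q\braket N)$ and the asymptotics $\Var_{\bm z}(M_\lambda)\sim\braket M$, $\Cov_{\bm z}(N_\lambda,M_\lambda)\sim\braket N$, $\Var_{\bm z}(N_\lambda)\sim(q+1)\braket N^2/\braket M$, a short computation shows that the imaginary part of the resulting exponent cancels (here the hypothesis $m-\braket M=O(\sqrt{\braket M})$ is used, together with the centering factor $\rme^{-\rmi\tau m/\sqrt{q\braket M}}$) and its real part equals $-\tfrac{q+1}{2q}\tau^2+\tfrac{1}{2q}\tau^2=-\tfrac12\tau^2$. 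Hence $\EE_{\bm z}[\rme^{\rmi\tau N_\lambda^{**}}\mid M_\lambda=m]\to\rme^{-\tau^2/2}$, which is \eqref{cvN2}.

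The step I expect to be the main obstacle is the error bookkeeping in part~(b), and in particular the choice of the cut-off $\delta$: recovering the $M$-Gaussian requires $\delta\sqrt{\braket M}\to\infty$, whereas the cumulant remainder in $\log\EE_{\bm z}[\rme^{\rmi tM_\lambda}]$ and the $t$-expansions of the moment sums inside $D(s,t)$ require $\delta$ small (e.g.\ $\delta=o(\braket M^{-1/3})$ and $\delta=o(\braket M^{-1/4})$). One must check that a single admissible scale such as $\delta=\braket M^{-2/5}$ makes every remainder $o(1)$, which entails tracking all orders in $\braket M$, $\braket N$ and $\kappa$ with the help of Lemmas~\ref{lm:sum1} and~\ref{lm:sum1+} and of the high-temperature feature $s/\gamma\to0$ (so that the phase $s\ell$ stays small where the variance of $M_\lambda$ sits). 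A related subtlety is that the ``$s$-tilt'' shifts $m$ by an imaginary amount of size $O(\sqrt{\braket M})$, comparable to the width of the $M$-Gaussian, so the local limit theorem of part~(a) is effectively invoked at a complex argument; this is cleanest to handle, as above, by keeping the shift inside the exponent and completing the square rather than deforming contours.
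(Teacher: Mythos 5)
The paper does not actually prove Theorem~\ref{thm1+}: the text following the statement only remarks that part~(a) is ``anticipated'' by Theorem~\ref{th:CLT}, that part~(b) can be obtained \emph{formally} from the bivariate normality via the conditional density or the ``normal correlation'' theorem, and that a rigorous proof would adapt the standard one-dimensional local limit theorem argument, which is deferred to a separate publication. Your proposal carries out exactly that deferred adaptation, by Fourier inversion on the lattice variable $M_\lambda$ using the product form of the characteristic function, and as far as I can check it is correct. In part~(a) the chain is the classical one: $|\chi(t)|^2\le\exp\bigl(-4\sin^2(t/2)\,\Var_{\bm z}(M_\lambda)\bigr)$ with $\Var_{\bm z}(M_\lambda)\sim\braket{M}$ (Theorem~\ref{th:VarCov}), the pointwise limit $\EE_{\bm z}[\rme^{\rmi u(M_\lambda-\braket{M})/\sqrt{\braket{M}}}]\to\rme^{-u^2/2}$ from the proof of Theorem~\ref{th:CLT} with $t_1=0$, dominated convergence, and the lower bound on $\sqrt{\braket{M}}\,f_{\braket{M}}(m)$ over $|m-\braket{M}|=O(\sqrt{\braket{M}})$. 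In part~(b) your bookkeeping checks out: $s/\gamma=O(\braket{M}^{-1/2})$ makes the phase $s\ell$ negligible on the range of $\ell$ carrying a fixed fraction of $\Var_{\bm z}(M_\lambda)$, so the region $\delta\le|t|\le\pi$ contributes $O(\rme^{-c\braket{M}^{1/5}})=o(f_{\braket{M}}(m))$; the discarded remainders are of orders $O(\braket{M}^{-1/2})$, $O(\braket{M}^{-3/10})$, $O(\braket{M}^{-2/5})$ for $\delta=\braket{M}^{-2/5}$; and the final algebra with $\Var_{\bm z}(N_\lambda)\sim(q+1)\braket{N}^2\!/\braket{M}$, $\Cov_{\bm z}(N_\lambda,M_\lambda)\sim\braket{N}$, $\Var_{\bm z}(M_\lambda)\sim\braket{M}$ gives real part $-\tfrac{q+1}{2q}\tau^2+\tfrac{1}{2q}\tau^2=-\tfrac12\tau^2$ and imaginary part $s\,(m-\braket{M})\bigl(\Cov_{\bm z}(N_\lambda,M_\lambda)/\Var_{\bm z}(M_\lambda)-\braket{N}\!/\braket{M}\bigr)\to0$, which is indeed where the hypothesis $m-\braket{M}=O(\sqrt{\braket{M}})$ and the centering enter.

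Two spots deserve one extra line each, though neither is a gap in substance. First, when you replace the truncated $t$-integral over $|t|\le\delta$ by the full Gaussian integral, justify it by noting that the maximiser of the real exponent, $t^*=-s\,\Cov_{\bm z}(N_\lambda,M_\lambda)/\Var_{\bm z}(M_\lambda)=O(\braket{M}^{-1/2})$, lies well inside $[-\delta,\delta]$, so the discarded tails are exponentially small relative to $\braket{M}^{-1/2}$; the bare remark ``$\delta\sqrt{\Var_{\bm z}(M_\lambda)}\to\infty$'' does not by itself control the linear term $-st\,\Cov_{\bm z}(N_\lambda,M_\lambda)$. Second, the uniform multiplicative errors $\rme^{\,o(1)}$ inside the $|t|\le\delta$ integral are harmless because $\int_{|t|\le\delta}\rme^{-st\myp\Cov_{\bm z}(N_\lambda,M_\lambda)-t^2\Var_{\bm z}(M_\lambda)/2}\,\rmd t=O(\braket{M}^{-1/2})$, using $s^2\Cov_{\bm z}(N_\lambda,M_\lambda)^2/(2\Var_{\bm z}(M_\lambda))=O(1)$; this is what makes ``up to $o(f_{\braket{M}}(m))$'' legitimate. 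Compared with the paper, your argument buys an actual proof (and uniformity in $m$ over the stated range) where the paper offers only the normal-correlation heuristic, at the cost of the explicit cut-off analysis you have already identified as the delicate step.
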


Part (a) of this theorem can be anticipated from the marginal asymptotic normality of $M_\lambda$ proven in Theorem~\ref{th:CLT}. The result in part (b) can be formally  derived from the joint asymptotic normality of the pair $(N_\lambda,M_\lambda)$ by calculating the conditional density of $N_\lambda$ given $M_\lambda$. Alternatively, one can refer to the ``normal correlation'' result (see, e.g., \cite[Sec.\,II.13, Theorem 2, pp.\,303--304]{Shiryaev}). A rigorous proof of Theorem~\ref{thm1+} can be obtained by adapting the standard proof of a ``one-dimensional" local limit theorem for $N_\lambda$ (see, e.g., \cite{Bogachev}, \cite{VFY}). We will return to this issue in a separate publication.

\subsection{Limit shape of Young diagrams}\label{sec:5.2}

In this section we show that, under the slow growth condition on $\langle M\rangle$, properly scaled Young diagrams of random partitions $\lambda\in\check{\varLambda}^q$ have a limit shape given by the tail of the gamma integral, \begin{equation}\label{eq:omega}
\omega_q^*(x):=\frac{1}{\Gamma(1/q)}\int_x^\infty \!u^{1/q-1}\mypp\rme^{-u}\,\rmd{u}=1-G_{1/q}(x),
\qquad x\ge0,
\end{equation}
where $G_{1/q}(x)$ is the distribution function of $\mathrm{Gamma}\mypp(1/q)$ (see \eqref{eq:gamma-cdf}). In particular, for $q=1$ the definition \eqref{eq:omega} is reduced to 
$$
\omega_1^*(x)=\rme^{-x},\qquad x\ge0.
$$
Specifically, set 
\begin{equation}\label{eq:ab}
A:=q\mypp\langle N\rangle/\langle M\rangle,\qquad B=\langle M\rangle,
\end{equation}
and consider a scaled Young diagram with upper boundary
\begin{equation}\label{eq:Y-tilde}
\widetilde{Y}_\lambda(x) = B^{-1}\mypp Y_\lambda(Ax), \qquad x\ge0,
\end{equation}
where (see \eqref{eq:Young})
\begin{equation}\label{eq:Young1}
Y_\lambda(x)=\sum_{\ell\ge x}\nu_\ell,
\qquad x\ge0.
\end{equation}
\begin{remark}
The area under the scaled Young diagram is given by
$$
\int_0^\infty \mynn\widetilde{Y}_\lambda(x)\,\rmd{x}=B^{-1}\int_0^\infty \mynn Y_\lambda(Ax)\,\rmd{x}=\frac{\langle N\rangle}{AB}=\frac{1}{q}.
$$
Naturally, this condition is preserved by the limit shape; indeed, integrating by parts we get
$$
\int_0^\infty\!\omega_q^*(x)\,\rmd{x}=\frac{1}{\Gamma(1/q)}\int_0^\infty x^{1/q}\mypp\rme^{-x}\,\rmd{x}=\frac{\Gamma(1+1/q)}{\Gamma(1/q)}=\frac{1}{q}.
$$
\end{remark}

First, we obtain the \emph{expected limit shape} result. 
\begin{theorem}\label{th:exp-LS}
Under Assumptions \ref{as1} and \ref{as2}, uniformly in $x\ge0$ 
\begin{equation}\label{sh}
\EE_{\bm{z}}\myn\bigl(\widetilde{Y}_\lambda(x)\bigr)\to \omega_q^*(x),
\end{equation}where the limit shape $x\mapsto \omega_q^*(x)$ is defined in (\ref{eq:omega}).
\end{theorem}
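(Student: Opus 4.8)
The plan is to compute $\EE_{\bm{z}}\bigl(\widetilde{Y}_\lambda(x)\bigr)$ directly from the definitions and identify the leading term. Starting from \eqref{eq:Y-tilde} and \eqref{eq:Young1} together with the independence and Bernoulli marginals of the multiplicities (Lemma \ref{distnu}), I would write
\begin{equation*}
\EE_{\bm{z}}\bigl(\widetilde{Y}_\lambda(x)\bigr) = B^{-1} \sum_{\ell \ge Ax} \EE_{\bm{z}}(\nu_\ell) = B^{-1} \sum_{\ell \ge Ax} \frac{z_1^\ell z_2}{1+z_1^\ell z_2}.
\end{equation*}
Exactly as in the proof of Theorem \ref{th:cal}, the denominator correction is negligible: $0 < \sum_{\ell \ge Ax} z_1^\ell z_2/(1+z_1^\ell z_2) - z_2\sum_{\ell\ge Ax} z_1^\ell \le z_2^2 \sum_{\ell \in \NN^q} z_1^{2\ell}$, and by Lemma \ref{lm:sum1} the right-hand side is $O(z_2^2 \gamma^{-1/q}) = O(z_2 \langle M\rangle) = o(\langle M\rangle)$ on account of \eqref{eq:z2/gamma} and $z_2 = o(1)$. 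So it suffices to analyse $B^{-1} z_2 \sum_{\ell \ge Ax} z_1^\ell = B^{-1} z_2 \sum_{\ell \ge Ax} \rme^{-\gamma \ell}$.

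Next I apply the indented sum asymptotics of Lemma \ref{lm:sum1+} (with $s=0$ and cutoff $\ell_* = \lceil Ax\rceil_{\NN^q}$, the least $q$-th power $\ge Ax$), giving
\begin{equation*}
z_2 \sum_{\ell \ge Ax} \rme^{-\gamma \ell} = \frac{z_2\,\Gamma(1/q,\,\gamma\ell_*)}{q\,\gamma^{1/q}}\bigl(1+O(\gamma^{1/q})\bigr).
\end{equation*}
Here $\gamma \ell_* \to \gamma A x = q\langle N\rangle\gamma x/\langle M\rangle \to x$ by \eqref{eq:gamma}, and since the incomplete gamma function $y \mapsto \Gamma(1/q, y)$ is continuous, $\Gamma(1/q, \gamma\ell_*) \to \Gamma(1/q, x)$. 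Combining this with $z_2/\gamma^{1/q} = \langle M\rangle/\Gamma(1+1/q) \cdot (1+O(\kappa^{1/q}))$ from \eqref{eq:z2/gamma} and $B = \langle M\rangle$, and using $\Gamma(1+1/q) = (1/q)\Gamma(1/q)$, we get
\begin{equation*}
\EE_{\bm{z}}\bigl(\widetilde{Y}_\lambda(x)\bigr) \to \frac{\Gamma(1/q,x)}{q\,\Gamma(1+1/q)} = \frac{\Gamma(1/q,x)}{\Gamma(1/q)} = \omega_q^*(x),
\end{equation*}
as desired. For the pointwise $x=0$ case this reduces to $\EE_{\bm{z}}(M_\lambda)/\langle M\rangle \to 1$, consistent with the calibration.

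The one genuinely delicate point is \textbf{uniformity in $x\ge 0$}. The $O(\cdot)$ error terms above should be checked to be uniform in $x$ (they arise from Euler--Maclaurin bounds that do not worsen as the cutoff moves, and from $\kappa, z_2 \to 0$ which are $x$-free), but even granting that, the cleanest route is to invoke Lemma \ref{lm:uniform}: for each fixed $\langle N\rangle$ the function $x \mapsto \EE_{\bm{z}}(\widetilde{Y}_\lambda(x))$ is non-increasing (it is a tail sum of non-negative terms, rescaled), it is uniformly bounded on $[0,\infty)$ by $\EE_{\bm{z}}(M_\lambda)/\langle M\rangle \to 1$, and the limit $\omega_q^*$ is continuous and monotone on $[0,\infty)$ with $\omega_q^*(0)=1$; hence pointwise convergence upgrades to uniform convergence on $[\delta,\infty)$ for every $\delta>0$, and the behaviour near $0$ is controlled because both $\EE_{\bm{z}}(\widetilde{Y}_\lambda(x))$ and $\omega_q^*(x)$ are pinned near $1$ there by monotonicity and the $x=0$ limit. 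I expect verifying that the error estimates in Lemma \ref{lm:sum1+} really are uniform over the whole range of cutoffs (including $\ell_* \to \infty$ as $x\to\infty$, where the main term itself decays) to be the main technical obstacle, but the monotonicity argument via Lemma \ref{lm:uniform} sidesteps most of it.
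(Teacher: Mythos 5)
Your proposal is correct and follows essentially the same route as the paper's proof: the same reduction to the tail sum $\sum_{\ell\ge Ax}\EE_{\bm{z}}(\nu_\ell)$ via Lemma \ref{distnu}, the same negligible correction bounded by $z_2^2\sum_{\ell\in\NN^q}z_1^{2\ell}=\braket{M}\myp O(\kappa^{1/q})$, Lemma \ref{lm:sum1+} combined with \eqref{eq:z2/gamma} for the pointwise limit $\Gamma(1/q,x)/\Gamma(1/q)=\omega_q^*(x)$, and Lemma \ref{lm:uniform} (monotonicity plus continuity of the limit) to upgrade to uniform convergence. One cosmetic point: the difference $\sum_{\ell\ge Ax} z_1^\ell z_2\mypp(1+z_1^\ell z_2)^{-1}-z_2\sum_{\ell\ge Ax} z_1^\ell$ is non-positive rather than positive as written, but only its magnitude matters and your bound on it is right.
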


\begin{proof}
We first show that for each $x\ge0$, the \strut{}convergence \eqref{sh} holds. 
By Lemma \ref{distnu},
\begin{align}
\EE_{\bm{z}}\bigl(Y_\lambda(Ax)\bigr)=\sum_{\ell\ge Ax}\EE_{\bm{z}}(\nu_\ell)&=\sum_{\ell\ge Ax}\frac{z_1^{\ell} z_2}{1+z_1^{\ell} z_2}=\sum_{\ell\ge Ax}z_1^{\ell} z_2- \widetilde{R}_1(\bm{z}),
\label{eq:E(Y)}
\end{align}
where (cf.\  \eqref{eq:<M>} and \eqref{eqR1})
\begin{equation}
\label{eq:R1}
0\le \widetilde{R}_1(\bm{z})=\sum_{\ell\ge Ax}\frac{z_1^{2\ell}\myn z_2^2 }{1+z_1^{\ell} z_2}\le \sum_{\ell\in\NN^q} z_1^{2\ell}\myn z_2^2=\braket{M}O(\kappa^{1/q}),
\end{equation}
by virtue of \eqref{eq:sum_q} (with $r=2$). Furthermore, applying Lemma \ref{lm:sum1+} (with $\gamma=-\log z_1$ and $s=0$) and noting that $\gamma A\to1$, we obtain
\begin{equation}\label{eq:sum1}
\sum_{\ell\ge Ax}z_1^{\ell} z_2\sim \frac{z_2}{q\,\gamma^{1/q}}\int_{\gamma Ax}^\infty\mynn  u^{1/q-1}\mypp \rme^{-u}\,\rmd{u} \sim\frac{\langle M\rangle}{\Gamma(1/q)}\int_{x}^\infty \!u^{1/q-1}\mypp\rme^{-u}\,\rmd{u},
\end{equation}
using the asymptotic formula \eqref{eq:z2/gamma}, Thus, substituting \eqref{eq:R1} and \eqref{eq:sum1} into \eqref{eq:E(Y)} gives
\begin{align*}
\EE_{\bm{z}}\bigl(\widetilde{Y}_\lambda(x)\bigr)=\frac{1}{\langle M\rangle}\,\EE_{\bm{z}}\bigl(Y_\lambda(Ax)\bigr)
\sim\frac{1}{\Gamma(1/q)}\int_{x}^\infty \!u^{1/q-1}\mypp\rme^{-u}\,\rmd{u}=\omega_q^*(x),
\end{align*}
as claimed. Finally, the uniform convergence in formula \eqref{sh} follows by Lemma \ref{lm:uniform}, noting that the function $x\mapsto\omega_q^*(x)$ is continuous, bounded, and decreasing on $[0,\infty)$.
 \end{proof}

Now, we are ready to state and prove the main result of this section.
\begin{theorem}\label{th:LS}
Under Assumptions \ref{as1} and \ref{as2}, the rescaled Young diagrams converge to the limit shape $y=\omega_{q}^*(x)$ in $\QQ_{\bm{z}}$-probability uniformly for $x\ge0$, that is, 
\begin{equation*}
\QQ_{\bm{z}}\!\left(\lambda\in\check{\varLambda}^q\colon
\sup_{x\ge0}\,\bigl|\myp \widetilde{Y}_\lambda(x)-\omega_q^*(x)\bigr|>\varepsilon\right)\to 0,
\end{equation*}
where $\widetilde{Y}_\lambda(x)$ is defined in (\ref{eq:Y-tilde}) with the aid of (\ref{eq:ab}).
\end{theorem}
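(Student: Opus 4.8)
The plan is to upgrade the expected–shape result of Theorem~\ref{th:exp-LS} to a statement of convergence in probability, uniformly in $x\ge 0$, by controlling the fluctuations of $\widetilde{Y}_\lambda(x)$ around its mean and then promoting pointwise control to uniform control via a monotonicity (Dini–Pólya) argument. First I would fix $x\ge 0$ and recall from Lemma~\ref{distnu} that $Y_\lambda(Ax)=\sum_{\ell\ge Ax}\nu_\ell$ is a sum of independent Bernoulli variables, so
\begin{equation*}
\Var_{\bm z}\bigl(Y_\lambda(Ax)\bigr)=\sum_{\ell\ge Ax}\frac{z_1^{\ell}z_2}{(1+z_1^{\ell}z_2)^2}\le \sum_{\ell\ge Ax}z_1^{\ell}z_2=O(\langle M\rangle)
\end{equation*}
by Lemma~\ref{lm:sum1+} and \eqref{eq:z2/gamma} (the bound is in fact $\sim\langle M\rangle\,\omega_q^*(x)$, but the crude $O(\langle M\rangle)$ suffices). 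Hence $\Var_{\bm z}(\widetilde{Y}_\lambda(x))=B^{-2}\Var_{\bm z}(Y_\lambda(Ax))=O(\langle M\rangle^{-1})\to 0$, and Chebyshev's inequality together with Theorem~\ref{th:exp-LS} gives, for each fixed $x\ge0$,
\begin{equation*}
\widetilde{Y}_\lambda(x)\stackrel{\mathrm p}{\longrightarrow}\omega_q^*(x).
\end{equation*}

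The second step is to pass from this pointwise-in-probability convergence to uniform convergence in probability. The key structural fact is that for each partition $\lambda$ the map $x\mapsto\widetilde{Y}_\lambda(x)$ is non-increasing (being $B^{-1}$ times the non-increasing step function $Y_\lambda$), while the limit $\omega_q^*$ is continuous, strictly decreasing, with $\omega_q^*(0)=1$ and $\omega_q^*(\infty)=0$. This is exactly the setting of a Pólya-type uniformization of a Dini argument: I would choose a finite grid $0=x_0<x_1<\dots<x_K$ such that $\omega_q^*(x_{i-1})-\omega_q^*(x_i)<\varepsilon/2$ for all $i$ and $\omega_q^*(x_K)<\varepsilon/2$, and also note $\widetilde Y_\lambda(x)\le \widetilde Y_\lambda(0)$ which is itself $M_\lambda/\langle M\rangle\to 1$ in probability by Corollary~\ref{cor:5.4}. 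On the event $E_\varepsilon=\bigcap_{i=0}^{K}\{|\widetilde Y_\lambda(x_i)-\omega_q^*(x_i)|<\varepsilon/2\}\cap\{\widetilde Y_\lambda(0)<1+\varepsilon/2\}$, monotonicity of $\widetilde Y_\lambda$ and of $\omega_q^*$ sandwiches $\widetilde Y_\lambda(x)$ between $\widetilde Y_\lambda(x_i)$ and $\widetilde Y_\lambda(x_{i-1})$ on each $[x_{i-1},x_i]$, and a standard telescoping estimate gives $\sup_{x\ge0}|\widetilde Y_\lambda(x)-\omega_q^*(x)|<\varepsilon$ (handling the tail $x\ge x_K$ separately, where both functions lie in $[0,\varepsilon)$, using $\widetilde Y_\lambda(x)\le\widetilde Y_\lambda(x_K)<\omega_q^*(x_K)+\varepsilon/2<\varepsilon$). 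Since $E_\varepsilon$ is a finite intersection of events each of probability tending to $1$, we get $\QQ_{\bm z}(E_\varepsilon)\to 1$, which is the assertion.

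I expect the main technical obstacle to be the uniformization near the two endpoints $x=0$ and $x=\infty$ rather than the interior estimate, which is routine. At $x=\infty$ one must rule out the possibility that a random partition has an untypically large largest part contributing to $\widetilde Y_\lambda$ far out; this is exactly why one needs an a priori tail bound, and the cleanest route is to invoke the largest-part asymptotics of Theorem~\ref{th:maxM} (the maximal part is of order $q\langle N\rangle/\langle M\rangle=A$, so $\widetilde Y_\lambda(x)=0$ for $x$ bounded away from that scale, with high probability) — or, more self-containedly, to bound $\QQ_{\bm z}(Y_\lambda(Ax_K)\ge 1)\le \EE_{\bm z}(Y_\lambda(Ax_K))=\langle M\rangle\,\omega_q^*(x_K)(1+o(1))$, which can be made arbitrarily small relative to the grid by first taking $x_K$ large. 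At $x=0$ one uses $\widetilde Y_\lambda(0)=M_\lambda/\langle M\rangle$ and Corollary~\ref{cor:5.4}. With these endpoint controls in place, the monotone sandwich on a fixed finite grid closes the argument; note that no rate on $\langle M\rangle$ beyond $\langle M\rangle\to\infty$ (Assumption~\ref{as2}) and $\kappa\to0$ is needed, since only $\Var_{\bm z}(\widetilde Y_\lambda(x))\to0$ and the expected-shape limit are used.
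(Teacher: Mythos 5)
Your proof is correct, but it takes a genuinely different route from the paper. The paper controls the supremum in a single stroke: it observes that $Z_\lambda(x):=Y_\lambda(x^{-1})$ has independent increments, so its centered version is a martingale, and the Doob--Kolmogorov maximal inequality bounds $\QQ_{\bm{z}}\bigl(\sup_{x\ge0}|Y_\lambda(Ax)-\EE_{\bm{z}}Y_\lambda(Ax)|>B\varepsilon\bigr)$ by $\Var_{\bm{z}}(M_\lambda)/(B^2\varepsilon^2)=O(\langle M\rangle^{-1})$, after which Theorem~\ref{th:exp-LS} (used in its uniform form) finishes the job. You instead prove pointwise convergence in probability by Chebyshev (your variance bound $\Var_{\bm{z}}(\widetilde{Y}_\lambda(x))=O(\langle M\rangle^{-1})$ is the same estimate the paper extracts from Theorem~\ref{th:VarCov}) and then upgrade to uniformity by a Glivenko--Cantelli/P\'olya-type grid argument, exploiting that each path $x\mapsto\widetilde{Y}_\lambda(x)$ is non-increasing and the limit $\omega_q^*$ is continuous and monotone with finite limits at both ends; the finitely many grid events each have probability tending to one, and the monotone sandwich plus the tail bound $\widetilde{Y}_\lambda(x)\le\widetilde{Y}_\lambda(x_K)<\varepsilon$ close the estimate. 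Your route is more elementary (no martingale theory, and only pointwise convergence of expectations at the grid points is needed), essentially mirroring Lemma~\ref{lm:uniform} at the level of random paths; the paper's route is shorter, avoids the grid bookkeeping, and yields a clean explicit bound $O(\langle M\rangle^{-1}\varepsilon^{-2})$ valid simultaneously over the whole half-line. One small caveat: your alternative tail remark via Markov's inequality, $\QQ_{\bm{z}}(Y_\lambda(Ax_K)\ge1)\le\langle M\rangle\,\omega_q^*(x_K)(1+o(1))$, does not become small for fixed $x_K$ since $\langle M\rangle\to\infty$ (one would need $x_K$ to grow like a multiple of $\log\langle M\rangle$); but this aside is not needed, as your primary tail argument through monotonicity at $x_K$ is already valid.
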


\begin{proof}
By virtue of Theorem \ref{th:exp-LS}, letting
$Y^0_\lambda(x):=Y_\lambda(x)-\EE_{\bm{z}}\bigl(Y_\lambda(x)\bigr)$ it suffices to
check that
\begin{equation}\label{eq:Q->0}
\QQ_{\bm{z}}\!\left( \sup_{x\ge0}
\,\bigl|Y^{0}_\lambda(Ax)\bigr|>
B\myp\varepsilon\right)\to0.
\end{equation}
Put $Z_\lambda(x):=Y_\lambda(x^{-1})$ for $0\le x\le \infty$; in particular,  $Z_\lambda(0)=Y_\lambda(\infty)=0$,  $Z_\lambda(\infty)=Y_\lambda(0)=M_\lambda$. 
By the definition
\eqref{eq:Young}, for any $0<s<t\le\infty$ we have
$$
Z_\lambda(t)-Z_\lambda(s)=Y_\lambda(t^{-1})-Y_\lambda(s^{-1})=\sum_{t^{-1}\le
\myp\ell\myp<s^{-1}}\!\nu_\ell\mypp,
$$
which implies that the random process $Z_\lambda(x)$ ($x\ge0$) has
independent increments. Hence,
$Z^0_\lambda(x):=Z_\lambda(x)-\EE_{\bm{z}}\bigl(Z_\lambda(x)\bigr)$ is a martingale with respect to the filtration ${\mathcal
F}_x=\sigma\{\nu_\ell\myp,\, \ell\ge x^{-1}\}$. From
\eqref{eq:Young} it is also evident that $Z^0_\lambda(x)$ is
\emph{c\`adl\`ag} (i.e., its paths are everywhere right-continuous
and have left limits, cf.\ Figure \ref{fig:yd}). Therefore, by the Doob--Kolmogorov submartingale inequality (see, e.g.,
\cite[Theorem~6.16, p.\mypp 101]{Yeh})
%\cite[Theorem~6.14, p.\:99]{Yeh}
%\cite[Ch.\:II, Theorem\:1.7, p.\:54]{RevYor}),
we obtain
\begin{align}
\notag
 \QQ_{\bm{z}}\!\left( \sup_{x\ge0}
\,\bigl|Y^{0}_\lambda(Ax)\bigr|>B\myp\varepsilon\right)
&\equiv \QQ_{\bm{z}}\!\left( \sup_{y\le\infty}
\mynn|Z^{0}_\lambda(y\myp
A^{-1})|>B\myp\varepsilon\right)\\[.2pc]
&\le \frac{\Var_{\bm{z}}\bigl(Z_\lambda(\infty)\bigr)}{B^2\myp\varepsilon^2
}= \frac{\Var_{\bm{z}}\bigl(Y_\lambda(0)\bigr)}{B^2\myp\varepsilon^2}.
\label{eq:Var+}
\end{align}
Recalling that  $Y_\lambda(0)=M_\lambda$ and using Theorem~\ref{th:VarCov},
the right-hand side of \eqref{eq:Var+} is estimated by 
$O(\langle M\rangle^{-1})$. Thus, the claim \eqref{eq:Q->0}
follows and the proof of Theorem~\ref{th:LS} is complete.
\end{proof}

Convergence of normalized Young diagrams to their limits shape is illustrated in Figure \ref{fig:yd} for $q=1$ and $q=2$. Random partitions were simulated using a suitable Boltzmann sampler implemented as Algorithm \ref{sampler} (see Section \ref{sec:6.1}).

\begin{figure}[ht!]
\centering
\includegraphics[width=0.6\textwidth]{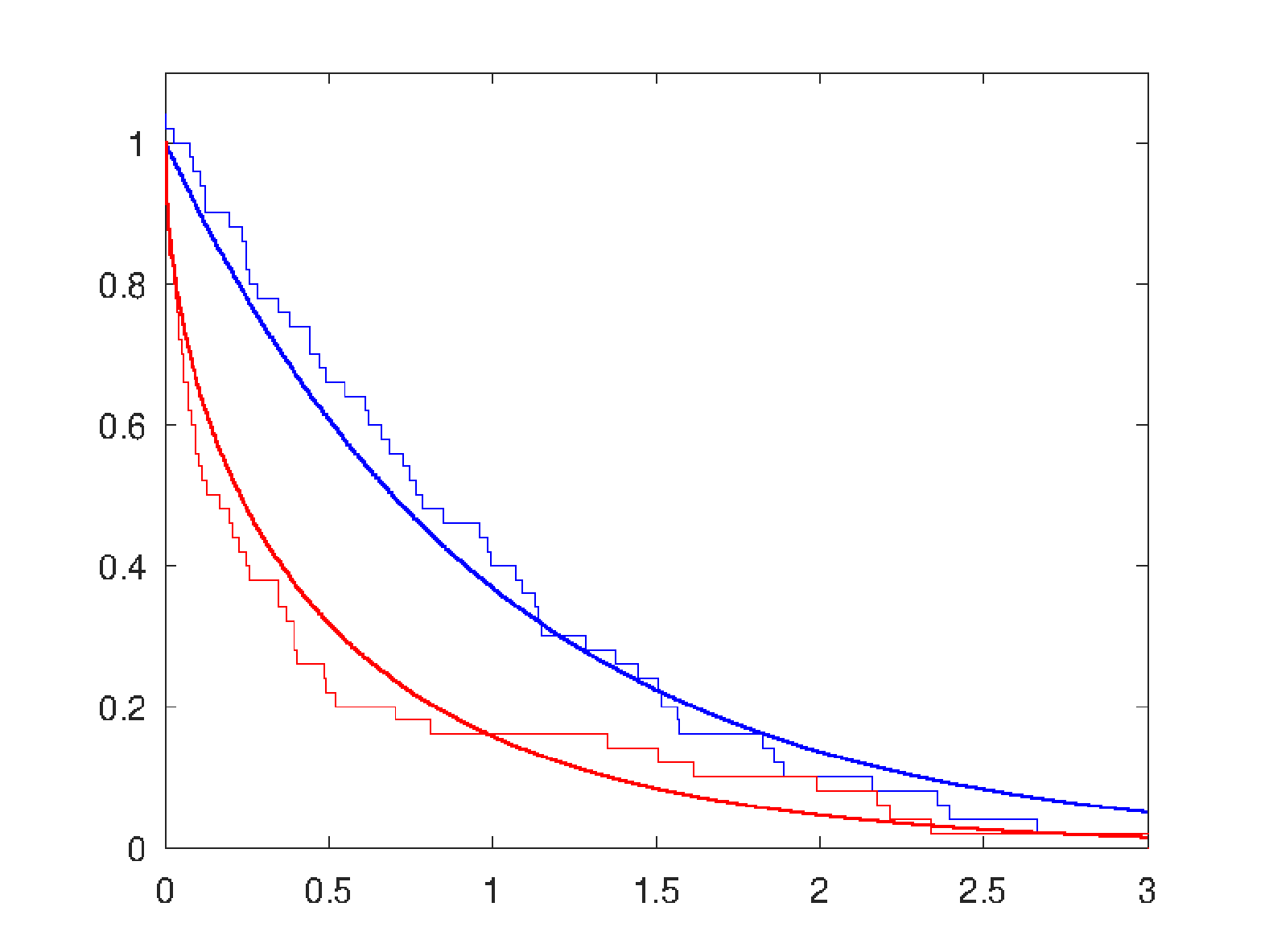}
\put(-220,40){\small$q=2$}
\put(-157,88){\small$q=1$}
\put(-130,160){\small\fbox{\!\!\begin{tabular}{l}
$\displaystyle\omega_1^*(x)=\rme^{-x}$\\[.2pc]
$\omega^*_2(x)=
1-G_{1/2}(x)$
\end{tabular}\!\!\!}}
\caption{Illustration of convergence to the limit shape for $q=1$ and $q=2$ (in the online version shown in blue and red, respectively). The step plots depict the 
upper boundary of the scaled Young diagrams (see~\eqref{eq:Y-tilde}), 
while the smooth lines represent the limit shape $\omega_q^*(x)=1-G_{1/q}(x)$
(see~\eqref{eq:omega}).
The corresponding partitions $\lambda\in\check{\varLambda}^q$ were sampled using Algorithm~\ref{sampler} with hyper-parameters $\braket{M}=50$ and $\braket{N}=2.5\cdot 10^5$ ($q=1$) or $\braket{N}=1.25\cdot 10^7$ ($q=2)$; in both cases, $\kappa=0.01$ (cf.\ Assumption~\ref{as1}). The respective sample weight and length are $N_\lambda=236{,}\myp369$, $M_\lambda=52$ ($q=1$)  and $N_\lambda=12{,}\myp 733{,}\myp 323$, $M_\lambda=45$ ($q=2$).}\label{fig:yd}
\end{figure}

Finally, we can analyze asymptotic fluctuations of scaled Young diagrams. Recall that $\widetilde{Y}_\lambda(x)$ is defined in (\ref{eq:Y-tilde}) with the aid of \eqref{eq:ab}.
\begin{theorem}\label{th:Gauss}
Under Assumptions \ref{as1} and \ref{as2}, for any $x>0$ the random value $\widetilde{Y}_\lambda(x)$ is asymptotically normal with 
variance $\omega_{q}^*(x)/\langle M\rangle$, that is, 
\begin{equation}\label{eq:Y*(x)}
\widetilde{Y}^*_\lambda(x):=\sqrt{\braket{M}}\left(\widetilde{Y}_\lambda(x)-\EE_{\bm{z}}\bigl(\widetilde{Y}_\lambda(x)\bigr)\right)\stackrel{\mathrm d}{\longrightarrow} \mathcal{N}\bigl(0,\omega_{\smash{q}}^*(x)\bigr).
\end{equation}
\end{theorem}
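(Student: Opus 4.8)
The plan is to express $\widetilde{Y}_\lambda(x)$ as a normalized sum of independent Bernoulli random variables and apply a Lyapunov-type central limit theorem. By Lemma~\ref{distnu}, for fixed $x>0$ we have $Y_\lambda(Ax)=\sum_{\ell\ge Ax}\nu_\ell$, a sum of independent indicators with $\EE_{\bm{z}}(\nu_\ell)=p_\ell:=z_1^\ell z_2/(1+z_1^\ell z_2)$ and $\Var_{\bm{z}}(\nu_\ell)=p_\ell(1-p_\ell)$. First I would compute the variance of the (unscaled) tail count: using Lemma~\ref{lm:sum1+} with $s=0$ and $\gamma=-\log z_1$, together with $\gamma A\to 1$ and \eqref{eq:z2/gamma}, one gets $\Var_{\bm{z}}\bigl(Y_\lambda(Ax)\bigr)=\sum_{\ell\ge Ax}p_\ell(1-p_\ell)\sim \sum_{\ell\ge Ax}z_1^\ell z_2\sim \langle M\rangle\,\omega_q^*(x)$, since the correction $\sum_{\ell\ge Ax}p_\ell^2\le \sum_{\ell\in\NN^q}z_1^{2\ell}z_2^2=\langle M\rangle\,O(\kappa^{1/q})$ is negligible by \eqref{eq:sum_q}. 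Dividing by $B^2=\langle M\rangle^2$ confirms that $\Var_{\bm{z}}\bigl(\widetilde{Y}_\lambda(x)\bigr)\sim \omega_q^*(x)/\langle M\rangle$, matching the claimed scaling in \eqref{eq:Y*(x)}.

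Next I would verify the Lyapunov condition for the triangular array $\{\nu_\ell-p_\ell\colon \ell\ge Ax\}$. With $s_n^2:=\Var_{\bm{z}}\bigl(Y_\lambda(Ax)\bigr)\sim \langle M\rangle\,\omega_q^*(x)\to\infty$ (here Assumption~\ref{as2} is used, since $\langle M\rangle\to\infty$), the third absolute central moments satisfy $\EE_{\bm{z}}|\nu_\ell-p_\ell|^3\le p_\ell$, so $\sum_{\ell\ge Ax}\EE_{\bm{z}}|\nu_\ell-p_\ell|^3\le\sum_{\ell\ge Ax}p_\ell\sim\langle M\rangle\,\omega_q^*(x)$. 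Hence the Lyapunov ratio is $O\bigl((\langle M\rangle\,\omega_q^*(x))/(\langle M\rangle\,\omega_q^*(x))^{3/2}\bigr)=O\bigl((\langle M\rangle\,\omega_q^*(x))^{-1/2}\bigr)\to 0$. By the Lyapunov CLT, $\bigl(Y_\lambda(Ax)-\EE_{\bm{z}}(Y_\lambda(Ax))\bigr)/s_n\stackrel{\mathrm d}{\to}\mathcal N(0,1)$. Since $B^{-1}Y_\lambda(Ax)=\widetilde{Y}_\lambda(x)$ and $s_n\sim\sqrt{\langle M\rangle\,\omega_q^*(x)}=B\sqrt{\omega_q^*(x)/\langle M\rangle}\cdot\sqrt{\langle M\rangle}\big/\sqrt{\langle M\rangle}$, rescaling gives exactly $\widetilde{Y}^*_\lambda(x)=\sqrt{\langle M\rangle}\,(\widetilde{Y}_\lambda(x)-\EE_{\bm{z}}(\widetilde{Y}_\lambda(x)))/\sqrt{\omega_q^*(x)}\stackrel{\mathrm d}{\to}\mathcal N(0,1)$, using that the ratio $s_n/(B\sqrt{\omega_q^*(x)/\langle M\rangle})\to 1$ by the variance asymptotics above and Slutsky's theorem.

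I expect no serious obstacle here: the main point is simply to confirm that the tail sum $\sum_{\ell\ge Ax}p_\ell$ grows like $\langle M\rangle\,\omega_q^*(x)\to\infty$, which is precisely where Assumption~\ref{as2} (slow growth with $\langle M\rangle\to\infty$) enters and which fails in the fixed-$\langle M\rangle$ regime of Section~\ref{sec:4} — there the number of nonzero terms stays Poisson-bounded and no CLT holds. The only mild technical care needed is that for $x>0$ the quantity $\omega_q^*(x)$ is strictly positive, so the normalization in \eqref{eq:Y*(x)} is well-defined and the Lyapunov ratio genuinely tends to zero. One could alternatively bypass Lyapunov and use characteristic functions directly, mimicking the Taylor expansion argument in the proof of Theorem~\ref{th:CLT} applied to the single linear statistic $\sum_{\ell\ge Ax}\nu_\ell$; either route is routine.
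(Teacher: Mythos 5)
Your proof is correct, and it packages the argument differently from the paper. The paper proves the statement by hand: it writes the characteristic function of $\widetilde{Y}^*_\lambda(x)$ as a product over $\ell\ge Ax$, takes logarithms, Taylor-expands as in the proof of Theorem~\ref{th:CLT}, and then feeds in exactly the two estimates you isolate, namely $\sum_{\ell\ge Ax} z_1^\ell z_2(1+z_1^\ell z_2)^{-1}\sim \langle M\rangle\,\omega_q^*(x)$ and $\sum_{\ell\ge Ax} z_1^{2\ell}z_2^2(1+z_1^\ell z_2)^{-2}=\langle M\rangle\,O(\kappa^{1/q})$ (its display \eqref{eq:R4}). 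You use the same decomposition of $Y_\lambda(Ax)$ into independent Bernoulli summands and the same tail-sum asymptotics from Lemma~\ref{lm:sum1+} and \eqref{eq:z2/gamma}, but instead of redoing the expansion you quote the Lyapunov CLT, with the clean bound $\EE_{\bm z}|\nu_\ell-p_\ell|^3\le p_\ell$ making the Lyapunov ratio $O\bigl((\langle M\rangle\,\omega_q^*(x))^{-1/2}\bigr)$. What your route buys is brevity and a transparent localization of where Assumption~\ref{as2} enters (the row variance $s_n^2\sim\langle M\rangle\,\omega_q^*(x)$ must tend to infinity, which fails in the fixed-$\langle M\rangle$ regime); what the paper's route buys is self-containedness and uniform control of the error terms, which is also what motivates its remark on why the intrinsic centering $\EE_{\bm z}(\widetilde{Y}_\lambda(x))$ is used rather than $\omega_q^*(x)$ — a point your argument sidesteps automatically by centering at the mean. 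Two small touch-ups: the row of your triangular array has countably many summands ($\ell\in\NN^q$, $\ell\ge Ax$), so you should note that the Lindeberg--Lyapunov theorem applies because the total variance is finite and the summands are uniformly small (or pass to finite truncations), and your rescaling sentence ending in $\sqrt{\langle M\rangle}\big/\sqrt{\langle M\rangle}$ should simply read $s_n\sim\sqrt{\langle M\rangle\,\omega_q^*(x)}$, after which Slutsky gives \eqref{eq:Y*(x)}.
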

\begin{proof}
Consider the characteristic function of $\widetilde{Y}^*_\lambda(x)$,
\begin{equation}\label{eq:phi3Y}
\varphi_{\bm{z}}(t;x):=\EE_{\bm{z}}\bigl[\exp\myn\bigl(\rmi\mypp t\mypp \widetilde{Y}^*_\lambda(x)\bigr)\bigr],\qquad t\in\RR.
\end{equation}
Substituting the definition \eqref{eq:Y*(x)} and using  \eqref{eq:ab}, \eqref{eq:Y-tilde} and \eqref{eq:Young1}, this is transformed as
\begin{equation}\label{eq:phi3'}
\varphi_{\bm{z}}(t;x)=\exp\myn\bigl\{-\rmi\mypp \tilde{t}\, \EE_{\bm{z}}\bigl(Y_\lambda(Ax)\bigr)\mypp\bigr\}
\,\EE_{\bm{z}}\bigl[\exp\myn\bigl(\rmi\mypp %\tilde{t}\, Y_\lambda(Ax)\bigr)\bigr],\qquad \tilde{t}:= \frac{t}{\sqrt{\braket{M}\omega_{\smash{q}}^*(x)}}\myp,
\tilde{t}\, Y_\lambda(Ax)\bigr)\bigr],\qquad \tilde{t}:= \frac{t}{\sqrt{\braket{M}}}\myp,
\end{equation}
and furthermore (see \eqref{eq:E(Y)}) 
\begin{equation}\label{eq:E(Y)1}
\EE_{\bm{z}}\bigl(Y_\lambda(Ax)\bigr)=\sum_{\ell\ge Ax}\frac{z_1^{\ell} z_2}{1+z_1^{\ell} z_2}.
\end{equation}

Next, similarly to \eqref{eq:cf1} and \eqref{eq:phi4}  the last expectation in \eqref{eq:phi3'} is expressed as
\begin{equation}
\label{eq:cf1'}
\EE_{\bm{z}}\!\left[\exp\left(\rmi\mypp \tilde{t}\sum_{\ell\ge Ax}\nu_\ell\right)\right]
=\prod_{\ell\ge Ax}\frac{1+z_1^\ell z_2\,\rme^{\mypp\rmi\myp\tilde{t}}}{1+z_1^\ell z_2}=\prod_{\ell\ge Ax}\bigl(1+w_\ell(\tilde{t}\myp)\bigr),
\end{equation}
where
\begin{equation}\label{eq:w-}
w_\ell(t):=\frac{z_1^\ell z_2}{1+z_1^\ell z_2}\,(\rme^{\mypp\rmi\myp t}-1).
\end{equation}
Choosing the principal branch of the logarithm and using \eqref{eq:E(Y)1} and \eqref{eq:cf1'}, from \eqref{eq:phi3'} we get
\begin{equation}
\label{eq:phi4'} \log\varphi_{\bm{z}}(t;x) =-\rmi\mypp \tilde{t}\sum_{\ell\ge Ax}\frac{z_1^{\ell} z_2}{1+z_1^{\ell} z_2}+\sum_{\ell\ge Ax} \log\bigl(1+w_\ell(\tilde{t}\myp)\bigr).
\end{equation}
In turn, similarly to \eqref{eq:log-expansion} we obtain
%, uniformly in $x\ge0$, 
\begin{equation}
\label{eq:log-expansion'}
 \sum_{\ell\ge Ax}\log\bigl(1+w_\ell(\tilde{t}\myp)\bigr)=\Bigl(\rmi\mypp\tilde{t}-\tfrac12\myp \tilde{t}^2+O(\tilde{t}^3)\Bigr)\sum_{\ell\ge Ax} \frac{z_1^\ell z_2}{1+z_1^\ell z_2} +O(\tilde{t}^2)\sum_{\ell\ge Ax} \frac{z_1^{2\ell} z_2^2}{(1+z_1^\ell z_2)^2}.
\end{equation}
As was shown in the proof of Theorem \ref{th:exp-LS} (see \eqref{eq:E(Y)}, \eqref{eq:R1}) and \eqref{eq:sum1}),
\begin{equation}\label{eq:R4}
\sum_{\ell\ge Ax} \frac{z_1^\ell z_2}{1+z_1^\ell z_2}\sim \langle M\rangle\,\omega_q^*(x),\qquad \sum_{\ell\ge Ax} \frac{z_1^{2\ell} z_2^2}{(1+z_1^\ell z_2)^2}
=\braket{M}O(\kappa^{1/q})
=\braket{M}o(1).
\end{equation}
Using \eqref{eq:R4} and recalling the notation of $\tilde{t}$ in \eqref{eq:phi3'}, from \eqref{eq:log-expansion'} we get
$$
\sum_{\ell\ge Ax}\log\bigl(1+w_\ell(\tilde{t}\myp)\bigr)=\rmi\mypp\tilde{t}\sum_{\ell\ge Ax} \frac{z_1^\ell z_2}{1+z_1^\ell z_2} -\tfrac12\myp t^2\omega_q^*(x)+o(1).
$$
Finally, returning to 
\eqref{eq:phi4'}, we see that $\log \varphi_{\bm{z}}(t;x)\to -\tfrac12\myp t^2\omega_q^*(x)$, 
which proves the theorem.
\end{proof}

\begin{remark}
The reason for using in \eqref{eq:Y*(x)} the intrinsic centering $\EE_{\bm{z}}\bigl(\widetilde{Y}_\lambda(x)\bigr)$ 
rather than the limit shape value $\omega_q^*(x)$ is that the error terms in the asymptotic estimates \eqref{eq:R4} are of order $\braket{M}\kappa^{1/q}$, where $\kappa=\langle M\rangle^{q+1}/\langle N\rangle=o(1)$ (see Assumption~\ref{as2}). Combined with the factor $\tilde{t}=O\bigl(\braket{M}^{-1/2}\bigr)$, this produces the error bound of order $\langle M\rangle^{1/2}\mypp\kappa^{1/q}$, which is not guaranteed to be small. Thus, a stronger assumption to this end is $\langle M\rangle^{1/2}\myp\kappa^{1/q}=o(1)$, that is, $\langle M\rangle^{1+3\myp q/2}/\langle N\rangle=o(1)$. On the other hand, lifting any control over the length may restore the limit-shape centering; for example, for $q=1$ (ordinary strict partitions), a central theorem of that kind was proved in~\cite{VFY}.   
\end{remark}

Theorem \ref{th:Gauss} can be extended to incorporate convergence of finite-dimensional distributions. 

\begin{theorem}\label{th:Gauss-fidi}
Under Assumptions \ref{as1} and \ref{as2}, the random process $\bigl(\widetilde{Y}^*_\lambda(x),\,x>0\bigr)$ defined in (\ref{eq:Y*(x)}) converges, in the sense of convergence of finite-dimensional distributions, to a Gaussian random process $(\varXi(x),\,x > 0)$ with zero mean and the covariance
function
\begin{equation}
\label{eq:K1}
K(x,x'):=\Cov\bigl(\varXi(x),\varXi(x')\bigr)=\min\{\omega_{\smash{q}}^*(x), \omega_{\smash{q}}^*(x')\},\qquad 
x,x'>0.
\end{equation}
In turn, the process $(\varXi(x))$ has the same  distribution as the Brownian motion $(B_{\omega_q^*(x)})$, where  $(B_t)$ is a standard Brownian motion, i.e.\ with mean zero and covariance function $\Cov(B_t,B_{t'})=\min\myn\{t,t'\}$.
\end{theorem}

\begin{proof}
Like in the proof of Theorem \ref{th:LS}, observe that for $0<x_1< x_2<\dots < x_m$ the increments
$$
\widetilde{Y}^*_\lambda(x_1)-\widetilde{Y}^*_\lambda(x_2),\ \dots,\ \ \widetilde{Y}^*_\lambda(x_{m-1})-\widetilde{Y}^*_\lambda(x_{m}),\ \ \widetilde{Y}^*_\lambda(x_m)
$$
are mutually independent due to independence of the multiplicities $(\nu_\ell)$ under the Boltzmann measure $\PP_{\bm{z}}$ (see \eqref{eq:Young1}). In particular, noting that for each $k\ge 1$ 
$$
\bigl(\widetilde{Y}^*_\lambda(x_{k}) -\widetilde{Y}^*_\lambda(x_{k+1})\bigr)+\widetilde{Y}^*_\lambda(x_{k+1}) =\widetilde{Y}^*_\lambda(x_{k}),
$$
independence implies that the characteristic function of $\widetilde{Y}^*_\lambda(x_{k}) -\widetilde{Y}^*_\lambda(x_{k+1})$ is given by the ratio $\varphi_{\bm{z}}(t;x_{k})/\varphi_{\bm{z}}(t;x_{k+1})$ (see the notation \eqref{eq:phi3Y}), and  by Theorem \ref{th:Gauss} it follows that  
$$
\widetilde{Y}^*_\lambda(x_{k}) -\widetilde{Y}^*_\lambda(x_{k+1})
\stackrel{\mathrm d}{\longrightarrow} \mathcal{N}\bigl(0,\,\omega_{\smash{q}}^*(x_{k})-\omega_{\smash{q}}^*(x_{k+1})\bigr)\qquad (k\ge1).
$$
Applying a suitable linear transformation,
%to the vector of increments, 
we conclude that the vector 
$\bigl(\widetilde{Y}^*_\lambda(x_{1}),\dots,\allowbreak \widetilde{Y}^*_\lambda(x_{m})\bigr)$ converges in distribution to a normal vector $\bigl(\varXi(x_1),\dots,\allowbreak \varXi(x_m)\bigr)$, such that $\EE\bigl(\varXi(x_k)\bigr)=0$ ($k=1,\dots,m$) and, for $1\le j\le k\le m$,  
$$
\Cov\bigl(\varXi(x_j),\varXi(x_k)\bigr)=\Var\bigl(\varXi(x_k)\bigr)=\omega_{\smash{q}}^*(x_k)=\min\{\omega_{\smash{q}}^*(x_j), \omega_{\smash{q}}^*(x_k)\}
$$
(cf.\ \eqref{eq:K1}), because $x\mapsto\omega_{\smash{q}}^*(x)$ is a decreasing function. 

Finally,  notice that the process $\bigl(B_{\omega_q^*(x)}\bigr)$ has zero mean and the same covariance function,
$$
\Cov\bigl(B_{\omega_q^*(x)}, B_{\omega_q^*(x')}\bigr)=\min\{\omega_q^*(x),\omega_q^*(x')\},\qquad x,x'>0,
$$
which completes the proof. 
\end{proof}

\subsection{A joint limit theorem for the extreme parts (growing expected length)}\label{sec:5.4}
We use the notation $\lambda_{\rm min}$ and $\lambda_{\rm max}$ (cf.\ Section \ref{sec:4.3}). 
\begin{theorem}\label{th:maxM}
Under Assumptions \ref{as1} and \ref{as2}, set
\begin{equation}\label{eq:ab+}
b_q:=\left(\frac{q\myn\braket{M}}{\displaystyle\Gamma(1/q)}\right)^{\myn q}\!,\qquad B_q:=\log\myn \langle M\rangle-\bigl(1-\tfrac{1}{q}\bigr)\log \log\myn\langle M\rangle-\log\Gamma(1/q),
\end{equation}
and consider the normalized versions of $\lambda_{\rm min}$ and $\lambda_{\rm max}$ defined as follows,
\begin{equation}\label{eq:lambda1*}
\lambda_{\rm min}^*:=\gamma\mypp b_q\mypp \lambda_{\rm min},\qquad \lambda_{\rm max}^*:=\gamma\myp \lambda_{\rm max}-B_q\mypp.
\end{equation} 
Then $\lambda^*_{\rm min}$ and $\lambda^*_{\rm max}$ are asymptotically independent under the measure $\QQ_{\bm{z}}$ as $\braket{N}\to\infty$ and their marginal limiting laws are given, respectively, by a Weibull distribution with shape parameter $1/q$ and the standard double-exponential (Gumbel) distribution, 
\begin{alignat}{3}
\label{eq:min-lim}
\QQ_{\bm{z}}(\lambda_{\rm min}^*> x_1)&\to \exp\myn\bigl(-x_1^{1/q}\bigr),\qquad& x_1\ge0,&\\[.3pc]   
\label{eq:max-lim}
\QQ_{\bm{z}}(\lambda_{\rm max}^*\le x_2)&\to \exp\myn\bigl(-\rme^{-x_2}\bigr),&x_2\in\RR.&
\end{alignat}
\end{theorem}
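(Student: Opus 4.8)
The strategy is to compute the joint survival/distribution function of $(\lambda_{\rm min},\lambda_{\rm max})$ directly, exploiting the independence of the Bernoulli multiplicities $(\nu_\ell)$ from Lemma~\ref{distnu}, exactly as in the proof of Theorem~\ref{th:minmax}, and then to substitute the new normalizations \eqref{eq:lambda1*} and carry out the asymptotics under Assumption~\ref{as2} (i.e.\ with $\braket{M}\to\infty$). First I would fix $x_1\ge 0$ and $x_2\in\RR$, set $\ell_*^{(1)}:=\min\{\ell\in\NN^q\colon \ell>\gamma^{-1}b_q^{-1}x_1\}$ and $\ell_*^{(2)}:=\min\{\ell\in\NN^q\colon \ell>\gamma^{-1}(x_2+B_q)\}$, and write, as in \eqref{eq:ell>ell*},
\begin{align*}
\QQ_{\bm{z}}(\lambda_{\rm min}^*>x_1,\ \lambda_{\rm max}^*\le x_2)
&=\QQ_{\bm{z}}\bigl(\nu_\ell\equiv0\ \text{for}\ \ell<\ell_*^{(1)}\ \text{and}\ \ell\ge\ell_*^{(2)}\bigr)\\
&=\exp\Biggl\{-\Biggl(\sum_{\ell\in\NN^q}-\sum_{\ell\ge\ell_*^{(1)}}+\sum_{\ell\ge\ell_*^{(2)}}\Biggr)\log\bigl(1+z_1^\ell z_2\bigr)\Biggr\}.
\end{align*}
Applying Lemma~\ref{lm:sum2+} and the calibration relation \eqref{eq:z2/gamma} to each of the three sums turns the exponent into $-\frac{\braket{M}}{\Gamma(1/q)}\bigl(\Gamma(1/q)-\Gamma(1/q,\gamma\ell_*^{(1)})+\Gamma(1/q,\gamma\ell_*^{(2)})\bigr)\,(1+o(1))$, and the product form already gives the desired asymptotic independence. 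It then remains to analyse the two marginal terms separately.

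For the minimum, $\gamma\ell_*^{(1)}\to b_q^{-1}x_1$, which is small, so I would expand the incomplete gamma function near zero: $\Gamma(1/q)-\Gamma(1/q,x)=\int_0^x u^{1/q-1}e^{-u}\,\rmd u\sim q\,x^{1/q}$ as $x\to0$. Hence the first marginal contribution to the exponent is $\sim-\frac{\braket{M}}{\Gamma(1/q)}\cdot q\,(b_q^{-1}x_1)^{1/q}=-\frac{q\braket{M}}{\Gamma(1/q)}\,b_q^{-1/q}\,x_1^{1/q}$, and the choice $b_q=\bigl(q\braket{M}/\Gamma(1/q)\bigr)^q$ in \eqref{eq:ab+} is precisely what makes this equal $-x_1^{1/q}$, yielding the Weibull limit \eqref{eq:min-lim}. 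For the maximum, $\gamma\ell_*^{(2)}\to\infty$ (since $B_q\to\infty$), so I would use the large-argument asymptotics of the incomplete gamma function, $\Gamma(1/q,x)\sim x^{1/q-1}e^{-x}$ as $x\to\infty$. With $\gamma\ell_*^{(2)}=x_2+B_q+o(1)$ this gives $\frac{\braket{M}}{\Gamma(1/q)}\Gamma(1/q,x_2+B_q)\sim\frac{\braket{M}}{\Gamma(1/q)}(x_2+B_q)^{1/q-1}e^{-(x_2+B_q)}$; since $B_q\sim\log\braket{M}$ we have $(x_2+B_q)^{1/q-1}\sim B_q^{1/q-1}\sim(\log\braket{M})^{1/q-1}$, and the definition of $B_q$ in \eqref{eq:ab+} is engineered so that $\frac{\braket{M}}{\Gamma(1/q)}(\log\braket{M})^{1/q-1}e^{-B_q}\to1$, giving exponent $\to-e^{-x_2}$ and the Gumbel limit \eqref{eq:max-lim}.

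I would then assemble the three pieces: the "full" sum contributes $-\braket{M}\to-\infty$ inside the exponent but is exactly cancelled against the $+\sum_{\ell\ge\ell_*^{(1)}}$ term up to the Weibull correction (since $\Gamma(1/q)-(\Gamma(1/q)-(\Gamma(1/q)-\Gamma(1/q,\gamma\ell_*^{(1)})))$ collapses to $\Gamma(1/q)-\Gamma(1/q,\gamma\ell_*^{(1)})$), so that the exponent is genuinely $-x_1^{1/q}-e^{-x_2}+o(1)$ and $\QQ_{\bm{z}}(\lambda_{\rm min}^*>x_1,\lambda_{\rm max}^*\le x_2)\to e^{-x_1^{1/q}}e^{-e^{-x_2}}$, which is the product of the two marginals claimed. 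Finally I would note that the one-sided limits (obtained by sending $x_2\to+\infty$ or $x_1\to0$) recover \eqref{eq:min-lim} and \eqref{eq:max-lim} individually, and would remark that, as in the earlier theorems, $\gamma$ may be replaced by $\gamma_0=\braket{M}/(q\braket{N})$.

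\textbf{Main obstacle.} The delicate point is the precise bookkeeping of error terms in the two opposite regimes $\gamma\ell_*^{(1)}\to0$ and $\gamma\ell_*^{(2)}\to\infty$ simultaneously. Lemma~\ref{lm:sum2+} gives a relative error $O(z_2)+O(\gamma^{1/q})=O(\kappa^{1/q})$, but for the \emph{maximum} this multiplies a quantity of exact order $1$ only after the exponentially small factor $e^{-B_q}\asymp(\log\braket{M})^{1/q-1}/\braket{M}$ has been extracted, so I must check that the absolute error in the exponent, namely $O(\kappa^{1/q})\cdot\braket{M}\cdot\Gamma(1/q,\gamma\ell_*^{(2)})/\Gamma(1/q)=O(\kappa^{1/q})\cdot O(1)=o(1)$, indeed vanishes — this is fine because $\kappa\to0$, but the argument must be made explicitly rather than hidden in the $\sim$. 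A secondary nuisance is that $\gamma\ell_*^{(2)}$ equals $x_2+B_q$ only up to an $O(\gamma)=o(1/\log\braket{M})$ discretization error from the ceiling in the definition of $\ell_*^{(2)}$; since $e^{-(x_2+B_q)+o(1/\log\braket{M})}=e^{-(x_2+B_q)}(1+o(1))$, this is harmless, but again it should be spelled out. No genuinely new idea beyond Theorem~\ref{th:minmax} is required; the work is in the two asymptotic expansions of $\Gamma(1/q,\cdot)$ and the calibration of the constants $b_q,B_q$.
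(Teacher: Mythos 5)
Your proposal follows essentially the same route as the paper's proof: the same cut-off levels $\ell_*^{(1)},\ell_*^{(2)}$, the same three-sum exponent for the joint probability evaluated via Lemma~\ref{lm:sum2+} and the calibration relation \eqref{eq:z2/gamma}, and the same small- and large-argument asymptotics of the incomplete gamma function, with $b_q$ and $B_q$ absorbing the constants exactly as in \eqref{eq:int1min}--\eqref{eq:int1max}. The only caveat concerns your closing remark that $\gamma$ may be replaced by $\gamma_0$ ``as in the earlier theorems'': for $\lambda_{\rm max}$ this needs the slightly stronger condition $\kappa^{1/q}\log\braket{M}=o(1)$ (as the paper notes after the theorem), since the discrepancy $|\gamma-\gamma_0|$ enters multiplied by $\lambda_{\rm max}\asymp\gamma^{-1}\log\braket{M}$.
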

\begin{proof}
For $x_1\ge0$ and $x_2\in\RR$, set 
$$
\ell^*_{1}(x_1):=\min\{\ell\in\NN^q\colon \ell > x_1/(\gamma\mypp b_q)\},\qquad \ell^*_{2}(x_2):=\min\{\ell\in\NN^q\colon \ell > (B_q+x_2)\mypp \gamma^{-1}\}.
$$ 
Recalling the asymptotic relations \eqref{eq:z1} and \eqref{eq:kappa}, observe that  
\begin{align}
\label{eq:ell*gamma1}
\gamma\mypp\ell^*_{1}(x_1)&=\frac{x_1}{b_q}+O(\gamma)\sim x_1\left(\frac{\Gamma(1/q)}{\displaystyle q\braket{M}}\right)^{\myn q}\!,\\
\label{eq:ell*gamma2}
\gamma\mypp \ell^*_{2}(x_2)&=B_q+x_2+O(\gamma)\sim \log\braket{M}\mynn.
\end{align}  
Like in the proof of Theorem \ref{th:minmax}, we have
\begin{align*}
\QQ_{\bm{z}}(\lambda_{\rm min}^*>x_1,\,\lambda_{\rm max}^*\le x_2) &= \exp\Biggl\{-\Biggl(\myp\sum_{\ell\in\NN^q}^\infty -\sum_{\ell\ge\ell^*_{1}(x_1)}^\infty +\sum_{\ell\ge\ell^*_{2}(x_2)}^\infty\Biggr) \log\mynn\!\left(1+z_1^{\ell}z_2 \right) \Biggr\}.
\label{eq:ell>ell*}
\end{align*}
Applying Lemmas \ref{lm:sum2} and \ref{lm:sum2+} (with $\gamma=-\log z_1$ and $\eta=z_2$) and using the asymptotic relation \eqref{eq:z2/gamma}, we obtain
\begin{equation}
\label{eq:tail-exp}
-\log \QQ_{\bm{z}}(\lambda^*_{\rm min}> x_1,\,\lambda^*_{\rm max}\le x_2)\sim \frac{\braket{M}}{\Gamma(1/q)}\left(\int_{0}^{\gamma\myp\ell^*_{1}(x_1)}\!+\int_{\gamma\myp\ell^*_{2}(x_2)}^\infty \right) u^{1/q-1}\mypp\rme^{-u}\,\rmd{u}.
\end{equation}
Integrating by parts and using the asymptotic relation \eqref{eq:ell*gamma1}, we obtain
\begin{equation}\label{eq:int1min}
\int_{0}^{\gamma\myp\ell^*_{1}(x_1)}\mynn u^{1/q-1}\mypp\rme^{-u}\,\rmd{u}=q\int_{0}^{\gamma\myp\ell^*_{1}(x_1)}\!\rme^{-u}\,\rmd{(u^{1/q})}\sim q\mypp\bigl(\gamma\mypp\ell^*_{1}(x_1)\bigr)^{1/q}\sim\frac{\Gamma(1/q)\,x_1^{1/q}}{\displaystyle \braket{M}}.
\end{equation}
Next, using \eqref{eq:ell*gamma2} we get
\begin{align}
\notag
\int_{\gamma\myp\ell^*_{2}(x_2)}^\infty \mynn u^{1/q-1}\mypp\rme^{u}\,\rmd{u}  &\sim \bigl(\gamma\mypp\ell^*_{2}(x_2)\bigr)^{1/q-1}\mypp \rme^{-\gamma\myp\ell^*_{2}(x_2)}\\
&\sim B_q^{1/q-1}\mypp\rme^{-(B_q+x_2)}\sim\frac{\Gamma(1/q)\,\rme^{-x_2}}{\braket{M}} .
\label{eq:int1max}
\end{align}
Hence, substituting \eqref{eq:int1min} and \eqref{eq:int1max} into \eqref{eq:tail-exp} yields
\begin{align*}
-\log \QQ_{\bm{z}}(\lambda^*_{\rm min}> x_1,\,\lambda^*_{\rm max}\le x_2)\sim x_1^{1/q}+\rme^{-x_2},
\end{align*}
which completes the proof of the theorem.
\end{proof}

\begin{remark}
The necessary use of the intrinsic calibration parameter $\gamma=-\log z_1$ in Theorem \ref{th:maxM} may be a little disappointing. This can be easily improved under a slightly stronger condition on slow growth of $\langle M\rangle$ than in Assumption \ref{as2}; namely, $\kappa^{1/q}\log\langle M\rangle =o(1)$, that is, $\langle M\rangle^{q+1} \mypp(\log\mynn\langle M\rangle)^q/\langle N\rangle =o(1)$. In this case, the normalization \eqref{eq:lambda1*} can be written more explicitly by replacing $\gamma$ with $\gamma_0=\langle M\rangle/(q\mypp\langle N\rangle)$ (see~\eqref{eq:gamma}). \end{remark}

\begin{corollary}\label{cor:5.8}
Under the hypotheses of Theorem \ref{th:maxM}, the following law of large numbers holds, 
\begin{equation}\label{eq:LLN-maxmed}
\frac{\langle M\rangle\myp\lambda_{\rm max}}{q\mypp \langle N\rangle \log\mynn \langle M\rangle}\stackrel{\mathrm{p}}{\longrightarrow} 1,
\end{equation}
where the symbol $\stackrel{\mathrm{p}}{\longrightarrow}$ indicates convergence in $\QQ_{\bm{z}}$-probability. 
\end{corollary}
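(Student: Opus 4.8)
The plan is to deduce \eqref{eq:LLN-maxmed} as a soft consequence of the Gumbel limit in Theorem~\ref{th:maxM}, without any further computation. First I would note that, since $\lambda^*_{\rm max}=\gamma\myp\lambda_{\rm max}-B_q$ converges in distribution to the (proper, non-degenerate) standard double-exponential law, the sequence $\lambda^*_{\rm max}$ is tight and hence bounded in $\QQ_{\bm z}$-probability; that is, there exist random variables $R_{\braket{N}}$, bounded in $\QQ_{\bm z}$-probability as $\braket{N}\to\infty$, such that
\begin{equation*}
\gamma\myp\lambda_{\rm max}=B_q+R_{\braket{N}}.
\end{equation*}

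Next I would observe that, by the definition \eqref{eq:ab+} of $B_q$ together with Assumption~\ref{as2} (so that $\braket{M}\to\infty$), the subleading terms in $B_q$ are negligible relative to the leading one: $\log\log\braket{M}=o(\log\braket{M})$ and $\log\Gamma(1/q)=O(1)$, whence $B_q\sim\log\braket{M}$. Dividing the previous display by $\log\braket{M}\to\infty$ then simultaneously absorbs the bounded-in-probability fluctuation $R_{\braket{N}}$ and the $o(\log\braket{M})$ deterministic corrections hidden in $B_q$, so that
\begin{equation*}
\frac{\gamma\myp\lambda_{\rm max}}{\log\braket{M}}\stackrel{\mathrm p}{\longrightarrow}1.
\end{equation*}

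Finally I would pass from the intrinsic scale $\gamma=-\log z_1$ to the explicit one $\gamma_0=\braket{M}/(q\myp\braket{N})$ by invoking the calibration asymptotics \eqref{eq:gamma}, namely $\gamma=\gamma_0\bigl(1+O(\kappa^{1/q})\bigr)$ with $\kappa\to0$ under Assumption~\ref{as0}; thus $\gamma_0/\gamma\to1$, and
\begin{equation*}
\frac{\braket{M}\myp\lambda_{\rm max}}{q\myp\braket{N}\log\braket{M}}=\frac{\gamma_0}{\gamma}\cdot\frac{\gamma\myp\lambda_{\rm max}}{\log\braket{M}}\stackrel{\mathrm p}{\longrightarrow}1\cdot1=1,
\end{equation*}
which is \eqref{eq:LLN-maxmed}. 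I do not expect any genuine obstacle in this argument; the only point meriting a little care is that weak convergence of $\lambda^*_{\rm max}$ does not of itself yield a law of large numbers for $\lambda_{\rm max}$, but it does once one divides by the deterministically diverging normalizer $\log\braket{M}$, which is precisely the mechanism that kills both the random fluctuation and the lower-order deterministic terms.
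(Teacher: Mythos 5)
Your argument is correct and is essentially the paper's own proof: the authors likewise deduce $\gamma\myp\lambda_{\rm max}/B_q\stackrel{\mathrm{p}}{\to}1$ from the Gumbel limit of Theorem~\ref{th:maxM} (the tightness step you spell out being left implicit) and then conclude via $B_q\sim\log\mynn\langle M\rangle$ and $\gamma\sim\gamma_0=\langle M\rangle/(q\mypp\langle N\rangle)$. No gaps; your explicit handling of the bounded-in-probability remainder is just a more detailed rendering of the same mechanism.
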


\begin{proof}
Theorem \ref{th:maxM} implies that $\gamma\myp\lambda_{\rm max}/B_q\stackrel{\mathrm{p}}{\longrightarrow} 1$, and the claim \eqref{eq:LLN-maxmed} follows by noting that $\gamma\sim\gamma_0=\langle M\rangle/(q\mypp\langle N\rangle)$ and $B_q\sim\log\mynn\langle M\rangle$.
\end{proof}

\begin{remark}\label{rm:max-slow}
Theorem \ref{th:maxM} indicates that, under the condition of slow growth of $\braket{M}$,
the smallest part $\lambda_{\mathrm{min}}$ of a $\QQ_{\bm{z}}$-typical partition $\lambda\in\check{\varLambda}^q$ ``lives'' on the scale $A_*= (\gamma\mypp b_q)^{-1}\propto \langle N\rangle/\langle M\rangle^{q+1}=\kappa^{-1}$. On the other hand, Corollary \ref{cor:5.8} shows that the scale of variation of the largest part $\lambda_{\mathrm{max}}$ is 
given by $A^*\mynn=B_q\mypp \gamma^{-1}\!\propto \!\langle N\rangle\log\mynn\langle M\rangle/\langle M\rangle$.
This is to be compared with the typical behavior in the bulk of the partition ``spectrum'', where the scale of variation is given by $A\sim\gamma^{-1}\propto \langle N\rangle/\langle M\rangle$.
\end{remark}

\begin{remark}
Continuing an asymptotic linkage between the cases of fixed or slowly growing parameter $\langle M\rangle$, observed above in Remark \ref{rm:link1}, the limiting distributions of Theorem \ref{th:minmax} formally conform to Theorem~\ref{th:maxM} in the limit as $\langle M\rangle \to\infty$. Indeed, using \eqref{eq:max1} we have
\begin{align*}
-\log G_{\rm max}(x+B_q)&=\frac{\braket{M}\myp\Gamma(1/q,x+B_q)}{\Gamma(1/q)}\\
&\sim \frac{\braket{M}}{\Gamma(1/q)}\,(x+B_q)^{1/q-1}\mypp\rme^{-x-B_q}\\
&\sim \frac{\braket{M}}{\Gamma(1/q)}\,(\log\mynn\langle M\rangle)^{1-1/q}\,\rme^{-x}\cdot\rme^{-B_q}=\rme^{-x},
\end{align*}
according to the definition of $B_q$ in \eqref{eq:ab+}. Similarly, using \eqref{eq:min1} and \eqref{eq:int1min} we have
\begin{align*}
-\log G^c_{\rm min}(x/b_q)&=\braket{M}\!\left(1-\frac{\Gamma(1/q,x/b_q)}{\Gamma(1/q)}\right)\\
&\sim \frac{\braket{M}\myn q}{\Gamma(1/q)}\left(\frac{x}{b_q}\right)^{1/q}=x^{1/q},
\end{align*}
by the definition of $b_q$ in \eqref{eq:ab+}.
\end{remark}

\begin{remark}[Possible generalization to non-integral powers]
The reader may have observed that our main results (including Theorems \ref{thm1} and \ref{th:CLT} about joint limiting laws of weight $N_\lambda$ and length $M_\lambda$; Theorems \ref{th:minmax} and \ref{th:maxM} about the joint asymptotics of extreme parts; the limit shape Theorems \ref{th:LS} and \ref{th:Gauss}; and the cardinality Theorem \ref{th:ID}) continue to make sense \emph{for any real power $q>0$}, although our proofs proceed from the assumption that $q$ is integer, which makes the partition model combinatorically well defined. The interest in non-integral powers is not new but was mostly motivated by applications in statistical physics  \cite{Agarwala,Comtet,Roccia}. 
%\lvb{In particular, Agarwala and Auluck \cite{Agarwala} analyzed  the asymptotic growth of the number of solutions to the sandwich inequality $n-1<j_1^q+j_2^q+\cdots\le n$, where $q>0$ and $j_1\ge j_2\ge\cdots \ge 1$ ($j_i\in\NN$).} 
A mathematically meaningful interpretation of partitions with non-integral power parts may be \strut{}based on the idea recently proposed by Lipnik et al.\ \cite{Lipnik}, whereby the multiplicative Boltzmann structure is introduced on the hidden ``substrate'' $\NN=\{k\}$, from which the $q$-power parts of the random partition $\lambda=(\ell_i)$ are formed as $\ell=\lfloor k^q\rfloor$, with any $q\in\RR_+$. We will review and extend our results under this approach in a separate paper. 
%\jp{When $q<1$, the number $g(\ell)=\lceil (\ell+1)^{1/q} \rceil-\lceil \ell^{1/q} \rceil$ that counts the integers from the substrate that produce the part $\ell$ goes to infinity instead of being in $\{0,1\}$ when $q\geq 1$. Thus we can propose two extensions of the family $\PP_{\bm{z}}$. If we distinguish the $g(\ell)$ parts of weight $\ell$ the multiplicities in the substrate follow a Bernoulli distribution as in equation \eqref{distnu} and after the projection on $\varLambda$ the multiplicities of the parts follow a binomial distribution with $g(\ell)$ trials. This is similar to what is done in \cite{Lipnik}, it is akin to the notion of \emph{degeneracy} in physics. It conduces to a smooth continuation. If the parts of identical weight are not distinguished, it will correspond to the constrained space of partition $\tilde{\varLambda}$ as defined in equation \eqref{eq:tilde-Lambda} with $\mathbb{A}=\mathbb{N}$ and $\mathbb{B}_\ell=\{0,1,\cdots,g_\ell\}$. This approach conduces to a constant continuation when $q<1$.}  
\end{remark}

\section{Application to random sampling} \label{sec:6}
\emph{Boltzmann sampling} is a powerful technique conceptualized, streamlined and popularized by Duchon et al.\ \cite{Duchon} in the context of single-parameter combinatorial structures (for multi-parametric extensions, see Bendkowski et al.\ \cite{BBD} and the references therein). 
Random integer partitions with controlled expected weight and length provide an ``exactly soluble'' instance of a two-parametric combinatorial structure, where the issues of Boltzmann sampling implementation and efficiency can be analyzed in some depth. 

Specifically, in this section we discuss sampling from the Boltzmann distribution on partition spaces $\check{\varLambda}^q$ (i.e., into distinct $q$-power parts), calibrated under the predefined hyper-parameters $\braket{N}$ and $\braket{M}$, which have the meaning of the expected weight and length, respectively. The two controlling parameters in question are $z_1$ and $z_2$, which are amenable to asymptotic analysis as was shown in Section~\ref{sec:3.3}. Once these parameters are fixed, due to the mutual independence of the multiplicities $(\nu_\ell)$ (see Proposition \ref{pr:mult} and Lemma~\ref{distnu}), the Boltzmann sampling is essentially reduced to an iterated independent testing of potential parts $\ell=j^q$ via  dichotomous (Bernoulli) random trials with 
success probabilities $\QQ_{\bm{z}}(\nu_{\ell}=1)=z_1^\ell z_2\mypp(1+z_1^\ell z_2)^{-1}$. 
The practical implementation of such sampling algorithms thus relies on a random number generator $\mathtt{Ber}(p)$, in each call producing an independent pseudo-random value $1$ or $0$ with probabilities $p$ and $1-p$, respectively.

It is convenient to distinguish between the \emph{free samplers} and the \emph{rejection samplers}, with the former just producing independent random  realizations of partitions under the said Boltzmann distribution,
and the latter 
comprising one or more rejection loops that 
iterate a free Boltzmann sampler 
until the desired
targets are met. We discuss these two versions in Sections \ref{sec:6.1} and \ref{sec:6.2}, respectively.

Computer codes were 
implemented using the programming language C and 
Intel\textsuperscript{\textregistered} \texttt{oneAPI\;DPC}$\texttt{++}$ compiler,
and run on a desktop CPU Intel\textsuperscript{\textregistered} Core\textsuperscript{\tiny TM} i5-10600 (processor base frequency 3.30 GHz, turbo boost frequency 4.80 GHz). Numerical calculations were carried out using  Maple\textsuperscript{\tiny TM} (Release 2022.1, licensed to the University of Leeds).

\subsection{Free sampler}\label{sec:6.1}
In this subsection, we delineate a free Boltzmann sampler (see Algorithm~\ref{sampler} below) 
under the calibration through the hyper-parameters $\braket{N}$ and $\braket{M}$. It should be noted that, despite an intuitive appeal of iterated Bernoulli-type tests, there are some implementation concern that have to be addressed. We discuss them below before presenting the algorithm.

\subsubsection{Correcting the bias}\label{sec:6.1.1}
The first issue to consider is that of choosing the control parameters $z_1$ and $z_2$ to ensure that the sampler is unbiased, that is, $\EE_{\bm{z}}(N_\lambda)=\braket{N}$ 
and $\EE_{\bm{z}}(M_\lambda)=\braket{M}$. Unfortunately, we can solve this set of equations only asymptotically (see Lemma~\ref{distnu}). In a ``crude'' version of Algorithm \ref{sampler}, we use the leading terms in the asymptotics by setting (cf.\ \eqref{eq:z1} and \eqref{eq:z2})
\begin{equation}\label{eq:z12-crude}
z_1=\rme^{-\gamma_0},\qquad z_2=\frac{\braket{M}\gamma_0^{1/q}}{\Gamma(1+1/q)},
\end{equation}
where $\gamma_0=\langle M \rangle/(q\myn\braket{N})$. Inevitably, this causes a bias in the resulting expectations. 
More precisely, the first source of this bias clearly comes from dropping the (positive) remainder terms $R_1(\bm{z})$ and $R_2(\bm{z})$ in the approximate series representations of the aforementioned expected values (see equations \eqref{eq:<M>} and \eqref{eq:<N>}). A further error occurs when replacing the resulting series with the corresponding integrals, using Lemma~\ref{lm:sum1}.

In fact, one can show that the overall bias due to \eqref{eq:z12-crude} is always negative. Indeed, recalling that $\Delta_0(\gamma)<0$ (see \eqref{eq:Delta0}\footnote{The inequality $\Delta_0(\gamma)<0$ can also be seen directly by monotonicity of the function $x\mapsto \rme^{-\gamma\myp x^q}$.}), we have
\begin{align*}
\EE_{\bm{z}}(M_\lambda)=\sum_{\ell\in\NN^q}\frac{z_1^\ell z_2}{1+z_1^\ell z_2}
<z_2\sum_{\ell\in\NN^q}z_1^\ell& =z_2\sum_{j=1}^\infty \rme^{-\gamma_0\myp j^q}\\
&<z_2\int_0^\infty\! \rme^{-\gamma_0\myp x^q}\rmd{x}=\frac{z_2\,\Gamma(1/q)}{q\,\gamma_0^{1/q}}
=\langle M\rangle,
\end{align*}
according to the parameter choice \eqref{eq:z12-crude}. Turning to  $\EE_{\bm{z}}(N_\lambda)$, recall from \eqref{eqR2} that the error term $-R_2(\bm{z})$ is negative, and furthermore, 
\begin{equation}\label{eq:R2dom}
R_2(\bm{z})\sim z_2^2\sum_{\ell\in\NN^q} \ell\myp z_1^{2\ell}\sim
\frac{z_2\, q\myn\braket{M}\myn\gamma_0^{1/q}}{\Gamma(1/q)}\cdot\frac{\Gamma(1+1/q)}{q\,\gamma_0^{1+{1/q}}}=\frac{z_2 \braket{M}}{q\,\gamma_0}.
\end{equation}
On the other hand, the error due to replacing the sum $\sum_{\ell} \ell\myp z_1^{\ell}$ in \eqref{eq:<N>} by the corresponding integral is bounded, according to \eqref{eq:2nd+}, by $z_2\,O\bigl(\gamma_0^{-1+1/q}\bigr)$. Since $\braket{M}$ is bounded away from zero, it follows that the $R_2$-term \eqref{eq:R2dom} is dominant and, therefore, the overall bias in targeting $\braket{N}$ is negative.

A practical recipe towards correcting the bias may be to move the  error terms $R_1(\bm{z})$ and $R_2(\bm{z})$ to the left-hand side of equations in \eqref{eq:<M>} and \eqref{eq:<N>}, for simplicity using their integral approximations. Effectively, this  amounts to redefining the hyper-parameters,
\begin{align*}
\braket{\tilde{N}}&:=\braket{N}+z_2^2\sum_{\ell\in\NN^q} \ell\myp z_1^{2\ell}\approx \braket{N}+z_2^2\int_0^\infty\! x^{q}\,\rme^{-2\myp\gamma_0\myp x^q}\myp\rmd{x}=\braket{N}+\frac{\braket{M}^2 \gamma_0^{1/q}}{2^{1/q}\,\Gamma(1/q)},\\
\braket{\tilde{M}}&:=\braket{M}+z_2^2\sum_{\ell\in\NN^q} z_1^{2\ell}\approx \braket{M}+z_2^2\int_0^\infty\! \rme^{-2\myp\gamma_0\myp x^q}\myp\rmd{x}=\braket{N}+\frac{q\myn\braket{M}^2 \gamma_0^{1/q}}{2^{1/q}\,\Gamma(1/q)}.
\end{align*}
Accordingly, we redefine $\tilde{\gamma_0}=\langle \tilde{M} \rangle/(q\myn\braket{\tilde{N}})$ and (cf.\ \eqref{eq:z12-crude})
\begin{equation}\label{eq:z12-tilde}
\tilde{z}_1=\rme^{-\tilde{\gamma_0}},\qquad \tilde{z}_2=\frac{q\myn\braket{\tilde{M}}\tilde{\gamma_0}^{1/q}}{\Gamma(1/q)}.
\end{equation}
A numerical illustration of the proposed modification is presented in Table \ref{table1}, showing a significant reduction of bias. The expected values were computed from the exact series expansions \eqref{eq:<M>} and \eqref{eq:<N>} using Maple.
\begin{table}[H]
\caption{Expected values of $N_\lambda$ and $M_\lambda$ for $q=1$ and $q=2$ under two choices of the calibrating parameters: using the leading asymptotic terms \eqref{eq:z12-crude} and after a heuristic correction \eqref{eq:z12-tilde}.}\label{table1}
\small
\vspace{1pc}\centering\begin{tabular}{|c|c||c|c|}
 \hline
\multicolumn{2}{|c||}{$q=1$}&\multicolumn{2}{|c|}{$q=2$}\\
\hline
$\vphantom{\int^{)}}\braket{N}=10^6$&$\braket{\tilde{N}}\doteq1{,}\myp002{,}\myp499.50$&$\braket{N}=10^7$&$\braket{\tilde{N}}\doteq10{,}\myp315{,}\myp391.57$\\
\hline
$\vphantom{\int^{)}}\braket{M}=100$&$\braket{\tilde{M}}\doteq100.4999$&$\braket{M}=50$&$\braket{\tilde{M}}\doteq53.1539$\\
 \hline  
 $z_1\doteq0.9999000$& $\tilde{z}_1=0.9998998$& $z_1\doteq0.9999975$& $\tilde{z}_1\doteq0.9999974$\\
 \hline  
 $z_2\doteq0.010000$& $\tilde{z}_2\doteq0.0100750$& $z_2\doteq0.0892062$& $\tilde{z}_2\doteq0.0962720$\\\hline
 $\EE_{\bm{z}}(N_\lambda)\doteq997{,}\myp510.70$&$\EE_{\tilde{z}}(N_\lambda)\doteq999{,}\myp985.73$&$\EE_{\bm{z}}(N_\lambda)\doteq9{,}\myp699{,}\myp070.63$&$\EE_{\tilde{z}}(N_\lambda)\doteq9{,}\myp981{,}\myp802.71$\\
 \hline
$\EE_{\bm{z}}(M_\lambda)\doteq99.498326$&$\EE_{\tilde{z}}(M_\lambda)\doteq99.992019$&$\EE_{\bm{z}}(M_\lambda)\doteq47.018388$&$\EE_{\tilde{z}}(M_\lambda)\doteq49.754495$\\
 \hline \end{tabular}
 \end{table}

If the remaining (small) bias is still an issue, a further recalibration can be carried out by a suitable refinement of the solution $\bm{z}=(z_1,z_2)$ to the equations \eqref{eq:<M>} and \eqref{eq:<N>}, for instance, by using a two-dimensional Newton--Raphson method. For a general approach to the multidimensional tuning of parameters based on convex optimization, see Bendkowski et al.\ \cite{BBD}.

\subsubsection{Truncation of the parts pipeline}\label{sec:6.1.2}

We deal with a finitary computation, so should rule out the risk of indefinite processing. That is to say, the algorithm must have a well-defined stopping rule that would guarantee a finite-time termination. In a free sampler, the sequence of productive outcomes in successive Bernoulli trials (i.e., with sample multiplicities  $\nu_\ell=1$ corresponding to  non-zero parts) is $\QQ_{\bm{z}}$-a.s.\ finite (see Lemma~\ref{lm:lemma2}). More precisely, the last successful trial selects the largest part $\lambda_{\rm max}$, after which the testing 
settles down to pure idling. 
The $\QQ_{\bm{z}}$-distribution of  $\lambda_{\rm max}$ is given by (cf.\ Sections \ref{sec:4.3} and \ref{sec:5.4})
\begin{align}
\notag
\QQ_{\bm{z}}(\lambda_{\rm max}\le L)&=\QQ_{\bm{z}}(\nu_{\ell}\equiv 0\ \text{for all}\  \ell > L)\\
&=\prod_{\ell >L} \frac{1}{1+z_1^{\ell}z_2}=\frac{1}{F(\bm{z})}\prod_{\ell \le L} (1+z_1^{\ell}z_2),
\label{eq:oracle-max}
\end{align}
where $F(\bm{z}) = \prod_{\ell\in\NN^q} (1+z_1^{\ell}z_2)$ is the generating function of the partition space $\check{\varLambda}^p$ (see \eqref{eq:F_q}). 
It is also easy to see that conditioning on $\lambda_{\rm max}=j_0^q$ does not change the distribution of the preceding multiplicities $\{\nu_{j^q},1\le j\le j_0-1\}$, that is, they remain mutually independent and with Bernoulli distributions \eqref{eq:Bern}. Thus, if the numerical value of 
$F(\bm{z})$ can be calculated in advance, which is a common convention in computing known as an \emph{oracle} (see, e.g., \cite{Duchon,Flajolet-Fusy-Pivoteau,BBD}), then we can sample the random value $\lambda_{\rm max}$ using formula \eqref{eq:oracle-max} and then sample independently the preceding candidate parts via the respective Bernoulli trials. Unfortunately, this approach embeds a computational error through the numerical calculation of $F(\bm{z})$, so it is not quite ``exact''; besides, convergence of the infinite product may not be fast, given that the parameter $z_1$ is close to $1$ (see \eqref{eq:z1}). Specifically, using Lemma \ref{lm:sum2+} one can check that the truncation error arising from a partial product up to $\ell_*$ is of order $\braket{M} (\gamma\myp\ell_*)^{1/q-1}\myp\rme^{-\gamma\myp\ell_*}$, which dictates that $\ell_*$ be chosen much bigger than $\gamma^{-1}\sim q\mypp\langle N\rangle/\langle M\rangle$.

An alternative idea is to truncate the pipeline of potential parts $\ell\in\NN^q$ subject to testing at an appropriate threshold $L$ (see Section~\ref{sec:2.5}), so that the Bernoulli testing only runs over $\ell\le L$.
A simple pragmatic solution is to choose the threshold $L$ so that the probability of exceeding it in an indefinite free sampler is small enough, that is, 
\begin{equation}\label{eq:<L}
\QQ_{\bm{z}}(\lambda_{\mathrm{max}}>L)\le\delta,
\end{equation}
where the confidence tolerance $\delta>0$ can be chosen in advance to be as small as desired. Then the corresponding threshold $L=L(\delta)$ can be determined from a suitable limit theorem for the largest part, namely, Theorem \ref{th:minmax} if $\braket{M}>0$ is fixed, or Theorem \ref{th:maxM} for slow growth of $\braket{M}$. 
In the former case, threshold $L$ is determined by the asymptotic equation (see~\eqref{eq:max1})
\begin{equation}\label{eq:L1}
\Gamma(1/q,\gamma_0 L)=\frac{\Gamma(1/q)}{\braket{M}}\cdot \log\frac{1}
{1-\delta},
\end{equation}
where, as before, $\gamma_0=\braket{M}\mynn/(q\myn\braket{N})$.
In the latter case, we obtain from \eqref{eq:max-lim}
\begin{equation}\label{eq:L2}
L = \frac{1}{\gamma_0}\left(B_q -\log \log \frac{1}{1-\delta}\right)\!,
\end{equation}
where (see \eqref{eq:ab+})
$$
B_q=\log\myn \langle M\rangle-\bigl(1-\tfrac{1}{q}\bigr)\log \log\myn\langle M\rangle-\log\Gamma(1/q).
$$
Note that for $q=1$ the bounds \eqref{eq:L1} and \eqref{eq:L2} coincide, reducing to
\begin{equation}\label{eq:L0}
L = \frac{1}{\gamma_0}\left(\log\myn \langle M\rangle -\log \log \frac{1}{1-\delta}\right)\!.
\end{equation}

An illustration of evaluation of the threshold $L$ is presented in Table \ref{table2} for $q=1$ and $q=2$. The equation \eqref{eq:L1} was solved numerically using Maple. One can observe from the table that while the value $\delta=10^{-k}$ ($k=1,2,\dots$)  is decreasing geometrically, the growth of threshold $L$ is only about linear. Intuitively, this is explained by the fact that the confidence probability $1-\delta$ enters  expressions \eqref{eq:L1} and \eqref{eq:L2} under the logarithm, that is, as $\log\left(1-\delta\right)$. More precisely, it readily follows from \eqref{eq:L2} that
$$
L-\frac{B_q}{\gamma_0}\sim\frac{1}{\gamma_0}\log\frac{1}{\delta}\qquad (\delta\to0^+).
$$
Likewise, equation \eqref{eq:L1} asymptotically solves to yield
$$
L-\frac{\log\braket{M}}{\gamma_0\,\Gamma(1/q)}\sim\frac{1}{\gamma_0}\log\frac{1}{\delta}\qquad (\delta\to0^+).
$$

\begin{table}[H]
\caption{Threshold $L$ for the largest part $\lambda_{\rm{max}}$ with confidence probability $1-\delta$, calculated from expressions \eqref{eq:L0} ($q=1$) and \eqref{eq:L1} or \eqref{eq:L2} ($q=2$) and rounded down to the nearest $q$-th power.}
 \label{table2}
\small
\vspace{1pc}\centering\begin{tabular}{|c|c||c|c|c|}
 \hline
\multicolumn{2}{|c||}{$\vphantom{\int^{t}}q=1$, $\braket{N}=10^6$, $\braket{M}=100$}&\multicolumn{3}{|c|}{$q=2$, $\braket{N}=10^7$, $\braket{M}=50$}\\
\hline
$\vphantom{\int^{t}}\delta$& $L$  \eqref{eq:L0}&$\delta$& $L$  \eqref{eq:L1}& $L$  \eqref{eq:L2}\\
 \hline 
$0.1$& $68{,}\myp555$&$0.1$& $1{,}\myp890{,}\myp625=1375^2$&$1{,}\myp962{,}\myp801=1401^2\vphantom{\int^{t}}$\\
 \hline 
$0.01$&  $92{,}\myp053$&$0.01$& $2{,}\myp762{,}\myp244=1662^2$ & $2{,}\myp900{,}\myp209=1703^2\vphantom{\int^{t}}$\\
 \hline 
$0.001$& $115{,}\myp124$&$0.001$& $3{,}\myp636{,}\myp649=1907^2$&$3{,}\myp825{,}\myp936=1956^2\vphantom{\int^{t}}$ \\
\hline 
$0.0001$& $138{,}\myp154$ &$0.0001$& $4{,}\myp515{,}\myp625=2125^2$ &$4{,}\myp743{,}\myp684=2178^2\vphantom{\int^{t}}$ \\
\hline  \end{tabular}
\end{table}
According to Lemma~\ref{lm:B-cutoff0} (with $\tilde{\varLambda}^\dag=\check{\varLambda}_L$), the output of a truncated sampling algorithm follows the Boltzmann distribution with a smaller source set $\mathbb{A}_L=\{\ell\in\NN^q\colon \ell\le L\}$, which nonetheless approximates well the target Boltzmann distribution $\QQ_{\bm{z}}$ (see Lemma~\ref{lm:TV-appr}). One should be wary though that truncation contributes to the negative bias (see \eqref{eq:ExpN<} and \eqref{eq:ExpM<}), which may require a refined calibration through the parameters $z_1$ and $z_2$. 

Note that the confidence guarantee $1-\delta$ as discussed above is valid only in the case of a single output instance. If the purpose of the free algorithm is to produce an independent sample of, say, $k$ random Boltzmann partitions, then the overall confidence probability is approximately  given by $(1-\delta)^k$, which may be  exponentially small if $k$ is large while  $\delta$ stays fixed. A simple upper bound for the error probability is based on the Bernoulli inequality, yielding $1-(1-\delta)^k\le k\delta$. This motivates the well-known Bonferroni correction, which amounts to choosing the  individual error probability $\delta_0=\delta/k$ in order to ensure the overall error probability not exceeding $\delta$. As an example, if $k=1{}\myp 000$ and we would like to guarantee the overall error probability bound $\delta=0.1$, then the individual error probability should be taken as $\delta_0=0.0001$. The approximation is quite accurate here, as the exact solution is $\delta_0\doteq 0.000105355$. Clearly, switching from $\delta$ to $\delta_0$ leads to a higher threshold $L$. For instance, Table \ref{table2} shows that in the case $q=1$ the suitable threshold $L$ needs to double. In general, the increase of $L$ due to multiple errors is not really dramatic because of the logarithmic dependence on $\delta$ mentioned above.

Another unwelcome outcome of the Bernoulli testing is that it may not return any parts at all, which has a positive $\QQ_{\bm{z}}$-probability even in the infinite sequence of tests (see Remark~\ref{rm:==0}). This is not critical, as the sampling cycle can be repeated if necessary. However, it may be wasteful and can be easily rectified by adopting a similar approach based on confidence. Specifically, one can set a lower cutoff $L_0$ such that the run of the sampler is terminated, and the cycle is repeated, if the Bernoulli tests fail to select at least one (non-zero) part $\ell\le L_0$. To this end, we choose $L_0$ in such a way that $\QQ_{\bm{z}}(\lambda_{\rm min}>L_0)\le \delta$, where $\delta>0$ is small enough (cf.\ \eqref{eq:<L}). Again referring to the limit theorems regarding the smallest part, we obtain from Theorem \ref{th:minmax} (cf.\ \eqref{eq:L1})
\begin{equation}\label{eq:L1'}
\Gamma(1/q)-\Gamma(1/q,\gamma_0 L_0)=\frac{\Gamma(1/q)}{\braket{M}}\mypp \log\frac{1}
{\delta},
\end{equation}
and from Theorem \ref{th:maxM} (cf.\ \eqref{eq:L2})
\begin{equation}\label{eq:L2'}
L_0=\frac{1}{\gamma_0}\left(\frac{\Gamma(1/q)}{q\myn\braket{M}}\log \frac{1}{\delta}\right)^q\!.
\end{equation}

A numerical illustration of the confident lower threshold $L_0$ is presented in Table \ref{table3} for $q=1$ and $q=2$. The equation \eqref{eq:L1'} was solved numerically using Maple. The match between the results produced via equations \eqref{eq:L1'} and \eqref{eq:L2'} is quite close, especially for $q=2$. One should also observe a significant difference between the thresholds $L_0$ and $L$, which underpins a considerable computational saving due to the lower cutoff, activated whenever the sampler fails to produce at least one positive part up to $L_0$.

\begin{table}[H]
\caption{Asymptotic threshold $L_0$ for the smallest part $\lambda_{\rm{min}}$ with confidence probability $1-\delta$, calculated from expressions \eqref{eq:L1'} or \eqref{eq:L2'}, and rounded down to the nearest $q$-th power.}
 \label{table3}
\small
\vspace{1pc}\centering\begin{tabular}{|c|c|c||c|c|c|}
 \hline
\multicolumn{3}{|c||}{$\vphantom{\int^{t}}q=1$, $\braket{N}=10^6$, $\braket{M}=100$}&\multicolumn{3}{|c|}{$q=2$, $\braket{N}=10^7$, $\braket{M}=50$}\\
\hline
$\vphantom{\int^{t}}\delta$& $L_0$ \eqref{eq:L1'}& $L_0$ \eqref{eq:L2'} &$\delta$& $L_0$  \eqref{eq:L1'}& $L_0$  \eqref{eq:L2'}\\
 \hline 
$0.1$&$232$ &$230$&$0.1$& $625=25^2$&$625=25^2\vphantom{\int^{t}}$\\
 \hline 
$0.01$&  $471$&$460$&$0.01$& $2{,}\myp601=51^2$ & $2{,}\myp601=51^2\vphantom{\int^{t}}$\\
 \hline 
$0.001$& $715$&$690$&$0.001$&$5{,}\myp929=77^2$ &$5{,}\myp929=77^2\vphantom{\int^{t}}$ \\
\hline 
$0.0001$& $966$ &$921$&$0.0001$& $10{,}\myp816=104^2$ &$10{,}\myp609=103^2\vphantom{\int^{t}}$ \\
\hline  \end{tabular}
\end{table}

Finally, if both cutoffs $L$ and $L_0$ are exercised as described above then, due to the asymptotic independence of $\lambda_{\rm max}$ and $\lambda_{\rm min}$, the overall confidence probability is (asymptotically) given by $(1-\delta)^2=1-2\myp\delta+\delta^2$, hence the resulting error probability is bounded by $2\myp\delta$.

\subsubsection{Free sampling algorithm}\label{sec:6.1.3}

A free Boltzmann sampler is presented below in pseudocode as Algorithm~\ref{sampler}. For simplicity, the algorithm incorporates only the upper threshold $L$ selected in advance for a given confidence probability $1-\delta$. As discussed in Section \ref{sec:6.1.2}, this is essential to ensure  termination of the code, but for the sake of optimization a lower cutoff $L_0$ can also be included without difficulty. As explained above, the confidence probability should be chosen carefully to match a possibly multiple output.

\begin{figure}[ht!]
\begin{algorithm}[H]
\label{sampler}
\caption{{\tt FreeSampler}\mypp($q,
\langle N\rangle,
\langle M\rangle, L$)} 
\KwInput{integer $q$,
real $\langle N\rangle,\langle M\rangle, L$}
\KwOutput{partition $\lambda\in\check{\varLambda}^{q}_{L}$, 
weight $N_\lambda$, length $M_\lambda$}
integer array $\lambda_{[\ ]}$\; real 
$z_1, z_2, \gamma_0, q\myp$\;
\strut{}$\gamma_0\leftarrow \langle M \rangle/(q\myn\braket{N})$\;
$z_1 \leftarrow \rme^{-\gamma_0}$, 
$z_2 \leftarrow q\myn\braket{M}\myn\smash{\gamma_0^{1/q}}\mynn/\myp\Gamma(1/q)$\;
integer $j^*$, $j$, $N$, $M$\;
$j^*\leftarrow \lfloor L^{1/q}\rfloor$\;
$N \leftarrow 0$, $M \leftarrow 0$\;
       \strut{} \For{$j$ from $j^*$ to $1$ by $-1$}
{$p\leftarrow z_1^{j^q}\! z_2\mypp(1+z_1^{j^q}\! z_2){\displaystyle^{-1}}$\; 
\If{$\mathtt{Ber}
(p)=1$}
{
		$N \leftarrow N+ j^q$\;
		$M \leftarrow M+1$\;
	$\lambda_{M}\leftarrow j^q$\;
}
}
$N_\lambda\leftarrow N$, $M_\lambda\leftarrow M$\;
\Return{$(\lambda, N_{\lambda}, M_{\lambda})$}
\end{algorithm}
\end{figure}

The code structure is fairly straightforward and consists in a single cycle of sequential Bernoulli tests over potential parts $\ell\in\NN^q$. It is convenient to do this via downward scoping in view of our convention to enumerate the partition parts in decreasing order. Because the resulting length of the output partition $\lambda=(\lambda_i)$ is unknown in advance, the space for the corresponding  integer array is defined in the code as $\lambda_{[\ ]}$, that is, through a dynamically allocated memory. Finally, the calibration parameters $z_1$ and $z_2$ are specified using the leading-term formulas \eqref{eq:z12-crude}; if desired, these can be replaced by the bias-correcting values \eqref{eq:z12-tilde} or by any other, more refined choices.

By design, the output of Algorithm \ref{sampler} is a random partition $\lambda\in\check{\varLambda}^q_L=\check{\varLambda}^q\cap\{(\lambda_i)\colon\lambda_{\rm{max}}\le L\}$. It has a Boltzmann distribution on the space $\check{\varLambda}^q_L$, with expected values of weight $N_\lambda$ and length $M_\lambda$ close to the predefined hyper-parameters $\braket{N}$ and $\braket{M}$, respectively. As discussed in Section \ref{sec:2.5}, this distribution approximates (in total variation) the Boltzmann distribution on the infinite partition space $\check{\varLambda}^q$, which may suffice for the sampling purposes at hand.

\subsubsection{Validation of Algorithm \ref{sampler}}
The output performance of the code in Algorithm~\ref{sampler} was visually monitored via the marginal histograms for the sample weight $N_\lambda$ and length $M_\lambda$ (Figure~\ref{fig4}), as well as by the bivariate histograms and frequency level plots of the sample pairs  $(N_\lambda,M_\lambda)$ (Figure~\ref{fig5}). The numerical illustration was carried out in the case of square parts, $q=2$ (selected for computational convenience in order to reduce the completion time), and in two different regimes with regard to the hyper-parameter $\braket{M}$, that is, ``fixed'' and ``slow growth'', illustrated by  $\braket{M}=5$ ($\braket{N}=12{,}\myp500$) and $\braket{M}=50$ ($\braket{N}=10^7$), yielding for the parameter $\kappa=\langle M\rangle^3\myn/\langle N\rangle$ values $\kappa=0.01$ and $\kappa=0.0125$, respectively (cf.\ Assumption~\ref{as2}). The algorithm was run at a very low  confidence tolerance (error probability) $\delta = 10^{-8}$ and with the corresponding truncation value $L$ calculated using formulas \eqref{eq:L1} or \eqref{eq:L2} according to the regime at hand, yielding $L=299^2=89,401$ and $L=2{,}\myp903^2=8{,}\myp427{,}\myp409$, respectively (cf.\ Table~\ref{table2}).

\begin{figure}[ht!]
\centering
\subfigure[$\braket{M}=5$, $\braket{N}=12{,}\myp500$ \mypp($\kappa=0.01$). A dot at the origin on the left panel shows a discrete atom $\pi_0=\rme^{-\braket{M}}\doteq 0.00674$
%0.006737947
(see \eqref{eq:mix}).]{\includegraphics[width=0.4\textwidth]{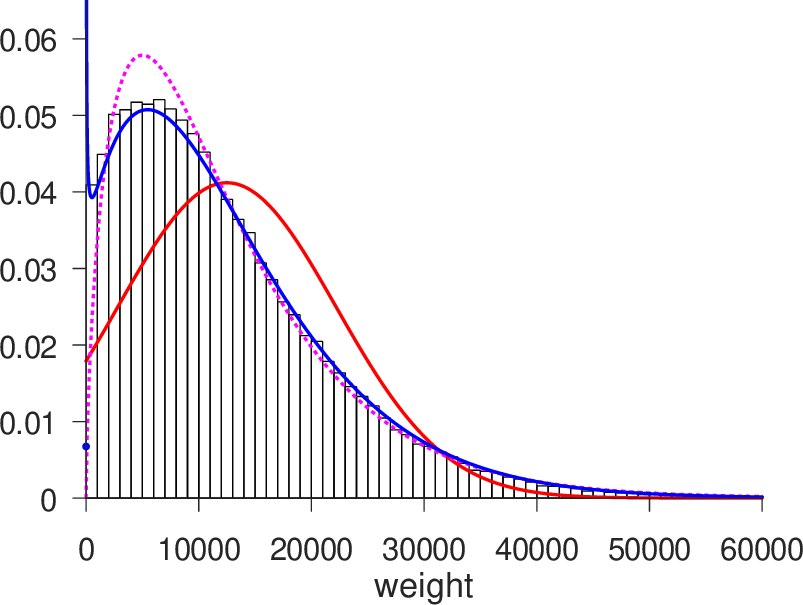}\hspace{2pc} \includegraphics[width=0.4\textwidth]{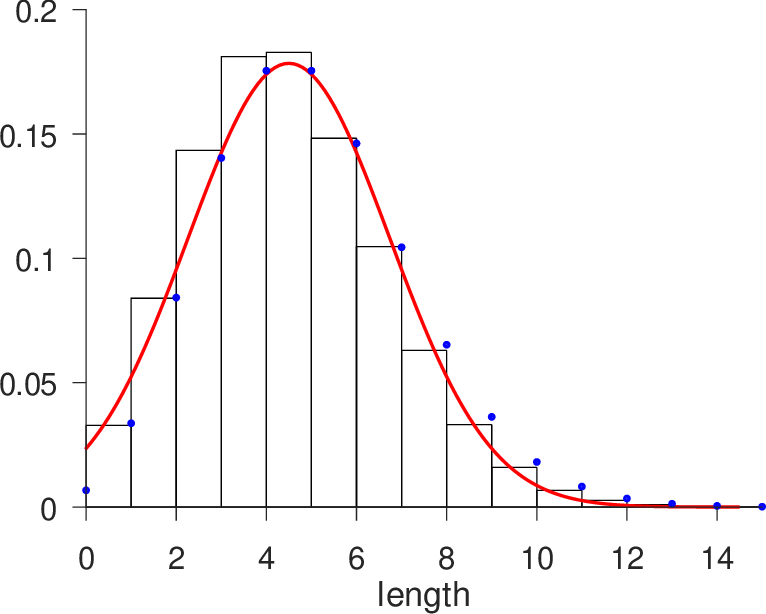}}\\
\subfigure[$\braket{M}=50$, $\braket{N}=10^7$ \myp($\kappa=0.0125$). The atom $\pi_0=\rme^{-\braket{M}}\doteq2\cdot 10^{-22}$ is too small to be visible.]{\includegraphics[width=0.4\textwidth]{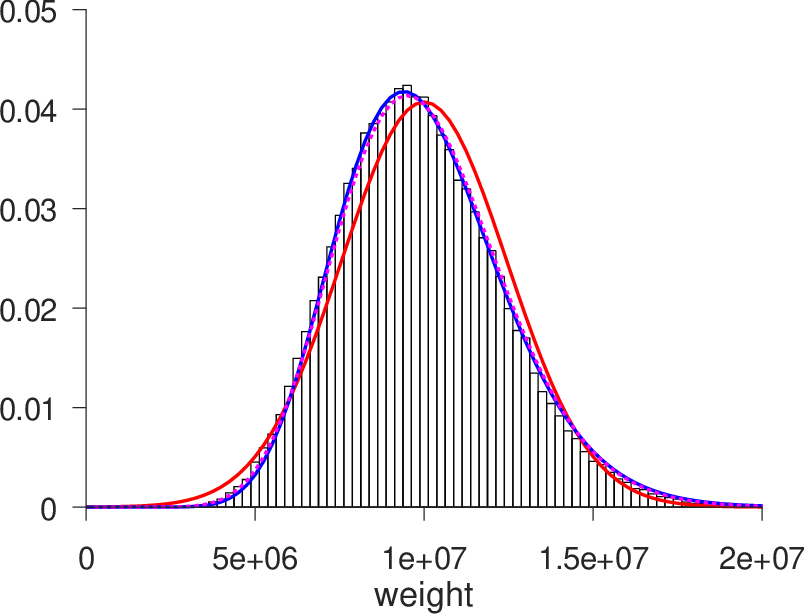}\hspace{2pc} \includegraphics[width=0.4\textwidth]{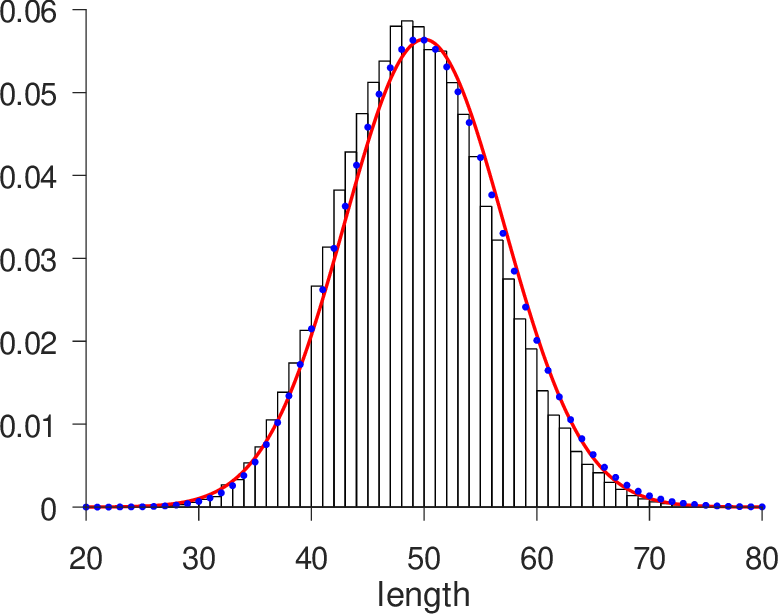}}
\caption{Marginal histograms for the weight $N_\lambda$ (left) and length $M_\lambda$ (right) for random samples (of size $10^5$ each) from the partition space $\check{\varLambda}^q$ ($q=2$),
simulated using a free Boltzmann sampler as set out in Algorithm~\ref{sampler}. Color coding (online version): \emph{blue} designates the limiting distributions under the ``fixed'' regime, that is, compound Poisson-Gamma (left) and Poisson (right); \emph{red} indicates a normal approximation; \emph{magenta} depicts a mean-gamma approximation (see Section ~\ref{sec:4.1}). 
In the black-and-white version, the normal curves on the left are identifiable by a noticeable positive shift.}
\label{fig4}
\end{figure}

The empirical results (with $10^5$ output partitions in both cases) were compared with the theoretical predictions from Theorems \ref{thm1} and \ref{th:CLT}. The marginal histogram plots for $N_\lambda$ and $M_\lambda$ shown in Figure \ref{fig4} depict bell-shaped unimodal empirical distributions with the sample modes noticeably shifted to the left of the calibration hyper-parameters $\braket{N}$ and $\braket{M}$, respectively. The discrepancy between the modes and the means is observed especially well in the weight plots (the more so for smaller $\braket{M}$), which is essentially due to the fact that the underlying gamma and Poisson distributions are right-skewed; for example, the mode of $\mathrm{Gamma}\mypp(\alpha)$ is given by $\max\{\alpha-1,0\}$ whereas the mean is $\alpha$. Another (minor) reason for a negative bias is due to a certain miscalibration, as was pointed out in Section~\ref{sec:6.1.1}. In line with theoretical predictions, this mismatch is vanishing with the growth of the hyper-parameters $\braket{N}$ and $\braket{M}$, together with the improving accuracy of the normal approximations, both for $N_\lambda$ and $M_\lambda$. It is also interesting to note that the  mean-gamma approximation $\mathrm{Gamma}\mypp(\braket{M})$ (see Section \ref{sec:4.1}) nearly perfectly matches the exact compound  Poisson-Gamma distribution for $\braket{M}=50$ (see Figure \ref{fig4}(b), left); for smaller values of $\braket{M}$, this approximates is rather crude, however it still captures well the mode of the Poisson-Gamma distribution and also its right shoulder (see Figure \ref{fig4}(a), left).

A remarkable exception to the unimodality of the plots in Figure \ref{fig4} is the compound Poisson-Gamma plot for the weight $N_\lambda$, with a relatively small value of $\braket{M}=5$ (see Figure \ref{fig4}(a), left), where one can clearly see a singularity at zero (cf.\ Section \ref{sec:4.1}). This theoretical prediction is supported by the empirical results, with an obvious excess of smaller weights. With $\braket{M}=5$ and $\braket{N}=12{,}\myp500$, the local minimum of the theoretical density $g(x)$ defined in   \eqref{eq:g-density} occurs at $x_0\doteq
0.10340$
%0.1033960389$ 
with value\footnote{Note that the asymptotic formula \eqref{eq:pdf-as} gives a pretty accurate approximation $g(x_0)\approx 0.14334$.}
%0.1433356643
$g(x_0)\doteq 0.19632$, 
%0.1963170652
which corresponds to weight $n_0=\lceil x_0/\gamma_0\rceil 
%=0.1963170652\times5000=
=517$. If the density $g(x)$ continued to decay to the left of $x_0$, this would predict the (asymptotic) probability of getting weights smaller than $n_0$ (together with an empty partition) loosely bounded by $x_0\mypp g(x_0)+\pi_0\doteq 0.02704$.
%0.02703635391
But the actual compound Poisson-Gamma probability is higher, $G(x_0)\doteq0.03120$ (see \eqref{eq:mix}).
%0.03119989940.
The excess of ``small'' partitions is reminiscent of a partition interpretation of the Bose--Einstein condensation (see \cite{Vershik2}). As already mentioned in Section \ref{sec:4.1}, this is a truly finite-length phenomenon, which vanishes as $\braket{M}\to\infty$ (cf.\ Figure \ref{fig4}(b), where $\braket{M}=50$).

\begin{figure}[ht!]
\centering
\subfigure[Bivariate frequency plot.]{\raisebox{1.2pc}{\includegraphics[width=0.45\textwidth]{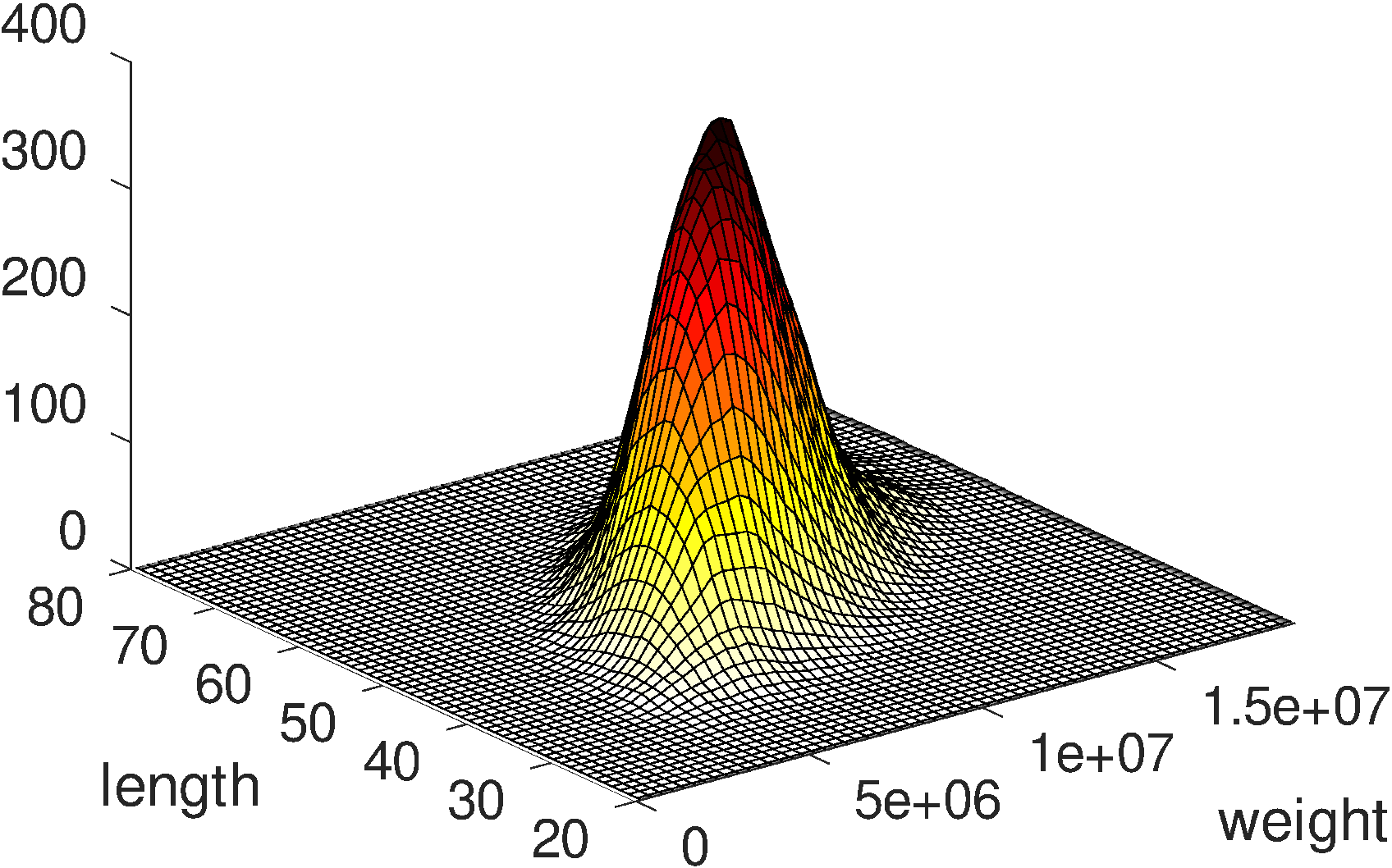}}}\hfill\subfigure[Frequency level  sets.]{\includegraphics[width=0.45\textwidth]{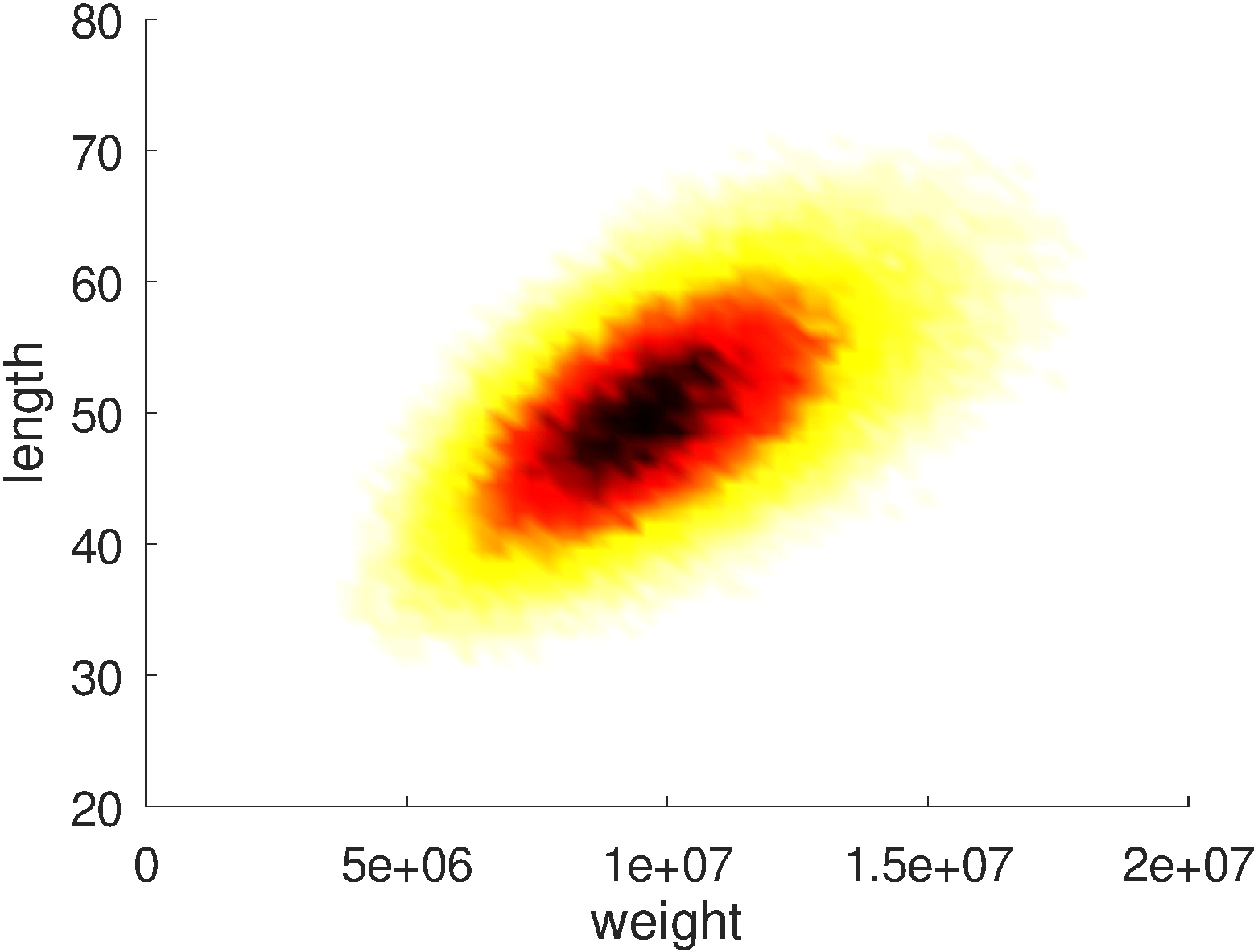}}\\
\caption{Joint sampling distribution of weight $N_\lambda$ and length $M_\lambda$ for $q=2$, $\braket{M}=50$ and $\braket{N}=10^7$ (cf.\ marginal plots in Figure \ref{fig4}(b)). A random Boltzmann sample of partitions $\lambda\in\check{\varLambda}^q$ (of size $10^5$) was simulated using Algorithm~\ref{sampler}.\label{fig5}} 
\end{figure}

The bivariate plots in Figure \ref{fig5} (for $\braket{M}=50$) appear to be approximately consistent with the asymptotically predicted (standardized) confidence ellipses of the form
\begin{equation}\label{eq:ellipse}
\mathcal{L}_\alpha=\{\bm{x}\in\RR^2\colon \bm{x}\myp \bm{K}_q^{-1}\bm{x}^\top\le \chi^2_2(1-\alpha)\},
\end{equation}
where $\bm{K}_q^{-1}$ is the inverse covariance matrix \eqref{eq:K}, and $\chi^2_2(1-\alpha)$ is the quantile of the chi-squared distribution with two degrees of freedom,  corresponding to confidence probability $1-\alpha$. The latter distribution simplifies to an exponential distribution with mean $2$, hence $\chi^2_2(1-\alpha)=2\log\left(1/\alpha\right)$. According to Theorem \ref{th:CLT}, a sample point $(N_\lambda^*,M_\lambda^*)$ belongs to the ellipse \eqref{eq:ellipse} approximately with probability $1-\alpha$, where the standardized values $N_\lambda^*,M_\lambda^*$ are defined in \eqref{eq:MN-norm}. The inverse of $\bm{K}_q$ is easily computed,
$$
\bm{K}_q^{-1}=\frac{q+1}{q}\begin{pmatrix}
1 &\displaystyle \frac{-1}{\sqrt{q+1}}\\
\displaystyle \frac{-1}{\sqrt{q+1}} & 1
\end{pmatrix}\!,
$$
and the confidence ellipse \eqref{eq:ellipse} specializes as follows,
$$
N_\lambda^{*2}-\frac{2\myp N_\lambda^*M_\lambda^*}{\sqrt{q+1}}  +M_\lambda^{*2}\le \frac{2\myp q}{q+1}\log\frac{1}{\alpha}\myp.
$$

A closer inspection of the level plots in Figure \ref{fig5}(b) reveals some elongation of the frequency level sets towards bigger values of the weight $N_\lambda$, thus indicating a bit of discrepancy with the predicted elliptical shape. This observation is confirmed by comparison of the marginal histograms of $N_\lambda$ and $M_\lambda$ in Figure \ref{fig4}, where the latter is reasonably symmetric while the former is noticeably skewed to the right. A heuristic explanation of such an effect may be based on noticing from formulas \eqref{eq:NM-constr} that, while the length $M_\lambda$ is built by summation of multiplicities $\nu_\ell$, the weight $N_\lambda$ involves size-biased terms $\ell\myp\nu_\ell$, which pinpoint skewing the distribution to the right.

A well-localized unimodal nature of the distributions behind the outputs $N_\lambda$ and $M_\lambda$ is sometimes referred to as a \emph{bumpy type} \cite{Duchon}, characterized by an asymptotically large signal-to-noise ratio (SNR) in response to a large signal,\footnote{The ``bumpy'' property is especially helpful for sampling with rejection designed to achieve certain targets for weight and length, due to a guaranteed asymptotically fast delivery of the output (see Section~\ref{sec:6.2}).}
\begin{equation}\label{eq:SNR}
\mathrm{SNR}(X):=\frac{[\EE(X)]^2}{\Var(X)}\to\infty,\qquad \EE(X)\to\infty. 
\end{equation}
Here, the notation $X$ designates a random output in question (such as the size), which has a large expected value.  Following definition \eqref{eq:SNR} and applying Theorem \ref{th:VarCov},
we readily get
$$
\mathrm{SNR}(N_\lambda)\sim\frac{\braket{N}^2}{(q+1)\braket{N}^2\!/\!\braket{M}}=\frac{\braket{M}}{q+1},\qquad \mathrm{SNR}(M_\lambda)\sim\frac{\braket{M}^2}{\braket{M}}=\braket{M} ,
$$
so that under Assumption \ref{as2} (slow growth of $\braket{M}$) each of the marginal SNRs tends to infinity.

In the multivariate case, the SNR is usually defined in the literature as a scalar value,
\begin{equation}\label{eq:SNR-m}
\mathrm{SNR}(\bm{X}):=\bm{\mu}\mypp \bm{K}^{-1}\myn\bm{\mu}^\top,\qquad \bm{\mu}:=\EE(\bm{X}),\ \ 
\bm{K}:=\Cov(\bm{X},\bm{X})
\end{equation}
(see, e.g., \cite[Eq.\,(1), p.\,511]{Polcari}). Again using Theorem \ref{th:VarCov}, we find the asymptotic inverse of the covariance matrix,
$$
\bm{K}^{-1}(\bm{z})\sim \frac{1}{q\myn\braket{N}^2}\begin{pmatrix}
\braket{M} & -\braket{N}\\
-\braket{N} & \displaystyle\frac{(q+1)\mynn\braket{N}^2}{\braket{M}}
\end{pmatrix}\!,
$$
and hence
$$
\mathrm{SNR}(N_\lambda,M_\lambda)\sim\frac{(\langle N\rangle,\langle M\rangle )}{q\myn\braket{N}^2}\begin{pmatrix}
\braket{M} & -\braket{N}\\
-\braket{N} & \displaystyle\frac{(q+1)\mynn\braket{N}^2}{\braket{M}}
\end{pmatrix}\begin{pmatrix}\braket{N}\\[.4pc]\braket{M}\end{pmatrix}=\braket{M}\to\infty.
$$
However, a scalar definition \eqref{eq:SNR-m} is not entirely satisfactory\,---\,for instance, it cannot detect whether the individual  components of $\bm{X}$ are of bumpy type. As an alternative, we propose the following matrix definition, 
\begin{equation}\label{eq:SNR-multi}
\mathbf{SNR}(\bm{X}): = (\bm{\mu}\myp \bm{K}^{-1/2})^{\mynn\top}(\bm{\mu}\myp \bm{K}^{-1/2}) = \bm{K}^{-1/2}\myp(\bm{\mu}^{\myn\top}\mynn  \bm{\mu})\mypp \bm{K}^{-1/2},
\end{equation}
where $\bm{K}^{-1/2}$ is the (unique) positive definite square root of the inverse covariance matrix $\bm{K}^{-1}$, that is, $\bm{K}^{-1/2}\bm{K}^{-1/2}=\bm{K}^{-1}$ \cite[Theorem 7.2.6, p.\,439]{Horn}. In our case, the exact expression for  $\bm{K}^{-1/2}(\bm{z})$ is cumbersome (although available), but its asymptotic version under Assumption~\ref{as2} simplifies to
$$
\bm{K}^{-1/2}(\bm{z})\sim \frac{\sqrt{\braket{M}}}{\sqrt{q\mypp(q+1)}\braket{N}}\begin{pmatrix}
\sqrt{q}& -1\\[.4pc]
-1 & \displaystyle \frac{(q+1)\braket{N}}{\braket{M}}
\end{pmatrix}\!.
$$
Hence, after straightforward 
calculations we obtain from \eqref{eq:SNR-multi}
$$
\mathbf{SNR}(N_\lambda,M_\lambda) 
\sim \frac{\braket{M}}{q+1}\begin{pmatrix}
1&\sqrt{q}\\[.4pc]
\sqrt{q}& q
\end{pmatrix}\!,
$$
which tends to infinity in matrix sense.

\subsection{Rejection sampler}\label{sec:6.2}

The idea of a rejection sampler discussed in this section is to run a free sampler in a loop until a prescribed target is met. For example, if the target is set in terms of the required partition length as $M_\lambda=m$, with a fixed $m\in\NN$, then the free sampler is iterated until a partition of exact length $m$ is obtained. Likewise, if the target is set for the partition weight, $N_\lambda = n$, with a fixed $n\in\NN$, then the free sampling loop runs until a partition of exact weight $n$ is found. These two targets can be imposed simultaneously, $M_\lambda=m$ and $N_\lambda = n$; here, it is natural to design the rejection algorithm as a juxtaposition of two loops of the free sampler, such that the internal loop runs until the length target is met and, every time this happens, the resulting partition is checked with regards to the weight target and is either rejected, whereby the internal loop starts afresh, or accepted, in which case the algorithm stops. 

A more general approach, leading to the so-called approximate algorithms, is to relax the exact targets to suitable intervals (brackets). In Algorithm \ref{Resampler} presented below in Section \ref{sec:6.2.3}, we give an example of a Boltzmann rejection sampler aiming to sample a partition $\lambda\in\check{\varLambda}^q$ satisfying two conditions,  $M_\lambda = m$ and $n\le N_\lambda\le \theta\kern0.03pc n$, for some predefined tolerance factor $\theta\ge 1$. Of course, if $\theta=1$, the approximate algorithm is reduced to an exact one. An approximate target can also be considered for length, $m\le M_\lambda\le \theta' m$, and furthermore, such approximate targets can be combined if desired. 

Before delineating  Algorithm \ref{Resampler}, we discuss a few implementation issues arising therein. 

\subsubsection{Parameter calibration and truncation of parts}\label{sec:6.2.1} 

To start with, the choice of the calibrating parameters $z_1$ and $z_2$ now follows a slightly different logic as compared to the case of a free sampler. If only one target is set, such as $M_\lambda=m$, then according to formula \eqref{eq:uniM} the conditional Boltzmann measure $\QQ_{\bm{z}}(\cdot\,|\mypp\check{\varLambda}^q(\sbullet\myp,m))$ does not depend on the parameter $z_2$, whereas the other parameter, $z_1$, can be chosen with a view on a desired expected value $\braket{N}$ of the output weight $N_\lambda$, as was the case with the free sampler. Nonetheless, in order to maximize efficiency of the sampling algorithm, the ``free''  parameter $z_2$ should still be chosen in line with the mean conditions \eqref{braket} subject to the specification  $\braket{M}=m$, thus aiming to benefit from the bumpy nature of the distribution $\QQ_{\bm{z}}(\cdot\,|\mypp\check{\varLambda}^q(\sbullet\myp,m))$ (cf.\  Section~\ref{sec:6.1.3}). Similarly, if the condition $N_\lambda=n$ is targeted then the conditional Boltzmann measure $\QQ_{\bm{z}}(\cdot\,|\mypp\check{\varLambda}^q(n,\myn\sbullet))$ does not depend on $z_1$, however both parameters $z_1$ and $z_2$ are chosen to match the mean conditions \eqref{braket} with $\braket{N}=n$. Furthermore, if both targets are imposed, $N_\lambda=n$ and $M_\lambda=m$, then by Lemma~\ref{uniformity} the measure $\QQ_{\bm{z}}(\cdot\,|\myp\check{\varLambda}^q(n,m))$ is reduced to the uniform distribution on $\check{\varLambda}^q(n,m)$ regardless of the parameters $z_1$ and $z_2$. But, as explained above, it is worthwhile to calibrate them in line with the mean conditions \eqref{braket}. With these targets in mind, in what follows (including Algorithm \ref{Resampler}) we use the specific hyper-parameters $\braket{N}=n$ and $\braket{M}=m$. Moreover, since avoiding bias is no longer a concern (unlike in the case of the free sampler), using the leading-term expressions \eqref{eq:z12-crude} is a perfectly satisfactory option. Of course, the same recipe applies to the approximate sampling.

Another issue to be addressed is whether any truncation of the source of parts is needed in the algorithm (cf.\ Section \ref{sec:6.1.2}). As long as the weight target is involved, $N_\lambda = n$, one can use a natural majorant $L^*=n$ as a call parameter $L$ in Algorithm \ref{sampler} (see Section~\ref{sec:6.1.3}), which clearly causes no loss in confidence (i.e., $\delta=0$, see \eqref{eq:<L}). The same is true in the case of an  approximate target, $N_\lambda\in[n,\theta\kern0.04pc n]$, by choosing the majorant $L^*=\theta\kern0.06pc n$. However, if only the length target is in place 
then no such majorant is available and, therefore, confidence considerations must be deployed, as discussed in Section~\ref{sec:6.1.2}.
    
\subsubsection{Censoring of iterations}\label{sec:6.2.2}

As was pointed out in the Introduction, in contrast to the special value $q=1$, in the general case with $q\ge 2$ and arbitrary $m\ge1$ there is no guarantee for a given natural number $n\in\NN$ to be partitionable into a required number $m$ of $q$-power parts (unless it is covered by a solution of the Waring problem \cite{Waring}). The requirement that the parts be distinct adds to the complexity of the question. Therefore, the space $\check{\varLambda}^q(n,m)$ may well be empty and, not knowing this in advance, the task of sampling from such a space may be ``mission impossible''. 

To be specific,  consider sampling  subject to the joint targets $M_\lambda=m$ and $N_\lambda=n$. As already indicated at the beginning of Section \ref{sec:6}, a general design of the corresponding sampling algorithm is based on the two nested loops according to the separated targets, internal for $M_\lambda$ and external for $N_\lambda$. While the internal loop is certain to produce a random partition in $\check{\varLambda}^q$ with exactly $m$ parts (see more about this below), the external loop contains an inherent loose end due to its potential failure to satisfy the weight requirement, simply because there may be no such partitions. That is to say, although the successful completion of the external loop will take some time by repeatedly querying the internal loop, it may be pointless to wait for too long as there is no certainty if that is not wasteful.

We propose to resolve this difficulty by an appropriate ``censoring'' of processing time, that is, by setting a limit $t^*$ on the waiting time, chosen to ensure sufficiently high confidence in the algorithm's ability to deliver a successful completion within the allocated time limit, of course provided that the task is feasible (i.e., required partitions exist). More precisely, given a confidence tolerance (significance level) $\delta \in (0,1)$, the threshold $t^*$ should be such that, if the target space is non-empty, the probability that the algorithm would not succeed by time $t^*$ does not exceed $\delta$. 
Such an approach is akin to statistical testing of the null hypothesis ``the target is non-empty''. Under this hypothesis, the test (implemented as a sampling algorithm) fails to produce a required partition (Type I error) with rate bounded by $\delta$.

The choice of threshold $t^*$ is determined by the sampling task at hand. A few examples of interest are as follows (assuming the number of parts $m$ to be fixed):
\begin{itemize}
   \item[(T1)] \emph{Exact sampling:} For a given $n$, attempt to sample $\lambda\in\check{\varLambda}^q(n,m)$ (in other words, check partitionability of $n$).
    \item[(T2)] \emph{Multiple exact sampling:} For a given $n$ and some $\theta>1$, attempt to sample $\lambda\in\check{\varLambda}^q(k,m)$ for each integer $k$ in the range $k\in [n,\theta\kern0.02pc n]$ (that is, test partitionability of each of these numbers).
\item[(T3)] \emph{Approximate sampling:} Same as in task (T2) but attempting to sample $\lambda\in\bigcup_{n\le k\le \theta\kern0.02pc n} \check{\varLambda}^q(k,m)$ (that is, to find at least one partitionable number in the said range).
\end{itemize}

\begin{remark}
If required, tasks (T2) and (T3) could be modified to a two-sided version, such as $\theta^{-1}n\le k\le \theta\kern0.03pc n$ or, more generally, $\theta_1 n\le k\le \theta_2\myp n 
$, with some $0<\theta_1<1< \theta_2$.
\end{remark}

First, let us look at how the internal loop performs towards its task of sampling a partition $\lambda\in \check{\varLambda}^q_{L}(\sbullet\myp,m)=\check{\varLambda}^q(\sbullet\myp,m)\cap \{\lambda\colon\lambda_{\rm max}\le L\}$ (i.e., the source of parts is truncated to $\{\ell\le L\}$, see Section \ref{sec:2.5}). According to Theorem \ref{thm1<L}(a), if $\langle M\rangle>0$ is fixed and $L\sim\theta\myp\langle N\rangle$ then the distribution of $M_\lambda$ conditional on $\lambda_{\rm  max}\le L$ converges to a Poisson law with mean $\mu_\theta=\braket{M} G_{1/q}(a_\theta)$, where $a_\theta=\theta\myp\langle M\rangle\myn/q$.  
A stronger result concerns a Poisson approximation (cf.\ Remark \ref{rm:Poisson}) under a suitable metric, such as the total variation distance between distributions. Namely, the said conditional distribution of $M_\lambda$ can be replaced by a Poisson distribution with mean
\begin{equation}\label{eq:TV-mean}
\EE_{\bm{z}}(M_\lambda\,|\,\lambda_{\rm{max}}\le L)=\sum_{\ell\le L} \frac{z_1^{\ell}z_2}{1+z_1^{\ell}z_2}\sim \mu_\theta,
\end{equation}
with the error in total variation bounded by (see \cite[Theorem 1, p.\,474]{Barbour-Hall})
\begin{equation}\label{eq:TV-error}
\frac{1}{\mu_\theta}\sum_{\ell\le L}\left( \frac{z_1^{\ell}z_2}{1+z_1^{\ell}z_2}\right)^{\mynn2}=O(z_2)=O(\kappa^{1/q})\to 0,
\end{equation}
according to \eqref{eq:sqsum}. The advantage of such an  approximation is that it holds true even if $\braket{M}$ is slowly growing, whereby the error estimate \eqref{eq:TV-error} is still valid. 

Returning to the analysis of the internal loop, 
with $\braket{N}=n$ and $\braket{M}=m$ 
we have 
\begin{equation}\label{eq:Qm}
\QQ_{\bm{z}}(M_\lambda=m\,|\,\lambda_{\rm max}\le \theta\kern0.03pc n)\sim \frac{\mu_\theta^m\,\rme^{-\mu_\theta}}{m!} ,\qquad \mu_\theta=m\,G_{1/q}(\theta\myp m/q).
\end{equation}
Hence, the probability \eqref{eq:Qm} is bounded away from zero, and since the attempts within the internal loop are independent, the number of internal runs until success has geometric distribution, with the expected time to success being bounded by a constant (depending on $m$). If $m\to\infty$ (with $\kappa=m^{q+1}\mynn/n=o(1)$), then $\mu_\theta\sim m$ and, according to \eqref{eq:TV-mean} and \eqref{eq:TV-error}, we have
\begin{equation}\label{eq:Qm'}
\QQ_{\bm{z}}(M_\lambda=m\,|\,\lambda_{\rm max}\le \theta\kern0.03pc n)=\frac{\mu_\theta^m\,\rme^{-\mu_\theta}}{m!}+O(\kappa^{1/q})\sim \frac{m^m\,\rme^{-m}}{m!}\sim \frac{1}{\sqrt{2\pi\myp m}},
\end{equation}
with the help of the Stirling formula. In turn, formula \eqref{eq:Qm'}  implies that the expected number of runs of the internal loop is of order $O\bigl(\sqrt{m}\myp\bigr)$, which is not particularly large for practical implementation.

Let us now turn to tasks (T1)\,--\,(T3) and focus on probabilistic analysis of runs of the external loop, taking for \strut{}granted that $M_\lambda=m$ and, automatically, $\lambda_{\rm max}\le L^*$, where $L^*=\theta\kern0.03pc n$ is a majorant in $\bigcup_{n\le k\le \theta n} \check{\varLambda}^q(k,m)$. As stipulated above, we use the hyper-parameters $\braket{N}=n$ and $\braket{M}=m$, and \strut{}specify the calibrating parameters $z_1$ and $z_2$ according to the leading-term expressions \eqref{eq:z12-crude}.
Denote for short
\begin{equation}\label{eq:q*k'}
p_k^* := \QQ_{\bm{z}}(N_\lambda=k\,|\mypp M_\lambda =m, \myp\lambda_{\rm max}\le \theta\kern0.03pc n),\qquad n\le k \le \theta\kern0.03pc n. 
\end{equation}
for simplicity omitting reference to $n$ and $m$. Of course, if $\check{\varLambda}^q(k,m)=\varnothing$ then $p_k^*=0$. The probability \eqref{eq:q*k'} can be interpreted as the probability of successfully sampling a partition of weight $k$ in a single run of the external loop. Due to independence of successive runs, the number $T_k$ of attempts until success for a targeted weight value $k$ follows geometric distribution,
\begin{equation}\label{eq:Q*t'0}
\QQ_{\bm{z}}(T_k>t\,|\mypp M_\lambda=m, \lambda_{\rm max}\le n)= \left(1-p_k^*\right)^{t},\qquad t\in\NN_0.
\end{equation} 
This includes the case $p_k^*=0$, whereby $T_k=\infty$ ($\QQ_{\bm{z}}$-a.s.). Also note that $(T_k)$ are mutually independent for different $k$. 

\begin{itemize}
\item[(T1)] 
Here, $\theta=1$, so $L^*=n$. Suppose that  $\check{\varLambda}^q(n,m)\ne \varnothing$
and let $\lambda_* \in \check{\varLambda}^q(n,m)$, so that $N_{\lambda_*}=n$ and $M_{\lambda_*}=m$. Then we can write
\begin{equation}\label{eq:q*}
\QQ_{\bm{z}}(\lambda_*\mypp|\mypp M_\lambda =m, \myp\lambda_{\rm max}\le n)=\frac{\QQ_z(\lambda_*\myp|\mypp \lambda_{\rm max}\le n)}{\QQ_z(M_\lambda =m\,|\,\lambda_{\rm max}\le n)} .
\end{equation}
Starting with the  numerator, we have 
\begin{equation*}
\QQ_z(\lambda_*\myp|\mypp \lambda_{\rm max}\le n)=z_1^nz_2^m\prod_{\ell\le n}\frac{1}{1+z_1^{\ell}\myn z_2}.
\end{equation*}

Substituting formulas \eqref{eq:z12-crude}, we obtain 
\begin{equation*}
z_1^nz_2^m\sim \rme^{-m/q}\,\frac{q^{2m} (m/q)^{m+m/q}}{\bigl(\Gamma(1/q)\bigr)^m\mypp n^{m/q}},
\end{equation*}
while Lemma \ref{lm:sum2+}, with the help of the asymptotic relation \eqref{eq:z2/gamma}, yields
\begin{equation*}
\prod_{\ell\le n}\frac{1}{1+z_1^{\ell}\myn z_2}
\sim \exp\bigl(-m\,G_{1/q}(m/q)\bigr).
\end{equation*}

Furthermore, by Theorem \ref{thm1<L}(a) the denominator in \eqref{eq:q*} is asymptotically given by
\begin{equation}
\QQ_z(M_\lambda =m\,|\,\lambda_{\rm max}\le n)\sim \frac{m^m\myp \bigl(G_{1/q}(m/q)\bigr)^m \exp\bigl(-m\,G_{1/q}(m/q)\bigr)}{m!}.
\end{equation}
Hence, returning to \eqref{eq:q*} we obtain
\begin{equation}\label{eq:q>}
p_n^*\ge \QQ_{\bm{z}}(\lambda_*\mypp|\mypp M_\lambda=m, \lambda_{\rm max}\le n)\sim\frac{1}{C_1(m,q)\,n^{m/q}},
\end{equation}
where
\begin{equation}\label{eq:C1}
C_1(m,q):=\frac{1}{m!}
\left(\frac{\rme^{1/q}\,\Gamma(1/q)\,G_{1/q}(m/q)}{q^{1-1/q}\myp m^{1/q}}\right)^{\mynn m}\!.
\end{equation}
Combining \eqref{eq:Q*t'0} and \eqref{eq:q>}, we have, asymptotically, \begin{equation}\label{eq:Q*t''}
\QQ_{\bm{z}}(T_n>t\,|\mypp M_\lambda=m, \lambda_{\rm max} \le n)= \left(1-p_n^*\right)^{t}
\le\left(1-\frac{1+o(1)}{C_1(m,q)\,n^{m/q}}\right)^{\myn t}\!.
\end{equation} 
Thus, for the probability \eqref{eq:Q*t''} not to exceed a predefined (small) confidence tolerance $\delta>0$, it suffices to choose the threshold $t=t_n^*$ as follows,
\begin{equation}\label{eq:t*new'}
t_n^*\simeq\frac{\displaystyle \log\delta}
{\displaystyle\log\left(1-\frac{1+o(1)}{C_1(m,q)\,n^{m/q}}\right)}\sim C_1(m,q)\,n^{m/q} \log\frac{1}{\delta}\myp.
\end{equation}

\begin{remark}\label{rm:better1}
The bound \eqref{eq:t*new'} is very conservative due to a crude estimate \eqref{eq:q>} leveraging just one instance $\lambda_*\in \check{\varLambda}^q(n,m)$. If more information was available about the size of the space $\check{\varLambda}^q(n,m)$, the bound \eqref{eq:t*new'} could be reduced accordingly. For example, if $q=1$ then it is known that $\#\check{\varLambda}(n,m)\sim n^{m-1}(m!\,(m-1)!)^{-1}$ (see \eqref{eq:q=1-size}). Hence, formula \eqref{eq:t*new'} for the time threshold in the case $q=1$ is replaced by a much better and more realistic estimate,
\begin{equation}\label{eq:t*1corr}
\tilde{t}_n^*\simeq \frac{C_1(m,1)}{\#\check{\varLambda}(n,m)}\,n^{m} \log\frac{1}{\delta}\sim\frac{\rme^{m}\left(1-\rme^{-m}\right)^m (m-1)!}{m^m}\,n\myp \log\frac{1}{\delta}\myp.
\end{equation}
Likewise, for $q=2$ we get, using \eqref{eq:Lq=2} (with $C_m=1$),
\begin{equation}\label{eq:t*2corr}
\tilde{t}_n^*\simeq \frac{C_1(m,2)}{\#\check{\varLambda}^2(n,m)}\,n^{m/2} \log\frac{1}{\delta}\sim\left(\frac{2\mypp\rme }{m}\right)^{\!m/2}\Gamma(m/2)\,\bigl(G_{1/2}(m/2)\bigr)^m\,n\myp \log\frac{1}{\delta}\myp,
\end{equation}
which again grows only linearly in $n$. 
\end{remark}

\item[(T2)] For the multiple exact sampling in the range $k\in[n,\theta\kern0.02pc n]$,
we can just repeat the procedure in task (T1) for each $k$ in that range. 
According to \eqref{eq:Q*t'0}, 
the probability that the number of attempts until success, $T_k$, exceeds a threshold $t_k^*$ is given by (cf.\  \eqref{eq:Q*t'0})
$$
\QQ_{\bm{z}}\bigl(T_k>t_k^*\,|\mypp M_\lambda=m, \lambda_{\rm max} \le k\bigr)=(1-p_k^*)^{t_k^*},\qquad n\le k\le \theta\kern0.03pc n.
$$
Taking into account only partitionable numbers $k\in[n,\theta\kern0.02pc n]$ (i.e., such that $\check{\varLambda}^q(k,m)\ne \varnothing$ and, therefore, $p_k^*>0$)
and using a Bonferroni-type inequality, the probability of Type I error for task (T2) (i.e., that the external loop fails for at least one such $k$) is bounded as follows,
\begin{equation}
\label{eq:Q*n}\QQ_{\bm{z}}\Biggl(\,\bigcup_{k=n}^{\lfloor \theta n\rfloor}\{t_k^*<T_k<\infty\}\,\Bigl|\mypp M_\lambda=m, \lambda_{\rm max} \le \theta\kern0.03pc n\Biggr)
\le \sum_{k\colon\mynn p_k^*>0}(1-p_k^*)^{t_k^*}.
\end{equation}

Motivated by formula  \eqref{eq:t*new'}, we can look for the time limits $t_k^*$ in the form 
\begin{equation}\label{eq:tk*}
t_k^*\sim c\mypp k^{m/q}.
\end{equation}
Then from \eqref{eq:Q*n} using \eqref{eq:q>} we get, asymptotically,
\begin{align*}
\sum_{k\colon\mynn p_k^*>0}(1-p_k^*)^{t_k^*}&\lesssim \sum_{k=n}^{\lfloor \theta n\rfloor}\left(1-\frac{1}{C_1(m,q)\,k^{m/q}}\right)^{c\myp k^{m/q}.}\\
&\le (\theta-1)\, n\mypp \exp\left(-\frac{c}{C_1(m,q)}\right)\le \delta.
\end{align*}
Solving this inequality for $c$ and returning to \eqref{eq:tk*} ultimately yields
\begin{equation}\label{eq:t*2}
\displaystyle t_k^*\simeq C_1(m,q)\,k^{m/q} \log\frac{ (\theta-1)\myp n}{\delta},\qquad n\le k\le \theta\kern0,03pc n.
\end{equation}
Thus, the time bound \eqref{eq:t*2} follows the same formula as in a single test (cf.\ \eqref{eq:t*new'}) but with a Bonferroni-type adjustment of the significance level in order to offset the multiple testing.

\begin{remark}\label{rm:better2} The same comment as in Remark \ref{rm:better1} applies to task (T2). Specifically, for $q=1$ and $q=2$ the improved formulas for the thresholds $t^*_k$ are given, respectively, by
\begin{align}\label{eq:t*1corrT2}
\tilde{t}_k^*&\simeq \frac{\rme^{m}\left(1-\rme^{-m}\right)^m (m-1)!}{m^m}\,k\myp \log\frac{ (\theta-1)\myp n}{\delta},\\
\label{eq:t*2corrT2}
\tilde{t}_k^*&\simeq \left(\frac{2\mypp\rme }{\pi m}\right)^{\!m/2}\Gamma(m/2)\,\biggl(\int_{0}^{m/2} \!u^{-1/2}\mypp\rme^{-u}\,\rmd{u}\biggr)^m\,k\myp \log\frac{ (\theta-1)\myp n}{\delta}.
\end{align}
\end{remark}

\item[(T3)] 
In a single attempt, the external loop gets a partition $\lambda\in \bigcup_{n\le k\le\theta\kern0.02pc n}\check{\varLambda}^q(k,m)$ with probability 
\begin{align}
\QQ_{\bm{z}}\bigl(n\le N_\lambda\le \theta\kern0.02pc n\:|\mypp M_\lambda=m,\lambda_{\rm max}\le \theta\kern0.02pc n)&=\sum_{k=n}^{\lfloor \theta n\rfloor} p_k^*\to G_{1/q}^{\star m}(a_\theta\myp|\mypp a_\theta)-G_{1/q}^{\star m}(a_1|\mypp a_\theta).
\label{eq:sum-q*}
\end{align}
The limit \eqref{eq:sum-q*} is due to Theorem \ref{thm1<L}(b), where $a_\theta=\theta\myp m/q$ and $G_{1/q}^{\star m}(x\mypp|\mypp a_\theta)$ stands for the $m$-convolution of the $a_\theta$-truncated gamma distribution $G_{1/q}(x\mypp|\mypp a_\theta)$. 

To circumvent the trouble of computing such a convolution, observe that in the range $0\le x\le a$ the distribution function $G^{\star m}_{\alpha}(x\mypp|\mypp a)$ coincides with $G_{m\myp\alpha}(x)$ up to the normalization factor $G_{\alpha}(a)^m$. This is obvious for $m=1$, and the general case can be seen by induction over $m$ using the convolution formula. Indeed, denoting the corresponding densities by  $g^{\star m}_{\alpha}(x\mypp|\mypp a)$ and $g_{m\alpha}(x)$, respectively, we have by definition $g_{\alpha}(x\mypp|\mypp a)=g_\alpha(x)/G_\alpha(a)$ ($0\le x\le a$), and the induction step is carried out as follows, 
\begin{align*}
g^{\star m}_{\alpha}(x\mypp|\mypp a)&=\int_0^x\! g^{\star(m-1)}_\alpha(u\mypp|\mypp a)\,g_{\alpha}(x-u\mypp|\mypp a)\,\rmd{u}\\
&=\frac{1}{G_\alpha(a)^m}\int_0^x\! g_{(m-1)\myp\alpha}(u)\,g_{\alpha}(x-u)\,\rmd{u}=\frac{g_{m\myp\alpha}(x)}{\bigl(G_\alpha(a)\bigr)^m},\qquad 0\le x\le a,
\end{align*}
due to the convolution property of the gamma distribution. 
Thus, formula \eqref{eq:sum-q*} simplifies to 
\begin{equation}
\label{eq:sum-q*1}
\sum_{k=n}^{\lfloor \theta n\rfloor} p_k^*\to \frac{G_{m/q}(\theta\myp m/q)-G_{m/q}(m/q)}{{\bigl(G_{1/q}(\theta\myp m/q)\bigr)^m}}=:C_3(m,q,\theta).
\end{equation}

Now, since individual runs of the external loop are independent,
the probability that the number of attempts until success, $T$, exceeds a threshold $t$ is given by (cf.\  \eqref{eq:Q*t'0})
\begin{equation}\label{eq:Q*t''1}
\QQ_{\bm{z}}(T>t\,|\mypp M_\lambda=m, \lambda_{\rm max}\le \theta\kern0.02pc n)=\Biggl( 1-\sum_{k=n}^{\lfloor \theta n\rfloor} p_k^*\Biggr)^{\!t}\simeq \bigl(1-C_3(m,q,\theta)\bigr)^t,
\end{equation} 
on account of \eqref{eq:sum-q*1}. Hence, in order that this probability be bounded by a confidence tolerance 
$\delta>0$, we may choose the threshold $t=t^*$ as follows, 
\begin{equation}\label{eq:omni-t*'}
t^*\simeq\frac{\log \delta}{\log \left(1-C_3(m,q,\theta)\right)}. 
\end{equation}

\begin{remark}
According to formula \eqref{eq:omni-t*'}, the threshold $t^*$ does not depend on $n$. It is of interest to look at how it depends on the growth of the power $q$. To this end, by a direct analysis of the gamma distribution (see \eqref{eq:gamma-cdf}) one can verify that
$$
G_\alpha(\theta\myp\alpha)=(\theta\myp\alpha)^\alpha\bigl(1+O(\alpha)\bigr)\qquad (\alpha\to0^+).
$$
Hence, from formula \eqref{eq:sum-q*1} we get 
$$
C_3(m,q,\theta)\sim 1-\theta^{-m/q}\sim\frac{m\log\theta}{q}\qquad (q\to\infty),
$$
and then \eqref{eq:omni-t*'} gives 
$$
t^*\simeq\frac{q\mypp\log\mynn (1/\delta)}{m\myp \log \theta}. 
$$
\end{remark}

\end{itemize}

\subsubsection{Rejection sampling algorithm}\label{sec:6.2.3}

A stylized example of rejection sampler is presented below in pseudocode as Algorithm \ref{Resampler}. It is set out in a flexible way so as to be usable in exact and approximate sampling alike, as determined by the tasks (T1)--(T3) described in Section \ref{sec:6.2.2}. In particular, the range parameter $\theta$ is allowed to take the value $\theta=1$, in which case the algorithm would work towards the exact sampling task (T1) (i.e., with a specific weight target $N_\lambda=n$). As explained in Section \ref{sec:6.2.1}, the hyper-parameters are adapted to the desired targets, $\braket{N}=n$ and $\braket{M}=m$, and the calibrating parameters $z_1$ and $z_2$ are set according to the simplified expressions \eqref{eq:z12-crude}. A predefined time bound $t^*$ for the external loop is selected according to the task at hand, as discussed in Section \ref{sec:6.2.2}, and on account of the required confidence probability $1-\delta$.

As briefly indicated at the start of Section \ref{sec:6.2}, Algorithm \ref{Resampler} comprises an external loop that iterates the free sampler in an internal loop (i.e., Algorithm \ref{sampler}, with the majorant $L=\theta\kern0.02pc n$), which delivers, in each productive cycle, a partition $\lambda$ that meets the length target $M_\lambda=m$. This continues until the trial partition $\lambda$ meets the weight target (e.g., $N_\lambda=n$ in task (T1) or $n\le N_\lambda\le \theta\kern0.03pc n$ in task (T3)). However, if the limit of attempts $t^*$ is reached with no success then the algorithm terminates, returning a message `VOID'. It remains to add that for task (T2) involving multiple exact sampling, the algorithm should be run in an additional loop to scan all weight values in the range $k\in[n,\theta\kern0.02pc n]$.

\begin{figure}[ht!]
\begin{algorithm}[H]
\label{Resampler}
\caption{{\tt ReSampler}\mypp($q,n, m,\theta, t^*$)}
\KwInput{integer $q,n, m$, real $\theta\ge 1$, $t^*$ 
}
\KwOutput{partition
$\lambda\in\check{\varLambda}^{q}(\sbullet\myp,m)$ with 
$N_\lambda\in[n,\theta\kern0.02pc n]$, otherwise `VOID'} 
integer array $\lambda_{[\ ]}$\;
integer $N, M, t$\;
real 
$L$\;
$L\leftarrow \theta\kern0.02pc n$\;
$N \leftarrow 0$, $M \leftarrow 0$, $t \leftarrow 0$\;
\While{$N\notin[n,\theta\kern0.02pc n]$ and $t\le t^*$\label{outloop}}{
	\While{$M\ne m$\label{inloop} }
	{
		$(\lambda, N, M) \leftarrow {\tt FreeSampler}(q,n,m,L)$
	}
	$t\leftarrow t+1$\;
}
\uIf{$t \le t^*$} {$N_\lambda\leftarrow N$\;
\Return $(\lambda,N_\lambda)$}
\Else  {\Return \texttt{`VOID'}
}
\end{algorithm}
\end{figure}

Algorithm \ref{Resampler} can be optimized in a number of ways. Since the weight of a valid output $\lambda$ should not exceed $\theta\kern0.03pc n$, it is clear that the run of the internal loop can be terminated prior to collecting the required number of parts $m$ if the next candidate part is too large, so that the incremented weight will certainly exceed the majorant. Furthermore, if the number of collected parts has already reached the target value $m$ then there is no need to keep scanning the remaining values in the range $\ell\le L$ and the current run of the internal loop may be stopped without any loss. However, to avoid bias and maintain the Boltzmann distribution of the output, the corresponding proposal $\lambda\in\check{\varLambda}_L^q(\sbullet\myp,m)$ must be accepted only if the remaining candidate parts in the range $\ell\le L$ were to be rejected by the respective Bernoulli checks. Since individual such checks are mutually independent, their multitude can be replaced by a single Bernoulli trial with the corresponding product probability of failure. An additional benefit of such an aggregated Bernoulli check is that this will reduce the number of calls of the random number generator and hence improve the efficiency of the sampler.

Another improvement of the code implementation in the multiple testing task (T2) proceeds from the observation that the sequential procedure based on separate testing of each target in the range $[n,\theta\kern0.02pc n]$ (see Section \ref{sec:6.2.2}) is apparently wasteful, because a partition of some $k'\in[n,\theta\kern0.02pc n]$ obtained whilst looking for partitions of a different number $k$ would be discarded in that cycle of the external loop, whereas keeping it would have helped to achieve success if an earlier search with target $k'$ failed, or to save time on a duplicate job when the algorithm moves to the new target $k'$. In practice, all partitions (at least, the new ones) obtained in every run of the external loop should be stored as long as they fit into the range $[n,\theta\kern0.02pc n]$, thus leaving dynamically fewer targets to address.

For the sake of presentational clarity, Algorithm \ref{Resampler} embeds iterated calls of the free sampler (Algorithm \ref{sampler}), but this means that the calibrating parameters $z_1$ and $z_2$ are recalculated at every such call, which is of course wasteful. This drawback can be easily amended by writing out the code explicitly. Note, however, that such an  improvement would have no significant bearing on the asymptotic estimation of the code complexity.

\begin{remark}
It would be interesting to explore if the performance of our sampling scheme can be improved via a probabilistic divide-and-conquer method  %recently 
proposed by Arratia and DeSalvo \cite{ADS}. 
\end{remark}

\subsubsection{Complexity and performance}
Building on the probabilistic analysis of the internal and external loops carried out in Section \ref{sec:6.2.2}, it is  straightforward to estimate the \emph{time complexity} of Algorithm \ref{Resampler}, understood as the \emph{expected number of elementary runs to completion}. 

Starting with the internal loop, in its crude (non-optimized) version each internal run comprises $\lfloor (\theta\kern0.03pc n)^{1/q}\rfloor$ checks of available parts $\ell\in \NN^q$ not exceeding $L^*\mynn=\theta\kern0.03pc n$. 
Combined with the estimate \eqref{eq:Qm} of the probability to collect $m$ parts in a single run and the corresponding geometric distribution of the number of attempts, the complexity of the internal loop is bounded by 
\begin{equation}\label{eq:M-comp}
\mu_\theta^{-m}\myp m!\,\rme^{\myp\mu_\theta}\myp (\theta\kern0.03pc n)^{1/q}.
\end{equation}

As for the external loop, its complexity depends on the task at hand. If $\theta=1$ (which corresponds to task (T1) of exact sampling with the weight target $N_\lambda=n$), then the time to completion, $T_n$, has geometric distribution with parameter $p_n^*$ (see \eqref{eq:Q*t'0}). For simplicity dropping a time bound $t^*$ (but still assuming that the space $\check{\varLambda}^q(n,m)$ is non-empty, so that $p_n^*>0$), the expected time to completion is given by $\EE_{\bm{z}}(T_n)=1/p_n^*$.
With a time bound $t^*$, the expectation is modified as follows,
\begin{align}\label{eq:T1-cmplx-2}
\EE_{\bm{z}}(T_n; T_n\le t^*)&=\sum_{t=1}^{\,t^*\!} t\mypp(1-p_n^*)^{t-1}p_n^*+t^*(1-p_n^*)^{t^*}=\frac{1-(1-p_n^*)^{t^*}}{p_n^*}<\frac{1}{p_n^*}.
\end{align}
However, the reduction in \eqref{eq:T1-cmplx-2} is not significant, because under our confidence-based choice of the time limit (see \eqref{eq:q>}), we always have
$(1-p_n^*)^{t^*}\mynn\!\le \delta$. Thus, combining formulas  \eqref{eq:M-comp} and \eqref{eq:T1-cmplx-2}, the total complexity guarantee for task (T1) is estimated by 
\begin{equation}\label{eq:MN-compT1}
\frac{m!\,\rme^{\myp\mu_1}}{\mu_1^{m}}\mypp O\bigl(n^{1/q}\myn/p_n^*\bigr),\qquad \mu_1=m\,G_{1/q}(m/q).
\end{equation}
Further specification depends on the informative lower bound for the probability $p_n^*$. For example, a crude estimate 
\eqref{eq:q>} gives a more explicit estimate for the complexity,
\begin{equation}\label{eq:MN-compT1'}
\left(\frac{\exp\bigl(G_{1/q}(m/q)+1/q\bigr)\,\Gamma(1/q)}{q^{1-1/q}\myp m^{1+1/q}}\right)^{\mynn m}\mypp O\bigl(n^{(m+1)/q}\bigr).
\end{equation}
For $q=1$ and $q=2$, this estimate can be significantly improved by using asymptotically exact cardinalities \eqref{eq:q=1-size} and \eqref{eq:Lq=2}, respectively, yielding the estimates
\begin{equation}\label{eq:MN-compT1q=1}
\frac{(m!)^2\, \rme^{2m}}{m^{2m+1}} \, O(n^2)=O(n^2)
\end{equation}
and
\begin{equation}\label{eq:MN-compT1q=2}
\frac{2^{m/2}\myp m!\:\Gamma(m/2)\,\rme^{3m/2}}{m^{3m/2}}\mypp O\bigl(n^{3/2}\bigr)=O(n^{3/2}).
\end{equation}
Interestingly, the asymptotic bounds \eqref{eq:MN-compT1q=1} and  \eqref{eq:MN-compT1q=2} do not depend on the number of parts $m$.

For task (T2) (with some $\theta>1$), the above estimates just need to be multiplied by the number of targeted weights, $\lfloor (\theta-1)\mypp  n\rfloor+1=O(n)$. 
Finally, for task (T3) we can use formula \eqref{eq:MN-compT1}, but with $\mu_1$ changed to $\mu_\theta$ (see \eqref{eq:M-comp}) and with the probability $p_n^*$ of success in a single attempt replaced by the (asymptotic) probability \eqref{eq:sum-q*1} of at least one success in the range $[n,\theta\kern0.003pc n]$, yielding
$$
\frac{m!\,\exp\bigl(m\,G_{1/q}(\theta\myp m/q)\bigr)}{m^m\mypp\bigl(G_{m/q}(\theta\myp m/q)-G_{m/q}(m/q)\bigr)}\,O(n^{1/q})=
O\bigl(m\myp n^{1/q}\bigr).
$$

\begin{table}[b!h!t]
\caption{Confident time thresholds $t^*$ for the external loop in tasks (T1), (T2) and (T3), calculated for $q=1$ and $q=2$ with various values of confidence tolerance $\delta$ using formulas \eqref{eq:t*new'}, \eqref{eq:t*2} and \eqref{eq:omni-t*'}. In both cases, the chosen values of $n$ and $m$ yield $\kappa=m^{q+1}\mynn/n=0.01$. For tasks (T2) and (T3), the testing range is set with the factor $\theta=1.1$. For comparison, corrected values $\tilde{t}^*$ for tasks (T1) and (T2) are calculated from  formulas \eqref{eq:t*1corr}, \eqref{eq:t*2corr} and \eqref{eq:t*1corrT2}, \eqref{eq:t*2corrT2}, respectively.}
 \label{table4}
\small
\vspace{1pc}\centering\begin{tabular}{|c||c|c||c|c||c|}
 \hline
\multicolumn{6}{|c|}{$\vphantom{\int^{t}}q=1$, $n=2{,}\myp500$, $m=5$}\\
\hline
$\vphantom{\int_y^{t}}\delta$& $t_n^*$\,(T1)  &$\tilde{t}_n^*\vphantom{\int^{T}}$\,(T1)  &$t_n^*$\,(T2) &$\tilde{t}_n^*$\,(T2) &$t^*$\,(T3)\\
 \hline 
$0.1$&$8.603518\cdot 10^{13}\vphantom{\int^{t}}$&$6{,}\myp343.202$&$2.923424\cdot10^{14}$&$21{,}\myp553.82$& $26.01955$
\\
 \hline 
$0.01$&$1.720704\cdot 10^{14}$&$12{,}\myp686.40$&$3.783776\cdot10^{14}\vphantom{\int^{t}}$&$27{,}\myp897.02$&  $52.03911$
\\
 \hline 
$0.001$&$2.581056\cdot 10^{14}$&$19{,}\myp029.61$&$4.644128\cdot10^{14}\vphantom{\int^{t}}$&$34{,}\myp240.22$&$78.05866$ \\
\hline 
$0.0001$&$3.441407\cdot 10^{14}$&$25{,}\myp372.81$&$5.504479\cdot10^{14}\vphantom{\int^{t}}$&$40{,}\myp583.43$&$104.0782$ \\
\hline
\multicolumn{6}{c}{}\\[-.5pc]
\hline
\multicolumn{6}{|c|}{$q=2$, $n=12{,}\myp500$, $m=5$}\\
\hline
$\vphantom{\int_y^{t}}\delta$&$t_n^*$\,(T1) &$\tilde{t}_n^*$\,(T1) &$t_n^*$\,(T2) &$\tilde{t}_n^*$\,(T2) &$t^*$\,(T3)\\
 \hline 
$\vphantom{\int^{t}}0.1$&$198{,}\myp687{,}\myp146$ &$41{,}\myp485.61$&$814{,}\myp003{,}\myp357$&$169{,}\myp962.8
$&$34.94282$
\\
 \hline 
$\vphantom{\int^{t}}0.01$&$397{,}\myp374{,}\myp292$ &$82{,}\myp971.22$&$1{,}\myp012{,}\myp690{,}\myp503$&$211{,}\myp448.4$&  $69.88564$
\\
 \hline 
$\vphantom{\int^{t}}0.001$&$596{,}\myp061{,}\myp438$&$124{,}\myp456.8$&$ 1{,}\myp211{,}\myp377{,}\myp649$&$252{,}\myp934.0$&$104.8285$ \\
\hline 
$\vphantom{\int^{t}}0.0001$&$794{,}\myp748{,}\myp583$&$165{,}\myp942.4$&$1{,}\myp410{,}\myp064{,}\myp795$&$294{,}\myp419.7$&$139.7713$ \\
\hline\end{tabular}
\end{table}

%
% R code
% q=1
% m=5
% a=m/q
% theta=1.1
% a1=a*t
% delta=0.1
% (T3)
% C3=(pgamma(a1, a, lower.tail = TRUE)-pgamma(a, a, lower.tail = TRUE))/(pgamma(a1, 1/q, lower.tail = TRUE))^m
% t3=log(delta)/log(1-C3)
%
% (T2)
% C1=(exp(m/q)/(q^(m-m/q)*m^(m/q)*gamma(m+1)))*(gamma(1/q))^m*(pgamma(a, 1/q, lower.tail = TRUE))^m
% check q=2
% C1=(exp(m/2)/(2^(m/2)*m^(m/2)*gamma(m+1)))*(gamma(1/q))^m*(pgamma(a, 1/q, lower.tail = TRUE))^m
% t2=C1*n^(m/q)*log((theta-1)*n/delta)
% correction (q=1)
% t21=((exp(m)*(1-exp(-m))^m*gamma(m))/m^m)*n*log((theta-1)*n/delta)
% correction (q=2)
% t22=(2*exp(1)/m)^(m/2)*gamma(m/2)*(pgamma(m/2,1/2, lower.tail = TRUE))^m*n*log((theta-1)*n/delta)
%
% (T1)
% C1=(exp(m/q)/(q^(m-m/q)*m^(m/q)*gamma(m+1)))*(gamma(1/q))^m*(pgamma(m/q, 1/q, lower.tail = TRUE))^m
% t1=C1*n^(m/q)*log(1/delta)
% check: q=1
%  t1=(exp(1)*n*(1-exp(-m))/m)^m*(1/gamma(m+1))*log(1/delta)
% check: q=2
% C1=(exp(m/2)/(2^(m/2)*m^(m/2)*gamma(m+1)))*(gamma(1/2))^m*(pgamma(m/2, 1/2, lower.tail = TRUE))^m
% t1=C1*n^(m/q)*log(1/delta)
% correction (q=1)
% t11=(exp(m)*(1-exp(-m))^m*gamma(m)/m^m)*n*log(1/delta)
% correction (q=2)
% t12=(2*exp(1)/m)^(m/2)*gamma(m/2)*(pgamma(m/2,1/2, lower.tail = TRUE))^m*n*log(1/delta)

To evaluate real time performance of the rejection sampler, we first need to take a practical look at the censoring time limits $t^*$ in tasks (T1)--(T3) proposed in Section \ref{sec:6.2.2}. These are numerically illustrated in Table \ref{table4} for $q=1$ and $q=2$, with various values of $n$ and $m$. Observe that the crude bounds for tasks (T1) and (T2) calculated via formulas \eqref{eq:t*new'} and \eqref{eq:t*2} appear to be very high, especially for $q=1$ (of order $10^{14}$), casting doubt on whether such limits are usable. In real terms, since each run of the external loop is a simple check if $N_\lambda=n$ (see line \ref{outloop} in Algorithm \ref{Resampler}), we can assume for simplicity that it needs a single tick of the CPU clock. If the algorithm is executed on a contemporary mid-range desktop PC (say, with processor base frequency 3.30 GHz, which we used) then, under the estimate \eqref{eq:t*new'} for task (T1) with $q=1$ and a fairly low confidence tolerance $\delta=0.001$, the external loop alone may require up to $2.581056 \cdot 10^{14}/(3.30\cdot 10^9\cdot60\cdot 60)\doteq 21.72606\approx 22$ hours until completion, which is unpleasantly long but not entirely unrealistic. This estimate drops dramatically for $q=2$ to less than $1$ second. A steep decreasing trend continues with larger powers;\footnote{Keeping $m$ and $\kappa=m^{q+1}\mynn/n$ fixed, from formulas \eqref{eq:C1} and \eqref{eq:t*new'} we find $\lim _{q\to\infty} t_n^*=(m^m\myn /m!)\log\mynn(1/\delta)$. For example, for $m=5$ as in Table \ref{table4} and $\delta=0.001$, this limiting value specializes to $179.8895$} for example, for $q=3$, $n=62{,}\myp500$, $m=5$ and $\delta=0.001$, formula \eqref{eq:t*new'} gives $t_n^*=3{,}\myp358{,}\myp531$, leading to the maximum execution time of up to $0.001$ second. Thus, the sampler becomes progressively more efficient for larger $q$, even under a crude time bound. On the other hand, as pointed out in Remarks \ref{rm:better1} and \ref{rm:better2}, additional information about the size of the corresponding partition spaces would allow a significant reduction of the estimated bound as illustrated in Table \ref{table4} (by a factor $10^{10}$ for $q=1$ and about $4{,}\myp790$ for $q=2$).

Let us now look at the real time computational cost due to the internal loop. As mentioned before (cf.\ \eqref{eq:M-comp}), the expected numbers of internal runs until collecting exactly $m$ parts is asymptotically given by $\mu_1^{-m}\myp m!\,\rme^{\myp\mu_1}$ (with $\theta=1$), where $\mu_1=m\,G_{1/q}(m/q)$. Using for numerical illustration the same values of $q$, $n$ and $m$ as in Table \ref{table4}, this formula yields $5.6993$
%$5.699304$ 
($q=1$) and $5.7043$
%$5.704286$
($q=2$). The average computing time for each of such attempts is inversely proportional to the CPU base frequency (such as 3.30 GHz), but it involves many other important aspects such as the operational efficiency of a random number generator, design of memory allocation and data storage, numerical precision, coding implementation and compiler used, and the overall architecture of the computer (e.g., the number of cores and whether or not parallel processing was utilized). Thus, it is impossible to estimate the actual computing time without real benchmarking.

To test the performance of the internal loop, we implemented the algorithm on a desktop CPU as described at the beginning of Section \ref{sec:6}, for simplicity using a single core. Since internal runs are independent from each other and the computational costs due to the multi-core design are negligible, we can simply divide the average execution time on a single core by the number of cores at disposal.

With the same values of $q$, $n$ and $m$ used above and in Table \ref{table4} (and with $\theta = 1$), the average number of sampling attempts (starting at line \ref{inloop} of Algorithm \ref{Resampler}) was $5.6956$ for $q=1$ and $5.6404$ for $q=2$; note that these sample averages match the expected values calculated above. Furthermore, the program took on average $2.1036\cdot 10^{-3}\myn$ seconds ($q=1$) and  $0.9780\cdot 10^{-4}\myn$ seconds ($q=2$) per single successful completion of the internal loop. The corresponding number of ticks of the CPU clock per elementary check of a candidate part $\ell\le n$ (see formula \eqref{eq:M-comp}) is evaluated as $(2.1036\cdot 10^{-3}\myn/(5.6956\cdot 2{}\mypp 500))\cdot 3.30 \cdot 10^9\doteq 487.5258$ ($q=1$) and $(0.9780\cdot 10^{-4}\myn/(5.6404\,\sqrt{12{}\mypp500}\mypp))\cdot 3.30\cdot 10^9=511.7854$, so it stays in the range about $450\div 550$. 

However, there is a problem: if we combine the physical times benchmarked for the internal loop with the time bounds $t^*_n$ for the external loop given in Table \ref{table4} (say, with tolerance $\delta=0.001$), then for $q=1$ we obtain, by converting seconds to minutes, hours, days and years, $2.1036\cdot 10^{-3}\myn\cdot 2.581056\cdot 10^{14}\myn/(60\cdot 60\cdot 24\cdot 365)\approx17{,}\myp217$ years\,(!), which is clearly impractical. For $q=2$, a similar calculation gives a more reasonable estimate,  $0.9780\cdot 10^{-4}\myn\cdot 596{}\, 061{}\mypp 438 /(60\cdot 60)\approx16$ hours. But with the improved time bounds $\tilde{t}_n^*$ (see Table \ref{table4}), we obtain much more satisfactory estimates, $2.1036\cdot 10^{-3}\myn\cdot 19\, 029.61\approx40$ seconds ($q=1$) and $0.9780\cdot 10^{-4}\cdot 124\,456.8 \approx12$ seconds ($q=2$).

%\begin{remark} 
As a concluding remark, 
Algorithm \ref{Resampler} could be used as an experimental tool for searching satisfactory instances in additive problems of number theory such as variants of the Waring problem. Here, iterated sampling attempts to find a suitable instance subject to certain constraints\,---\,in the lack of prior knowledge if such instances even exist\,---\,may be interpreted as statistical testing of existence as the null hypothesis, under which a suitable confident time limit $t^*$ can be determined. The sampling approach based on bounded (although high) confidence bears similarity with primality testing procedure such as the Miller--Rabin algorithm \cite{Rabin}. It can also be helpful in verification of conjectures about the density of representable numbers via an experimental analysis of the success rates. We will address such applications in another paper.
%\end{remark}

\vspace{-.3pc}
\section*{Acknowledgments}
J.C.P.\ was supported by an EPSRC Doctoral Training Partnership scholarship at the School of Mathematics, University of Leeds (grant number 2106382). The authors have benefited from the useful discussions with Yuri V.\ Yakubovich.
%(Saint Petersburg).

%\section*{Competing interests}
%
%The authors declare none.

%\nocite{*}

%\small

\vspace{-.3pc}

\end{document}